\numberwithin{equation}{section}
\DeclareMathOperator{\Test}{\mathscr{F}}
\newcommand{\Testzeta}{\Test_{\eta}^{\mathrm div}}
\newcommand{\Div}{\divergence}
\newcommand{\ep}{\varepsilon}
\newcommand{\R}{\mathbb R}
\newcommand{\N}{\mathbb N}
\newcommand{\vu}{\mathbf{u}}
\newcommand{\Grad}{\nabla}
\newcommand{\dd}{\mathrm{d}}
\newcommand{\dx}{\,\mathrm{d}x}
\newcommand{\dt}{\,\mathrm{d}t}
\newcommand{\dxt}{\,\mathrm{d}x\,\mathrm{d}t}
\newcommand{\ds}{\,\mathrm{d}\sigma}
\newcommand{\dxs}{\,\mathrm{d}x\,\mathrm{d}\sigma}
\newcommand{\dH}{{\, \dd y}}
\DeclareMathOperator{\tr}{tr}
\newcommand{\dif}{\mathrm{d}}
\newcommand{\mr}{\mathbb{R}}
\newcommand{\mt}{\mathbb{R}^3}
\DeclareMathOperator{\dist}{dist}
\DeclareMathOperator{\diver}{div}
\newcommand{\bu}{\mathbf u}
\newcommand{\bq}{\mathbf q}
\newcommand{\vr}{\varrho}
\newcommand{\vt}{\vartheta}
\newcommand{\vrd}{\varrho_\delta}
\newcommand{\vtd}{\vartheta_\delta}
\newcommand{\vud}{\bfu_\delta}
\newcommand{\weaktostar}{\weakto^*}
\newcommand{\toetai}{\to^\eta}
\newcommand{\weaktoetai}{\weakto^\eta}
\newcommand{\rdiss}{{\mathscr{D}_\delta}}
\newcommand{\seb}[1]{\textcolor[rgb]{0.00,0.00,1.00}{  #1}}
\begin{document}


\title[Compressible heat-conducting fluid-structure interactions] {Navier--Stokes--Fourier fluids interacting with elastic shells} 

\author{Dominic Breit}
\address[D. Breit]{
Department of Mathematics, Heriot-Watt University, Riccarton Edinburgh EH14 4AS, UK}
\email{d.breit@hw.ac.uk}

\author{Sebastian Schwarzacher}
\address[S. Schwarzacher]{Departement of Analysis, Faculty of Mathematics and Physics, Charles University, Sokolovsk\'a 83, 186 75 Praha, Czech Republic}
\email{schwarz@karlin.mff.cuni.cz}

\begin{abstract}
We study the motion of a compressible heat-conducting fluid in three dimensions interacting with a nonlinear flexible shell. The fluid is described by the full Navier--Stokes--Fourier system. The shell constitutes an unknown part of the boundary of the physical domain of the fluid and is changing in time. The solid is described as an elastic non-linear shell of Koiter type; in particular it possesses a non-convex elastic energy.
We show the existence of a weak solution to the corresponding system of PDEs which exists until the moving boundary approaches a self-intersection or the non-linear elastic energy of the shell degenerates. It is achieved by compactness results (in highest order spaces) for the solid-deformation and fluid-density. Our solutions comply with the first and second law of thermodynamics: the total energy is preserved and the entropy balance is understood as a variational inequality.
\end{abstract}

\subjclass[2020]{76N10, 76N06, 35Q40, 35R35}
\keywords{Compressible fluids, Navier--Stokes equations, weak solution, Koiter Shell, time dependent domains, moving boundary}

\date{\today}

\maketitle


\tableofcontents

\section{Introduction}
The interactions of fluids with elastic structures are important for many applications
ranging from hydro- and aero-elasticity \cite{Do} over bio-mechanics \cite{BGN} to hydrodynamics
\cite{Su}. Motivated by these applications and the scientific foundations from engineers and physicists also mathematicians became interested in the field. Nowadays there exists a vast body of literature on incompressible fluid structure interaction, where a part of the boundary of the underlying domain is the mid-section of a flexible shell.\\
The mathematical analysis of continuum mechanical models in fluid mechanics
reaches back to the pioneering work of Leray on the existence of weak solutions for the incompressible Navier--Stokes equations \cite{Ler}. Based on this, various fluid-structure interaction results have been achieved already; we will explain this in more detail below.
A similar foundational work in the compressible case is due to Lions \cite{Li} with important extensions by Feireisl et al. \cite{feireisl1,fei3}. Compressible fluids are
important for applications in aero-dynamics and mathematical results on their interactions with elastic structures appeared in this context recently in~\cite{BrSc,Tr}. A next natural step is to study the thermodynamics
of fluid structure interactions. In fact, the assumption that a physical process is isentropic can only be valid
for a very short period of time. In general it is indispensable to take into account the transfer of heat. Similarly, the linearisation of the shell model, often applied in the mathematical literature,
looses its validity as soon as
the displacement of the boundary is not on a small scale any more. The treatment of non-linear shell models in the context of weak solutions is very recent~\cite{MuSc} and (up to date) only available for incompressible fluids. In this work we progress on the theory of weak solutions by showing the existence for systems that take into account 1) heat conduction and compression effects for the fluid and 2) a non-linear elastic respond for the solid.
More specifically, we use the classical model by Koiter to describe the shell movement which yields a fully nonlinear fourth order hyperbolic
equation with a non-convex energy. 
The main result of this paper is the existence of a global-in-time weak solution to the Navier--Stokes--Fourier system coupled to the motion of a solid shell of Koiter type. This means that a fourth order PDE for the solid is coupled (via the geometry) to a viscous fluid. A special feature of the Navier--Stokes--Fourier system is that even weak solutions can satisfy an energy equality. We produce a respective equality for the energy of the coupled fluid-structure interaction; this includes the full Koiter energy of the solid deformation. In this context it is noteworthy that we consider perfect elastic shells. This means that no heat is produced by the solid, or reversely entropy is only increased via the fluid. Still some viscous effects can be shown to hold for the elastic solid due to the tight coupling between the solid and the fluid. It is this key observation (and the respective estimate in Subsection~\ref{sec:shelleps}) that allows to show that the elastic part of the energy has the necessary compactness in order to prove that the system is indeed closed (energy is preserved).
We note that the interval of existence for our weak solutions could be arbitrarily large. In fact, the time of existence is only restricted
once either the topology of the fluid domain changes, namely if a self-intersection of the variable boundary (of the elastic shell) is approached, or if the solid energy reaches a point of degeneracy.

\subsection{State of art}
Incompressible viscous fluids interacting with lower-dimensional linear elastodynamic equations were studied, for instance, in \cite{Ch, Gr, LeRu, MuCa1, MuCa, MuSc, SchSro20}. All but the last result are concerned with the existence of weak solutions which exist as long as the moving part of the structure does not touch the fixed part of the fluid boundary.
The analysis in \cite{Ch} is concerned with a three-dimensional viscous incompressible fluid modelled by the Navier--Stokes equations, which is interacting with a flexible elastic plate located on one
part of the fluid boundary. The shell equations is linearised and the shell is assumed to be one-dimensional.
The existence of a weak solution to the incompressible Navier--Stokes equation coupled with a plate in flexion was constructed in \cite{Gr}.
The authors in \cite{LeRu} studied the interaction of an incompressible fluid which interacts with a linear elastic shell of Koiter-type. Here, the middle surface of the shell serves as the mathematical boundary of the three-dimensional fluid domain. 
In \cite{MuCa1} the incompressible Navier--Stokes equations are studied in a cylindrical wall which is moving in time. Its elastodynamics is modelled by the one-dimensional cylindrical linearised Koiter shell model. The authors apply a numerical approach and show the existence a of weak solution
based on semi-discrete operator splitting scheme. The elastodynamics of the cylinder wall in \cite{MuCa} is governed by the one-dimensional linear wave equation modelling the thin structural layer, and by the two-dimension equations of linear elasticity modelling the thick structural layer. In \cite{SchSro20} the analysis of uniqueness properties of weak-solution has been initiated in this field. There the authors show a weak-strong uniqueness result for elastic plates interacting with the incompressible Navier--Stokes equations.
As far as we know, the only result on the analysis of weak solutions to fluid-structure interaction, where the original Koiter model (to be described below in Section \ref{subsec:model}) with a leading order nonlinear shell energy is considered, is the recent paper \cite{MuSc} by Muha and the second author. Results regarding the short-time existence of strong solutions can be found in \cite{ChSk}.\\
There are much less results concerning the compressible case.
In \cite{BrSc} the authors of the present paper showed the existence of a weak solution to the compressible Navier--Stokes equations coupled with a linear elastic shell of Koiter type. Eventually, a similar result has been shown by a time-stepping method~\cite{Tr}, where the interaction of a compressible fluid with a thermoelastic plate is studied (compare also with with the numeric results from~\cite{SchShe20}).
Results on the short-time existence of strong solutions for compressible fluid models coupled with one-dimensional linear elastic structures can be found in \cite{Ma,Mi}.
In \cite{Bo} the author studies an elastic structure (with a regularised elasticity law) which is immersed into a compressible fluid and proves the existence of weak solutions to the underlying system. 
Results concerning the long-time existence of weak solutions about structure interactions with heat conducting fluids are missing so far - even in the incompressible case. The existence of a unique local-in-time strong solution to compressible Navier--Stokes--Fourier system coupled with a damped linear
plate equation has been established very recently in~\cite{Ma2}.

\subsection{The model}
\label{subsec:model}
We consider the full Navier--Stokes--Fourier system of a heat-conducting compressible
fluid interacting with a nonlinear elastic Koiter shell in $\mathbb{R}^3$ of thickness $2\varepsilon_0 > 0$ (see \cite{Koi,Koi2} and also \cite{Ci1,Ci2}).  Here,  $\omega \subset \R^2$ can be associated to the middle surface of the shell and for simplicity, we take $\omega = \R^2\setminus \mathbb{Z}^2$ to be the flat torus.
Following \cite{ ciarlet2001justification} (see also \cite{MuSc} and \cite{BrMe}) we suppose that $\partial\Omega$ can be parametrised by an injective mapping ${\bfvarphi}\in C^4(\omega;\R^3)$ such that for all points $y=(y_1,y_2)\in \omega$, the pair of vectors  
$\partial_i {\bfvarphi}(y)$, $i=1,2,$ are linearly independent. Simply put, $\bfvarphi$ is an injective map on the mid-section of the shell of the domain $\Omega$.
This vector pair $[\partial_1 {\bfvarphi}(y), \partial_2 {\bfvarphi}(y)]$ is the covariant basis of the tangent plane to the middle surface ${\bfvarphi}(\omega)$ of the reference configuration  at each point ${\bfvarphi}(y)$ and
\begin{align*}
{\nu}(y)=\frac{\partial_1 {\bfvarphi}(y) \times \partial_2 {\bfvarphi}(y)}{\vert \partial_1 {\bfvarphi}(y) \times \partial_2 {\bfvarphi}(y) \vert}
\end{align*}
is a well-defined unit vector  normal to the surface ${\bfvarphi}(\omega)$ at ${\bfvarphi}(y)$. We now assume that the shell (and in particular, its middle surface) only deforms along the normal direction with a displacement field $\eta {\nu} : \omega \rightarrow\R^3$ where $\eta : \omega \rightarrow\R$ is considerably smooth. Then, we can parametrized the deformed boundary by the coordinates
\begin{align}\label{eq:covar}
{\bfvarphi}_\eta( y)={\bfvarphi}(y) + \eta(y){\nu}(y), \quad \,y \in \omega,
\end{align}
yielding the deformed middle surface 
${\bfvarphi}_\eta(\omega)$. 
The covariant components of the ``modified'' change of metric tensor $\mathbb{G}(\eta)$ are given by
\begin{align*}
G_{ij}(\eta)=\partial_i{\bfvarphi}_\eta \cdot \partial_j {\bfvarphi}_\eta -\partial_i {\bfvarphi} \cdot \partial_j {\bfvarphi},
\end{align*}
where $\partial_i{\bfvarphi}_\eta \cdot \partial_j {\bfvarphi}_\eta$ are the covariant components of the first fundamental form of  the deformed middle surface  $ {\bfvarphi}_\eta(\omega)$.
We denote by $\nu_\eta$ the normal-direction to the deformed middle surface  $ {\bfvarphi}_\eta(\omega)$ at the point ${\bfvarphi}_\eta(y)$ (which is in general not a unit vector). It is given by
\begin{align*}
{\nu}_\eta(y)
&=\partial_1 {\bfvarphi}_\eta(y) \times \partial_2 {\bfvarphi}_\eta(y)
\end{align*}
and
\begin{align*}
R_{ij}^\sharp(\eta) :=\frac{\partial_{ij} {\bfvarphi}_\eta\cdot {\nu}_\eta}{\vert \partial_1 {\bfvarphi} \times \partial_2 {\bfvarphi} \vert}
-
\partial_{ij} {\bfvarphi}\cdot {\nu} , \quad i,j=1,2,
\end{align*}
are the covariant components of the change of curvature tensor $\mathbb{R}^\sharp(\eta)$. The elastic energy $K(\eta):=K(\eta, \eta)$ of the deformation is then given by 
\begin{equation}
\begin{aligned}
\label{koiterEnergy}
K(\eta)&= \frac{1}{2}\epsilon_0 \int_\omega \mathbb{C}:\mathbb{G}(\eta) \otimes \mathbb{G}(\eta ) \dH
+
\frac{1}{6}\epsilon_0^3 \int_\omega \mathbb{C}:\mathbb{R}^\sharp(\eta ) \otimes\mathbb{R}^\sharp(\eta ) \dH
\\&
:=\sum_{i,j,k,l=1}^2 \frac{1}{2}\epsilon_0 \int_\omega C^{ijkl}G_{kl}(\eta )G_{ij}(\eta ) \dH
+
\frac{1}{6}\epsilon_0^3 \int_\omega C^{ijkl}{R}^\sharp_{kl}(\eta ){R}^\sharp_{ij}(\eta ) \dH
\end{aligned}
\end{equation} 
where $\mathbb{C}=( C^{ijkl})_{i,j,k,l =1}^2$ is a fourth-order tensor whose entries are the contravariant components of the shell elasticity, see \cite[Page 162]{Ci3}. We remark that for simplicity, we have normalized the measure $\dH$ in \eqref{koiterEnergy} which should have actually been the weighted measure $=\vert \partial_1 {\bfvarphi} \times \partial_2 {\bfvarphi} \vert \dH$ with the weight $\vert \partial_1 {\bfvarphi} \times \partial_2 {\bfvarphi} \vert$. 
Next, given the geometric quantity
\begin{align}
\begin{aligned}
\label{geomQuan}
\overline{\gamma}(\eta) :=
1&+
\frac{\eta}{\vert \partial_1 {\bfvarphi}\times \partial_2{\bfvarphi} \vert}
\Big[
{\nu} \cdot\big(  \partial_1 {\bfvarphi}\times \partial_2 {\nu}+\partial_1{\nu} \times \partial_2 {\bfvarphi} \big) 
\Big]\\&+
\frac{\eta^2}{\vert \partial_1 {\bfvarphi} \times \partial_2 {\bfvarphi} \vert}
{\nu}\cdot \big(\partial_1{\nu} \times \partial_2{\nu} \big),
\end{aligned}
\end{align}
one deduces the $W^{2,2}(\omega)$-coercivity of the Koiter energy \eqref{koiterEnergy} as long as $\gamma(\eta)\neq0$, cf. \cite[Lemma 4.3
and Remark 4.4]{MuSc}.
Finally, we remark that
the Koiter energy is continuous on $W^{2,p}(\omega)$ for all $p>\seb{\beta}>2$ due to the Sobolev embedding
$W^{2,p}(\omega)\hookrightarrow W^{1,\infty}(\omega)$.
\\
For a given  function $\eta:I\times\omega\rightarrow \R$ with an interval $I=(0,T)$ we denote by $\Omega_{\eta(t)}$ the variable in time domain. With a slight abuse of notation we denote by $I\times\Omega_\eta=\bigcup_{t\in I}\set{t}\times\Omega_{\eta(t)}$ the deformed time-space cylinder, defined via its boundary
\[
\partial\Omega_{\eta(t)}=\set{{\bfvarphi}(y) + \eta(t,y){\nu}(y):y\in \omega}.
\]
 Along such a cylinder we observe the flow of a heat-conducting compressible fluid subject to the volume force $\bff:I\times \Omega_\eta\rightarrow\R^3$ and the heat source $H:I\times \Omega_\eta\rightarrow\R$. We seek the velocity field $\bu:I\times \Omega_\eta\rightarrow\R^3$, the density $\varrho:I\times \Omega_\eta\rightarrow\R$
 and the temperature $\vartheta:I\times \Omega_\eta\rightarrow\R$ solving the following system
 \begin{align}
 \partial_t\varrho+\diver(\varrho\bu)&=0,&\text{ in } &I\times \Omega_\eta,\label{eq1}
 \\ \partial_t(\varrho\bu)+\diver(\varrho\bu\otimes\bu)&=\diver\bfS(\vt,\nabla\bfu)-\nabla p(\varrho,\vt)+{\varrho}\bff&\text{ in } &I\times\Omega_\eta,\label{eq2}
 \\
\nonumber\partial_t(\varrho e(\vr, \vt))+\Div(\varrho e(\vr, \vt) \bfu)&= \bfS(\vt, \Grad \vu) :\nabla\bfu-p(\vr, \vt) \Div\bfu\\&-\Div\bfq(\vt, \Grad \vt) + \vr H &\text{ in } &I\times\Omega_\eta,\label{eq2b}\\
 \bfu(t,x+\eta(t,x)\nu(x))&=\partial_t\eta(t,x)\nu(x)&\text{ in } &I\times \omega,\label{eq3a}
 \\
 \partial_{\nu_\eta}\vt&=0&\text{ on } &I\times \Omega_\eta,\label{eq3aa'}
 \\
  \varrho(0)=\varrho_0,\quad (\varrho\bu)(0)&=\bq_0,\quad\vt(0)=\vt_0&\text{ in } &\Omega_{\eta_0}.\label{eq3}
 \end{align}
In \eqref{eq2} we suppose Newton's rheological law
\begin{align*}
\bfS(\vt,\nabla\bfu)=\mu(\vt)\Big(\frac{\nabla\bfu+\nabla\bfu^T}{2}-\frac{1}{3}\Div\bfu\,\mathcal{I}\Big)+\lambda(\vt)\Div\bfu\,\mathcal{I}
\end{align*} 
with strictly positive viscosity coefficients $\mu,\,\lambda$
(see Remark 1.3 in \cite{BrSc} for the case $\lambda\geq0$).
The internal energy (heat) flux is determined by Fourier's law
\begin{align}\label{eq:fl}
\bfq(\vartheta,\nabla\vartheta)=-\varkappa(\vartheta)\nabla\vartheta=-\nabla\mathcal K(\vartheta),\quad \mathcal K(\vartheta)=\int_0^\vartheta\varkappa(z)\,\dd z
\end{align}
with strictly positive heat-conductivity $\varkappa$.
The thermodynamic functions $p$ and $e$ are related to the (specific) entropy $s$ through {Gibbs' equation}
\begin{align}\label{m97} \vartheta D s (\varrho, \vartheta) = D e (\varrho,
\vartheta) + p (\varrho, \vartheta) D \Big( \frac{1}{\varrho} \Big)
\quad \mbox{for all} \quad \varrho, \vartheta > 0.
\end{align}
The model case is given by
\begin{align*}
p(\varrho,\vartheta)=\varrho^\gamma+\varrho\vartheta+\frac{a}{3}\vartheta^4,\quad e(\varrho,\vartheta)=\frac{1}{\gamma-1}\varrho^{\gamma-1}+c_v\vartheta+a\frac{\vartheta^4}{\varrho},\quad
s(\varrho,\vartheta)=\frac{4a}{3}\frac{\vartheta^3}{\varrho}+\log (\vartheta^{c_v})-\log\varrho,
\end{align*}
for $a,c_v>0$ and $\gamma>1$.
 In view of Gibb's relation (\ref{m97}),
the internal energy equation \eqref{eq2b} can be rewritten in the form of the entropy balance
\begin{align}\label{eq:13'}
\partial_t(\varrho s(\vr,\vt))+\Div(\varrho s(\vr,\vt)\bfu)=-\Div\Big(\frac{\bfq(\vt,\nabla\vt)}{\vartheta}\Big)+\sigma+ \vr \frac{{H}}{\vt} 
\end{align}
with the entropy production rate
\begin{align}\label{eq:0202}
\sigma=\frac{1}{\vartheta}\Big(\bfS(\vt,\nabla\bfu):\nabla\bfu-\frac{\bfq(\vt,\nabla\vt)\cdot\nabla\vartheta}{\vartheta}\Big).
\end{align}
In the weak formulation \eqref{eq:13'} will be replaced by a variational inequality.\\
 The shell should response optimally with respect to the forces, which act on the boundary. Therefore we have
 \begin{align}\label{eq:4}
  \varepsilon_0\varrho_S\partial_t^2\eta+K'(\eta)&=
  g+\nu \cdot \bfF\quad\text{in}\quad I\times \omega,
\end{align}
where $\varrho_S>0$ is the density of the shell.
Here, $g:I\times \omega\rightarrow\R$ is a given force and $\bfF$ is given by
\begin{align*}
\bfF&:=\big(-\bftau\nu_\eta \big)\circ \bfvarphi_{\eta(t)}|\det D\bfvarphi_{\eta(t)}|,
\quad
\bftau:=\bfS(\nabla\bfu) -p(\varrho,\vt)\mathcal{I}. 
 \end{align*} 
Here, $\bfvarphi_{\eta(t)}:\omega\rightarrow\partial\Omega_{\eta(t)}$ is the change of coordinates from
\eqref{eq:covar} and $\bftau$ is the Cauchy stress. To simplify the presentation in \eqref{eq:4}
we will assume 
\[
\varepsilon_0\varrho_S=1
\]
 throughout the paper. 
We assume the following boundary and initial values for $\eta$
\begin{align}
\label{initial}
\eta(0,\cdot)=\eta_0,\quad \partial_t\eta(0,\cdot)=\eta_1\quad\text{in}\quad \omega,
\end{align}
where $\eta_0,\eta_1:\omega\rightarrow\R$ are given functions. 
Here, we assume that
\[
\text{Im}(\eta_0)\subset (a,b).
\]
In view of \eqref{eq3a} we have to suppose the compatibility condition\footnote{Note that the above condition is necessary for strong solutions only and hence is not effecting the rest of the paper. In particular, since the assumptions on the initial data are only in Lebesgue spaces, the compatibility condition is void. It does, however, implicitly appear in the construction of the Galerkin bases, where a smooth approximation of the initial values is considered.}
\begin{align}\label{eq:compa}
\eta_1(y)\nu(y)=\frac{\bfq_0}{\varrho_0}(y+\eta(y)\nu(y))\quad\text{in}\quad \omega. 
\end{align}

Our main result is the following existence theorem. The system \eqref{eq1}--\eqref{eq:compa} can be written in a natural way as a weak solution. The concept is introduced in the next section, (see \eqref{eq:apufinal}--\eqref{EI20final}), where also the precise formulation of our main result is presented (see Theorem \ref{thm:MAIN}). It is concerned with the existence of a weak solution up to degeneracy of the geometry and reads in a simplified version as follows.
 
\begin{theorem} \label{thm:MAINa}
Under natural assumptions on the data there exists a weak solution $(\eta,\bfu,\varrho,\vartheta)$ to \eqref{eq1}--\eqref{initial} with satisfies the energy balance
\begin{align}\label{eq:apriori0}
\begin{aligned}
\mathcal E(t) &=
\mathcal E(0) + \int_{\Omega_\eta} \vr H  \dx+\int_{\Omega_{\eta}}\varrho\bff\cdot\bfu\dx+\int_\omega g\,\partial_t\eta\dH,\\
\mathcal E(t)&= \int_{\Omega_\eta(t)}\Big(\frac{1}{2} \varrho(t) | {\bf u}(t) |^2 + \varrho(t) e(\varrho(t),\vartheta(t))\Big)\dx+\int_\omega \frac{|\partial_t\eta(t)|^2}{2}\,\dH+ K(\eta(t)).\end{aligned}
\end{align} 
The interval of existence is of the form $I=(0,t)$, where $t<T$ only in case $\Omega_{\eta(s)}$ approaches a self-intersection when $s\to t$ or the Koiter energy degenerates (namely, if $\lim_{s\to t}\overline{\gamma}(s,y)=0$ for some point $y\in \omega$).
\end{theorem}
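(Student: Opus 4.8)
The plan is to construct the weak solution by a multi-layer approximation scheme, passing to the limit in a carefully ordered sequence of parameters, and to identify the energy balance \eqref{eq:apriori0} as the limit of the (exact) energy identities satisfied at each approximation level. The natural scheme combines: (i) a regularisation/decoupling of the geometry by mollifying the shell displacement $\eta$ in space (parameter $\kappa$), so that the fluid domain $\Omega_{\eta_\kappa(t)}$ is smooth and the change of coordinates $\bfvarphi_{\eta_\kappa(t)}$ is well behaved; (ii) an artificial-viscosity/pressure regularisation of the Navier--Stokes--Fourier system in the spirit of Feireisl et al.\ \cite{feireisl1,fei3} (vanishing viscosity $\varepsilon$ in the continuity equation, an added $\delta$-pressure term $\delta\varrho^\beta$ with $\beta$ large enough to control the density, and a regularised internal-energy balance so that the temperature stays strictly positive); (iii) a Galerkin discretisation in the fluid velocity and in the shell displacement, using bases adapted to the time-dependent domain through the kinematic coupling \eqref{eq3a} and a smooth approximation of the initial data (this is where the compatibility condition \eqref{eq:compa} is used). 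On the discrete level existence follows from a fixed-point argument (for the decoupled geometry) plus the theory of ODEs, with the stopping time chosen just before $\overline{\gamma}(\eta)$ degenerates or $\Omega_\eta$ self-intersects; the stopping time is then shown not to shrink to zero uniformly in the parameters by the energy estimate.

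Next I would carry out the limit passages in the order $N\to\infty$ (Galerkin), $\varepsilon\to0$ (vanishing continuity viscosity), $\kappa\to0$ (de-regularising the geometry), and finally $\delta\to0$ (removing the artificial pressure); the second-law inequality survives each limit because $\sigma\ge0$ and the entropy functional is weakly lower semicontinuous after testing \eqref{eq:13'} with nonnegative test functions, exactly as in the fixed-domain Navier--Stokes--Fourier theory. At every level the backbone is the a~priori estimate obtained by testing the momentum equation with $\bfu$, the continuity equation with the pressure potential, adding the internal-energy balance \eqref{eq2b}, and using the kinematic coupling \eqref{eq3a} together with the shell equation \eqref{eq:4} tested by $\partial_t\eta$: the boundary terms cancel and one recovers, up to the forcing terms, the balance \eqref{eq:apriori0}; the $W^{2,2}(\omega)$-coercivity of the Koiter energy for $\overline{\gamma}(\eta)\ne0$ (cited from \cite{MuSc}) then gives uniform control of $\eta$ in $L^\infty_t W^{2,2}_x\cap W^{1,\infty}_t L^2_x$, hence a uniform lower bound on the stopping time depending only on the data and on the distance of $\eta_0$ to the degenerate set. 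Compactness of $\bfu$ follows by an Aubin--Lions argument on the moving domain (as developed in \cite{MuSc,BrSc}), compactness of $\vartheta$ from the estimates on $\mathcal K(\vartheta)$ and the entropy, and compactness of $\varrho$ from the effective-viscous-flux identity combined with a renormalisation argument adapted to the time-dependent geometry.

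The main obstacle—and the conceptual novelty the introduction flags—is the \emph{strong} compactness of the highest-order terms, i.e.\ of $\Grad^2\eta$ for the solid (needed to pass to the limit in the non-convex Koiter energy $K(\eta)$ and in $K'(\eta)$, so that the limit energy identity is actually closed) and of $\varrho$ in the presence of the temperature-dependent pressure. For the shell, the point is that even though the shell is perfectly elastic, the tight fluid--structure coupling transfers a viscous-type regularising effect to $\partial_t\eta$; one exploits the estimate of Subsection~\ref{sec:shelleps} to obtain a uniform bound on a fractional time-derivative (or on $\partial_t\Grad\eta$ in a negative space) which, combined with the spatial $W^{2,2}$ bound and an interpolation/commutator argument against the nonlinear elliptic operator $K'$, upgrades weak to strong convergence of $\Grad^2\eta$; monotonicity of the principal part of $K'$ modulo lower-order (non-convex) perturbations then lets one identify the limit. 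For the density one adapts the Feireisl effective-viscous-flux lemma and the oscillation-defect-measure technique to the non-cylindrical domain $I\times\Omega_\eta$, using the extension/restriction operators compatible with the moving boundary; the delicate coupling is that the pressure also appears as a force on the shell via $\bfF$, so the density compactness and the shell compactness must be established simultaneously rather than sequentially.

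Finally, once all parameters are removed one verifies that the limit quadruple $(\eta,\bfu,\varrho,\vartheta)$ satisfies the weak formulation \eqref{eq:apufinal}--\eqref{EI20final}, that the energy equality \eqref{eq:apriori0} holds (first law) by passing to the limit in the discrete energy identities and using the just-proved strong convergences to handle $K(\eta(t))$ and $\varrho e(\varrho,\vartheta)$, and that the entropy variational inequality holds (second law); the assertion on the interval of existence is then immediate from the construction of the stopping time, since the only mechanisms forcing $t<T$ are a self-intersection of $\partial\Omega_{\eta(s)}$ or $\lim_{s\to t}\overline{\gamma}(s,y)=0$, both of which are precisely the obstructions to the uniform coercivity/invertibility used throughout the a~priori estimates.
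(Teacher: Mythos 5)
Your overall architecture (Galerkin plus artificial viscosity $\varepsilon$ and artificial pressure $\delta\varrho^\beta$, exact energy identities at each level, entropy inequality recovered a posteriori from the strictly positive temperature, limits passed layer by layer) matches the paper. Two remarks. First, a structural difference: you insert an extra mollification layer $\kappa$ for the geometry and a separate limit $\kappa\to0$; the paper deliberately avoids this by decoupling the geometry only on the Galerkin level and running the fixed-point argument directly on the semi-discrete system, so the only limits are $N\to\infty$, $\varepsilon\to0$, $\delta\to0$. Your route is closer to the older scheme of \cite{BrSc} and is not wrong per se, but you would then have to redo the temperature analysis (strict positivity, classical solvability of \eqref{eq2b}) on the $\kappa$-regularised geometry as well.

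The genuine gap is in your treatment of the compactness of the shell energy. You invoke ``a uniform bound on a fractional time-derivative (or on $\partial_t\nabla\eta$ in a negative space)'' combined with ``monotonicity of the principal part of $K'$ modulo lower-order perturbations.'' This is not what Subsection~\ref{sec:shelleps} provides, and the monotonicity route does not work: the Koiter energy is non-convex and $K$ is not even continuous on $W^{2,2}(\omega)$ (it requires $W^{2,p}$ with $p>2$), so weak $L^\infty_t W^{2,2}_x$ convergence plus any time-regularity of $\partial_t\eta$ cannot identify $\lim K(\eta_n)$. What is actually needed, and what the paper proves, is a gain of \emph{spatial} regularity: a uniform bound on $\int_I\|\eta_n\|^2_{W^{2+s,2}(\omega)}\dt$ for some $s>0$, obtained by testing the coupled momentum/shell equation with the double fractional difference quotient $\Delta^s_{-h}\Delta^s_h\eta_n$ extended into the fluid domain by the \emph{solenoidal} operator $\mathscr F^{\mathrm{div}}_{\eta_n}$ of Proposition~\ref{prop:musc}. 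The solenoidality is the crucial device: the pressure is only uniformly integrable in $L^1$ near the moving boundary (Lemmas~\ref{prop:higherb} and \ref{prop:higher'}), so a generic extension of the difference quotient would produce an uncontrollable pressure term; with the divergence-free extension that term vanishes identically. The resulting $W^{2+s,2}$ bound, interpolated with the energy bounds, gives strong $L^2_t W^{2,2}_x$ (indeed $L^q_t W^{2,q}_x$) convergence of $\eta_n$ and hence convergence of $K(\eta_n)$ and of $K'(\eta_n)$; this also supplies the bound $\nabla\eta_n\in L^2_t L^\infty_x$ needed in the equi-continuity step for $\partial_t\eta_n$. Without this mechanism your limit energy identity \eqref{eq:apriori0} is not closed. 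Relatedly, your claim that density and shell compactness must be established ``simultaneously'' is the opposite of what the solenoidal test function achieves: it decouples the shell regularity estimate from the pressure, so the effective-viscous-flux argument for $\varrho$ can be run separately (locally in the interior, plus the no-concentration lemmas at the boundary, which is where the restriction $\gamma>\tfrac{12}{7}$ enters).
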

The function space of existence for a weak solution to \eqref{eq1}--\eqref{initial} is determined by the total energy $\mathcal E$ in 
\eqref{eq:apriori0} as well as the quantity $\sigma$ in \eqref{eq:0202} taking into account the variable domain. Theorem \ref{thm:MAIN} extends the results from \cite{BrSc} to the case of a heat-conducting fluid but also applies to nonlinear structure equations.
As in the case of fixed domains studied in \cite{F} (see also \cite{DuFe} and \cite{FN}) the heat-conducting model allows (different to the isentropic equations) the striking feature of an energy equality. Energy, which is lost by dissipation, is transfered into heat, cf. \eqref{eq2b}.

%

\subsection{Mathematical strategy}
In this paragraph we provide an overview of the developed methodologies. Further we aim to explain all technical novelties and their potential significance.\\
As is common in the existence theory for weak solutions, the first step is to understand how to prove sequential compactness. Let us assume there is a given sequence of weak solutions $(\eta_n,\bfu_n,\varrho_n,\vartheta_n)$ to \eqref{eq1}--\eqref{initial} possessing suitable regularity properties. Deriving a priori estimates using the entropy balance one can control, in addition to the total energy defined in \eqref{eq:apriori0}, first order spatial derivative of
$\bfu_n$ and $\vartheta_n$
using \eqref{eq:13'} and \eqref{eq:0202}.
Unlike in the steady domain case, these estimates are not sufficient to show that a subsequence is again converging to a solution. One problem is to derive energy equality (that is expected for closed systems like the Navier--Stokes--Fourier equations considered here).  Critical are the {\em kinetic and elastic part of the solid energy}. 
To prove their compactness which does not follow from the energy estimates. In fact, the functional $K$ is not even well-defined on $W^{2,2}(\omega)$ recalling the discussion from Section \ref{subsec:model}. Our strategy is to
derive fractional estimates for $\nabla^2\eta_n$ as a consequence of a testing procedure for \eqref{eq:4} with difference quotients. Testing the shell equation with suitable test-functions requires in the weak formulation to choose an appropriate test-function for the full momentum equation as well.
Technically, this means we have to ``extend'' functions defined on $\omega$ to functions defined on the time dependent domain $\Omega_{\eta_n}$. An obstacle here is that the pressure is only expected to belong to $L^1$ in space near the moving boundary (compare with~\cite{BrSc}).\footnote{As is explained in~\cite{BrSc} the usual test with the \Bogovskii-operator (that implies higher integrability of the density) fails and we are only able to prove uniform integrability, cf. Lemmas \ref{prop:higherb} and \ref{prop:higher'}.}
To circumvent the irregularity of the pressure we work with a solenoidal extension $\mathscr F^{\Div}_{\eta_n}$ that was recently constructed in~\cite{MuSc} (compare also with~\cite{LeRu}).\\
A second related problem is the strong convergence of $\partial_t\eta_n$ (which is a part of the kinetic energy). Here we use a modified version of the classical argument by Aubin-Lions. Non-standard are uniform continuity estimates in time of the underlying sequence, which rely on the weak coupled momentum equation.  
Again a carefully chosen test-function is needed.
 Here, however, we use an extension which has (different to $\mathscr F^{\Div}_{\eta_n}$) a regularizing effect but no solenoidality is needed. What turns out to be the most sensitive point is that the extension is depending on the variable geometry. In particular, the extension of a constant in time function still possesses a non-trivial time-derivative. The essential term is
\begin{align*}
\int_I\int_{\Omega_{\eta_n}}\varrho_n\bfu_n\cdot \partial_t(\mathscr F_{\eta_n}b)\dxt
\end{align*}
using the notation from the next section.
We observe that $\partial_t(\mathscr F_{\eta_n}b)$ (the time-derivative of the extension) is expected to behave like $\partial_t\eta_n$.
Based on the a priori estimates $\vr_n\in L^\infty_t(L^\gamma_x)$, $\bfu_n\in L^2_t(L^6_x)$, we find that $\partial_t\eta_n\in L^2_t(L^{r}_x)$ for all $r<4$ uniformly by the trace theorem (see~Lemma~\ref{lem:2.28}).  Consequently the bound $\gamma>\frac{12}{7}$ naturally appears. It is interesting to note that
the same bound was needed in \cite[Lemma 7.4]{BrSc} in order to avoid concentrations of the approximate pressure at the boundary (an argument that we will use later in Lemma~\ref{prop:higher'}).\\
In order to prove Theorem \ref{thm:MAIN} we have to work with a multi-layer approximation scheme.
As is nowadays standard in the theory of compressible fluids
we follow \cite{feireisl1} and use an artificial pressure (replace $p(\varrho,\vt)$ by
$p_\delta(\varrho,\vt)=p(\vr,\vt)+\delta\varrho^\beta$ where $\beta$ is chosen large enough) as well as an artificial viscosity (add $\varepsilon\Delta\varrho$ to the right-hand side of \eqref{eq1}). 
The resulting system is solved by means of a Galerkin approximation. More specifically,
we have to solve a finite-dimensional system of ODEs and eventually pass to the limit in the dimension $N$.
It turns out that existence on the basic level, where the parameters $\ep$ and $\delta$ are fixed, is quite involving. Troublesome is the derivation of the entropy balance \eqref{eq:13'} (in form of a variational inequality): Though it is suitable to pass to the limit it is not appropriate for the direct construction of solutions due to its highly involving non-linearities.
Hence the entropy balance is derived a posteriori by dividing the internal energy equation \eqref{eq2b} by $\vt$. In order to do this rigorously it has to be shown that the temperature is strictly positive - a property which can only be expected from strong solutions to \eqref{eq2b}. One of the main efforts of this paper is consequently to construct strong solutions to \eqref{eq2b} for regularized velocity and smooth pressure. New a priori estimates for \eqref{eq2b} and \eqref{eq1} in variable domains are shown that go well beyond the results from~\cite[Sec. 3]{BrSc} and form one if the main achievements of this paper.
Finally, we wish to note that we can shorten the approach from \cite{BrSc} considerably. Different to \cite{BrSc} we decouple the geometry from the fluid system on the Galerkin level and apply the fixed point argument to the resulting semi-discrete problem directly. This allows to remove one regularization level in which the moving boundary and the convective terms are regularised. 

\subsection{Overview of the paper} 
In Section \ref{sec:2} we present basics concerning variable domains as well as the functional analytic set-up. In its last subsection the concept of weak solutions for the coupled system and the main theorem are introduced. The preliminary section is rather significant. Indeed, many standard tools of the analysis need an appropriate adaptation to the variable geometry set-up, as well as to the particular non-linear coupling of the PDE system. In particular, in Subsection~\ref{sec:ext} we introduce two different extension operators that are needed for the analysis performed later. 
 In Section \ref{sec:3a} we study the (regularized) continuity equation as well as the (regularized) internal energy equation in a time dependent domain. These are non-trivial extensions from the analysis presented in \cite[Section 3]{BrSc}. In particular, we provide regularity estimates and minimum and maximum principles. 
 Section \ref{sec:3} is dedicated to the construction of an approximate solution. Different to previous fixed point approaches (see e.g.~\cite{BrSc} and \cite{LeRu}) we construct a fixed point on the Galerkin level which we believe to be appropriate also for future applications. A further achievement is the derivation of the entropy inequality which sensitively relies on Section~\ref{sec:3a}.
 Finally, in Section \ref{sec:5} the two limit passages $\varepsilon\rightarrow0$ and $\delta\rightarrow0$ are performed which leads to the proof of Theorem~\ref{thm:MAIN} and the existence of a weak solution is shown. 
 Of particular importance is here Subsection~\ref{subsec:shell} where the derivation of an {\em energy equality} is performed. Critical is the strong convergence of the elastic energy of the solid deformation. Here we adapt a recent regularity argument for the shell displacement derived in \cite{MuSc}.  As shown in \cite{MuSc} these estimates are crucial to involve non-linear Koiter shell laws in the weak existence theory for incompressible fluids. In the here considered Navier-Stokes-Forier system the regularity is needed even for linear shell models. Since, even for linear Koiter shell models an energy equality cannot be derived without additional regularity estimates and the related compactness properties.

\section{Preliminaries}
\label{sec:2} 
\subsection{Structural and constitutive assumptions}
\label{H}

We impose several restrictions on the specific shape of the thermodynamic functions
$p=p(\vr,\vt)$, $e=e(\vr,\vt)$ and $s=s(\vr,\vt)$ which are in line with Gibbs' relation (\ref{m97}). 
We consider the pressure $p$ in the form
\begin{align}\label{m98}
p(\vr, \vt) = p_M(\vr) + p_R(\vr,\vt),\quad
p_M(\vr) = \vr^\gamma+\varrho\vartheta,\quad p_R(\vr,\vt)=\frac{a}{3} \vt^4, \ a > 0,
\end{align}
the specific internal energy
\begin{equation} \label{mp8a}
e(\vr, \vt) = e_M(\vr) + e_R(\vr,\vt),\quad e_M(\vr) =\frac{1}{\gamma-1}\vr^{\gamma-1}+c_v\vartheta,\quad e_R(\vt,\vr)={a} \frac{\vt^4}{\vr},\ c_v>0,
\end{equation}
and the specific entropy
\begin{equation}\label{md8!}
s(\vr, \vt) = \frac{4a }{3} \frac{\vt^3}{\vr}+\log (\vartheta^{c_v})-\log\varrho.
\end{equation}
This is model case for the set-up in \cite[Chapter 1]{F}, to which we refer
for the physical background and the relevant discussion.

The viscosity coefficients $\mu$, $\lambda$ are continuously
differentiable functions of the absolute temperature $\vt$, more
precisely $\mu, \ \lambda  \in C^1([0,\infty))$, satisfying
\begin{align}\label{m105}  \underline{\mu}(1 + \vartheta) \leq
\mu(\vartheta) \leq \overline{\mu}(1 + \vartheta), \end{align}
\begin{align}\label{*m105*} \sup_{\vartheta\in [0, \infty)}\big(|\mu'(\vartheta)|+|\lambda'(\vartheta)|\big)\le
\overline m, \end{align}
\begin{align}\label{m106}  \underline{\lambda}(1 +
\vartheta) \leq \lambda (\vartheta) \leq \overline{\lambda}(1 +
\vartheta), \end{align}
with positive constants $\underline{\mu},\overline{\mu},\overline{m},\underline{\lambda},\overline{\lambda}$.
The heat conductivity coefficient $\varkappa \in C^1[0, \infty)$ satisfies
\begin{align}\label{m108} 0 < \underline{\varkappa} (1 + \vartheta^3) \leq \varkappa(
\vartheta) \leq \overline{\varkappa} (1 + \vartheta^3)
\end{align}
with some positive $\underline{\varkappa},\overline{\varkappa}$.
We introduce the following regularizations
\begin{align} \label{neu}
\begin{aligned}
&p_\delta(\vr,\vt):= p_{R}(\vr,\vt)+p_{M,\delta}(\vr), \quad 
p_{M,\delta} (\vr):=p_M(\vr) + \delta\vr^\beta,
\\
&e_\delta(\vr,\vt)=:= e_{R}(\vr,\vt)+e_{M,\delta}(\vr), \quad 
e_{M,\delta} (\vr):=e_M(\vr) + \frac{\delta}{\beta-1}\vr^{\beta-1}, 
\\
&\varkappa_\delta(\vartheta )=\varkappa(\vartheta )+\delta\big(\vt^\beta+\frac{1}{\vartheta}\big),\quad \mathcal{K}_\delta(\vt) = \int_0^\vt \varkappa_\delta(z) \ {\rm d}z,
\\
&\bfS^{\ep}(\vt,\nabla\bfu)=\bfS(\vt,\nabla\bfu)+\ep(1+\vt)|\nabla\bfu|^{p-2}\nabla\bfu,
\end{aligned}
\end{align}
for some $p>\beta>2$.


\subsection{Function spaces on variable domains}
\label{ssec:geom}
 The spatial domain $\Omega$ is assumed to be a non-empty bounded subset of $\mathbb{R}^3$ with $C^4$-boundary and an outer unit normal ${\nu}$. We recall from Section \ref{subsec:model} that we assume that
 $\partial\Omega$ can be parametrised by an injective mapping ${\bfvarphi}\in C^4(\omega;\R^3)$ such that for all points $y=(y_1,y_2)\in \omega$, the pair of vectors  
$\partial_i {\bfvarphi}(y)$, $i=1,2,$ are linearly independent. For a point $x$ in the neighbourhood
or $\partial\Omega$ we can define
\begin{align*}
 y(x)=\arg\min_{y\in\omega}|x-\bfvarphi(y)|,\quad s(x)\text{ is defined such that }s(x)\nu(y(x))+y(x)=x.
 \end{align*}
Moreover, we define the projection $\bfp(x)=\bfvarphi(y(x))$. We define $L>0$ to be the largest number such that $s,y$ and $\bfp$ are well-defined on $S_L$, where
\begin{align}
\label{eq:boundary1}
S_L=\{x\in\R^3:\,\mathrm{dist}(x,\partial\Omega)<L\},
\end{align}
see also Remark \ref{rem:simple} in connection with this.
We remark that due to the $C^2$ regularity of $\Omega$ for $L$ small enough we find that $\abs{s(x)}=\min_{y\in\omega}|x-\bfvarphi(y)|$ for all $x\in S_L$. This implies that $S_L=\{s\nu(y)+y:(s,y)\in [-L,L]\times \omega\}$.
%
For a given function $\eta : I \times \omega \rightarrow\R$ we parametrise the deformed boundary by
\begin{align*}
{\bfvarphi}_\eta(t,y)={\bfvarphi}(y) + \eta(t,y){\nu}(y), \quad \,y \in \omega,\,t\in I,
\end{align*}
and the deformed space-time cylinder $I\times\Omega_\eta=\bigcup_{t\in I}\set{t}\times\Omega_{\eta(t)}$ through
$$\partial\Omega_{\eta(t)}=\set{{\bfvarphi}(y) + \eta(t,y){\nu}(y):y\in \omega}.$$
The corresponding function spaces for variable domains are defined as follows.
\begin{definition}{(Function spaces)}
For $I=(0,T)$, $T>0$, and $\eta\in C(\overline{I}\times\omega)$ with $\|\eta\|_{L^\infty_{t,x}}< L$ we set $I\times\Omega_\eta:=\bigcup_{t\in I}\{t\}\times\Omega_{\eta(t)} \subset\R^4$. We define for $1\leq p,r\leq\infty$
\begin{align*}
L^p(I;L^r(\Omega_\eta))&:=\big\{v\in L^1(I\times\Omega_\eta):\,\,v(t,\cdot)\in L^r(\Omega_{\eta(t)})\,\,\text{for a.e. }t,\,\,\|v(t,\cdot)\|_{L^r(\Omega_{\eta(t)})}\in L^p(I)\big\},\\
L^p(I;W^{1,r}(\Omega_\eta))&:=\big\{v\in L^p(I;L^r(\Omega_\eta)):\,\,\nabla v\in L^p(I;L^r(\Omega_\eta))\big\}.
\end{align*}
\end{definition}
For various purposes it is useful to relate the time dependent domains and the fixed domain. This can be done by the means of the Hanzawa transform. Its construction can be found in
\cite[pages 210, 211]{LeRu}. Note that variable domains in \cite{LeRu} are defined via functions $\zeta:\partial\Omega\rightarrow\R$ rather than functions $\eta:\omega\rightarrow\R$ (clearly, one can link them by setting
$\zeta=\eta\circ\bfvarphi^{-1}$). For any  $\eta:\omega\rightarrow(-L,L)$ we define the Hanzawa transform $\bfPsi_\eta: \Omega  \to \Omega_\eta$
by
\begin{align}
\label{map}
\begin{aligned}
\bfPsi_\eta(x)&= \begin{cases}\bfp(x)+\Big(s(x)+\eta(y(x))\phi(s(x))\Big)\nu(y(x)),&\text{ if } \dist(x,\partial\Omega)<L,\\
\quad x,\quad &\text{elsewhere}
\end{cases}.
\end{aligned}
\end{align}
Here $\phi\in C^\infty\big((-\frac{3L}{4},\infty),[0,1]\big)$ is such that 
$\phi\equiv 0$ in $[-\frac{3L}{4},-\frac{L}{2}]$ and $\phi\equiv 1$ in $[-\frac{L}{4},\infty)$.
Due to the size of $L$, we find that $\bfPsi_\eta$ is a homomorphism such that $\bfPsi_\eta|_{\Omega\setminus S_L}$ is the identity. Moreover, $\eta\in C^k(\omega)$ for $k\in\N$ implies that $\bfPsi_\eta$ is a $C^k$-diffeomorphism.

We collect a few properties of the above mapping $\bfPsi_\eta$.
\begin{lemma}
\label{lem:sobdif} Let $1<p\leq\infty$ and $\sigma\in (0,1]$.
\begin{itemize}
\item[a)]  If $\eta\in W^{2,2}(\omega)$ with $\|\eta\|_{L^\infty_x}<L$, then the linear mapping $\bfv\mapsto \bfv\circ \bfPsi_\eta$ ($\bfv\mapsto \bfv\circ \bfPsi_\eta^{-1}$) is continuous from $L^p(\Omega_\eta)$ to $L^r(\Omega)$ (from $L^p(\Omega)$ to $L^r(\Omega_\eta)$) for all $1\leq r<p$. 
\item[b)]If $\eta\in W^{2,2}(\omega)$ with $\|\eta\|_{L^\infty_x}<L$, then the linear mapping $\bfv\mapsto \bfv\circ \bfPsi_\eta$ ($\bfv\mapsto \bfv\circ \bfPsi_\eta^{-1}$) is continuous from 
$W^{1,p}(\Omega_\eta)$ to $W^{1,r}(\Omega)$ (from $W^{1,p}(\Omega)$ to $W^{1,r}(\Omega_\eta)$) for all $1\leq r<p$.
\item[c)] If $\eta\in C^{0,1}(\omega)$with $\|\eta\|_{L^\infty_x}<L$, then the linear mapping $\bfv\mapsto \bfv\circ \bfPsi_\eta$ ($\bfv\mapsto \bfv\circ \bfPsi_\eta^{-1}$) is continuous from 
$W^{\sigma,p}(\Omega_\eta)$ to $W^{\sigma,p}(\Omega)$ (from $W^{\sigma,p}(\Omega)$ to $W^{\sigma,p}(\Omega_\eta)$).
\item[d)] If $\eta\in W^{2,2}(\partial\Omega)$with $\|\eta\|_{L^{\infty}_x}<L$, then the linear mapping $\bfv\mapsto \bfv\circ \bfPsi_\eta$ ($\bfv\mapsto \bfv\circ \bfPsi_\eta^{-1}$) is continuous from 
$W^{\sigma,p}(\Omega_\eta)$ to $W^{\theta,r}(\Omega)$ (from $W^{\sigma,p}(\Omega)$ to $W^{\theta,r}(\Omega_\eta)$) for all $\theta\in(0,\sigma)$ and all $1<r<p$.
\end{itemize}
The continuity constants depend only on $\Omega,p,r,\sigma,\theta$, the respective norms of $\eta$.
\end{lemma}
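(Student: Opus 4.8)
The plan is to prove Lemma~\ref{lem:sobdif} by reducing all four statements to the behaviour of the Hanzawa transform $\bfPsi_\eta$ near the boundary, i.e.\ on the tubular neighbourhood $S_L$, since $\bfPsi_\eta$ is the identity on $\Omega\setminus S_L$ and contributes no difficulty there. First I would record the explicit form of $D\bfPsi_\eta$ in the local coordinates $(s,y)\in(-L,L)\times\omega$ induced by the map $(s,y)\mapsto s\nu(y)+y$. Because $\bfvarphi,\nu\in C^3$ (coming from $\partial\Omega\in C^4$) and $\phi\in C^\infty$, the Jacobian of $\bfPsi_\eta$ depends affinely on $\eta$ and on $\nabla_y\eta$: schematically $D\bfPsi_\eta = A(s,y) + \eta(y)\,B(s,y) + \nabla_y\eta(y)\otimes C(s,y)$, with $A,B,C$ smooth and $A$ uniformly invertible once $\|\eta\|_\infty<L$ (this is exactly the content of the ``size of $L$'' remark that makes $\bfPsi_\eta$ a homeomorphism). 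The Jacobian determinant $J_\eta=\det D\bfPsi_\eta$ is then a polynomial in $\eta,\nabla_y\eta$ with smooth coefficients, bounded above and below away from zero on compact sublevel sets of $\|\eta\|_{W^{1,\infty}}$. This single structural fact drives everything.

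For part~(c), where $\eta\in C^{0,1}(\omega)$, the map $\bfPsi_\eta$ is bi-Lipschitz with Lipschitz constants controlled by $\|\eta\|_{C^{0,1}}$ and $L$; then the $W^{\sigma,p}$-boundedness of composition with a bi-Lipschitz homeomorphism is classical (one uses the Gagliardo seminorm and the change of variables $x\mapsto\bfPsi_\eta(x)$, $x'\mapsto\bfPsi_\eta(x')$, bounding $|x-x'|$ above and below by $|\bfPsi_\eta(x)-\bfPsi_\eta(x')|$ up to constants, and similarly for the Jacobian factors in the double integral). For part~(d), $\eta\in W^{2,2}(\omega)\hookrightarrow W^{1,\infty}$ fails in two dimensions only at the borderline, so $\bfPsi_\eta$ is not quite Lipschitz; instead $D\bfPsi_\eta\in W^{1,2}$, which by Sobolev embedding in $\omega$ lies in every $L^q$, $q<\infty$, hence $\bfPsi_\eta\in W^{1,q}$ for all finite $q$, which in turn gives $\bfPsi_\eta\in C^{0,\alpha}$ for every $\alpha<1$. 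The loss $\sigma\to\theta<\sigma$ and $p\to r<p$ is precisely the room needed to absorb this near-Lipschitz (but not Lipschitz) regularity: one interpolates the Gagliardo seminorm estimate using H\"older on the double integral, paying the Jacobian in $L^{q}$ with $q$ large, and the exponent bookkeeping closes for any $\theta<\sigma$, $r<p$. Parts~(a) and (b) are the integer-order analogues: (a) is change of variables in $L^p$, where $\|\bfv\circ\bfPsi_\eta\|_{L^r(\Omega)}^r=\int |\bfv|^r\,|J_{\eta}^{-1}|\circ\bfPsi_\eta^{-1}$ and one applies H\"older with exponents $p/r$ and its conjugate, needing $J_\eta^{-1}\in L^{(p/r)'}$ locally, which holds because $J_\eta^{-1}$ is bounded when $\eta\in W^{1,\infty}$ and at least in every $L^q$ when only $\eta\in W^{2,2}$; (b) adds the chain rule $\nabla(\bfv\circ\bfPsi_\eta)=(\nabla\bfv)\circ\bfPsi_\eta\cdot D\bfPsi_\eta$ and uses that $D\bfPsi_\eta\in L^q$ for all finite $q$ together with another H\"older split to land in $W^{1,r}$ for $r<p$.

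The main obstacle is part~(d), and within it the precise tracking of exponents so that every H\"older split closes simultaneously: one must choose the integrability exponent for the Jacobian and the two ``halves'' of the fractional difference $|\bfv(\bfPsi_\eta x)-\bfv(\bfPsi_\eta x')|/|x-x'|^{\,3/r+\theta}$ so that (i) the $\bfv$-part reconstructs a finite $W^{\sigma,p}$ seminorm after substituting $z=\bfPsi_\eta x$, using $|x-x'|\gtrsim|\bfPsi_\eta x-\bfPsi_\eta x'|^{1/\alpha}$ from the $C^{0,\alpha}$ bound, and (ii) the leftover powers of the Jacobian sit in the $L^q(\omega\times(-L,L))$ afforded by $D\bfPsi_\eta\in W^{1,2}(\omega)$. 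I expect the constants to come out depending only on $\Omega$, $p,r,\sigma,\theta$ and $\|\eta\|_{W^{2,2}}$ (resp.\ $\|\eta\|_{C^{0,1}}$ in (c)), as claimed, because all geometric quantities $A,B,C$ above are fixed by $\Omega$ alone. Statements for $\bfPsi_\eta^{-1}$ follow by the same argument applied to $\bfPsi_\eta^{-1}$, whose Jacobian is $J_\eta^{-1}\circ\bfPsi_\eta^{-1}$ and enjoys the same bounds, or alternatively by a direct symmetry argument since $\bfPsi_\eta^{-1}$ has the identical near-boundary structure with $\eta$ replaced by the (equally regular) solution of $s+\eta\phi(s)=\tilde s$.
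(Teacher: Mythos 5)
The paper states this lemma without proof (it is presented as a collection of standard properties of the Hanzawa transform, with only the neighbouring trace lemma attributed to a modification of \cite[Cor.~2.9]{LeRu}), so there is no in-paper argument to compare against; judged on its own, your outline is sound and would compile into a correct proof. Two refinements are worth recording. First, because $\bfPsi_\eta$ moves points only in the normal direction (in the coordinates $(s,y)\mapsto s\nu(y)+\bfvarphi(y)$ it reads $(s,y)\mapsto(s+\eta(y)\phi(s),y)$ up to smooth geometric factors), the terms involving $\nabla_y\eta$ sit in an off-diagonal block and cancel in the determinant: $\det D\bfPsi_\eta$ is controlled above and below using only $\|\eta\|_{L^\infty}<L$. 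This matters, since for $\eta\in W^{2,2}(\omega)$ you do not control $\|\nabla\eta\|_{L^\infty}$, and your phrase ``bounded \dots on compact sublevel sets of $\|\eta\|_{W^{1,\infty}}$'' would otherwise leave a gap in (a) and (b); your fallback $J_\eta^{-1}\in L^q$ for all $q<\infty$ via $W^{2,2}(\omega)\hookrightarrow W^{1,q}(\omega)$ covers it in any case, and it is precisely $D\bfPsi_\eta\in L^q\setminus L^\infty$ that forces the loss $r<p$ in (b). Second, for (d) your direct Gagliardo computation does close — using $\bfPsi_\eta\in C^{0,\alpha}$ for all $\alpha<1$, the near-diagonal exponent bookkeeping works exactly when $\alpha>(3+\theta r)/(3+\sigma r)$, which is admissible since $\theta<\sigma$ — but a shorter route is real interpolation of the composition operator between the endpoint statements (a) and (b), which produces the same double loss $\theta<\sigma$, $r<p$ and is presumably why the lemma is phrased with that slack.
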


The following lemma is a modification of \cite[Cor. 2.9]{LeRu}.
\begin{lemma}\label{lem:2.28}
Let $1<p<3$, $\sigma\in(\frac{1}{p},1]$ and $\eta\in W^{2,2}(\omega)$ with $\|\eta\|_{L^\infty_x}<L$. 
The linear mapping
$\tr_\eta:v\mapsto v\circ\bfPsi_\eta\circ\bfvarphi|_{\partial\Omega}$ is well defined and continuous from $W^{\sigma,p}(\Omega_\eta)$ to $W^{\sigma-\frac1r,r}(\omega)$ for all $r\in (\frac{1}{\sigma},p)$ and well defined and continuous from  $W^{\sigma,p}(\Omega_\eta)$ to $L^{q}(\omega)$ for all $1<q<\frac{2p}{3-\sigma p}$.
The continuity constants depend only on $\Omega,p,\sigma,$ and $\|\eta\|_{W^{2,2}_x}$.
\end{lemma}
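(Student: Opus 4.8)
The plan is to reduce the trace estimate on the variable domain $\Omega_\eta$ to a trace estimate on the fixed domain $\Omega$ by means of the Hanzawa transform $\bfPsi_\eta$, and then invoke a classical (fractional) trace theorem on $\Omega$ together with the regularity of the boundary parametrisation $\bfvarphi$. Concretely, for $v\in W^{\sigma,p}(\Omega_\eta)$ we first pull back to the fixed domain: by Lemma~\ref{lem:sobdif}~c) (valid since $\eta\in W^{2,2}(\omega)\hookrightarrow C^{0,1}(\omega)$ is more than $C^{0,1}$... actually here one only needs that $\eta$ is Lipschitz, which follows from $W^{2,2}(\omega)\hookrightarrow C^{0,\alpha}$ is too weak, so one should instead use part d) to lose a bit of the fractional order), the map $v\mapsto v\circ\bfPsi_\eta$ is continuous from $W^{\sigma,p}(\Omega_\eta)$ to $W^{\theta,r}(\Omega)$ for $\theta<\sigma$ and $r<p$. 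Then $v\circ\bfPsi_\eta\in W^{\theta,r}(\Omega)$ and, since $\bfPsi_\eta$ restricted to the neighbourhood of $\partial\Omega$ maps $\partial\Omega$ onto $\partial\Omega_\eta$ correctly, the composition $v\circ\bfPsi_\eta\circ\bfvarphi$ is exactly the trace of $v\circ\bfPsi_\eta$ read off in the $\omega$-coordinates.

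The second step is the actual trace theorem on the fixed smooth domain. Since $\partial\Omega$ is $C^4$ and $\bfvarphi\in C^4(\omega;\R^3)$ is an immersion, the trace operator $w\mapsto w|_{\partial\Omega}\mapsto w\circ\bfvarphi$ maps $W^{\theta,r}(\Omega)$ continuously into $W^{\theta-\frac1r,r}(\omega)$ whenever $\theta>\frac1r$, and by Sobolev embedding of $W^{\theta-\frac1r,r}(\omega)$ (with $\omega$ two-dimensional) into $L^q(\omega)$ one obtains the claimed Lebesgue range. One has to bookkeep the exponents carefully: starting from $\sigma\in(\frac1p,1]$ and $1<p<3$, choosing $r<p$ close to $p$ and $\theta<\sigma$ close to $\sigma$ one can still arrange $\theta>\frac1r$, and the Sobolev embedding on the two-dimensional $\omega$ gives $W^{\theta-\frac1r,r}(\omega)\hookrightarrow L^q(\omega)$ for $q\le \frac{2r}{2-(\theta-\frac1r)r}$; letting $r\uparrow p$, $\theta\uparrow\sigma$ this supremum tends to $\frac{2p}{3-\sigma p}$, which matches the stated strict bound $1<q<\frac{2p}{3-\sigma p}$ (the strictness is precisely the price for the strict losses in Lemma~\ref{lem:sobdif}~d)). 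For the first, Sobolev-trace, part of the conclusion one argues the same way but stops at $W^{\sigma-\frac1r,r}(\omega)$ without the final embedding; here again $r<p$ and the loss $\theta<\sigma$ has to be absorbed, which is why $r$ is restricted to $(\frac1\sigma,p)$.

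The main obstacle, and the point requiring care, is that the Hanzawa transform with $\eta$ only in $W^{2,2}(\omega)$ is \emph{not} Lipschitz-to-Lipschitz regular enough to preserve the fractional order $\sigma$ exactly — one genuinely loses derivatives, which is why Lemma~\ref{lem:sobdif}~d) (rather than c)) is the right tool and why all the target exponents are open/strict. One must check that despite these losses the composition $\theta>\frac1r$ can still be met simultaneously with $r<p$, $\theta<\sigma$; this is where the hypotheses $1<p<3$ and $\sigma>\frac1p$ are used, and a short continuity/limiting argument in the parameters $(r,\theta)$ closes the gap between the "lossy" intermediate estimate and the stated range. A secondary technical point is verifying that $\bfPsi_\eta\circ\bfvarphi$ does parametrise $\partial\Omega_{\eta}$ consistently with the trace of the transported function — this is immediate from the definition \eqref{map} of $\bfPsi_\eta$ on $S_L$, since there $\bfPsi_\eta(\bfvarphi(y))=\bfvarphi(y)+\eta(y)\nu(y)=\bfvarphi_\eta(y)$, so no subtlety beyond bookkeeping arises there. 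Finally, the claim that the continuity constants depend only on $\Omega,p,\sigma$ and $\|\eta\|_{W^{2,2}_x}$ follows by tracking the dependence through Lemma~\ref{lem:sobdif}~d) (whose constants depend on $\Omega$ and the norm of $\eta$) and through the fixed-domain trace/embedding constants (which depend only on $\Omega,p,\sigma$).
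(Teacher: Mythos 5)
Your argument is correct and is essentially the intended one: the paper gives no written proof of Lemma~\ref{lem:2.28} (it only refers to \cite[Cor.~2.9]{LeRu}), and the standard route is exactly what you describe — pull back to the reference domain via the Hanzawa transform using Lemma~\ref{lem:sobdif}~d) (part c) is indeed unavailable since $W^{2,2}(\omega)$ does not embed into $C^{0,1}(\omega)$ in two dimensions), apply the classical trace theorem on the $C^4$ domain $\Omega$, and recover the stated open ranges of exponents by the limiting argument in $(r',\theta)$, whose bookkeeping (in particular the critical exponent $\tfrac{2p}{3-\sigma p}=\tfrac{2p}{2-(\sigma-1/p)p}$) checks out.
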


\begin{remark}
If $\eta\in L^\infty(I;W^{2,2}(\omega))$ we obtain non-stationary variants of the results stated above.
\end{remark}
It will be convenient for our purposes to extend $\bfPsi_\eta$, originally defined only
on
 $$\Omega_{(L-\eta)_+}=\Omega\cup \{x\in S_L:s(x)<\min\{L,L-\eta(y(x))\}\}),$$
to $\Omega_L=\Omega\cup S_L$ by setting
\begin{align*}
\begin{aligned}
\overline\bfPsi_\eta(x)&= \begin{cases}\bfp(x)+\Big(s(x)+\eta(y(x))\phi(s(x))\Big)\nu(y(x)),&\text{ if } \dist(x,\partial\Omega)<L,\ s(x)+\eta(\bfp(x))< L,
\\
\quad x,\quad &\text{elsewhere.}
\end{cases}
\end{aligned}
\end{align*}
All the above statements are also true for $\bfv\mapsto \bfv\circ\overline{\bfPsi}_\eta$ and $\bfv\mapsto \bfv\circ\overline{\bfPsi}_\eta^{-1}$ on their respective domains.

\subsection{Extensions on variable domains}\label{sec:ext}
Since $\Omega$ is assumed to be sufficiently smooth, it is well-know that there is an extension operator $\mathscr F_\Omega$ which extends functions from $\partial\Omega$ to $\R^3$ and satisfies
\begin{align*}
\mathscr F_\Omega:W^{\sigma,p}(\partial\Omega)\rightarrow W^{\sigma+1/p,p}(\R^3)
\end{align*}
for all $p\in(1,\infty)$ and $\sigma\in[0,1]$, all as well as $\mathscr F_\Omega v|_{\partial\Omega}=v$. Now we define $\mathscr F_\eta$ by 
\begin{align}\label{eq:2401b}
\mathscr F_\eta b=\mathscr F_\Omega ((b\nu)\circ\bfvarphi^{-1})\circ\overline{\bfPsi}_\eta^{-1},\quad b\in W^{\sigma,p}(\omega),
\end{align}
where $\bfvarphi$ is the $C^4$-function in the parametrisation of $\Omega$.
If $\eta$ is smooth $\mathscr F_\eta$ behaves as a classical extension by Lemma \ref{lem:sobdif}.
The following properties can all be easily derived from the formulas
\begin{align*}
\nabla\mathscr F_\eta b&=\nabla\mathscr F_\Omega ((b\nu)\circ\bfvarphi^{-1})\circ\overline{\bfPsi}_\eta^{-1}\nabla\overline{\bfPsi}_\eta^{-1},\\
\nabla^2\mathscr F_\eta b&=\nabla^2\mathscr F_\Omega ((b\nu)\circ\bfvarphi^{-1})\circ\overline{\bfPsi}_\eta^{-1}\nabla\overline{\bfPsi}_\eta^{-1}\nabla\overline{\bfPsi}_\eta^{-1}+\nabla\mathscr F_\Omega ((b\nu)\circ\bfvarphi^{-1})\circ\overline{\bfPsi}_\eta^{-1}\nabla^2\overline{\bfPsi}_\eta^{-1},\\
\partial_t\mathscr F_\eta b&=\nabla\mathscr F_\Omega ((b\nu)\circ\bfvarphi^{-1})\circ\overline{\bfPsi}_\eta^{-1}\partial_t\overline{\bfPsi}_\eta^{-1},
\end{align*}
{where $\nabla\overline{\bfPsi}_\eta^{-1}$, $\nabla^2\overline{\bfPsi}_\eta^{-1}$ and $\partial_t\overline{\bfPsi}_\eta^{-1}$ behave as $\nabla\eta$, $\nabla^2\eta$ and $\partial_t\eta$ respectively.}
\begin{lemma}\label{lem:3.8}
Let $\eta\in C^{0,1}(\omega)$ with $\|\eta\|_{L^\infty_x}<\alpha<L$. 
\begin{enumerate}
\item[(a)] The operator
$\mathscr F_\eta$ defined in \eqref{eq:2401b} satisfies for all $p\in[1,\infty)$ and $\sigma\in[0,1]$
\begin{align*}
\mathscr F_\eta: W^{\sigma,p}(\omega)\rightarrow W^{\sigma+1/p,p}(\Omega\cup S_\alpha)
\end{align*}
and $\tr_\eta\mathscr F_\eta b=b\nu$ for all $b\in W^{1,p}(\omega)$. In particular, we have
\begin{align*}
\|\mathscr F_\eta b\|_{W^{\sigma+1/p,p}(\Omega\cup S_\alpha)}\leq\,c\,\|b\|_{W^{\sigma,p}(\omega)}
\end{align*}
for all $b\in W^{1,p}(\omega)$,
where the constant $c$ depends only on $\Omega,p,\sigma$, $\|\nabla\eta\|_{L^\infty_x}$ and $L-\alpha$.
\item[(b)]
If $p=\infty$ we have
\begin{align*}
\|\mathscr F_\eta b\|_{W^{1,\infty}(\Omega\cup S_\alpha)}\leq\,c(1+\|\nabla\eta\|_{L^\infty(\omega)})\,\|b\|_{W^{1,\infty}(\omega)}
\end{align*}
for all $b\in W^{1,\infty}(\omega)$, where $c$ depends only on $\Omega,p$ and $L-\alpha$.
\end{enumerate}
\end{lemma}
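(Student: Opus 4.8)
The plan is to reduce everything to the known mapping properties of the fixed-domain extension operator $\mathscr F_\Omega$ composed with the Hanzawa-type change of variables $\overline\bfPsi_\eta^{-1}$, using the composition estimates already recorded in Lemma~\ref{lem:sobdif}. Recall $\mathscr F_\eta b = \mathscr F_\Omega((b\nu)\circ\bfvarphi^{-1})\circ\overline\bfPsi_\eta^{-1}$. First I would observe that $\bfvarphi\in C^4(\omega;\R^3)$ is a fixed diffeomorphism onto $\partial\Omega$, so $b\mapsto (b\nu)\circ\bfvarphi^{-1}$ is a bounded linear map $W^{\sigma,p}(\omega)\to W^{\sigma,p}(\partial\Omega)$ with constant depending only on $\bfvarphi$ (hence on $\Omega$); here the smoothness of $\nu$ (inherited from $\bfvarphi\in C^4$) is what makes the multiplication by $\nu$ harmless for $\sigma\le 1$. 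Then $\mathscr F_\Omega$ maps this into $W^{\sigma+1/p,p}(\R^3)$, and in particular into $W^{\sigma+1/p,p}(\Omega_L)$, with the stated gain of $1/p$ derivatives.

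For part~(a) the remaining point is that precomposition with $\overline\bfPsi_\eta^{-1}$ preserves the space $W^{\sigma+1/p,p}$ on the relevant set. Since $\|\eta\|_{L^\infty_x}<\alpha<L$, the map $\overline\bfPsi_\eta$ is a bi-Lipschitz homeomorphism of $\Omega\cup S_\alpha$ onto $\Omega_{(L-\eta)_+}\supset\Omega\cup S_\alpha$ (the buffer $L-\alpha>0$ keeps us away from the boundary of the chart where $\overline\bfPsi_\eta$ degenerates), with Lipschitz constants for $\overline\bfPsi_\eta$ and $\overline\bfPsi_\eta^{-1}$ controlled by $\|\nabla\eta\|_{L^\infty_x}$, $\Omega$, and $L-\alpha$, because $\overline\bfPsi_\eta$ differs from the identity only in the tubular neighbourhood where the formula $\bfp(x)+(s(x)+\eta(y(x))\phi(s(x)))\nu(y(x))$ applies and there its Jacobian is a smooth ($C^3$) function of the geometry perturbed by terms linear in $\eta$ and $\nabla\eta$. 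A bi-Lipschitz change of variables with Lipschitz inverse preserves $W^{\theta,q}$ for $\theta\in[0,1]$, $q\in(1,\infty)$ — for $\theta$ integer this is the chain rule, for fractional $\theta$ it follows from the Gagliardo seminorm and the Jacobian bounds — so composing with $\overline\bfPsi_\eta^{-1}$ keeps us in $W^{\sigma+1/p,p}(\Omega\cup S_\alpha)$ (note $\sigma+1/p\le 1+1/p$, but since $\sigma\le 1$ and we only need $\theta=\sigma+1/p$ which can exceed $1$; here one splits: the $W^{1,p}$-part by the chain rule and the fractional remainder $\sigma+1/p-1\in(0,1/p]$ via the Gagliardo norm, both using that $\overline\bfPsi_\eta$ is $C^{1}$ with Lipschitz first derivatives when $\eta\in C^{0,1}$). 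The trace identity $\tr_\eta\mathscr F_\eta b = b\nu$ follows by unwinding definitions: $\tr_\eta v = v\circ\bfPsi_\eta\circ\bfvarphi|_{\partial\Omega}$, and $\overline\bfPsi_\eta^{-1}\circ\bfPsi_\eta = \mathrm{id}$ while $\mathscr F_\Omega w|_{\partial\Omega}=w$, so $\tr_\eta\mathscr F_\eta b = ((b\nu)\circ\bfvarphi^{-1})\circ\bfvarphi = b\nu$ on $\omega$; this needs $b\in W^{1,p}(\omega)$ so that the trace of $\mathscr F_\eta b$ is classically defined.

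For part~(b), when $p=\infty$ the claim is just the $W^{1,\infty}$ statement, which is the Lipschitz bound: $\mathscr F_\Omega$ maps $W^{1,\infty}(\partial\Omega)$ into $W^{1,\infty}(\R^3)$ boundedly (it is a standard extension operator), the multiplication by the smooth $\nu$ and the $C^4$ change of chart $\bfvarphi^{-1}$ cost only a constant depending on $\Omega$, and then the chain rule $\nabla(\mathscr F_\Omega w\circ\overline\bfPsi_\eta^{-1}) = (\nabla\mathscr F_\Omega w)\circ\overline\bfPsi_\eta^{-1}\cdot\nabla\overline\bfPsi_\eta^{-1}$ together with the bound $\|\nabla\overline\bfPsi_\eta^{-1}\|_{L^\infty}\le c(1+\|\nabla\eta\|_{L^\infty(\omega)})$ (depending on $\Omega$ and $L-\alpha$) gives the stated linear dependence on $\|\nabla\eta\|_{L^\infty(\omega)}$. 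I expect the main obstacle to be the fractional-order bookkeeping in part~(a): carefully justifying that precomposition with the merely Lipschitz (not smooth, since $\eta$ is only $C^{0,1}$) map $\overline\bfPsi_\eta^{-1}$ preserves $W^{\sigma+1/p,p}$ with a constant of the claimed form, in particular handling the regime where $\sigma+1/p>1$ by interpolating or by the product/composition rules for Gagliardo seminorms; the geometric and algebraic parts are routine given the explicit formula for $\overline\bfPsi_\eta$ and the $C^4$-regularity of $\Omega$.
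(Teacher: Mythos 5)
Your route is the same one the paper takes: the paper gives no detailed proof of this lemma, only the chain--rule identities for $\nabla\mathscr F_\eta b$, $\nabla^2\mathscr F_\eta b$, $\partial_t\mathscr F_\eta b$ together with the remark that $\nabla\overline{\bfPsi}_\eta^{-1}$ ``behaves as $\nabla\eta$'', and declares the properties easily derived. Your reduction to (i) boundedness of $b\mapsto (b\nu)\circ\bfvarphi^{-1}$ on $W^{\sigma,p}$, (ii) the $1/p$-gain of $\mathscr F_\Omega$, and (iii) invariance under the bi-Lipschitz change of variables $\overline{\bfPsi}_\eta^{-1}$ is exactly that programme; part (b) and the trace identity are handled correctly.

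There is, however, one false assertion, and it sits at the only genuinely delicate point. You claim that ``$\overline{\bfPsi}_\eta$ is $C^1$ with Lipschitz first derivatives when $\eta\in C^{0,1}$''. It is not: $\nabla\overline{\bfPsi}_\eta$ contains the factor $\phi(s(x))\,\nabla\eta(y(x))$ (times smooth geometric quantities), so for $\eta\in C^{0,1}$ the first derivatives are merely $L^\infty$ with no modulus of continuity. This matters exactly in the regime $\sigma+1/p>1$, where after splitting off the integer part you must place the product $(\nabla\mathscr F_\Omega w)\circ\overline{\bfPsi}_\eta^{-1}\cdot\nabla\overline{\bfPsi}_\eta^{-1}$ in $W^{\sigma+1/p-1,p}$ with $\sigma+1/p-1>0$. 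Multiplication by a bounded Lipschitz function does preserve $W^{s,p}$ for $s\in(0,1)$ -- which is what your argument invokes -- but multiplication by a function that is only $L^\infty$ does not: in the Gagliardo seminorm the cross term $\iint |f(z)|^p\,|g(x)-g(z)|^p\,|x-z|^{-3-sp}\dx\,\dd z$ requires some fractional smoothness of the multiplier $g=\nabla\overline{\bfPsi}_\eta^{-1}$, and $\eta\in C^{0,1}$ supplies none. So as written your proof closes only for $\sigma+1/p\le 1$; to cover $\sigma+1/p>1$ you either need to assume more on $\eta$ (as the paper itself does in Lemma \ref{lem:3.8'}, where $\eta\in W^{2,2}$ and an integrability loss is accepted), or supply a separate argument for the multiplier step (e.g.\ exploiting that $\nabla\eta\circ y$ is constant along the normal direction, or interpolating from the integer endpoints where only the chain rule is needed). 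Make this restriction or repair explicit.
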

\begin{corollary}\label{cor:2807}
Let $\eta\in C^1(\overline I\times\omega)$ with $\|\eta\|_{L^\infty_x}<\alpha<L$. Then we have for all $q<\infty$
\begin{align*}
\sup_{t\in I}\|\partial_t\mathscr F_\eta b\|_{L^q(\Omega\cup S_\alpha)}\leq\,c\,\|b\|_{W^{1,q}(\omega)}\|\partial_t\eta\|_{L^\infty(I\times\omega)}
\end{align*}
for all $b\in W^{1,q}(\omega)$, where the constant $c$ depends only on $\Omega,p$ and $L-\alpha$.
\end{corollary}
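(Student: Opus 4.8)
The goal is to bound $\sup_{t\in I}\|\partial_t\mathscr F_\eta b\|_{L^q(\Omega\cup S_\alpha)}$ by $c\,\|b\|_{W^{1,q}(\omega)}\|\partial_t\eta\|_{L^\infty(I\times\omega)}$. The natural starting point is the formula for $\partial_t\mathscr F_\eta b$ recorded just before Lemma~\ref{lem:3.8}, namely
\[
\partial_t\mathscr F_\eta b=\nabla\mathscr F_\Omega\big((b\nu)\circ\bfvarphi^{-1}\big)\circ\overline{\bfPsi}_\eta^{-1}\,\partial_t\overline{\bfPsi}_\eta^{-1}.
\]
So the plan is to estimate the two factors separately: the gradient of the fixed extension $\mathscr F_\Omega$ composed with the (inverse) Hanzawa map, and the time-derivative $\partial_t\overline{\bfPsi}_\eta^{-1}$.

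For the second factor I would use the explicit structure of $\overline{\bfPsi}_\eta$ from \eqref{map}: differentiating $\overline{\bfPsi}_\eta$ in $t$ produces $\partial_t\eta(y(x))\phi(s(x))\nu(y(x))$, which is pointwise bounded by $\|\partial_t\eta\|_{L^\infty(I\times\omega)}$ since $|\phi|\le 1$ and $|\nu|=1$; passing to $\overline{\bfPsi}_\eta^{-1}$ via the identity $\partial_t(\overline{\bfPsi}_\eta^{-1}) = -(\nabla\overline{\bfPsi}_\eta)^{-1}\circ\overline{\bfPsi}_\eta^{-1}\,\big(\partial_t\overline{\bfPsi}_\eta\big)\circ\overline{\bfPsi}_\eta^{-1}$, and noting that $\nabla\overline{\bfPsi}_\eta$ has a uniformly bounded inverse because $\|\eta\|_{L^\infty_x}<\alpha<L$ and $\nabla\eta\in L^\infty$ (this is exactly the regime in which $\overline{\bfPsi}_\eta$ is a bi-Lipschitz homeomorphism, cf.\ the discussion after \eqref{map} and Lemma~\ref{lem:sobdif}), one gets $\|\partial_t\overline{\bfPsi}_\eta^{-1}\|_{L^\infty(\Omega\cup S_\alpha)}\le c\,\|\partial_t\eta\|_{L^\infty(I\times\omega)}$ with $c=c(\Omega,L-\alpha,\|\nabla\eta\|_{L^\infty})$. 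For the first factor, $\nabla\mathscr F_\Omega((b\nu)\circ\bfvarphi^{-1})$ lies in $L^q(\R^3)$ with norm controlled by $\|(b\nu)\circ\bfvarphi^{-1}\|_{W^{1,q}(\partial\Omega)}\le c\|b\|_{W^{1,q}(\omega)}$, using the mapping property $\mathscr F_\Omega: W^{1,q}(\partial\Omega)\to W^{1+1/q,q}(\R^3)\hookrightarrow W^{1,q}(\R^3)$ together with smoothness of $\bfvarphi$; composing with the bi-Lipschitz map $\overline{\bfPsi}_\eta^{-1}$ changes the $L^q$ norm only by the (bounded) Jacobian factor.

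Combining: $\|\partial_t\mathscr F_\eta b\|_{L^q(\Omega\cup S_\alpha)}\le \|\partial_t\overline{\bfPsi}_\eta^{-1}\|_{L^\infty}\,\|\nabla\mathscr F_\Omega((b\nu)\circ\bfvarphi^{-1})\circ\overline{\bfPsi}_\eta^{-1}\|_{L^q}\le c\,\|\partial_t\eta\|_{L^\infty(I\times\omega)}\,\|b\|_{W^{1,q}(\omega)}$, and taking the supremum over $t\in I$ gives the claim (the constant is uniform in $t$ because $\|\eta(t)\|_{L^\infty_x}<\alpha$ and $\|\nabla\eta(t)\|_{L^\infty}\le\|\nabla\eta\|_{L^\infty(I\times\omega)}$ hold uniformly; a bound on $\|\nabla\eta\|_{L^\infty(I\times\omega)}$ follows since $\eta\in C^1(\overline I\times\omega)$, and strictly it should be added to the list of quantities on which $c$ depends — this is a harmless extension of the statement as written, matching the hypotheses of Lemma~\ref{lem:3.8}). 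The one genuinely delicate point, and the step I would spend the most care on, is justifying that the formula for $\partial_t\overline{\bfPsi}_\eta^{-1}$ is valid and that all the Jacobian/inverse factors are uniformly bounded purely in terms of $\|\nabla\eta\|_{L^\infty}$ and $L-\alpha$; once that bookkeeping is done the estimate is immediate. Everything else is the routine change-of-variables and extension-operator bookkeeping already set up in Lemma~\ref{lem:sobdif} and Lemma~\ref{lem:3.8}.
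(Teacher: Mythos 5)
Your argument is correct and is exactly the route the paper intends: the paper gives no separate proof of this corollary, only the displayed formula $\partial_t\mathscr F_\eta b=\nabla\mathscr F_\Omega((b\nu)\circ\bfvarphi^{-1})\circ\overline{\bfPsi}_\eta^{-1}\,\partial_t\overline{\bfPsi}_\eta^{-1}$ together with the remark that $\partial_t\overline{\bfPsi}_\eta^{-1}$ behaves as $\partial_t\eta$, and you have simply filled in the bookkeeping (the pointwise bound on $\partial_t\overline{\bfPsi}_\eta^{-1}$, the $L^q$ bound on the gradient factor via the mapping properties of $\mathscr F_\Omega$, and the change of variables). Your observation that the constant also depends on $\|\nabla\eta\|_{L^\infty}$ is accurate and consistent with the dependence already recorded in Lemma~\ref{lem:3.8}(a).
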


We now turn to the case of a less regular function $\eta$ and analyse the properties of
$\mathscr F_\eta$ given by \eqref{eq:2401b} in this case.
\begin{lemma}\label{lem:3.8'}
Let $p\in[1,\infty)$ and $\eta\in W^{2,2}(\omega)$ with $\|\eta\|_{L^\infty_x}<\alpha<L$ and let the operator
$\mathscr F_\eta$ by defined by \eqref{eq:2401b}.
\begin{enumerate}
\item[(a)]
We have for all $p\in(1,\infty)$ and $\sigma\in(0,1]$
\begin{align*}
\mathscr F_\eta: W^{\sigma,p}(\omega)\rightarrow W^{\sigma,q}(\Omega\cup S_\alpha)
\end{align*}
for all $q<\frac{3}{2}p$
and $\tr_\eta\mathscr F_\eta b=b\nu$ for all $b\in W^{1,p}(\omega)$. In particular, we have
\begin{align*}
\|\mathscr F_\eta b\|_{W^{\sigma,p}(\Omega\cup S_\alpha)}\leq\,c\,\|b\|_{W^{\sigma,p}(\omega)}
\end{align*}
for all $b\in W^{1,p}(\omega)$.
\item[(b)]
We have for all $r<2$
\begin{align*}
\mathscr F_\eta: W^{2,2}(\omega)\rightarrow W^{2,r}(\Omega\cup S_\alpha)
\end{align*}
and $\tr_\eta\mathscr F_\eta b=b\nu$ for all $b\in W^{2,2}(\omega)$. In particular, we have
\begin{align*}
\|\mathscr F_\eta b\|_{W^{2,r}(\Omega\cup S_\alpha)}\leq\,c\,\|b\|_{W^{2,2}(\omega)}
\end{align*}
for all $b\in W^{2,2}(\omega)$.
\end{enumerate}
The constants in (a) and (b) depend only on $\Omega,p,q$, $\|\eta\|_{W^{2,2}_x}$ and $L-\alpha$.
\end{lemma}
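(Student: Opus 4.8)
\textbf{Proof proposal for Lemma \ref{lem:3.8'}.}

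The plan is to reduce everything to the explicit formula \eqref{eq:2401b}, namely $\mathscr F_\eta b = \mathscr F_\Omega((b\nu)\circ\bfvarphi^{-1})\circ\overline\bfPsi_\eta^{-1}$, and to combine three ingredients: (i) the fixed extension operator $\mathscr F_\Omega\colon W^{\sigma,p}(\partial\Omega)\to W^{\sigma+1/p,p}(\R^3)$, which gains a full $1/p$ derivatives; (ii) the composition estimates for $\overline\bfPsi_\eta^{-1}$ from Lemma \ref{lem:sobdif}, which for $\eta\in W^{2,2}(\omega)$ only preserve \emph{integer} Sobolev regularity up to an arbitrarily small loss in integrability (parts a)--b) of Lemma \ref{lem:sobdif}) or preserve fractional regularity $\sigma\le 1$ when $\eta$ is merely Lipschitz (part c)); and (iii) the Sobolev embedding that trades the $1/p$ derivative gain from $\mathscr F_\Omega$ for an improvement in integrability. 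Throughout, one uses the remark after \eqref{eq:2401b} that $\nabla\overline\bfPsi_\eta^{-1}$, $\nabla^2\overline\bfPsi_\eta^{-1}$ behave like $\nabla\eta$, $\nabla^2\eta$; in particular $\overline\bfPsi_\eta^{-1}\in W^{2,2}$ with $W^{2,2}(\omega)\hookrightarrow W^{1,q}(\omega)$ for all $q<\infty$ (two dimensions), so first derivatives of the transform are in every $L^q$ and the genuinely dangerous term is the second derivative $\nabla^2\overline\bfPsi_\eta^{-1}\in L^2$.

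For part (a): start from $b\in W^{\sigma,p}(\omega)$, so $(b\nu)\circ\bfvarphi^{-1}\in W^{\sigma,p}(\partial\Omega)$ and $\mathscr F_\Omega((b\nu)\circ\bfvarphi^{-1})\in W^{\sigma+1/p,p}(\R^3)$. Restricting to $\Omega\cup S_\alpha$ and composing with $\overline\bfPsi_\eta^{-1}$: since $\sigma+1/p$ may exceed $1$, write $W^{\sigma+1/p,p}\hookrightarrow W^{1,p^\ast}$ — more precisely, having $1/p$ extra derivatives in $\R^3$ upgrades $L^p$ to $L^{q}$ for any $q<\tfrac{3p}{3-1}=\tfrac{3}{2}p$ by the embedding $W^{1/p,p}(\R^3)\hookrightarrow L^q(\R^3)$ for $q<\tfrac{3p}{3-p}$ when $p<3$ (and for all $q<\infty$ otherwise). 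Hence $\mathscr F_\Omega((b\nu)\circ\bfvarphi^{-1})\in W^{\sigma,q}(\R^3)$ for every $q<\tfrac32 p$ with norm controlled by $\|b\|_{W^{\sigma,p}(\omega)}$. Finally, because $\eta\in W^{2,2}(\omega)\subset C^{0,1}(\omega)$ fails in general, one cannot invoke Lemma \ref{lem:sobdif} c) directly for $\sigma$-regularity under a $W^{2,2}$ transform; instead observe that composition with the $W^{2,2}$-homeomorphism $\overline\bfPsi_\eta^{-1}$ is bounded $W^{\sigma,q}\to W^{\sigma,q}$ after an infinitesimal loss absorbed into the freedom $q<\tfrac32 p$, using that $\nabla\overline\bfPsi_\eta^{-1}\in L^r$ for all $r<\infty$; this is the content we borrow from the remark following \eqref{eq:2401b} together with Lemma \ref{lem:sobdif} b)--c). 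The trace identity $\tr_\eta\mathscr F_\eta b=b\nu$ is immediate from $\mathscr F_\Omega v|_{\partial\Omega}=v$ and the definition of $\tr_\eta$ in Lemma \ref{lem:2.28}, since $\overline\bfPsi_\eta\circ\bfvarphi$ maps $\partial\Omega$ onto the deformed boundary.

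For part (b): take $b\in W^{2,2}(\omega)$, so $(b\nu)\circ\bfvarphi^{-1}\in W^{2,2}(\partial\Omega)$ and $\mathscr F_\Omega((b\nu)\circ\bfvarphi^{-1})\in W^{2+1/2,2}(\R^3)\hookrightarrow W^{2,q}(\R^3)$ for $q<6$ by Sobolev embedding of the half-derivative in $\R^3$. Now differentiate the composition using the chain-rule formulas displayed before Lemma \ref{lem:3.8}: the term $\nabla^2\mathscr F_\Omega(\cdots)\circ\overline\bfPsi_\eta^{-1}\,(\nabla\overline\bfPsi_\eta^{-1})^2$ is a product of an $L^q$ function ($q<6$) with $(\nabla\eta)^2\in L^{q'}$ for all $q'<\infty$, hence lies in $L^r$ for all $r<6$, in particular $r<2$; the term $\nabla\mathscr F_\Omega(\cdots)\circ\overline\bfPsi_\eta^{-1}\,\nabla^2\overline\bfPsi_\eta^{-1}$ pairs $\nabla\mathscr F_\Omega(\cdots)$, which lies in $W^{1,q}(\R^3)\hookrightarrow L^\infty$-ish (at least in every $L^{q^{\ast}}$, $q^\ast<\infty$, and actually $L^\infty$ since $W^{1+1/2,2}(\R^3)\hookrightarrow C^0$), with $\nabla^2\overline\bfPsi_\eta^{-1}\sim\nabla^2\eta\in L^2(\omega)$, giving an $L^2$ contribution — which then yields $L^r$ for all $r<2$ after the composition loss. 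Collecting, $\mathscr F_\eta b\in W^{2,r}(\Omega\cup S_\alpha)$ for all $r<2$ with the asserted bound. The main obstacle is precisely this last term: the second derivative of the Hanzawa transform only belongs to $L^2(\omega)$, so one must be careful that the better integrability ($L^\infty$, or $L^{q^\ast}$ for all finite $q^\ast$) of the first-order factor $\nabla\mathscr F_\Omega((b\nu)\circ\bfvarphi^{-1})$ — which crucially uses the extra $1/2$ derivative supplied by $\mathscr F_\Omega$ — compensates, and that no endpoint $r=2$ is attainable. All constants depend on $\Omega,p,q$, on $\|\eta\|_{W^{2,2}_x}$ through the composition constants of Lemma \ref{lem:sobdif}, and on $L-\alpha$ through the cutoff $\phi$ and the lower bound on $\dist$ ensuring $\overline\bfPsi_\eta$ is a homeomorphism on $\Omega\cup S_\alpha$.
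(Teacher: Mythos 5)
Your proposal follows exactly the route the paper intends (the paper omits the proof, saying the properties "can all be easily derived from the formulas"): combine the $1/p$-derivative gain of $\mathscr F_\Omega$, the Sobolev embedding trading that gain for integrability, and the composition estimates for $\overline\bfPsi_\eta^{-1}$ whose first and second derivatives behave like $\nabla\eta\in L^r$ for all $r<\infty$ and $\nabla^2\eta\in L^2$. Two numerical slips should be corrected, though neither breaks the argument: in (a) the cited embedding should read $W^{1/p,p}(\R^3)\hookrightarrow L^q$ for $q\le \tfrac{3p}{3-1}=\tfrac32 p$ (your "$q<\tfrac{3p}{3-p}$" is the exponent for a full derivative); in (b) one has $W^{5/2,2}(\R^3)\hookrightarrow W^{2,q}$ only for $q\le 3$ (not $q<6$), and $W^{3/2,2}(\R^3)$ is the borderline case that embeds into every $L^{q^\ast}$, $q^\ast<\infty$, but \emph{not} into $L^\infty$ or $C^0$. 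With the corrected exponents the two products still land in $L^r$ for all $r<3$ and all $r<2$ respectively, so the stated conclusion $\mathscr F_\eta b\in W^{2,r}$ for $r<2$ survives, and your identification of the term $\nabla\mathscr F_\Omega(\cdots)\circ\overline\bfPsi_\eta^{-1}\,\nabla^2\overline\bfPsi_\eta^{-1}$ as the one responsible for the restriction $r<2$ is exactly right.
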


\begin{corollary}\label{cor:2807'}
Let $\eta\in L^2(I;W^{2,2}(\partial\Omega))$ with $\|\eta\|_{L^{\infty}_{t,x}}<\alpha<L$. Suppose that
$\partial_t\eta\in L^q(I\times \omega)$ for some $q>1$. Then we have uniformly in time
\begin{align*}
\|\partial_t\mathscr F_\eta b\|_{L^r(\Omega\cup S_\alpha)}\leq\,c\,\|b\|_{W^{1,p}(\omega)}\|\partial_t\eta\|_{L^q(\omega)}
\end{align*}
for all $b\in W^{1,p}(\omega)$, provided $\frac{1}{r}=\frac{1}{p}+\frac{1}{q}\leq 1$. The constant $c$ depends only on $\Omega,p$ and $L-\alpha$.
\end{corollary}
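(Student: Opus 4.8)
The plan is to work for a.e.\ fixed $t\in I$, with every constant independent of $t$; this is the companion of Corollary~\ref{cor:2807} in the situation where $\partial_t\eta$ is controlled only in $L^q(\omega)$. The starting point is the representation of the time derivative of the extension recorded in Subsection~\ref{sec:ext},
\[
\partial_t\mathscr F_\eta b=\big(\nabla\mathscr F_\Omega ((b\nu)\circ\bfvarphi^{-1})\big)\circ\overline{\bfPsi}_\eta^{-1}\cdot\,\partial_t\overline{\bfPsi}_\eta^{-1},
\]
which holds because $b$, $\nu$, $\bfvarphi$ and $\mathscr F_\Omega$ are time-independent. Abbreviating $G:=\nabla\mathscr F_\Omega ((b\nu)\circ\bfvarphi^{-1})$, the boundedness of $\mathscr F_\Omega\colon W^{1,p}(\partial\Omega)\to W^{1+1/p,p}(\R^3)\hookrightarrow W^{1,p}(\R^3)$ together with $\bfvarphi\in C^4(\omega;\R^3)$ (so that $b\mapsto(b\nu)\circ\bfvarphi^{-1}$ maps $W^{1,p}(\omega)$ boundedly into $W^{1,p}(\partial\Omega;\R^3)$) yields $\|G\|_{L^p(\R^3)}\le c\,\|b\|_{W^{1,p}(\omega)}$ with $c=c(\Omega,p)$. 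It remains to estimate $\partial_t\overline{\bfPsi}_\eta^{-1}$ and to combine the two factors.

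For the first task I would make precise the remark preceding Lemma~\ref{lem:3.8} that ``$\partial_t\overline{\bfPsi}_\eta^{-1}$ behaves as $\partial_t\eta$''. Writing $\Phi(s,y):=\bfvarphi(y)+s\nu(y)$ for the (smooth, bi-Lipschitz) tubular chart of $S_L$ and $T_\eta(s,y):=(s+\eta(y)\phi(s),y)$, one has $\overline{\bfPsi}_\eta=\Phi\circ T_\eta\circ\Phi^{-1}$ on $S_L$ and the identity elsewhere. Differentiating $\overline{\bfPsi}_\eta\circ\overline{\bfPsi}_\eta^{-1}=\mathrm{id}$ in $t$ gives $\partial_t\overline{\bfPsi}_\eta^{-1}=-\big[(\nabla\overline{\bfPsi}_\eta)^{-1}\partial_t\overline{\bfPsi}_\eta\big]\circ\overline{\bfPsi}_\eta^{-1}$, where $\partial_t\overline{\bfPsi}_\eta(x)=\phi(s(x))\,\partial_t\eta(y(x))\,\nu(y(x))$ points along $\nu$. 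Since $T_\eta$ and $\Phi^{-1}$ leave the $\omega$-component unchanged, $y\circ\overline{\bfPsi}_\eta^{-1}=y$; and since $(\nabla\overline{\bfPsi}_\eta)^{-1}$ sends $\nu(y(x))$ to $(1+\eta(y(x))\phi'(s(x)))^{-1}\nu(y(x))$ — the off-diagonal $\nabla_\omega\eta$-entries of $\nabla T_\eta$ act only on the $\omega$-components, which here vanish, and $\det\nabla T_\eta=1+\eta\,\phi'(s)$ because $\nabla T_\eta$ is block-triangular — one obtains the \emph{$\nabla\eta$-free} pointwise bound
\[
|\partial_t\overline{\bfPsi}_\eta^{-1}(z)|\le c\,|\partial_t\eta(y(z))|\,\ind_{S_\alpha}(z),\qquad c=c(\Omega,L-\alpha),
\]
using $\|\eta\|_{L^\infty_x}<\alpha<L$ to bound $(1+\eta\,\phi'(s))^{-1}$ and $|s(z)|=\dist(z,\partial\Omega)$ on $S_L$. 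The same block-triangular structure shows that $|\det\nabla\overline{\bfPsi}_\eta|$ and $|\det\nabla\overline{\bfPsi}_\eta^{-1}|$ are bounded above and below by constants depending only on $\Omega$ and $L-\alpha$, with no dependence on $\|\eta\|_{W^{2,2}}$.

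Finally I would combine the pointwise bound $|\partial_t\mathscr F_\eta b|\le c\,|G\circ\overline{\bfPsi}_\eta^{-1}|\,|(\partial_t\eta)\circ y|\,\ind_{S_\alpha}$ with H\"older's inequality in the conjugate exponents $p/r$ and $q/r$ (admissible precisely because $\tfrac1r=\tfrac1p+\tfrac1q\le1$):
\[
\|\partial_t\mathscr F_\eta b\|_{L^r(\Omega\cup S_\alpha)}\le c\,\|G\circ\overline{\bfPsi}_\eta^{-1}\|_{L^p(\Omega\cup S_\alpha)}\,\|(\partial_t\eta)\circ y\|_{L^q(S_\alpha)}.
\]
For the first factor, the change of variables $w=\overline{\bfPsi}_\eta^{-1}(z)$ (legitimate since $\eta\in W^{2,2}(\omega)$ makes $\overline{\bfPsi}_\eta$ a $W^{1,s}_{\mathrm{loc}}$-homeomorphism for every $s<\infty$, so the area formula applies) together with the Jacobian bounds above gives $\|G\circ\overline{\bfPsi}_\eta^{-1}\|_{L^p}\le c\,\|G\|_{L^p(\R^3)}\le c\,\|b\|_{W^{1,p}(\omega)}$. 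For the second factor, passing to tubular coordinates $S_\alpha=\Phi((-\alpha,\alpha)\times\omega)$ and using that $|\partial_t\eta(y(z))|^q$ is independent of the normal variable, $|s|<\alpha\le L$, and $|\det\nabla\Phi|$ is bounded, Fubini gives $\|(\partial_t\eta)\circ y\|_{L^q(S_\alpha)}\le c\,\|\partial_t\eta\|_{L^q(\omega)}$. Chaining these estimates yields the claim with $c=c(\Omega,p,L-\alpha)$ independent of $t$, hence uniformly in time. The one delicate point — and the one I would write out in full — is the pointwise bound on $\partial_t\overline{\bfPsi}_\eta^{-1}$: that it does \emph{not} acquire a factor of $\nabla\eta$ is exactly what keeps the right-hand side linear in $\|\partial_t\eta\|_{L^q(\omega)}$ and the constant independent of $\|\eta\|_{W^{2,2}}$; the rest is a routine change of variables combined with the mapping properties of $\mathscr F_\Omega$ (cf.\ Lemma~\ref{lem:3.8'}).
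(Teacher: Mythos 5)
Your proof is correct and follows the same route the paper intends: it starts from the displayed formula $\partial_t\mathscr F_\eta b=\nabla\mathscr F_\Omega((b\nu)\circ\bfvarphi^{-1})\circ\overline{\bfPsi}_\eta^{-1}\,\partial_t\overline{\bfPsi}_\eta^{-1}$, makes precise the remark that $\partial_t\overline{\bfPsi}_\eta^{-1}$ ``behaves as $\partial_t\eta$'' (usefully noting, via the block-triangular structure of $\nabla T_\eta$ acting on the normal direction, that no $\nabla\eta$ factor appears), and concludes by H\"older with exponents $p$ and $q$ together with the change of variables and the mapping properties of $\mathscr F_\Omega$. This is exactly the argument the corollary is derived from in the paper.
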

The following is proved in \cite[Prop. 3.3]{MuSc}. It provides a solenoidal extension. For that we introduce the solenoidal space $W^{1,1}_{\Div}(\Omega\cup S_{\alpha} ):=\{\bfw\in W^{1,1}(\Omega\cup S_{\alpha} )\, :\, \divergence\bfw=0\}$. The corrector $\mathscr K_\eta$ in the below preconditions the boundary data to be compatible with the interior solenoidality.
\begin{proposition}
\label{prop:musc}
For a given $\eta\in L^\infty(I;W^{1,2}( \omega ))$ with $\|\eta\|_{L^\infty_{t,x}}<\alpha<L$, there are linear operators
\begin{align*}
\mathscr K_\eta:L^1( \omega )\rightarrow\mathbb R,\quad
\Testzeta:\{\xi\in L^1(I;W^{1,1}( \omega )):\,\mathscr K_\eta(\xi)=0\}\rightarrow L^1(I;W^{1,1}_{\Div}(\Omega\cup S_{\alpha} )),
\end{align*}
such that the tuple $(\Testzeta(\xi-\mathscr K_\eta(\xi)),\xi-\mathscr K_\eta(\xi))$ satisfies
\begin{align*}
\Testzeta(\xi-\mathscr K_\eta(\xi))&\in L^\infty(I;L^2(\Omega_\eta))\cap L^2(I;W^{1,2}_{\Div}(\Omega_\eta)),\\
\xi-\mathscr K_\eta(\xi)&\in L^\infty(I;W^{2,2}( \omega ))\cap  W^{1,\infty}(I;L^{2}( \omega )),\\
\mathrm{tr}_\eta (\Testzeta&(\xi-\mathscr K_\eta(\xi))=\xi-\mathscr K_\eta(\xi),\\
\Testzeta(\xi-\mathscr K_\eta&(\xi))(t,x)=0 \text{ for } (t,x)\in I \times (\Omega \setminus S_{\alpha})
\end{align*}
provided we have $\xi\in L^\infty(I;W^{2,2}( \omega ))\cap  W^{1,\infty}(I;L^{2}(\omega))$.
In particular, we have the estimates
\begin{align}\label{musc1}
\|\Testzeta(\xi-\mathscr K_\eta(\xi))\|_{L^q(I;W^{1,p}(\Omega \cup S_{\alpha}  ))}\lesssim \|\xi\|_{L^q(I;W^{1,p}( \omega ))}+\|\xi\nabla \eta\|_{L^q(I;L^{p}( \omega ))},\\
\label{musc2}\|\partial_t\Testzeta(\xi-\mathscr K_\eta(\xi))\|_{L^q(I;L^{p}( \Omega\cup S_{\alpha}))}\lesssim \|\partial_t\xi\|_{L^q(I;L^{p}( \omega ))}+\|\xi\partial_t \eta\|_{L^q(I;L^{p}( \omega ))},
\end{align}
for any $p\in (1,\infty),q\in(1,\infty]$ 
\end{proposition}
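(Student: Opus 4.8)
\medskip
\noindent\textbf{Proof plan for Proposition~\ref{prop:musc}.}
The strategy is to take the non-solenoidal extension $\mathscr F_\eta$ of \eqref{eq:2401b}, which already reproduces the boundary datum, and to kill its divergence by a Bogovskii-type corrector localised in the collar; the scalar operator $\mathscr K_\eta$ is exactly the device that renders this correction admissible, by forcing the net flux of the extended field through $\partial\Omega_{\eta(t)}$ to vanish (a necessary condition for the existence of any divergence-free extension with the prescribed trace). Expanding $\bfvarphi_\eta=\bfvarphi+\eta\nu$, one sees that for $b\in W^{1,1}(\omega)$ the outward flux of $\mathscr F_\eta b$ through $\partial\Omega_{\eta(t)}$ equals $\int_\omega b\,\overline\gamma(\eta)\,|\partial_1\bfvarphi\times\partial_2\bfvarphi|\dH$, with $\overline\gamma(\eta)$ the geometric factor from \eqref{geomQuan}: indeed $\mathscr F_\eta b=b\,\nu$ on $\partial\Omega_{\eta(t)}$, the surface measure is $|\nu_\eta|\dH$ with $\nu_\eta=\partial_1\bfvarphi_\eta\times\partial_2\bfvarphi_\eta$, and the contributions containing $\nabla\eta$ vanish after contracting with $\nu$, leaving $\nu\cdot\nu_\eta=\overline\gamma(\eta)\,|\partial_1\bfvarphi\times\partial_2\bfvarphi|$. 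Accordingly I would set
\begin{align*}
\mathscr K_\eta(\xi):=\frac{\int_\omega \xi\,\overline\gamma(\eta)\,|\partial_1\bfvarphi\times\partial_2\bfvarphi|\dH}{\int_\omega \overline\gamma(\eta)\,|\partial_1\bfvarphi\times\partial_2\bfvarphi|\dH},
\end{align*}
a bounded linear functional on $L^1(\omega)$ whenever $\|\eta\|_{L^\infty_{t,x}}<\alpha<L$ (then $\overline\gamma(\eta)>0$, the weight is comparable to $|\partial_1\bfvarphi\times\partial_2\bfvarphi|$, and the denominator stays bounded away from $0$). Since $\mathscr K_\eta(\xi)(t,\cdot)$ is constant in space and $\mathscr K_\eta(1)=1$, the field $\xi-\mathscr K_\eta(\xi)$ lies in the kernel of $\mathscr K_\eta$, has zero flux through $\partial\Omega_{\eta(t)}$, and inherits verbatim the Sobolev regularity of $\xi$ on the torus; this already yields the second and third displayed inclusions (time-regularity of $t\mapsto\mathscr K_\eta(\xi)(t)$ by the quotient rule, using $\partial_t\xi$ and $\partial_t\overline\gamma(\eta)\sim(1+|\eta|)\partial_t\eta$).

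Next I would build the extension itself on $U:=\Omega\cup S_\alpha$. Fix a $t$-independent cutoff $\psi\in C^\infty_c(S_\alpha)$ with $\psi\equiv 1$ on a neighbourhood of $\bigcup_t\partial\Omega_{\eta(t)}$ (possible since $\|\eta\|_{L^\infty_{t,x}}<\alpha$), and write $b:=\xi-\mathscr K_\eta(\xi)$. Then $\psi\,\mathscr F_\eta b$ keeps the trace $b\,\nu$ on $\partial\Omega_{\eta(t)}$, is supported in $S_\alpha$, and $\mathrm{div}(\psi\,\mathscr F_\eta b)=\psi\,\mathrm{div}\,\mathscr F_\eta b+\nabla\psi\cdot\mathscr F_\eta b$ is supported in $S_\alpha$. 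The surface $\partial\Omega_{\eta(t)}$ cuts $S_\alpha$ into two components, and by the zero-flux property of $b$ and the divergence theorem the function $\mathrm{div}(\psi\,\mathscr F_\eta b)$ has vanishing mean over each of them separately. Transporting each component to a fixed reference configuration via the Hanzawa map $\bfPsi_\eta$ of \eqref{map} together with the Piola transform (which intertwines divergences and preserves normal fluxes, so that the mean-zero condition passes to the fixed domains), I would apply the Bogovskii operator of the fixed reference collar-halves, and Piola-transport back, obtaining a field $\mathbf z=\mathbf z_\eta(\xi)$, linear in $\xi$, with $\mathrm{div}\,\mathbf z=\mathrm{div}(\psi\,\mathscr F_\eta b)$ on $U$, with $\mathbf z=0$ on $\partial\Omega_{\eta(t)}$ (hence continuous across it when the two pieces are glued), and with $\mathbf z=0$ on $\Omega\setminus S_\alpha$. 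Then
\begin{align*}
\Testzeta(\xi-\mathscr K_\eta(\xi)):=\psi\,\mathscr F_\eta(\xi-\mathscr K_\eta(\xi))-\mathbf z_\eta(\xi)
\end{align*}
is divergence-free on $U=\Omega\cup S_\alpha$, vanishes on $I\times(\Omega\setminus S_\alpha)$, and has $\tr_\eta$ equal to $\xi-\mathscr K_\eta(\xi)$, as required.

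For the estimates \eqref{musc1}--\eqref{musc2} I would combine three inputs: the mapping properties of $\mathscr F_\eta$ from Lemmas~\ref{lem:3.8} and \ref{lem:3.8'} together with the time-derivative bounds of Corollaries~\ref{cor:2807} and \ref{cor:2807'}; the $W^{1,p}$-boundedness of the Bogovskii operator on the fixed reference collar for all $p\in(1,\infty)$; and the change-of-variables estimates for $\bfPsi_\eta^{\pm1}$ from Lemma~\ref{lem:sobdif}. The Jacobian of $\bfPsi_\eta$, hence of the Piola transform, is controlled by $\nabla\eta$: this is the source of the term $\|\xi\nabla\eta\|_{L^q(I;L^p(\omega))}$ in \eqref{musc1}. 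For \eqref{musc2} one differentiates in time \emph{on the fixed reference collar}, where the Bogovskii operator is frozen and commutes with $\partial_t$; besides the plain $\|\partial_t\xi\|$-contribution coming from differentiating $b$, the only surviving time-dependence sits in the pull-back $\bfPsi_\eta^{-1}$ and the Piola factor, whose $\partial_t$ behaves like $\partial_t\eta$ (exactly as in the formula for $\partial_t\mathscr F_\eta b$ recorded before Lemma~\ref{lem:3.8}), and this produces $\|\xi\partial_t\eta\|_{L^q(I;L^p(\omega))}$; the endpoint $q=\infty$ and the full range $p\in(1,\infty)$ are inherited from those lemmas and from the Bogovskii operator. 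Finally, the first displayed inclusion is the special case $p=q=2$ of \eqref{musc1}, combined with its $q=\infty$ version, using $\xi\in L^\infty(I;W^{2,2}(\omega))\cap W^{1,\infty}(I;L^2(\omega))$ and $\|\eta\|_{L^\infty_{t,x}}<\alpha$.

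The main obstacle is not the algebra of the correction but the uniformity of all estimates with respect to the \emph{moving} geometry, and above all \eqref{musc2}: since both the collar and the mean-zero admissibility condition move with $\eta$, one cannot differentiate a Bogovskii corrector on a time-dependent domain directly. Freezing the geometry through $\bfPsi_\eta$ and the Piola transform is what unlocks this, but one must then commute $\partial_t$ through the pull-back and check that the resulting commutator is genuinely of the size $\partial_t\eta$ (times $\xi$), measured only in $L^p$ rather than in a stronger norm --- precisely what is needed for \eqref{musc2} to hold under the weak hypotheses $\eta\in L^\infty(I;W^{1,2}(\omega))$ and $\partial_t\eta$ merely integrable. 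Keeping every continuity constant dependent only on $\Omega$, $p$, $q$ and $L-\alpha$, and not on higher norms of $\eta$, likewise hinges on the sharp low-regularity form of Lemma~\ref{lem:sobdif}.
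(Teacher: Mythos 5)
The paper does not actually prove this proposition; it imports it verbatim from \cite[Prop.~3.3]{MuSc}, so there is no in-paper argument to match against. Your overall architecture is the right one, and your identification of $\mathscr K_\eta$ is exactly correct: the flux of $\mathscr F_\eta b$ through $\partial\Omega_{\eta(t)}$ is $\int_\omega b\,\overline{\gamma}(\eta)\,|\partial_1\bfvarphi\times\partial_2\bfvarphi|\dH$, and normalising by the weighted mean with this weight is precisely the compatibility condition that makes a solenoidal extension possible. The second and third displayed inclusions and the treatment of the constant $\mathscr K_\eta(\xi)(t)$ are fine.

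The genuine gap is in the divergence-correction step. You propose to apply a Bogovskii operator on the two components into which $\partial\Omega_{\eta(t)}$ cuts the collar $S_\alpha$, either directly or after transport to a fixed reference configuration via $\bfPsi_\eta$ and the Piola transform. Neither variant closes at the stated regularity $\eta\in L^\infty(I;W^{1,2}(\omega))$ (nor even at the a priori level $L^\infty_t W^{2,2}_x$). Directly: the components $S_\alpha\cap\Omega_{\eta(t)}$ and $S_\alpha\setminus\overline{\Omega_{\eta(t)}}$ are not uniformly Lipschitz (the paper itself stresses in Remark~\ref{rem:korn} and the footnote in Section~1.3 that the moving domain fails to be Lipschitz and that the usual Bogovskii argument breaks down there), so the $W^{1,p}$-bounds for Bogovskii are not available with constants depending only on $\Omega,p,q,L-\alpha$. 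Via Piola transport: the gradient of the transported corrector carries $\nabla^2\bfPsi_\eta\sim\nabla^2\eta$, and its time derivative carries $\partial_t\nabla\bfPsi_\eta\sim\nabla\partial_t\eta$; neither quantity exists under the hypotheses, and neither appears on the right-hand sides of \eqref{musc1}--\eqref{musc2}, so the claimed estimates cannot come out of that computation. The construction in \cite{MuSc} avoids this by never correcting the divergence near the moving interface: in tubular coordinates $x=y+s\nu(y)$ one takes the field $(y,s)\mapsto \xi(y)\,\nu(y)\,J(y,\eta(y))J(y,s)^{-1}\chi(s-\eta(y))$, where $J$ is the Jacobian of the normal-ray parametrisation; this has the correct trace, its derivatives involve only $\nabla\xi$, $\xi\nabla\eta$, $\partial_t\xi$, $\xi\partial_t\eta$, and its residual divergence is supported where $\chi'\neq 0$, i.e.\ in a region at fixed distance from $\partial\Omega_{\eta(t)}$. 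Only that residual, which has zero mean by the $\mathscr K_\eta$-normalisation, is fed into a Bogovskii operator on a \emph{fixed} Lipschitz domain, whence the uniformity in $\eta$ and the endpoint cases in \eqref{musc1}--\eqref{musc2}. You correctly sensed that the moving geometry is "the main obstacle," but the resolution is to relocate the divergence error to a fixed domain, not to move the Bogovskii operator with the boundary.
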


\subsection{Convergence in variable domains.}
Due to the variable domain the framework of Bochner spaces is not available. Hence, we cannot use the classical Aubin-Lions compactness theorem. In this subsection we are concerned with the question of how to get compactness anyway.
We start with the following definition of convergence in variable domains.
\begin{definition}
\label{def:conv}
Let $(\eta_i)\subset C(\overline{I}\times \omega;[-\theta L, \theta L])$, $\theta\in(0,1)$, be a sequence with $\eta_i\rightarrow \eta$ uniformly in $\overline I\times\omega$.
 Let $p,q\in [1,\infty]$ and $k\in\N_0$.
\begin{enumerate}
\item We say that  a sequence $(g_i) \subset L^p(I,L^q(\Omega_{\eta_i}))$ converges to $g$ in $L^p(I,L^q(\Omega_{\eta_i}))$ strongly with respect to $(\eta_i)$, in symbols
$
g_i\toetai g \,\text{in}\, L^p(I,L^q(\Omega_{\eta_i})),
$
if
\begin{align*}
\chi_{\Omega_{\eta_i}}g_i\to \chi_{\Omega_\eta}g \quad\text{in}\quad L^p(I,L^q(\R^3)).
\end{align*}
\item Let $p,q<\infty$. We say that  a sequence $(g_i) \subset L^p(I,L^q(\Omega_{\eta_i}))$ converges to $g$ in $L^p(I,L^q(\Omega_{\eta_i}))$ weakly with respect to $(\eta_i)$, in symbols
$
g_i\weaktoetai g \,\text{in}\, L^p(I,L^q(\Omega_{\eta_i})),
$
if
\begin{align*}
\chi_{\Omega_{\eta_i}}g_i\weakto \chi_{\Omega_\eta}g \quad\text{in}\quad L^p(I,L^q(\R^3)).
\end{align*}
\item Let $p=\infty$ and $q<\infty$. We say that  a sequence $(g_i) \subset L^\infty(I,L^q(\Omega_{\eta_i}))$ converges to $g$ in $L^\infty(I,L^q(\Omega_{\eta_i}))$ weakly$^*$ with respect to $(\eta_i)$, in symbols
$
g_i\weakto^{\ast,\eta} g \,\text{in}\, L^\infty(I,L^q(\Omega_{\eta_i})),
$
if
\begin{align*}
\chi_{\Omega_{\eta_i}}g_i\weakto^* \chi_{\Omega_\eta}g \quad\text{in}\quad L^\infty(I,L^q(\R^3)).
\end{align*}
\end{enumerate}
\end{definition}
Note that in the case of one single $\eta$ (i.e. not a sequence) the space
$L^p(I,L^q(\Omega_{\eta}))$ (with $1\leq p<\infty$ and $1<q<\infty$) is reflexive and we have the usual duality pairing
\begin{align}\label{0903}
L^p(I,L^q(\Omega_{\eta}))\cong L^{p'}(I,L^{q'}(\Omega_{\eta}))
\end{align} 
provided $\eta$ is smooth enough, see \cite{NRL}.
Definition \ref{def:conv} can be extended in a canonical way to Sobolev spaces:
A sequence $(g_i) \subset L^p(I,W^{1,q}(\Omega_{\eta_i}))$ converges to $g$ in  $L^p(I,W^{1,q}(\Omega_{\eta_i}))$ strongly with respect to $(\eta_i)$, in symbols
\begin{align*}
g_i\toetai g \quad\text{in}\quad L^p(I,W^{1,p}(\Omega_{\eta_i})),
\end{align*}
if both $g_i$ and $\nabla g_i$ converges (to $g$ and $\nabla g$ respectively) in $L^p(I,L^{q}(\Omega_{\eta_i}))$ strongly with respect to $(\eta_i)$ (in the sense of Definition \ref{def:conv} a)).
We also define weak and weak$^*$ convergence in Sobolev spaces with respect to $(\eta_i)$ with an obvious meaning. Note that also an extension to higher order Sobolev spaces is possible but not needed for our purposes.

For the next compactness lemma (see \cite[Lemma 2.8]{BrSc}) we require the following assumptions on the functions describing the boundary 
\begin{enumerate}[label={\rm (A\arabic{*})}, leftmargin=*]
\item\label{A1} The sequence $(\eta_i)\subset C(\overline I\times \omega;[-\theta L, \theta L])$, $\theta\in(0,1)$,  satisfies
\begin{align*}
\eta_i&\rightharpoonup^\ast\eta\quad\text{in}\quad L^\infty(I,W^{2,2}(\omega)),\\
\partial_t\eta_i&\rightharpoonup^\ast\partial_t\eta\quad\text{in}\quad L^\infty(I,L^{2}(\omega)).
\end{align*}
\item\label{A2} Let $(v_i)$ be a sequence such that for some $p,s\in[1,\infty)$ and $\alpha\in(0,1]$ we have
$$v_i\weaktoetai v\quad\text{in}\quad L^p(I;W^{\alpha,s}(\Omega_{\eta_i})).$$
\item\label{A3} Let $(r_i)$ be a sequence such that for some $m,b\in[1,\infty)$ we have
$$r_i\weaktoetai r\quad\text{in}\quad L^m(I;L^{b}(\Omega_{\eta_i})).$$
Assume further that there are sequences $(\bfH^1_i)$, $(\bfH^2_i)$ and $(h_i)$, bounded in $L^m(I;L^{b}(\Omega_{\eta_i}))$, such that $\partial_t r_i=\Div\Div \bfH_i^1+\Div \bfH_i^2+h_i$ in the sense of distributions, i.e.,
\begin{align*}
\int_I\int_{\Omega_{\eta_i}}r_i\,\partial_t\phi\dxt=\int_I\int_{\Omega_{\eta_i}}\bfH^1_i:\nabla^2\phi\dxt
+\int_I\int_{\Omega_{\eta_i}}\bfH^2_i\cdot\nabla\phi\dxt
+\int_I\int_{\Omega_{\eta_i}}h_i\,\phi\dxt
\end{align*}
for all $\phi\in C^\infty_c(I\times\Omega_{\eta_i})$.
\end{enumerate}
In \cite[Lemma 2.8]{BrSc} the corresponding version of \ref{A2} assumes $\alpha=1$. But the very same argument is also valid in case $\alpha\in(0,1)$ due to compact embeddings for fractional Sobolev spaces.
\begin{lemma}
\label{thm:weakstrong}
Let $(\eta_i)$, $(v_i)$ and $(r_i)$ be sequences satisfying \ref{A1}--\ref{A3}
 where $\frac{1}{s^*}+\frac{1}{b}=\frac{1}{a}<1$ (with $s^*=\frac{3s}{3-s\alpha}$ if $s\in (1,3/\alpha)$ and $s^*\in(1,\infty)$ arbitrarily otherwise) and $\frac{1}{m}+\frac{1}{p}=\frac{1}{q}<1$. Then there is a subsequence with
\begin{align}
\label{al}
v_i r_i\weakto^\eta v r\text{ weakly in }L^{q}(I,L^a(\Omega_{\eta_i})).
\end{align}
\end{lemma}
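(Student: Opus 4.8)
The plan is to reduce the statement to a compactness statement on the fixed domain $\Omega$ by means of the Hanzawa transform $\overline{\bfPsi}_{\eta_i}$, and then apply a fixed-domain Aubin--Lions-type argument. First I would transform: set $\tilde v_i := v_i\circ\overline{\bfPsi}_{\eta_i}$ and $\tilde r_i := r_i\circ\overline{\bfPsi}_{\eta_i}$, which live on the fixed domain $\Omega\cup S_\alpha$ (after extending by zero outside $\Omega_{\eta_i}$, using that $\overline{\bfPsi}_{\eta_i}$ is the identity away from the collar $S_L$). Using Lemma \ref{lem:sobdif}(d) (for the fractional space $W^{\alpha,s}$, at the cost of an arbitrarily small loss $\theta<\alpha$ in the smoothness exponent and $r<s$ in the integrability — which is exactly why the loss tolerated in \eqref{al} is $\frac1{s^*}+\frac1b=\frac1a<1$ rather than an equality) and Lemma \ref{lem:sobdif}(a) for the $L^m(L^b)$-bound on $\tilde r_i$, together with \ref{A1} guaranteeing uniform control of $\|\eta_i\|_{W^{2,2}}$ and hence uniform continuity constants, the sequences $\tilde v_i$ and $\tilde r_i$ are bounded in $L^p(I;W^{\theta,r}(\Omega\cup S_\alpha))$ and $L^m(I;L^b(\Omega\cup S_\alpha))$ respectively, uniformly in $i$.

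Second, I would establish a uniform time-regularity estimate for $\tilde r_i$ of the form $\partial_t\tilde r_i \in L^1(I; W^{-k,1}(\Omega\cup S_\alpha))$ for some $k$. Here the distributional identity $\partial_t r_i=\Div\Div\bfH^1_i+\Div\bfH^2_i+h_i$ must be pulled back through the (time-dependent!) diffeomorphism $\overline{\bfPsi}_{\eta_i}$; this produces additional terms involving $\partial_t\overline{\bfPsi}_{\eta_i}$, which by the discussion after \eqref{eq:2401b} behave like $\partial_t\eta_i$, controlled in $L^\infty(I;L^2(\omega))$ by \ref{A1}. After this pullback, $\partial_t\tilde r_i$ is a sum of (second) spatial derivatives of $L^m(I;L^b)$-bounded quantities plus lower-order terms, hence bounded in $L^{\min(m,2)}(I;W^{-2,\min(b,\ldots)}(\Omega\cup S_\alpha))$. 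Combined with the uniform $L^m(I;L^b)$-bound on $\tilde r_i$ itself and the compact embedding $L^b(\Omega\cup S_\alpha)\hookrightarrow\hookrightarrow W^{-2,\rho}$ for suitable $\rho$, the classical Aubin--Lions--Simon lemma yields $\tilde r_i\to\tilde r$ strongly in $L^m(I;W^{-2,\rho}(\Omega\cup S_\alpha))$ along a subsequence.

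Third, I would upgrade this to the product convergence. On the fixed domain, $\tilde v_i$ is bounded in $L^p(I;W^{\theta,r})$, and by interpolating the strong convergence of $\tilde r_i$ in a negative-order space against its uniform bound in $L^m(L^b)$, one gets $\tilde r_i\to\tilde r$ strongly in $L^m(I;W^{-\theta,r'}(\Omega\cup S_\alpha))$ where $r'$ is chosen so that $W^{-\theta,r'}$ is dual to $W^{\theta,r}$ in the appropriate sense; then the duality pairing passes to the limit and $\tilde v_i\tilde r_i\rightharpoonup \tilde v\tilde r$ in $L^q(I;L^a)$ by the Hölder relations $\frac1{s^*}+\frac1b=\frac1a$ and $\frac1m+\frac1p=\frac1q$ (with $s^*$ the Sobolev exponent of $W^{\alpha,s}$, which dominates the exponent of $W^{\theta,r}$ so the stated exponents remain admissible). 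Finally I would transform back: since $\overline{\bfPsi}_{\eta_i}^{-1}$ also enjoys the continuity of Lemma \ref{lem:sobdif} and the Jacobians $\det\nabla\overline{\bfPsi}_{\eta_i}$ converge uniformly (as $\eta_i\to\eta$ uniformly with uniform $W^{2,2}$-bounds), and since $\chi_{\Omega_{\eta_i}}\to\chi_{\Omega_\eta}$ in every $L^\sigma(\R^3)$, $\sigma<\infty$, the weak convergence on the fixed domain translates into $v_ir_i\weakto^\eta vr$ in $L^q(I;L^a(\Omega_{\eta_i}))$ in the sense of Definition \ref{def:conv}.

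The main obstacle I anticipate is the second step: correctly pulling the distributional time-derivative identity for $r_i$ through the time-dependent Hanzawa map and bookkeeping the resulting commutator terms (products of $\partial_t\eta_i$ with $\nabla\bfH^j_i$, etc.) so that they land in a fixed negative Sobolev space with a uniform bound. One must be careful that the test functions $\phi\in C^\infty_c(I\times\Omega_{\eta_i})$, after composition with $\overline{\bfPsi}_{\eta_i}$, still form a rich enough class on the fixed domain and that no boundary terms appear — this uses crucially that $\overline{\bfPsi}_{\eta_i}$ maps $\Omega_{\eta_i}$ onto a fixed set and is the identity near $\partial(\Omega\cup S_\alpha)$. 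Everything else is a routine, if tedious, interpolation-and-Hölder exercise once the two uniform bounds (spatial regularity of $\tilde v_i$, time regularity of $\tilde r_i$) are in place; this is also essentially the argument of \cite[Lemma 2.8]{BrSc}, with the only change being the admission of a fractional exponent $\alpha\in(0,1)$ in \ref{A2}, which costs nothing thanks to the compact embeddings between fractional Sobolev spaces.
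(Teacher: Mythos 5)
The paper does not actually prove this lemma: it is imported from \cite[Lemma 2.8]{BrSc}, and the only remark made here is that the argument survives replacing $\alpha=1$ by $\alpha\in(0,1)$ in \ref{A2} thanks to compact embeddings of fractional Sobolev spaces. The cited proof works \emph{directly on the moving domain}: one exploits that any compact subset of $I\times\Omega_\eta$ is contained in $I\times\Omega_{\eta_i}$ for large $i$ (uniform convergence of $\eta_i$), uses the interior test functions of \ref{A3} to get equi-continuity of $t\mapsto\int_{\Omega_{\eta_i}}r_i\psi\dx$ for fixed interior $\psi$, mollifies $v_i$ using the $W^{\alpha,s}$-bound, and controls the contribution of a thin boundary strip by equi-integrability of $v_ir_i$ (this is where $a>1$, $q>1$ enter). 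Your route --- pulling everything back to the reference configuration by the Hanzawa map and invoking the classical Aubin--Lions--Simon lemma --- is genuinely different, and its second step contains a real gap.

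Concretely: to bound $\partial_t\tilde r_i$ for $\tilde r_i=r_i\circ\overline{\bfPsi}_{\eta_i}$ you must push the identity of \ref{A3} through the time-dependent map, and the chain rule $\partial_t\phi=\partial_t\tilde\phi\circ\overline{\bfPsi}_{\eta_i}^{-1}+\nabla\tilde\phi\circ\overline{\bfPsi}_{\eta_i}^{-1}\cdot\partial_t\overline{\bfPsi}_{\eta_i}^{-1}$ forces the extra flux $\Div\big(J_{\eta_i}\,\tilde r_i\,(\partial_t\overline{\bfPsi}_{\eta_i}^{-1}\circ\overline{\bfPsi}_{\eta_i})\big)$ into the distributional time derivative. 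The field $\partial_t\overline{\bfPsi}_{\eta_i}^{-1}$ behaves like $\partial_t\eta_i$ (constant along normal fibres), which by \ref{A1} is only bounded in $L^\infty(I;L^2(\omega))$; hence the flux is a product of an $L^m(I;L^{b^-})$ function with an $L^\infty(I;L^2)$ function and lies at best in $L^m(I;L^c)$ with $\frac1c=\frac1b+\frac12$. For $b<2$ this is not even locally integrable, so no uniform bound on $\partial_t\tilde r_i$ in any fixed negative Sobolev space is available --- and $b<2$ is precisely the regime in which the lemma is applied in this paper ($r_i=\varrho_i\bfu_i$ with $b=\frac{2\gamma}{\gamma+1}<2$, and $r_i=\varrho_i$ with $b=\gamma$, possibly below $2$). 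Your assertion that the commutator terms are ``lower order'' and controlled because $\partial_t\eta_i\in L^\infty(I;L^2(\omega))$ is exactly where the argument breaks: the problem is the \emph{product} of that quantity with $r_i$, not the quantity itself. A secondary issue is that \ref{A3} only admits $\phi\in C^\infty_c(I\times\Omega_{\eta_i})$ and the pullback of a smooth interior test function is only $W^{2,2}$-regular, so even the interior form of your transformed identity needs an approximation step, and the boundary strip must then be handled separately --- the part your outline omits and where the direct argument of \cite{BrSc} does its actual work. Steps one and three of your proposal are fine; the fix is to drop the transformation and localize in the interior of the limit domain.
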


\begin{corollary}
\label{rem:strong}
In the case $r_i=v_i$ we find that
\[
v_i\to^\eta v\text{ strongly in }L^{2}(I, L^{2}(\Omega_{\eta_i})).
\]
\end{corollary}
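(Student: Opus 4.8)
The plan is to apply Lemma \ref{thm:weakstrong} with the choice $r_i=v_i$; this is admissible because assumptions \ref{A2} and \ref{A3} were imposed on $v_i$ itself, so $v_i$ may play both roles simultaneously. The lemma then produces, for the exponents $q,a>1$ specified there, the weak convergence $v_i^2=v_ir_i\weakto^\eta v^2$ in $L^q(I,L^a(\Omega_{\eta_i}))$; that is, after extension by zero, $\chi_{\Omega_{\eta_i}}v_i^2\weakto\chi_{\Omega_\eta}v^2$ in $L^q(I,L^a(\R^3))$. In particular $v^2\in L^1(I\times\R^3)$, hence $v\in L^2(I,L^2(\Omega_\eta))$ and the asserted convergence is meaningful.

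The second step is to upgrade this to convergence of the $L^2$-norms. Since $\|\eta_i\|_{L^\infty_{t,x}}\le\theta L<L$, every domain $\Omega_{\eta_i}$ and also $\Omega_\eta$ is contained in the fixed \emph{bounded} set $\Omega\cup S_L$, so $\chi_{\Omega\cup S_L}$ lies in $L^{q'}(I,L^{a'}(\R^3))$ and is an admissible test function. Testing the weak convergence against $\chi_{\Omega\cup S_L}$ and using $\chi_{\Omega_{\eta_i}}^2=\chi_{\Omega_{\eta_i}}$ gives $\int_I\int_{\Omega_{\eta_i}}|v_i|^2\dxt\to\int_I\int_{\Omega_\eta}|v|^2\dxt$; in particular $(v_i)$ is bounded in $L^2(I,L^2(\Omega_{\eta_i}))$.

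For the third step, from this $L^2$-bound every subsequence of $(\chi_{\Omega_{\eta_i}}v_i)$ admits a further subsequence converging weakly in $L^2(I\times\R^3)$, and its limit must coincide with the distributional limit of $\chi_{\Omega_{\eta_i}}v_i$, which by \ref{A2} equals $\chi_{\Omega_\eta}v$. Uniqueness of the limit therefore forces the whole sequence to converge, $\chi_{\Omega_{\eta_i}}v_i\weakto\chi_{\Omega_\eta}v$ in $L^2(I\times\R^3)$. Combining this weak convergence with the convergence of norms from the second step and the uniform convexity (Radon--Riesz property) of $L^2(I\times\R^3)$ yields $\chi_{\Omega_{\eta_i}}v_i\to\chi_{\Omega_\eta}v$ strongly in $L^2(I\times\R^3)$, which is precisely $v_i\to^\eta v$ strongly in $L^2(I,L^2(\Omega_{\eta_i}))$.

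Given Lemma \ref{thm:weakstrong}, no serious obstacle arises; the most delicate point is merely the admissibility of the ``constant'' test function $\chi_{\Omega\cup S_L}$, which is legitimate only because all the moving domains stay inside a region of finite measure, so that $\chi_{\Omega\cup S_L}\in L^{q'}(I,L^{a'}(\R^3))$ for the finite conjugate exponents $q',a'$.
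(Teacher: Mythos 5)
Your proof is correct and is exactly the intended derivation: apply Lemma \ref{thm:weakstrong} with $r_i=v_i$ to get $\chi_{\Omega_{\eta_i}}v_i^2\rightharpoonup\chi_{\Omega_\eta}v^2$, test against the indicator of the fixed bounded set containing all the moving domains to obtain convergence of the $L^2$-norms, and combine with the weak $L^2(I\times\R^3)$ convergence of $\chi_{\Omega_{\eta_i}}v_i$ via Radon--Riesz. The paper states the corollary without proof, and your argument is the standard one it relies on; your attention to the admissibility of the constant test function on the uniformly bounded space-time region is exactly the right point to check.
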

We finish this section be repeating  the following Aubin-Lions type lemma which is shown in~\cite[Theorem 5.1. \& Remark 5.2.]{MuSc}.
\begin{theorem}
\label{thm:auba}
Let $X,Z$ be two Banach spaces, such that $X'\subset Z'$.
Assume that $f_n:(0,T)\to X$ and $g_n: (0,T)\to X'$, such that $g_n\in L^\infty(0,T;Z')$ uniformly. Moreover assume the following: 
\begin{enumerate}
\item The {\em weak convergence}: for some $s\in [1,\infty]$ we have that $f_n\weaktostar f$ in $L^s(X)$ and $g_n\weaktostar g$ in $L^{s'}(X')$.
\item The {\em approximability-condition} is satisfied: For every $\kappa\in (0,1]$ there exists a $f_{n,\kappa}\in L^s(0,T;X)\cap L^1(0,T;Z)$,
%
such that for every $\epsilon\in (0,1)$ there exists a $\kappa_\epsilon\in(0,1)$ (depending only on $\epsilon$) such that 
\[
\norm{f_n-f_{n,\kappa}}_{L^s(0,T;X)}\leq \epsilon\text{ for all } \kappa\in (0,\kappa_\epsilon]
\]
and for every $\kappa\in (0,1]$ there is a $C(\kappa)$ such that
\[
\norm{f_{n,\kappa}}_{L^1(0,T;Z)}\, dt\leq C(\kappa).
\]
Moreover, we assume that for every $\kappa$ there is a function $f_\kappa$, and a subsequence such that $f_{n,\kappa}\weaktostar f_\kappa$ in $L^s(0,T;X)$.

\item The {\em equi-continuity} of $g_n$. We require that there exists an $\alpha\in (0,1]$ a functions $A_n$ with $A_n\in L^1(0,T)$ uniformly, such that for every $\kappa>0$ that there exist a $C(\kappa)>0$ and an $n_\kappa\in \N$ such that for $\tau>0$ and a.e.\ $t\in [0,T-\tau]$
\[
\sup_{n\geq n_{\kappa}} \Big|\dashint_{0}^\tau\skp{g_n(t)-g_n(t+s)}{f_{n,\kappa}(t)}_{X',X}\,\dd s\Big| \leq C(\kappa)\tau^\alpha(A_n(t)+1).
\]

\item The {\em compactness assumption} is satisfied: $X'\hookrightarrow \hookrightarrow Z'$. More precisely, every uniformly bounded sequence in $X'$ has a strongly converging subsequence in $Z'$. 
\end{enumerate}
Then there is a subsequence, such that
\[
\int_0^T\skp{f_n}{g_n}_{X,X'}\dt\to \int_0^T\skp{f}{g}_{X,X'}\dt.
\]
\end{theorem}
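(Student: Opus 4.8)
The final statement to prove is Theorem~\ref{thm:auba}, an Aubin--Lions type compactness result adapted to the variable-domain setting. The goal is to show that $\int_0^T\langle f_n,g_n\rangle_{X,X'}\,\dt\to\int_0^T\langle f,g\rangle_{X,X'}\,\dt$ under the four hypotheses (weak convergence, approximability, equi-continuity, compact embedding).

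\medskip

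\textbf{The approach.} The plan is to follow the classical Aubin--Lions scheme but to replace the (now unavailable) time-regularity of $f_n$ by the approximability condition, and to exploit the equi-continuity of $g_n$ (measured only \emph{against} the regularized functions $f_{n,\kappa}$) in place of an equicontinuity estimate on $g_n$ in a negative norm. First I would split the difference
\[
\int_0^T\langle f_n,g_n\rangle\,\dt-\int_0^T\langle f,g\rangle\,\dt
\]
into three pieces: (i) $\int_0^T\langle f_n-f_{n,\kappa},g_n\rangle\,\dt$, (ii) $\int_0^T\langle f_{n,\kappa},g_n\rangle\,\dt-\int_0^T\langle f_\kappa,g\rangle\,\dt$, and (iii) $\int_0^T\langle f_\kappa-f,g\rangle\,\dt$. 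Term (i) is controlled uniformly in $n$ by $\epsilon$ once $\kappa\le\kappa_\epsilon$, using $\|f_n-f_{n,\kappa}\|_{L^s(X)}\le\epsilon$ together with the uniform bound on $g_n$ in $L^{s'}(X')$ (from the weak convergence hypothesis, which in particular gives boundedness). Term (iii) is handled by first identifying $f_\kappa\to f$: since $f_{n,\kappa}\weaktostar f_\kappa$ in $L^s(X)$ while $f_n\weaktostar f$ in $L^s(X)$ and $\|f_n-f_{n,\kappa}\|_{L^s(X)}\le\epsilon$ for $\kappa$ small, a lower-semicontinuity argument gives $\|f_\kappa-f\|_{L^s(X)}\le\epsilon$, so (iii) is also $O(\epsilon)$ using $g\in L^{s'}(X')$. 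The heart of the matter is term (ii): for \emph{fixed} $\kappa$, one must show $\int_0^T\langle f_{n,\kappa},g_n\rangle\,\dt\to\int_0^T\langle f_\kappa,g\rangle\,\dt$ as $n\to\infty$.

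\medskip

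\textbf{Proving (ii) for fixed $\kappa$.} Here I would mollify $g_n$ in time: set $[g_n]_\tau(t)=\dashint_0^\tau g_n(t+s)\,\dd s$ (a Steklov average). Then
\[
\int_0^T\langle f_{n,\kappa},g_n\rangle\,\dt=\int_0^T\langle f_{n,\kappa},g_n-[g_n]_\tau\rangle\,\dt+\int_0^T\langle f_{n,\kappa},[g_n]_\tau\rangle\,\dt.
\]
The first term is exactly what the equi-continuity hypothesis (3) controls: it is bounded (up to harmless boundary corrections near $t=T$ handled by the $L^\infty(Z')$ bound on $g_n$) by $C(\kappa)\tau^\alpha\int_0^T(A_n(t)+1)\,\dt\le C(\kappa)\tau^\alpha(\|A_n\|_{L^1}+T)$, hence $\le C(\kappa)\tau^\alpha$ uniformly in $n\ge n_\kappa$. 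For the second term, observe that $f_{n,\kappa}\in L^1(0,T;Z)$ with $\|f_{n,\kappa}\|_{L^1(Z)}\le C(\kappa)$ uniformly; since $g_n$ is bounded in $L^\infty(Z')$ and $X'\hookrightarrow\hookrightarrow Z'$, the compactness hypothesis (4) lets me extract (for fixed $\kappa$, along a further subsequence) that $g_n\to$ some limit in an appropriate sense against $Z$-valued functions — more precisely, for fixed $\tau$ the time-averaged $[g_n]_\tau$ gains equicontinuity in time and, combined with the uniform $L^\infty(Z')$ bound and the compact embedding, an Arzel\`a--Ascoli / vector-valued argument yields strong convergence of $[g_n]_\tau$ in $C([0,T];Z')$ (or at least $L^1(0,T;Z')$) along a subsequence. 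Pairing this strong convergence with the weak convergence $f_{n,\kappa}\weaktostar f_\kappa$ in $L^s(X)$ — using that $f_{n,\kappa}$ is also bounded in $L^1(0,T;Z)$ so the duality pairing $\langle f_{n,\kappa},[g_n]_\tau\rangle_{X,X'}=\langle f_{n,\kappa},[g_n]_\tau\rangle_{Z,Z'}$ makes sense and passes to the limit — gives
\[
\int_0^T\langle f_{n,\kappa},[g_n]_\tau\rangle\,\dt\to\int_0^T\langle f_\kappa,[g]_\tau\rangle\,\dt\quad(n\to\infty),
\]
where $[g]_\tau$ is the Steklov average of the weak-$*$ limit $g$ of $g_n$ in $L^{s'}(X')$ (one checks $[g_n]_\tau\weaktostar[g]_\tau$, consistent with the strong limit). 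Finally, letting $\tau\to0$: $\int_0^T\langle f_\kappa,[g]_\tau\rangle\,\dt\to\int_0^T\langle f_\kappa,g\rangle\,\dt$ by standard properties of Steklov averages in $L^{s'}(X')$. Combining, for fixed $\kappa$: $\limsup_n|\int_0^T\langle f_{n,\kappa},g_n\rangle-\int_0^T\langle f_\kappa,g\rangle|\le C(\kappa)\tau^\alpha$ for every $\tau$, hence equals $0$. This settles (ii).

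\medskip

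\textbf{Conclusion and main obstacle.} Putting the three pieces together: given $\epsilon$, choose $\kappa\le\kappa_\epsilon$ so that (i) and (iii) are each $O(\epsilon)$, then send $n\to\infty$ so that (ii) vanishes; a diagonal subsequence argument over $\epsilon\to0$ (or simply noting $\epsilon$ is arbitrary) yields the claim. The delicate point — the main obstacle — is the rigorous justification in step (ii) that $[g_n]_\tau$ converges strongly in $Z'$ (uniformly or in $L^1$ in time) so that it can be paired with the merely weakly convergent $f_{n,\kappa}$: this requires combining the uniform $L^\infty(0,T;Z')$ bound, the compact embedding $X'\hookrightarrow\hookrightarrow Z'$, and an equicontinuity-in-time estimate for the \emph{averaged} sequence $[g_n]_\tau$ (which itself must be squeezed out of hypothesis (3), where the pairing is only tested against $f_{n,\kappa}$, not against general elements of $X$). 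One must be careful that the pairing $\langle\cdot,\cdot\rangle_{X,X'}$ and $\langle\cdot,\cdot\rangle_{Z,Z'}$ agree on the relevant elements (valid because $X'\subset Z'$ and $f_{n,\kappa},f_\kappa\in L^1(Z)$), and that all subsequence extractions are compatible; since the final limit $\int_0^T\langle f,g\rangle\,\dt$ is uniquely determined, the convergence in fact holds for the full (originally chosen) subsequence. The remaining estimates — bounding (i) and (iii), the Steklov-average limits, the lower-semicontinuity identification $f_\kappa\to f$ — are routine.
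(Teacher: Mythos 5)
The paper does not actually prove Theorem \ref{thm:auba}: it is quoted from \cite[Theorem 5.1 \& Remark 5.2]{MuSc}, so there is no in-paper proof to compare against. Your argument is a correct reconstruction of the proof given there — the three-term splitting with $f_{n,\kappa}$ and $f_\kappa$, the Steklov average $[g_n]_\tau$, hypothesis (3) to control $\int_0^T\langle f_{n,\kappa},g_n-[g_n]_\tau\rangle\,\dt$, and Arzel\`a--Ascoli combined with $X'\hookrightarrow\hookrightarrow Z'$ (plus the $L^{s'}(X')$ bound on the averages and the $L^1(Z)$ bound on $f_{n,\kappa}$) for the averaged part are exactly the ingredients of the cited argument, and the diagonal extraction over $\kappa$ and $\tau$ is handled as you describe. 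The only step you wave at rather than justify is the contribution from $t\in(T-\tau,T)$, where hypothesis (3) is silent: making that piece small uniformly in $n$ as $\tau\to0$ needs some equi-integrability of $t\mapsto\|f_{n,\kappa}(t)\|_{Z}\,\|g_n(t)\|_{Z'}$ (for fixed $\kappa$ this is a routine truncation, but it should be written out rather than dismissed as a boundary correction).
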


\subsection{Weak solutions and main theorem}
\label{sec:weak}
In accordance with the current state of the art for weak solutions of Navier-Stokes-Fourier law fluids and fluid-structure interactions, we introduce here our concept of weak solutions.
For that we introduce the following function spaces, where $\mathcal D(\bfu)=\frac{1}{2}(\nabla \bfu+\nabla \bfu^T)$ denotes the symmetric gradient of a given function:
\begin{enumerate}[label={(S\arabic{*})}]
\item For the solid deformation $\eta:I\times \omega\to \mathbb{R}$, 
$Y^I:=\{\zeta\in W^{1,\infty}(I;L^2(\omega))\cap L^\infty(I;W^{2,2}(\omega))\,\}$.
\item For the fluid velocity $\bfu:I\times \Omega_\eta\to\mathbb{R}^d$, $d=2,3$, $X_\eta^I:=\{\bfu\in L^2(I;L^2(\Omega_\eta))):\,\mathcal D(\bfu)\in L^2(I;L^{2}(\Omega_{\eta}))\}$.
\item For the fluid density $\varrho: I\times \Omega_\eta\to [0,\infty)$, $
 W_\eta^I:= C_w(\overline{I};L^\gamma(\Omega_\eta))$, where the subscript $w$ refers to continuity with respect to the weak topology.
\item For the temperature $\vt: I\times \Omega_\eta\to [0,\infty)$
\[
Z_\eta^I=\{\vt\in L^2(I;W^{1,2}(\Omega_\eta))\cap L^\infty(I;L^4(\Omega_\eta)):\log(\vt)\in L^2(W^{1,2}(\Omega_\eta))\}.
\]
\end{enumerate}
The definition of the function spaces above depending on $\eta$ only make sense provided $\|\eta\|_{L^\infty_{t,x}}<L$.
\begin{definition}
\label{def:weak-sol}
A weak solution to \eqref{eq1}--\eqref{initial} is a quadruplet $(\eta,\bfu,\varrho,\vt)\in \times Y^I\times X_\eta^I\times W_\eta^I\times Z_\eta^I$, which satisfies the following.
\begin{enumerate}[label={(O\arabic{*})}]
\item\label{O1} The momentum equation is satisfied in the sense that
\begin{align}\label{eq:apufinal}
\begin{aligned}
&\int_I\frac{\dd}{\dt}\int_{\Omega_{ \eta}}\varrho\bfu \cdot\bfphi\dxt-\int_{\Omega_{\eta}} \Big(\varrho\bfu\cdot \partial_t\bfphi +\varrho\bfu\otimes \bfu:\nabla \bfphi\Big)\dxt
\\
&+\int_I\int_{\Omega_\eta}\bfS(\vt,\nabla\bfu):\nabla\bfphi\dxt-\int_I\int_{\Omega_{ \eta }}
p(\vr,\vt)\,\Div\bfphi\dxt\\
&+\int_I\bigg(\frac{\dd}{\dt}\int_\omega \partial_t \eta b\dH-\int_\omega \partial_t\eta\,\partial_t b\dH + \int_\omega K'(\eta)\,b\dH\bigg)\dt
\\&=\int_I\int_{\Omega_{ \eta}}\varrho\bff\cdot\bfphi\dxt+\int_I\int_\omega g\,b\,\dd y\dt
\end{aligned}
\end{align} 
holds for all $(b,\bfphi)\in C^\infty(\omega)\times C^\infty(\overline{I}\times\R^3)$ with $\mathrm{tr}_{\eta}\bfphi=b\nu$. Moreover, we have $(\varrho\bfu)(0)=\bfq_0$, $\eta(0)=\eta_0$ and $\partial_t\eta(0)=\eta_1$. The boundary condition $\mathrm{tr}_\eta\bfu=\partial_t\eta\nu$ holds in the sense of Lemma \ref{lem:2.28}.
\item\label{O2} The continuity equation is satisfied in the sense that 
\begin{align}\label{eq:apvarrho0final}
\begin{aligned}
&\int_I\frac{\dd}{\dt}\int_{\Omega_{\eta}}\varrho \psi\dxt-\int_I\int_{\Omega_{\eta}}\Big(\varrho\partial_t\psi
+\varrho\bfu\cdot\nabla\psi\Big)\dxt=0
\end{aligned}
\end{align}
holds for all $\psi\in C^\infty(\overline{I}\times\R^3)$ and we have $\varrho(0)=\varrho_0$. 
\item \label{O2'} The entropy balance
\begin{align} \label{m217*final1}\begin{aligned}
 \int_I \frac{\dd}{\dt}\int_{\Omega_{\eta}} \vr s(\varrho,\vartheta) \, \psi\dxt
 &- \int_I \int_{\Omega_{\eta}} \big( \vr s(\varrho,\vartheta) \partial_t \psi + \varrho s (\varrho,\vartheta)\bfu \cdot \nabla\psi \big)\dxt
\\& \geq\int_I\int_{\Omega_{\eta}}
\frac{1}{\vartheta}\Big(\bfS(\vartheta,\nabla\bfu):\nabla\bfu+\frac{\varkappa(\vartheta)}{\vartheta}|\nabla\vartheta|^2\Big)
\psi\,\dif x\,\dif t \\
&+ \int_I\int_{\Omega_{\eta}} \frac{\varkappa(\vartheta)\nabla\vartheta}{\vartheta} \cdot \nabla\psi \dxt+ \int_I \int_{\Omega_{\eta}} \frac{\vr}{\vt} H \psi  \dxt
\end{aligned}
\end{align}
holds for all
$\psi\in C^\infty(\overline I\times \R^3)$ with $\psi \geq 0$; in particular, all integrals above are well defined.  Moreover, we have
$\lim_{r\rightarrow0}\vr s(\vr,\vt)(t)\geq \vr_0 s(\vr_0,\vt_0)$ and $\partial_{\nu_{\eta}}\vt|_{\partial\Omega_\eta}\leq 0$.
\item \label{O3} The total energy balance
\begin{equation} \label{EI20final}
\begin{split}
- \int_I \partial_t \psi \,
\mathcal E \dt &=
\psi(0) \mathcal E(0) + \int_I \psi \int_{\Omega} \vr H  \dxt+\int_I\psi\int_{\Omega_{\eta}}\varrho\bff\cdot\bfu\dxt\\&+\int_I\psi\int_\omega g\,\partial_t\eta\,\dd y\dt
\end{split}
\end{equation}
holds for any $\psi \in C^\infty_c([0, T))$.
Here, we abbreviated
$$\mathcal E(t)= \int_{\Omega_\eta(t)}\Big(\frac{1}{2} \varrho(t) | {\bf u}(t) |^2 + \varrho(t) e(\varrho(t),\vartheta(t))\Big)\dx+\int_\omega \frac{|\partial_t\eta(t)|^2}{2}\,\dd y+ K(\eta(t)).$$
\end{enumerate}
\end{definition}

As will be apparent by the analysis we will show that the renormalized continuity equation in the sense of DiPerna and Lions is satisfied, cf. \cite{DL,Li2}.
\begin{definition}[Renormalized continuity equation]
\label{def:ren}
Let $\eta\in Y^I$ and $\bfu\in X_\eta^I$. We say that the function $\varrho\in W_\eta^I$ solves the continuity equation~\eqref{eq1} in the renormalized sense if we have
\begin{align}\label{eq:final3b}
\begin{aligned}
\int_I\frac{\dd}{\dt}\int_{\Omega_\eta}\theta(\varrho)\psi\dxt&-\int_I\int_{\Omega_\eta}\Big(\theta(\varrho)\partial_t\psi +\theta(\varrho) \bfu\cdot \nabla \psi\Big)\dxt
\\
& =- \int_I\int_{\Omega_\eta}(\varrho\theta'(\varrho)-\theta(\varrho)) \diver\bfu \,\psi\dxt
\end{aligned}
\end{align}
for all $\psi\in C^\infty(\overline{I}\times\R^3)$ and all $\theta\in C^1(\R)$ with 
$\theta(0)=0$ and 
$\theta'(z)=0$ for $z\geq M_\theta$.
\end{definition}

We are now ready to formulate our main result.
\begin{theorem} \label{thm:MAIN}
Let $\gamma>\frac{12}{7}$ ($\gamma>1$ in two dimensions). Assume that we have
\begin{align*}
\frac{|\bfq_0|^2}{\varrho_0}&\in L^1(\Omega_{\eta_0}),\ \varrho_0\in L^\gamma(\Omega_{\eta_0}),\ \vt_0\in L^4(\Omega_{\eta_0}), \ \eta_0\in W^{2,2}(\omega),\ \eta_1\in L^2(\omega),\\
\bff&\in L^2(I;L^\infty(\R^3)),\ g\in L^2(I\times \omega),\ H\in L^\infty(I\times\R^3),\ H\geq0\ \text{a.e.}
\end{align*}
Furthermore suppose that $\varrho_0\geq0$ a.e., $\vt_0\geq0$ a.e. and that \eqref{eq:compa} is satisfied.
Then there exists a weak solution $(\eta,\bfu,\varrho,\vartheta)$ to \eqref{eq1}--\eqref{initial} in the sense of Definition~\ref{def:weak-sol}. 
The interval of existence is of the form $I=(0,t)$, where $t<T$ only in case $\Omega_{\eta(s)}$ approaches a self-intersection when $s\to t$ or the Koiter energy degenerates (namely, if $\lim_{s\to t}\overline{\gamma}(s,y)=0$ for some point $y\in \omega$). Moreover, the continuity equation is satisfied in the renormalized sense as specified in Definition~\ref{def:ren}.
\end{theorem}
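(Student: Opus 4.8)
\textbf{Proof strategy for Theorem~\ref{thm:MAIN}.}
The plan is to construct the weak solution as the limit of a multi-layer approximation scheme and then pass successively to the limit in the regularising parameters. First I would fix $\varepsilon,\delta>0$ and replace the pressure by $p_\delta$, the heat conductivity by $\varkappa_\delta$, the stress by $\bfS^\ep$ and add the artificial viscosity $\varepsilon\Del\vr$ to the continuity equation, as in \eqref{neu}. On this level I would decouple the geometry from the fluid on the Galerkin level: given a candidate boundary displacement $\eta$ (hence a domain $\Omega_\eta$ via $\bfPsi_\eta$), one solves the regularised continuity equation \eqref{eq1} and the internal energy equation \eqref{eq2b} using the a~priori estimates, minimum and maximum principles of Section~\ref{sec:3a} (this is where strict positivity of $\vt$ and the strong form of \eqref{eq2b} are obtained, which is essential since the entropy balance \eqref{eq:13'} is derived a~posteriori by dividing \eqref{eq2b} by $\vt$). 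Then a finite-dimensional Galerkin system of ODEs for $\bfu$ and $\eta$ (written in the coupled weak form \eqref{eq:apufinal} restricted to the Galerkin basis, using $\tr_\eta\bfphi=b\nu$) is solved via a fixed point argument; the solenoidal extension $\Testzeta$ of Proposition~\ref{prop:musc} is used to build admissible test functions for the coupled momentum/shell pair while avoiding the irregularity of the $L^1$-pressure near the moving boundary.

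The second stage is the passage $N\to\infty$ in the Galerkin dimension at fixed $\varepsilon,\delta$: here the uniform bounds come from the energy balance \eqref{eq:apriori0} and the entropy production \eqref{eq:0202}, giving control of $\mathcal D(\bfu)$ and $\nabla\vt$, together with the artificial terms $\delta\vr^\beta$, $\varepsilon|\nabla\bfu|^p$ and $\varepsilon\Del\vr$. Compactness of $\vr$, $\vt$ and $\bfu$ in the variable-domain sense of Definition~\ref{def:conv} is obtained from Lemma~\ref{thm:weakstrong} and Corollary~\ref{rem:strong} (verifying \ref{A1}--\ref{A3} for $r_i=\vr_n$ etc.\ via the respective equations), while strong convergence of $\partial_t\eta_n$ uses the Aubin--Lions type Theorem~\ref{thm:auba} with the equi-continuity estimate obtained by testing the coupled momentum equation with $\mathscr F_{\eta_n}b$ and estimating $\partial_t(\mathscr F_{\eta_n}b)$ as $\partial_t\eta_n$ via Corollary~\ref{cor:2807'} (the bound $\partial_t\eta_n\in L^2_t L^r_x$, $r<4$, from the trace Lemma~\ref{lem:2.28} combined with $\vr_n\in L^\infty_t L^\gamma_x$, $\bfu_n\in L^2_t L^6_x$, is exactly what forces $\gamma>\tfrac{12}{7}$). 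One identifies the effective viscous flux and derives the renormalized continuity equation (Definition~\ref{def:ren}); the entropy inequality \eqref{m217*final1} is passed to the limit using convexity/lower semicontinuity of the entropy production, and the total energy balance \eqref{EI20final} is preserved because at this level $\bfu$ is still regular enough.

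The third stage is the double limit $\varepsilon\to0$ and then $\delta\to0$, carried out in Section~\ref{sec:5}. The $\varepsilon$-limit removes the artificial viscosity and the $\varepsilon$-stress, requiring the usual compactness of the density via the effective viscous flux and renormalization, now in the variable-domain framework; the $\delta$-limit removes the artificial pressure, which is the classical delicate point requiring the $L^1_{t,x}$-a.e.\ convergence of $\vr$ on the fixed part of the domain via the Feireisl--Lions oscillation-defect-measure argument adapted to $\Omega_\eta$. The key new point, and the one I expect to be the main obstacle, is the \emph{strong convergence of the Koiter energy} $K(\eta_n)$ needed for the energy equality: since $K$ is not even well defined on $W^{2,2}(\omega)$ and is non-convex, one cannot argue by lower semicontinuity. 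I would follow Subsection~\ref{subsec:shell}: derive fractional-in-space estimates for $\nabla^2\eta_n$ by testing the shell equation \eqref{eq:4} with difference quotients of $\eta_n$ (which, through the coupling, forces a compatible difference-quotient test function $\mathscr F_{\eta_n}$ on $\Omega_{\eta_n}$ for the full momentum equation), exploiting the hidden viscous regularisation that the fluid transfers to the elastic solid, thereby upgrading $\eta_n$ to a space $W^{2+s,2}(\omega)$ compactly embedded into $W^{2,p}(\omega)$ for some $p>\beta$; then continuity of $K$ on $W^{2,p}(\omega)$ gives $K(\eta_n)\to K(\eta)$ and hence \eqref{eq:apriori0}. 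Finally, the time of existence is controlled by a standard continuation argument: the construction runs until either $\|\eta\|_{L^\infty_{t,x}}\to L$ (a self-intersection of $\partial\Omega_{\eta(t)}$ is approached) or $\overline{\gamma}(\eta)$ degenerates to zero at some point, destroying the $W^{2,2}$-coercivity of the Koiter energy invoked above.
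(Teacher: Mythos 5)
Your proposal follows essentially the same route as the paper: the $(\varepsilon,\delta)$-regularisation with a Galerkin/fixed-point construction in which $\varrho$ and $\vartheta$ are solved classically in the variable domain (yielding strict positivity of $\vartheta$ and the a posteriori entropy balance), Aubin--Lions-type compactness for $\partial_t\eta$ forcing $\gamma>\tfrac{12}{7}$, the effective viscous flux and renormalization for the density, and fractional difference-quotient estimates for the shell to recover the Koiter energy in the limit. The only small slip is the placement of the two extension operators: the solenoidal extension $\Testzeta$ is used precisely in the difference-quotient test for the shell regularity (to kill the $L^1$ pressure term), while the non-solenoidal $\mathscr F_{\eta}$ is the one used in the equi-continuity/Aubin--Lions argument — you state both roles but momentarily swap them in the last paragraph.
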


\begin{remark}[Minimal interval of existence]
Let us mention that for any admissible initial conditions there is a minimal positive interval of existence. It follows from the fact that $\eta$  (and consequently also $\overline{\gamma}$, cf.~\eqref{geomQuan}) can be shown to be uniformly continuous in space-time (with bounds depending on the data only). Consequently, for some non-empty open time-interval no self-touching or point of degeneracy can be approached a-priori.
\end{remark}

\begin{remark}[Simplification of notation]\label{rem:simple}
We remark that we will assume without further mentioning that the initial conditions for the elastic deformation are within a neighbourhood of the reference configurations. This simplification is, however, without loss of generality. Indeed, by rephrasing the reference geometry accordingly, the existence procedure can be prolonged until a point of self touching or degeneracy (in case of non-linear Koiter energies) is approached.
\end{remark}

\begin{remark}[Properties of the solution]\label{rem:korn}
As can be seen from the proof, in particular \eqref{NTDB}, we can control in addition to the energy the quantity
\begin{align*}
\sigma&=\frac{1}{\vartheta}\bfS(\vartheta,\nabla\bfu):\nabla\bfu+\frac{\varkappa(\vartheta)}{\vartheta^2}|\nabla\vartheta|^2
\end{align*}
in $L^1$. This implies that
\begin{enumerate}
\item The symmetric gradient $\mathcal D(\bfu)$ belongs to $L^2$ due to \eqref{m105}--\eqref{m106}. Since the domain is not Lipschitz continuous (at least not uniformly in time) standard results on Korn-type inequalities do not apply. In our context of domains with less regularity, a corresponding inequality is shown in
\cite[Prop. 2.9]{Le} following ideas of \cite{Ac}. The integrability of the full gradient is, however, less than the one of the symmetric gradient, that is we only have $\nabla \bfu\in L^q$ for all $q<2$.
\item  
All involved integrals in \ref{O2'} are finite. In particular,
 the temperature satisfies $\nabla\log \vt\in L^2$ using \eqref{m108}.
  Further one can deduce that $\log\vt\in L^2$ following~\cite[Section 2.2.4]{F}. This implies $\vt>0$ a.a. in $I\times\Omega_\eta$. 
\end{enumerate}
\end{remark}

\section{Equations for density and temperature in variable domains}
\label{sec:3a}
In this section we study the continuity equation (with artificial viscosity) as well as the internal energy equation
in variable domains. In Theorems \ref{thm:regrho} and \ref{thm:regtheta} we prove the existence of classical solutions to both equations under the assumption the data (the velocity field as well and the variable boundary) are smooth. In particular, we prove that the temperature stays strictly positive on the regularised level. This is a key ingredient for the remainder of the paper.

\subsection{The continuity equation}
\label{subsec:continuity}
In this subsection we are concerned with the regularised continuity equation in a (given) variable domain.
 We assume that the moving boundary is prescribed by a function $\zeta:\overline{I}\times \omega\rightarrow \R$.
 For a given function $\bfw\in L^2(I;W^{1,2}(\Omega_\zeta))$ with $\tr_{\zeta}\bfw=\partial_t\zeta\nu$ and $\varepsilon>0$ we consider the equation
\begin{align}\label{eq:warme}\begin{aligned}
\partial_t \varrho&+\Div(\varrho\bfw)=\varepsilon\Delta\varrho\quad\text{in}\quad I\times\Omega_\zeta,\\
\varrho(0)&=\varrho_0\text{ in }\Omega_{\zeta(0)},\quad\partial_{\nu_\zeta}\varrho\big|_{\partial\Omega_\zeta}=0\quad\text{on}\quad I\times\partial\Omega_\zeta.
\end{aligned}
\end{align}
 A weak solution to \eqref{eq:warme} satisfies
\begin{align}
\int_I\frac{\dd}{\dt}\int_{\Omega_\zeta}\varrho\psi\dx\dt
-\int_I\int_{\Omega_\zeta}\big(\varrho \partial_t\psi +\varrho \bfw\cdot\nabla \psi\big)\dxt=-\int_I\int_{\Omega_\zeta}\varepsilon\nabla\varrho\cdot\nabla\psi\dxt\label{eq:weak-masse}
\end{align}
for all $\psi\in C^\infty(\overline{I}\times \R^3)$. The following result has been proved in \cite[Thm. 3.1]{BrSc}(for the analogous results for fixed in time domains see~\cite[section 2.1]{feireisl1}).

\begin{theorem}\label{lem:warme}Let $\zeta\in C^{2,\alpha}(\overline{I}\times \omega,[-\frac{L}2,\frac{L}2])$ with $\alpha\in(0,1)$ be the function describing the boundary. Assume that $\bfw\in L^2(I;W^{1,2}(\Omega_\zeta))\cap L^\infty(I\times \Omega_\zeta)$ with $\tr_{\zeta}\bfw=\partial_t\zeta\nu$ and
$\varrho_0\in L^{2}(\Omega_{\zeta(0)})$.
\begin{enumerate}
\item[a)] There is a unique weak solution $\varrho$ to \eqref{eq:warme} such that
$$\varrho\in L^\infty(I;L^2(\Omega_{\zeta}))\cap L^2(I;W^{1,2}(\Omega_{\zeta})).$$
\item[b)] Let $\theta\in C^2(\setR_+;\setR_+)$ be such that $\theta'(s)= 0$
 for large values of $s$ and $\theta(0)=0$.
Then the following holds, for the canonical zero extension of $\varrho\equiv\varrho\chi_{\Omega_{\zeta}}$:
\begin{align}
\label{eq:renormz}
\begin{aligned}
\int_I\frac{\dd}{\dt}&\int_{\R^3} \theta(\varrho)\,\psi\dxt-\int_{I\times \R^3}\theta(\varrho)\,\partial_t\psi\dxt
\\
=&-\int_{I\times \R^3}\big(\varrho\theta'(\varrho)-\theta(\varrho)\big)\Div\bfw\,\psi\dx+\int_{I\times \R^3}\theta(\varrho) \bfw\cdot\nabla\psi\dxt\\ &-\int_{I\times\R^3}\varepsilon\chi_{\Omega_{\zeta}}\nabla \theta(\varrho)\cdot\nabla\psi\dxt-\int_{I\times \R^3}\varepsilon\chi_{\Omega_{\zeta}}\theta''(\varrho)|\nabla\varrho|^2\psi\dxt
\end{aligned}
\end{align}
for all $\psi\in C^\infty(\overline{I}\times\R^3)$.
\item[c)] Assume that $\varrho_0\geq0$ a.e. in $\Omega_{\zeta(0)}$. Then we have $\varrho\geq0$ a.e. in $I\times\Omega_\zeta$. 
\end{enumerate}
\end{theorem}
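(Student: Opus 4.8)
The plan is to establish Theorem~\ref{lem:warme} by transporting the variable-domain parabolic problem \eqref{eq:warme} back to the fixed reference domain $\Omega$ via the Hanzawa transform $\bfPsi_\zeta$, solving there by standard parabolic theory, and then pulling the solution (and its renormalized and sign properties) back. Since $\zeta\in C^{2,\alpha}(\overline I\times\omega)$, the map $\bfPsi_{\zeta(t)}$ is a $C^2$-diffeomorphism with $C^{1,\alpha}$-in-time dependence, so the conjugated density $\tilde\varrho(t,\cdot)=\varrho(t,\cdot)\circ\bfPsi_{\zeta(t)}$ satisfies a uniformly parabolic equation on the \emph{fixed} cylinder $I\times\Omega$ of the form $\partial_t\tilde\varrho - \varepsilon\,\mathrm{div}(A\nabla\tilde\varrho) + \mathrm{div}(\tilde\varrho\, \tilde{\mathbf b}) + \tilde c\,\tilde\varrho = 0$, where $A(t,x)$ is a symmetric, uniformly elliptic matrix (coming from $\nabla\bfPsi_\zeta$ and its Jacobian) with coefficients continuous in $(t,x)$, and $\tilde{\mathbf b},\tilde c$ are bounded; the conormal condition $\partial_{\nu_\zeta}\varrho=0$ becomes a homogeneous conormal (Neumann-type) condition for $A\nabla\tilde\varrho$ on $\partial\Omega$, and the transform is the identity near $\partial\Omega$ outside $S_L$ so no compatibility issue arises there.

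\medskip

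\textbf{Part (a).} On the fixed domain, existence and uniqueness of a weak solution $\tilde\varrho\in L^\infty(I;L^2(\Omega))\cap L^2(I;W^{1,2}(\Omega))$ follows from the standard Galerkin method for linear parabolic equations in divergence form with bounded measurable (indeed continuous) coefficients and conormal boundary data: one tests the equation with $\tilde\varrho$ to get the basic energy estimate $\|\tilde\varrho\|_{L^\infty_tL^2_x}^2 + \varepsilon\|\nabla\tilde\varrho\|_{L^2_tL^2_x}^2 \lesssim \|\varrho_0\|_{L^2}^2$ (absorbing the first-order and zero-order terms by Young's inequality and Gr\"onwall, using $\|\tilde{\mathbf b}\|_\infty+\|\tilde c\|_\infty<\infty$), and uniqueness follows by linearity and the same test. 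Transporting back by Lemma~\ref{lem:sobdif} (parts a,b, valid here since $\zeta$ is even smoother than $W^{2,2}$) gives $\varrho\in L^\infty(I;L^2(\Omega_\zeta))\cap L^2(I;W^{1,2}(\Omega_\zeta))$ solving \eqref{eq:weak-masse}; one checks that the weak formulation is preserved under the change of variables by testing with $\psi\circ\bfPsi_\zeta$ and using that $\partial_t$ of the transformed test function produces exactly the first-order terms matching $\tilde{\mathbf b}$.

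\medskip

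\textbf{Part (b): the renormalized identity.} For $\theta\in C^2$ with $\theta'$ compactly supported and $\theta(0)=0$, the natural route is a parabolic regularization / mollification argument: either use that the solution of the \emph{nondegenerate} ($\varepsilon>0$) equation has enough regularity (e.g. $\partial_t\tilde\varrho\in L^2(I;(W^{1,2})')$ and, by De Giorgi--Nash--Moser or simply higher testing, $\varrho\in L^2(I;W^{1,2})$ already suffices) to justify the chain rule; concretely, multiply \eqref{eq:warme} by $\theta'(\varrho)\psi$, which is an admissible operation because $\theta'$ is bounded and $\varrho\in L^2_tW^{1,2}_x$, and integrate by parts. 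The term $\varepsilon\Delta\varrho\cdot\theta'(\varrho)\psi$ produces $-\varepsilon\nabla\theta(\varrho)\cdot\nabla\psi - \varepsilon\theta''(\varrho)|\nabla\varrho|^2\psi$ after one integration by parts (the second term has a definite sign when $\theta$ is convex/concave but here it is simply kept), the transport term gives $\theta(\varrho)\bfw\cdot\nabla\psi - (\varrho\theta'(\varrho)-\theta(\varrho))\,\mathrm{div}\,\bfw\,\psi$, and the time term gives $\frac{\dd}{\dt}\int\theta(\varrho)\psi - \int\theta(\varrho)\partial_t\psi$; passing to the zero extension across $\partial\Omega_\zeta$ is legitimate because the conormal boundary condition kills the boundary flux and $\theta(0)=0$ makes the extension continuous. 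Technically the cleanest implementation is to do this on the fixed domain for $\tilde\varrho$ (where parabolic smoothing gives classical-enough solutions, since the coefficients are continuous we get $\tilde\varrho\in C((0,T];C^0)$ and interior $W^{2,1}_p$ regularity) and then change variables.

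\medskip

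\textbf{Part (c): nonnegativity.} Test the equation with $\varrho_- := \min\{\varrho,0\}$ (or its smooth truncations $\theta_k$ with $\theta_k(s)\to s_-$, using part (b)); the second-order term gives $-\varepsilon\int|\nabla\varrho_-|^2$, the transport term is handled by the renormalized identity with $\theta$ approximating $s\mapsto \tfrac12 s_-^2$, and one arrives at $\frac{\dd}{\dt}\int\tfrac12|\varrho_-|^2 \le C\int|\varrho_-|^2$, whence $\varrho_-\equiv 0$ by Gr\"onwall since $(\varrho_0)_-=0$. The main obstacle in the whole argument is not any single deep estimate but rather the bookkeeping of the change of variables: verifying that the conormal boundary condition $\partial_{\nu_\zeta}\varrho=0$ transforms correctly into a coercive conormal problem for $\tilde\varrho$ (so that no boundary terms are lost in the weak formulation, the renormalization, or the sign test), and checking that the time-derivative of $\bfPsi_{\zeta(t)}$ — which is genuinely nonzero because the domain moves — contributes only lower-order (bounded, first-order) terms that do not spoil parabolicity. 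Once the transformed equation is written down cleanly, all three parts reduce to textbook linear parabolic theory (e.g.\ as in \cite[Section 2.1]{feireisl1} on fixed domains), and indeed the statement is quoted as \cite[Thm. 3.1]{BrSc}.
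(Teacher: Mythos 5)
Your proposal is correct and follows essentially the same route as the source: the paper itself does not prove this theorem but quotes it from \cite[Thm.\ 3.1]{BrSc}, and both that proof and the paper's own proof of the companion regularity result (Theorem \ref{thm:regrho}) proceed exactly as you do, by transporting the problem to the fixed reference domain via $\bfPsi_\zeta$, using the coupling $\tr_\zeta\bfw=\partial_t\zeta\nu$ to turn the boundary condition into a homogeneous conormal condition with no extra boundary flux, and invoking standard linear parabolic theory plus the chain rule for the renormalized identity and the negative-part test. The only caveat is your side remark about interior $W^{2,1}_p$/classical smoothing (the coefficients inherited from $\bfw\in L^2_tW^{1,2}_x\cap L^\infty$ are not regular enough for that), but it is not load-bearing since your alternative justification via $\varrho\in L^2_tW^{1,2}_x$, $\partial_t\varrho\in L^2_t(W^{1,2}_x)'$ and boundedness of $\theta'$, $\theta''$ is the correct one.
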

\begin{remark}

\begin{itemize}
Observe that:
\item The statement in \cite{BrSc} holds without the assumption $\tr_{\zeta}\bfw=\partial_t\zeta\nu$ under the boundary condition $\partial_{\nu_\zeta}\varrho\big|_{\partial\Omega_\zeta}=\tfrac{1}{\varepsilon}\varrho(\bfw-(\partial_t\zeta\nu)\circ\bfvarphi^{-1}_\zeta)\cdot\nu_\zeta$.
\item Theorem 3.1 in \cite{BrSc} is formulated with the stronger assumption $\zeta\in C^{3}(\overline{I}\times \omega,[-\frac{L}2,\frac{L}2])$.
 However, it can be checked that the condition $\zeta\in C^{2,\alpha}(\overline{I}\times \omega,[-\frac{L}2,\frac{L}2])$ is sufficient for the proof.
 \end{itemize}
\end{remark}
In the following we improve the result from Theorem \ref{lem:warme} and obtain a classical solution to \eqref{eq:warme}.
\begin{theorem}\label{thm:regrho}
Let the assumptions of Theorem \ref{lem:warme} be satisfied and suppose additionally that $\partial_t\nabla^2\zeta$ as well as $\nabla^3\zeta$ belong to the class $C^{\alpha}(\overline I\times \omega)$. Furthermore we assume that  $J_\zeta:=\mathrm{det}\nabla\bfPsi_\zeta$ is strictly positive, $\varrho_0\in C^{2,\alpha}(\overline\Omega_{\zeta(0)})$ and 
$\bfw\in C^{1,\alpha}(\overline I\times\overline\Omega_\zeta)$ such that $\partial_t\nabla\bfw$ and $\nabla^2\bfw$ belong to the class $C^{\alpha}(\overline I\times \overline\Omega_\zeta)$. 
\begin{enumerate}
\item
The solution $\varrho$ from Theorem \ref{lem:warme} satisfies \eqref{eq:warme} in the classical sense and belongs to the regularity class
\begin{align*}
\mathcal Z^{I}_\zeta:=\big\{z \in C^1(\overline I\times\overline\Omega_\zeta):\ \nabla^2z \in C(\overline I\times\overline\Omega_\zeta)\big\}.
\end{align*}
In particular, we have
\begin{align*}
\|\varrho\|_{C^1_{t,x}}+\|\nabla^2\varrho\|_{C_{t,x}}\leq\,c\Big(\varrho_0,\zeta,\sup J_\zeta^{-1},\bfw\Big),
\end{align*}
with dependence via the (semi-)norms in the affirmative function spaces.
\item Suppose that $\varrho_0\geq0$. Then we have the estimate
\begin{align*}
 C^{-1}\min_{\Omega_{\zeta(0)}}\varrho_0\leq \max_{I\times\Omega_\zeta} \varrho\leq C\max_{\Omega_{\zeta(0)}}\varrho_0,
\end{align*}
where $C=C(\zeta,\sup J_\zeta^{-1},\bfw)$ with dependence via the (semi-)norms in the affirmative function spaces.
\end{enumerate}
\end{theorem}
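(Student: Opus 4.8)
The plan is to transport the equation \eqref{eq:warme} from the moving domain $\Omega_\zeta$ onto the fixed reference domain $\Omega$ via the Hanzawa diffeomorphism $\bfPsi_\zeta$, and there apply classical parabolic Schauder theory. First I would set $\widetilde\varrho(t,\cdot)=\varrho(t,\cdot)\circ\bfPsi_{\zeta(t)}$ on $\overline\Omega$. Using the regularity hypotheses on $\zeta$ (namely $\partial_t\nabla^2\zeta,\nabla^3\zeta\in C^\alpha$, together with $\zeta\in C^{2,\alpha}$ as in Theorem~\ref{lem:warme}), the diffeomorphism $\bfPsi_\zeta$ has coefficients (and a Jacobian $J_\zeta$, bounded below by assumption) whose spatial derivatives up to second order and whose mixed derivative $\partial_t\nabla$ lie in $C^\alpha(\overline I\times\overline\Omega)$. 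Pulling back \eqref{eq:warme} then yields a uniformly parabolic linear equation for $\widetilde\varrho$ of the form $\partial_t\widetilde\varrho=\varepsilon\,a^{ij}(t,x)\partial_i\partial_j\widetilde\varrho+\varepsilon\,b^i(t,x)\partial_i\widetilde\varrho+\vc c(t,x)\cdot\nabla\widetilde\varrho+d(t,x)\widetilde\varrho$ on $\overline I\times\overline\Omega$, with a conormal (homogeneous Neumann-type) boundary condition inherited from $\partial_{\nu_\zeta}\varrho|_{\partial\Omega_\zeta}=0$; here the zeroth-order convective coefficients absorb $\bfw$ and the time-derivative of $\bfPsi_\zeta^{-1}$. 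By the transport rules recorded after \eqref{eq:2401b} these coefficients behave like the corresponding derivatives of $\zeta$ and $\bfw$, so they all lie in $C^\alpha(\overline I\times\overline\Omega)$, and the compatibility of the (smooth) initial datum $\varrho_0\in C^{2,\alpha}$ with the boundary condition can be assumed as in Remark after \eqref{eq:compa} (or arranged by a standard mollification of the data that does not affect the estimates).

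Second, with the transported problem now a genuinely linear second-order parabolic equation with $C^\alpha$ coefficients on a fixed $C^{2,\alpha}$ domain, I would invoke the classical linear Schauder estimate for the oblique-derivative problem (e.g.\ Ladyzhenskaya--Solonnikov--Uraltseva, or Lieberman) to conclude $\widetilde\varrho\in C^{1+\alpha/2,2+\alpha}_{t,x}(\overline I\times\overline\Omega)$, with the norm controlled by $\|\varrho_0\|_{C^{2,\alpha}}$ and the $C^\alpha$-norms of the coefficients, i.e.\ ultimately by $\varrho_0$, $\zeta$, $\sup J_\zeta^{-1}$ and $\bfw$ through the stated (semi-)norms. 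Transporting back via $\bfPsi_\zeta^{-1}$ (which enjoys the same regularity, by the chain rule and the lower bound on $J_\zeta$) gives $\varrho=\widetilde\varrho\circ\bfPsi_\zeta^{-1}\in\mathcal Z^I_\zeta$ and the asserted bound on $\|\varrho\|_{C^1_{t,x}}+\|\nabla^2\varrho\|_{C_{t,x}}$; uniqueness already follows from Theorem~\ref{lem:warme}(a), so this solution is \emph{the} one from that theorem.

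Third, for part (2) I would derive the two-sided bound by a maximum-principle argument adapted to the moving geometry. The clean way is to test a renormalized form of the equation, or equivalently to work on the fixed domain with the transported equation: writing \eqref{eq:warme} along characteristics of the vector field $\bfw$, one has $\frac{d}{dt}\varrho + \varrho\,\diver\bfw = \varepsilon\Delta\varrho$, so that the quantity $\varrho\,\exp\!\big(\int_0^t\diver\bfw\big)$ satisfies a parabolic equation without zeroth-order term and with homogeneous conormal boundary condition; the weak/strong maximum principle then yields $\min_{\Omega_{\zeta(0)}}\varrho_0\cdot e^{-t\|\diver\bfw\|_\infty}\le \varrho(t,x)\le \max_{\Omega_{\zeta(0)}}\varrho_0\cdot e^{t\|\diver\bfw\|_\infty}$, which is exactly the claimed estimate with $C=C(\zeta,\sup J_\zeta^{-1},\bfw)$. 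Nonnegativity is already part (c) of Theorem~\ref{lem:warme}, so only the upper and lower exponential bounds are new, and they follow from the substitution above plus the classical parabolic comparison principle on the (smooth, fixed) reference domain.

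The main obstacle I expect is bookkeeping the regularity of the geometric coefficients: one must check that after pulling back, every coefficient of the transported equation genuinely lies in $C^\alpha(\overline I\times\overline\Omega)$ — in particular that the mixed space-time derivatives produced by $\partial_t\bfPsi_\zeta^{-1}$ do not exceed the available regularity of $\zeta$ — and that the conormal boundary operator itself has $C^{1+\alpha}$ coefficients so that the oblique-derivative Schauder theory applies. This is precisely why the hypotheses demand $\partial_t\nabla^2\zeta,\nabla^3\zeta\in C^\alpha$ on top of $\zeta\in C^{2,\alpha}$: the extra derivative is needed so that $\varepsilon\Delta$ transported onto $\Omega$ has $C^\alpha$ principal coefficients and so that differentiating the equation once more (to reach $\nabla^2\varrho\in C_{t,x}$, i.e.\ $C^{2+\alpha}$ in space) stays within the data's regularity. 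A secondary technical point is the initial-boundary compatibility required by Schauder theory at $t=0$; as the footnote after \eqref{eq:compa} indicates, this is harmless because we may replace $\varrho_0$ by a compatible smooth approximation without changing the final estimates, or simply note the estimates are stated in terms of $\|\varrho_0\|_{C^{2,\alpha}}$ and are stable under such an approximation.
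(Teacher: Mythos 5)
Your part (1) follows the paper's own route: pull back by the Hanzawa map, obtain a uniformly parabolic equation with $C^\alpha$ coefficients and a conormal boundary condition on the fixed domain, and invoke the Ladyzhenskaya--Solonnikov--Uraltseva theory. The one step you gloss over is why the boundary condition on the reference domain is genuinely homogeneous: the pull-back produces a flux term $\overline\varrho\,\bff_\zeta\cdot\nu$ on $\partial\Omega$, and its vanishing is not automatic but follows from the kinematic coupling $\tr_\zeta\bfw=\partial_t\zeta\nu$, which forces $\partial_t\bfPsi_\zeta^{-1}\circ\bfPsi_\zeta=-\nabla\bfPsi_\zeta^{-T}\overline\bfw$ on $\partial\Omega$; the paper verifies this explicitly and you should too. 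Your upper bound in part (2) is likewise in the spirit of the paper's argument (an exponential-in-time substitution combined with an auxiliary function handling the boundary).

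The genuine divergence, and the weak point, is the lower bound. First, the substitution you propose is not what you claim: $\int_0^t\diver\bfw$ taken along characteristics is a function of $(t,x)$, so applying $\varepsilon\Delta$ to $\varrho\exp(\cdots)$ reintroduces first- and zeroth-order terms; the new equation is \emph{not} free of zeroth-order terms, though the extra coefficient is bounded and can be absorbed by a further factor $e^{-\lambda t}$. Second, and more importantly, the paper explicitly points out that the maximum-principle argument via the auxiliary function $\varphi$ \emph{fails} for the minimum: at a potential boundary minimum the contradiction with $\nabla\varphi\cdot\bfA_\zeta\nu<0$ requires strict positivity of $\overline\varrho$ there, which is exactly what one is trying to prove. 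The paper circumvents this with an $L^{-m}$ Moser-type iteration (testing with $-m(\xi+\overline\varrho)^{-m+1}$, Gronwall, then $m\to\infty$, $\xi\to0$). Your route could instead be closed by invoking the parabolic Hopf boundary-point lemma for the conormal problem (a strict sign of $\bfA_\zeta\nabla u\cdot\nu$ at a boundary minimum, independent of the sign of $u$), which would give a genuinely different and shorter proof of the lower bound; but as written you merely assert "the classical parabolic comparison principle" precisely at the point where the paper identifies the obstruction, so the argument is incomplete until you either cite and verify the hypotheses of that Hopf lemma or reproduce an iteration of the paper's type.
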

\begin{proof}
We start by transforming \eqref{eq:weak-masse} to the reference domain. For $\overline\psi\in C^\infty(\overline I\times\R^3)$ we set $\psi=\overline\psi\circ\bfPsi_\zeta^{-1}$. Defining similarly
$\overline\varrho=\varrho\circ\bfPsi_\zeta$ and $\overline\bfw=\bfw\circ\bfPsi_\zeta$
we obtain from \eqref{eq:weak-masse}
\begin{align*}
\int_I\frac{\dd}{\dt}\int_{\Omega_\zeta}\overline\varrho\circ\bfPsi_\zeta^{-1}\,\overline\psi\circ\bfPsi_\zeta^{-1}\dx\dt
&=\int_I\int_{\Omega_\zeta}\overline\varrho\circ\bfPsi_\zeta^{-1} \big(\partial_t\overline\psi\circ\bfPsi_\zeta^{-1}+\nabla\overline\psi\circ\bfPsi_\zeta^{-1}\cdot\partial_t\bfPsi_\zeta^{-1}\big)\dxt\\
&+\int_I\int_{\Omega_\zeta}\overline\varrho\circ\bfPsi_\zeta^{-1} \overline\bfw\circ\bfPsi_\zeta^{-1}\cdot(\nabla \bfPsi_\zeta)^{-1}\nabla \overline\psi\circ\bfPsi_\zeta^{-1}\dxt
\\&-\int_I\int_{\Omega_\zeta}\varepsilon\big(\nabla \bfPsi_\zeta^{-1}\big)^{T}\nabla \bfPsi_\zeta^{-1}\nabla\overline\varrho\circ\bfPsi_\zeta^{-1}\cdot\nabla\overline\psi\circ\bfPsi_\zeta^{-1}\dxt
\end{align*}
such that
\begin{align*}
\int_I\frac{\dd}{\dt}\int_{\Omega}J_{\zeta}\overline\varrho\,\overline\psi\dx\dt
&=\int_I\int_{\Omega}J_{\zeta}\overline\varrho \big(\partial_t\overline\psi+\nabla\overline\psi\cdot\partial_t\bfPsi_\zeta^{-1}\circ\bfPsi_\zeta\big)\dxt\\
&+\int_I\int_{\Omega}J_\zeta\overline\varrho\overline\bfw\cdot\big(\nabla \bfPsi_\zeta\big)^{-1}\nabla \overline\psi\dxt
\\&-\int_I\int_{\Omega}\varepsilon J_\zeta\big(\nabla \bfPsi_\zeta\big)^{-T}\big(\nabla \bfPsi_\zeta\big)^{-1}\nabla\overline\varrho\cdot\nabla\overline\psi\dxt,
\end{align*}
where $J_\zeta=\mathrm{det}\nabla\bfPsi_\zeta$. Finally, we replace $\overline\psi$ by $\overline\psi/J_\zeta$ to obtain
\begin{align*}
\int_I\frac{\dd}{\dt}\int_{\Omega}\overline\varrho\,\overline\psi\dx\dt
&=\int_I\int_{\Omega}\big(\overline\varrho \partial_t\overline\psi+\overline\varrho J_\zeta\partial_t J_\zeta^{-1}\overline\psi\big)\dxt\\&+\int_I\int_{\Omega}\big(\overline\varrho \partial_t\bfPsi_\zeta^{-1}\circ\bfPsi_\zeta\cdot\nabla\overline\psi+\overline\varrho J_\zeta\nabla J_\zeta^{-1}\cdot\partial_t\bfPsi_\zeta^{-1}\circ\bfPsi_\zeta\overline\psi\big)\dxt\\
&+\int_I\int_{\Omega}\overline\varrho\overline\bfw\cdot\big(\nabla \bfPsi_\zeta\big)^{-1}\nabla \overline\psi\dxt+\int_I\int_{\Omega}\overline\varrho\overline\bfw\cdot J_\zeta \big(\nabla \bfPsi_\zeta\big)^{-1}\nabla J_\zeta^{-1}\overline\psi\dxt
\\&-\int_I\int_{\Omega}\varepsilon\big(\nabla \bfPsi_\zeta\big)^{-T}\big(\nabla \bfPsi_\zeta\big)^{-1}\nabla\overline\varrho\cdot\nabla\overline\psi\dxt\\
&-\int_I\int_{\Omega}\varepsilon J_\zeta\big(\nabla \bfPsi_\zeta\big)^{-T}\big(\nabla \bfPsi_\zeta\big)^{-1}\nabla\overline\varrho\cdot\nabla J_\zeta^{-1}\overline\psi\dxt.
\end{align*}
Now we set
\begin{align*}
g_\zeta&=J_\zeta\partial_t J_\zeta^{-1}+ J_\zeta\nabla J_\zeta^{-1}\cdot\partial_t\bfPsi_\zeta^{-1}\circ\bfPsi_\zeta+\overline\bfw\cdot J_\zeta \big(\nabla \bfPsi_\zeta\big)^{-1}\nabla J_\zeta^{-1}\\
\bfg_\zeta&=-\varepsilon J_\zeta\big(\nabla \bfPsi_\zeta\big)^{-T}\big(\nabla \bfPsi_\zeta\big)^{-1}\nabla J_\zeta^{-1},\\
\bff_\zeta&= \partial_t\bfPsi_\zeta^{-1}\circ\bfPsi_\zeta+\big(\nabla \bfPsi_\zeta\big)^{-T}\overline\bfw,\quad \bfA_\zeta=\varepsilon\big(\nabla \bfPsi_\zeta\big)^{-T}\big(\nabla \bfPsi_\zeta\big)^{-1},
\end{align*}
such that the equation reads as
\begin{align*}
-\int_I\int_{\Omega}\overline\varrho\,\partial_t\overline\psi\dx\dt
&=\int_I\int_{\Omega}\overline\varrho\,g_\zeta \overline\psi\dxt+\int_I\int_{\Omega}\nabla\overline\varrho\cdot\bfg_\zeta \overline\psi\dxt+\int_I\int_{\Omega}\overline\varrho \bff_\zeta\cdot\nabla\overline\psi\dxt
\\&-\int_I\int_{\Omega}\bfA_\zeta\nabla\overline\varrho\cdot\nabla\overline\psi\dxt
\end{align*}
for any $\overline\psi$ with $\overline\psi(0)=\overline\psi(T)=0$.
Choosing $\overline\psi\in C^\infty_c(I\times\Omega)$ arbitrarily we obtain
\begin{align*}
\partial_t\overline\varrho&=\overline\varrho g_\zeta+\nabla\overline\varrho\cdot\bfg_\zeta-\Div(\overline\varrho\bff_\zeta)+\Div\big(\bfA_\zeta\nabla\overline\varrho\big)\\
&=\overline\varrho\big( g_\zeta-\Div \bff_\zeta\big)+\nabla\overline\varrho\cdot\big(\bfg_\zeta-\bff_\zeta\big)+\Div\big(\bfA_\zeta\nabla\overline\varrho\big)
\end{align*}
and we have the boundary condition
\begin{align}\label{1611}
\nu\cdot\bfA_\zeta\nabla\overline\varrho=\overline\varrho\bff_\zeta\cdot\nu=0.
\end{align}
Here we use that $\partial_t\bfPsi_\zeta^{-1}\circ\bfPsi_\zeta=-\nabla \bfPsi_\zeta^{-T}\overline\bfw$ on $\partial\Omega$ due to the assumption $\tr_{\zeta}\bfw=\partial_t\zeta\nu$. In fact, we have
\begin{align*}
0=\partial_t\big(\bfPsi_\zeta\circ\bfPsi_\zeta^{-1}\big)=\partial_t\bfPsi_\zeta\circ\bfPsi_\zeta^{-1}+\nabla\bfPsi_\zeta^T\circ\bfPsi_\zeta^{-1}\partial_t\bfPsi_\zeta^{-1}
\end{align*}
such that
\begin{align*}
\nabla \bfPsi_\zeta^{T}\partial_t\bfPsi_\zeta^{-1}\circ\bfPsi_\zeta=-\partial_t\bfPsi_\zeta=-(\partial_t\zeta\nu)\circ\bfvarphi=-\bfw\circ\bfPsi_\zeta=-\overline\bfw
\end{align*}
on $I\times\partial\Omega$ due to the definition of $\bfPsi_\zeta$ from \eqref{map}.\\
We can rewrite the equation further as
\begin{align*}
\partial_t\overline\varrho
&=\overline\varrho\big( g_\zeta-\Div \bff_\zeta\big)+\nabla\overline\varrho\cdot\big(\bfg_\zeta-\bff_\zeta\big)+\Div\bfA_\zeta\cdot\nabla\overline\varrho+\bfA_\zeta:\nabla^2\overline\varrho
\end{align*}
such that we finally obtain
\begin{align}\label{eq:LSU1}
\begin{aligned}
\partial_t\overline\varrho
&+b_\zeta(t,x,\overline\varrho,\nabla\overline\varrho)=\bfA_\zeta:\nabla^2\overline\varrho\quad\text{in}\quad I\times\Omega,\\
&\qquad\nu\bfA_\zeta\cdot\nabla\overline\varrho=0\quad \text{on}\quad I\times\partial\Omega,
\end{aligned}
\end{align}
where
\begin{align*}
b_\zeta(t,x,u,\bfU)&=-u\big( g_\zeta-\Div \bff_\zeta\big)+\bfU\cdot\big(\bff_\zeta-\bfg_\zeta-\Div\bfA_\zeta\big).
\end{align*}
By the classical theory from
\cite[Thm.'s 7.2, 7.3 \& 7.4, Chapter V]{LSU} the claim of part (a) follows if we can control the following quantities:\footnote{Note that this gives the required regularity for $\overline\varrho$; however, transforming back by means of $\bfPsi_\zeta^{-1}$ does not alter it due to the regularity of $\zeta$.}
\begin{itemize}
\item The $C^{2,\alpha}$-norm of $\varrho_0$;
\item The $\alpha$-H\"older-semi-norms of $\nabla_x b_\zeta,\partial_u b_\zeta$ and $\partial_\bfU b_\zeta$ with respect to $x$; the constants in
\begin{align*}
-u b_\zeta(t,x,u,\bfU)\leq\,c_0u^2+c_1|\bfU|^2+c_2\quad \forall(t,x,u,\bfU)\in I\times\Omega\times\R\times\R^3;\\
|b_\zeta(t,x,u,\bfU)|+|\nabla_{(t,u)} b_\zeta(t,x,u,\bfU)|+(1+|\bfU|)|\nabla_\bfU b_\zeta(t,x,u,\bfU)|\leq\,c_3(1+u^2+|\bfU|^2);
\end{align*}
\item The coercivity constant of $\bfA_\zeta$ and its upper bound; the $\alpha$-H\"older-semi-norm of $\nabla_x\bfA_\zeta$ and $\partial_u\bfA_\zeta$ with respect to $x$.
\end{itemize} 
{One readily checks that all these quantities can be controlled in terms of $\|\varrho_0\|_{C^{2,\alpha}_x}$, $\|\zeta\|_{C^{2,\alpha}_{t,x}}$, $\|\partial_t\nabla^2\zeta\|_{C^{\alpha}_{t,x}}$, $\|\nabla^3\zeta\|_{C^{\alpha}_{t,x}},\sup J_\zeta^{-1},\|\bfw\|_{C^{1,\alpha}_{t,x}}$ and $\|\nabla^2\bfw\|_{C^\alpha_{t,x}}$.}\\
In order to prove (b) we argue similarly to the classical arguments from
\cite[Thm. 7.3 Chapter V]{LSU} and set
\begin{align*}
v(t,x):=\varphi(x)e^{-\lambda_1 t}\overline\varrho
\end{align*}
to verify the estimate from above. Here $\varphi\in C^\infty(\overline\Omega)$ is constructed such that it satisfies
\begin{align}
\label{eq:testLSU1}\varphi(x)&\geq1\quad\text{in}\quad  \overline\Omega,\\
\frac{\nabla\varphi\cdot\bfA_\zeta\nu}{\varphi}&<0 \quad\text{on}\quad \overline{I}\times\partial\Omega\label{eq:testLSU2}.
\end{align}
Such a function $\varphi$ can be defined using the distance function to the boundary with respect to the direction $\bfA_\zeta\nu$. By the assumption that $\norm{\zeta}_{L^\infty_{t,x}}\leq \frac{L}2$ and the $C^2$ regularity of $\zeta$ this is a well defined function. Note that $\varphi$ is chosen independently of $\lambda_1$ (which we will fix below). We have by \eqref{eq:LSU1} using the linearity of b
\begin{align*}
&\partial_t v=\varphi e^{-\lambda_1 t}\partial_t\overline\varrho-\lambda_1 v=
-\varphi e^{-\lambda_1 t}b_\zeta(t,x,\overline\varrho,\nabla\overline\varrho)+\varphi e^{-\lambda_1 t}\bfA_\zeta:\nabla^2\overline\varrho-\lambda_1 v\\
&=
-b_\zeta(t,x,\varphi e^{-\lambda_1 t}\overline\varrho,\varphi e^{-\lambda_1 t}\nabla\overline\varrho)+\varphi e^{-\lambda_1 t}\bfA_\zeta:\nabla^2\overline\varrho-\lambda_1 v\\
&=
-b_\zeta(t,x,v,\nabla v-\overline\varrho e^{-\lambda_1 t}\nabla\varphi)+ \bfA_\zeta:\nabla^2 v-\bfA_\zeta:\big(2e^{-\lambda_1 t}\nabla\overline\varrho\otimes^{sym}\nabla\varphi+\overline\varrho e^{-\lambda_1 t}\nabla^2\varphi\big)-\lambda_1 v.
\end{align*}
Let us assume that there is a point $(t_0,x_0)\in I\times \Omega$
with $v(t_0,x_0)=\max_{t,x} v(t,x)$. 
We obtain in this point
\begin{align*}
0
&=
-b_\zeta(t,x,v,-\overline\varrho e^{-\lambda_1 t_0}\nabla\varphi)+\bfA_\zeta:\nabla^2 v+\bfA_\zeta:2e^{-\lambda_1 t_0}\overline\vr\frac{\nabla\varphi}{\varphi}\otimes\nabla\varphi-\bfA_\zeta:\overline\varrho e^{-\lambda_1 t_0}\nabla^2\varphi-\lambda_1 v\\
&\leq -b_\zeta(t,x,v,\overline\varrho e^{-\lambda_1 t_0}\nabla\varphi)+\overline\vr e^{-\lambda_1t_0}\bfA_\zeta:\Big(2\frac{\nabla\varphi}{\varphi}\otimes\nabla\varphi-\nabla^2\varphi\Big)-\lambda_1 v\\
&=\overline\varrho e^{-\lambda_1 t_0}\Big(\varphi\big( g_\zeta-\Div \bff_\zeta\big)-\lambda_1\varphi+\nabla\varphi\cdot\big(\bfg_\zeta-\bff_\zeta+\Div\bfA_\zeta\big)+\bfA_\zeta:\Big(2\frac{\nabla\varphi}{\varphi}\otimes\nabla\varphi-\nabla^2\varphi\Big)\Big).
\end{align*}
If we choose $\lambda_1$ large (depending on $\|g_\zeta\|_{L^\infty_{t,x}},\|\bfg_\zeta\|_{L^\infty_{t,x}},\|\nabla\bff_\zeta\|_{L^\infty_{t,x}},\|\nabla\bfA_\zeta\|_{L^\infty_{t,x}}$ and $\varphi$) this leads to a contradiction (note that $\overline\vr$ is non-negative by Theorem \ref{lem:warme} (b)).\\
Let us now assume that $x_0\in\partial\Omega$ and $t>0$. Then since $\bfA_\zeta(t_0,x_0)\nu(x_0)$ points outside $\Omega$ we have
\begin{align*}
0\leq \frac{\dd}{\dd s}v\big(t_0,x_0+s\bfA_\zeta(t_0,x_0)\nu(x_0)\big)\Big|_{s=0}=\nabla v(t_0,x_0)\cdot\bfA_\zeta(t_0,x_0)\nu(x_0).
\end{align*}
By \eqref{eq:LSU1} this implies
\begin{align}\label{eq:1412}
\begin{aligned}
0&\leq e^{-\lambda t_0}\big(\overline\varrho(t_0,x_0)\nabla\varphi(x_0)\cdot\bfA_\zeta(t_0,x_0)\nu(x_0)+\varphi\nabla\overline\varrho(x_0)\cdot\bfA_\zeta(t_0,x_0)\nu(x_0)\big)\\
&=\varphi(x_0)\overline\varrho(t_0,x_0) e^{-\lambda t_0}\frac{\nabla\varphi(x_0)\cdot\bfA_\zeta(t_0,x_0)\nu(x_0)}{\varphi(x_0)},
\end{aligned}
\end{align}
which yields a contradiction by \eqref{eq:testLSU2}. We conclude that the maximum of $v$ is attained at $(0,x_0)$ for some $x_0\in\overline\Omega$. By \eqref{eq:testLSU1} the estimate for the maximum follows. \\
Unfortunately, the approach above used for the maximum principle does not work to achieve a minimim principle. The reason is that we do not know a priori if $\overline\varrho$ is strictly positive at a potential minimum of $v$ at the boundary.
We multiply \eqref{eq:LSU1} by $-m(\xi+\overline\varrho)^{-m+1}$ where $0<\xi\ll1$ and $m\gg1$. This yields
\begin{align*}
\partial_t (\xi+\overline\varrho)^{-m}&=-m(g_\zeta-\Div\bff_\zeta)\overline\varrho(\xi+\overline\varrho)^{-m+1}-m\nabla \overline\varrho\cdot(\bfg_\zeta-\bff_\zeta)(\xi+\overline\varrho)^{-m+1}\\&-m\Div\big(\bfA_\zeta\nabla \overline\varrho\big)(\xi+\overline\varrho)^{-m+1}
\end{align*}
and
\begin{align*}
\frac{\dd}{\dt}&\int_\Omega (\xi+\overline\varrho)^{-m}\dx+m(m-1)\int_\Omega(\xi+\overline\varrho)^{-m}\bfA_\zeta\big(\nabla \overline\varrho,\nabla \overline\varrho\big)\dx\\&=-m\int_\Omega (g_\zeta-\Div\bff_\zeta)\overline\varrho(\xi+\overline\varrho)^{-m+1}\dx-m\int_\Omega(\bfg_\zeta-\bff_\zeta)\cdot\nabla \overline\varrho(\xi+\overline\varrho)^{-m+1}\dx=(I)+(II)
\end{align*}
using \eqref{eq:LSU1}$_2$. Using the boundedness of $\overline\varrho$ from (b) we obtain
\begin{align*}
(I)&\leq \,c\,m\int_\Omega \overline\varrho(\xi+\overline\varrho)^{-m+1}\dx\leq \,c\,m\int_\Omega \big(\xi+\overline\varrho)^{-m}\dx.
\end{align*}
The constant $c$ depends on $\|\zeta\|_{C^{2}_{t,x}},\sup J_\zeta^{-1}$, $\|\bfw\|_{L^\infty_{t,x}}$ and $\|\nabla\bfw\|_{L^\infty_{t,x}}$.
Similarly, we have for any $\kappa>0$
\begin{align*}
(II)&\leq\,c\,m\int_\Omega |\nabla \overline\varrho|(\xi+\overline\varrho)^{-m+1}\dx\\
&\leq \,\kappa\, m(m-1)\int_\Omega |\nabla \overline\varrho|^2(\xi+\overline\varrho)^{-m}\dx+
\,c(\kappa)\,\int_\Omega (\xi+\overline\varrho)^{-m}\dx\\
&\leq \,c\,\kappa \,m(m-1)\int_\Omega  \bfA_\zeta(\nabla \overline\varrho,\nabla \overline\varrho)(\xi+\overline\varrho)^{-m}\dx+
\,c\,\int_\Omega (\xi+\overline\varrho)^{-m}\dx
\end{align*}
with $c=c(\|\zeta\|_{C^{2,\alpha}_{t,x}},\|\bfg_\zeta\|_{L^\infty_{t,x}},\|\bff_\zeta\|_{L^\infty_{t,x}})$. 
If we absorb now the terms containing $\bfA_\zeta$ and apply Gronwall's lemma we obtain
\begin{align*}
\int_\Omega (\xi+\overline\varrho(t)))^{-m}\dx\leq e^{Cm}\int_\Omega (\xi+\overline\varrho_0)^{-m}\dx.
\end{align*}
The constant $C$ depends on $\|\zeta\|_{C^{2}_{t,x}},\|\partial_t\nabla^2\zeta\|_{C^{\alpha}_{t,x}},\|\nabla^3\zeta\|_{C^{\alpha}_{t,x}}, \sup J_\zeta^{-1}$, $\|\bfw\|_{C^{1,\alpha}_{t,x}}$, $\|\partial_t\nabla\bfw\|_{C^{\alpha}_{t,x}}$ and $\|\nabla^2\bfw\|_{C^{\alpha}_{t,x}},$ but is independent of $m$.
Taking the $m$-th root shows
\begin{align*}
\bigg(\int_\Omega \Big(\frac{1}{\xi+\overline\varrho(t)}\Big)^{-m}\dx\bigg)^{\frac{1}{m}}\leq e^{C}\bigg(\int_\Omega (\xi+\overline\varrho_0)^{-m}\dx\bigg)^{\frac{1}{m}}.
\end{align*}
Passing with $m\rightarrow \infty$ implies
\begin{align*}
\sup_\Omega\frac{1}{\xi+\overline\varrho(t)}\leq e^C\sup_\Omega\frac{1}{\xi+\overline\varrho_0}
\end{align*}
or, equivalently,
\begin{align*}
 e^C\inf_\Omega(\xi+\overline\varrho_0)\leq \inf_{\Omega}\big(\xi+\overline\varrho(t)\big).
\end{align*}
Consequently, passing with $\xi\rightarrow0$ we have
$ e^C\min_\Omega \overline\varrho_0\leq \overline\varrho(t,x)$
 for all $(t,x)\in \overline I\times\overline\Omega$. Thus, transforming back to $\varrho$, (c) is shown and the proof is complete.
\end{proof}

\subsection{The internal energy equation}
\label{sec:int}
The artificial viscosity of the mollified continuity equation produces some dissipative forces that will turn into heat. This is captured by the internal energy equation which we will solve next. For that we introduce the artificial dissipation as
\begin{align}
\label{eq:rdiss}
\rdiss(\vr)=(\gamma \vr^{\gamma-2}+\delta\beta \vr^{\beta - 2}) |\nabla \vr |^2
\end{align}
Indeed, we observe that by the renormalized continuity equation~\eqref{eq:renormz}, we find that
\begin{align}
\label{eq:pres}
\begin{aligned}
\partial_t \big(\tfrac{1}{\gamma-1}\varrho^\gamma\big)+\Div\big(\tfrac{1}{\gamma-1}\varrho^\gamma\bfw\big)&=\Delta\big(\tfrac{1}{\gamma-1}\varrho^\gamma\big)-\varrho^\gamma \Div\bfw - \ep
\gamma \vr^{\gamma-2} |\nabla \vr |^2
\end{aligned}
\end{align}
Hence the (regularized) internal energy part $\tilde e_R(\vr, \vt):=e_R(\vr,\vt)+c_v\vt=\frac{a\vt^4}{\vr}+c_v\vt$
with $\tilde p_R(\vr, \vt):=p_R(\vr,\vt)+\seb{\vr}\vt=\frac{a\vt^4}{3}+\vr\vt$ is required to satisfy
\begin{align}\label{m119a}
\begin{aligned}
\partial_t\big(\vr&\tilde e_R(\vr, \vt)\big)
+\Div\big(\vr\tilde e_R(\vr, \vt)\bfw\big)-
\Div\big(\varkappa_\delta (\vt) \nabla \vt\big) 
 \\
& = \bfS^{\ep}(\vartheta,\nabla\bfw):\nabla\bfw -
\tilde p_R(\vr,\vt)\Div\bfw 
 \\
 \quad &+  \ep
\rdiss(\vr)  +
\delta \frac{1}{{\vt}^2} - \ep \vt^5+\vr H\quad\text{in}\quad I\times\Omega_\zeta,\\
\partial_{\nu_\zeta}&\vt\big|_{\partial\Omega_\zeta}=0\quad\text{on}\quad I\times\partial\Omega_\zeta
\end{aligned}
\end{align}
%
and we have $\vt(0)=\vt_0$ (note that $\varkappa_\delta$ and $\bfS^\varepsilon$ are defined in \eqref{neu}).

The equation does indeed uniquely define $\vt$ provided $\vr$ exists and is satisfyingly smooth. Certainly $\vr$ constructed by Theorem~\ref{thm:regrho} does inherit the necessary smoothness.
Accordingly and similar to Theorem \ref{thm:regrho} we have the following result concerning a classical solution to \eqref{m119a}. 
\begin{theorem}\label{thm:regtheta}
Let $\zeta\in C^{2,\alpha}(\overline{I}\times \omega,[-\frac{L}2,\frac{L}2])$ with $\alpha\in(0,1)$ be the function describing the boundary. Suppose additionally that $\partial_t\nabla^2\zeta$ as well as $\nabla^3\zeta$ belong to the class $C^{\alpha}(\overline I\times \omega)$ and suppose that $J_\zeta:=\mathrm{det}\nabla\bfPsi_\zeta$ is strictly positive. Assume that $\bfw\in C^{1,\alpha}(\overline I\times\overline\Omega_\zeta)$ such that $\partial_t\nabla\bfw$ and $\nabla^2\bfw$ belong to the class $C^{\alpha}(\overline I\times \overline\Omega_\zeta)$ and $\tr_{\zeta}\bfw=\partial_t\zeta\nu$. 

 Assume further that
 $\vr,H\in C^{1,\alpha}(\overline I\times\overline\Omega_\zeta;[0,\infty))$ with $\nabla^2\varrho\in C(\overline I\times\overline\Omega_\zeta)$ that $\partial_{\nu_\zeta}\varrho\big|_{\partial\Omega_\zeta}=0$  and $\varrho$ strictly positive on $I\times\partial\Omega_\zeta $.
\begin{enumerate}
\item There is a unique classical solution $\vt$ to \eqref{m119a}
which belongs to the regularity class
\begin{align*}
\mathcal Z^{I}_\zeta:=\big\{z \in C^1(\overline I\times\overline\Omega_\zeta):\ \nabla^2z \in C(\overline I\times\overline\Omega_\zeta)\big\}.
\end{align*}
In particular, we have
\begin{align*}
\|\vt\|_{C^1_{t,x}}&+\|\nabla^2\vt\|_{C_{t,x}}\leq\,c\Big(\vt_0,\zeta,\sup J_\zeta^{-1},\bfw,\varrho,H\Big),
\end{align*}
with dependence via the (semi-)norms in the affirmative function spaces.
\item We have the estimate
\begin{align*}
&\min\Big\{C^{-1}\min_{\Omega_{\zeta(0)}}\vt_0,1\Big\}\leq\min_{I\times\Omega_\zeta} \vt\leq\max_{I\times\Omega_\zeta} \vt\leq \max\Big\{C\max_{\Omega_{\zeta(0)}}\vt_0,1\Big\},
\end{align*}
where $C=C(\zeta,\sup J_\zeta^{-1},\bfw,\varrho,H)$ with dependence via the (semi-)norms in the affirmative function spaces.
\end{enumerate}
\end{theorem}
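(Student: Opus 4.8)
The plan is to follow the blueprint of the proof of Theorem~\ref{thm:regrho}: transform \eqref{m119a} onto the fixed reference domain $\Omega$ by the Hanzawa map $\bfPsi_\zeta$ from \eqref{map}, reduce it to a scalar quasilinear parabolic equation with homogeneous conormal boundary condition, and then invoke the classical parabolic Schauder theory of \cite[Chapter~V]{LSU}. Two features make the present situation genuinely different from Theorem~\ref{thm:regrho}: the equation is quasilinear in $\vt$ (its leading coefficient contains $\varkappa_\delta(\vt)$), and its right-hand side is singular at $\vt=0$ (through $\varkappa_\delta$ and through the term $\delta\vt^{-2}$). Hence a two-sided a~priori bound $0<m\le\vt\le M$ has to be established first; only then is the equation uniformly parabolic with H\"older data. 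Concretely, writing $\vr\tilde e_R(\vr,\vt)=a\vt^4+c_v\vr\vt$ and $\mathcal K_\delta'=\varkappa_\delta$, expanding the time derivative and using the continuity equation \eqref{eq:warme} to eliminate $\partial_t\vr$, equation \eqref{m119a} is equivalent to
\begin{align*}
(4a\vt^3+c_v\vr)\,\partial_t\vt-\varkappa_\delta(\vt)\Delta\vt=\varkappa_\delta'(\vt)|\nabla\vt|^2-(4a\vt^3+c_v\vr)\,\bfw\cdot\nabla\vt+\mathcal R(t,x,\vt),
\end{align*}
with $\mathcal R$ collecting the pressure work, the viscous heating $\bfS^{\ep}(\vt,\nabla\bfw):\nabla\bfw\ge0$, the artificial dissipation $\ep\rdiss(\vr)\ge0$, the terms $\delta\vt^{-2}>0$ and $-\ep\vt^5$, the source $\vr H\ge0$, and the transport remainder $-(\tfrac{4a}{3}\vt^4+\vr\vt)\Div\bfw-c_v\ep\,\vt\,\Delta\vr$. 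After the change of variables all coefficients are $C^\alpha$ in $(t,x)$ with semi-norms controlled by $\|\zeta\|_{C^{2,\alpha}_{t,x}}$, $\|\partial_t\nabla^2\zeta\|_{C^\alpha_{t,x}}$, $\|\nabla^3\zeta\|_{C^\alpha_{t,x}}$, $\sup J_\zeta^{-1}$, $\|\bfw\|_{C^{1,\alpha}_{t,x}}$, $\|\nabla^2\bfw\|_{C^\alpha_{t,x}}$, $\|\vr\|_{C^{1,\alpha}_{t,x}}$, $\|\nabla^2\vr\|_{C_{t,x}}$ and $\|H\|_{C^{1,\alpha}_{t,x}}$, while $4a\vt^3+c_v\vr\ge 4am^3>0$ as soon as $\vt\ge m$, so the equation is non-degenerate once $\vt$ is bounded below.

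Next I would prove the $L^\infty$ bounds by comparison with \emph{spatially homogeneous} sub- and supersolutions, i.e.\ with solutions of scalar ODEs; such barriers have vanishing gradient, hence satisfy the homogeneous conormal condition $\partial_{\nu_\zeta}\vt=0$ trivially and a boundary extremum is ruled out directly by Hopf's lemma for the conormal problem, so the auxiliary boundary weight $\varphi$ needed in the proof of Theorem~\ref{thm:regrho} is not required. For the upper bound one observes that at an interior maximum of $\vt-\overline M(t)$ the term $\varkappa_\delta(\vt)\Delta\vt$ has the favourable sign, the first-order terms drop out, and $\mathcal R(t,x,\vt)\le C(1+\vt^4)-\ep\vt^5$, a function of $\vt$ that is bounded above; choosing $\overline M$ to solve the associated ODE with $\overline M(0)=\max\{\max_{\Omega_{\zeta(0)}}\vt_0,M_1\}$ for a suitable data-dependent $M_1$ yields $\vt\le\max\{C\max_{\Omega_{\zeta(0)}}\vt_0,1\}$. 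For the lower bound one uses the opposite sign information: at an interior minimum, if $\vt$ is below a threshold of order $\sqrt\delta$ then $\mathcal R\ge \delta\vt^{-2}-C\ge\tfrac12\delta\vt^{-2}>0$, because there all the possibly negative contributions are $O(1+\vt^4)=O(1)$ while $\bfS^{\ep}(\vt,\nabla\bfw):\nabla\bfw$, $\ep\rdiss(\vr)$ and $\vr H$ are nonnegative; comparison with the corresponding ODE yields $\vt\ge\min\{C^{-1}\min_{\Omega_{\zeta(0)}}\vt_0,1\}>0$. To make this rigorous at the level of existence I would first solve the truncated equation in which $\varkappa_\delta(\vt)$, $\delta\vt^{-2}$ and $\vt^5$ are replaced by $\varkappa_\delta(T_{\mu,R}\vt)$, $\delta(T_{\mu,R}\vt)^{-2}$ and $(T_{\mu,R}\vt)^5$, with $T_{\mu,R}$ the truncation onto $[\mu,R]$: that equation has smooth bounded coefficients and is uniformly parabolic, hence admits a classical solution in $\mathcal Z^I_\zeta$ by \cite[Chapter~V]{LSU}, and since the comparison bounds above are insensitive to the cut-off as long as $\mu<\min\{C^{-1}\min\vt_0,1\}$ and $R>\max\{C\max\vt_0,1\}$, the cut-off is inactive and the solution solves \eqref{m119a}.

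With $\vt$ confined to a compact subinterval $[m,M]\subset(0,\infty)$, the principal coefficient $\varkappa_\delta(\vt)$ lies between two positive constants, every coefficient is $C^\alpha$ in $(t,x)$, and the nonlinearity in $(\vt,\nabla\vt)$ satisfies the structural growth conditions of the quasilinear parabolic theory; \cite[Chapter~V]{LSU} (conormal case, together with the Schauder bootstrap) then gives $\vt\in\mathcal Z^I_\zeta$ and the asserted bound on $\|\vt\|_{C^1_{t,x}}+\|\nabla^2\vt\|_{C_{t,x}}$, and transforming back by $\bfPsi_\zeta^{-1}$, whose regularity is that of $\zeta$, preserves this class exactly as in Theorem~\ref{thm:regrho}. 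Uniqueness follows by a standard energy estimate: for two classical solutions $\vt_1,\vt_2$ with the same data the difference $w=\vt_1-\vt_2$ solves a linear parabolic equation with leading coefficients $4a\vt_1^3+c_v\vr$ and $\varkappa_\delta(\vt_1)$ and right-hand side controlled, via the mean value theorem and the bounds $\vt_i\in[m,M]$, $\mu,\lambda\in C^1$, $\|\vt_2\|_{C^1_{t,x}}$ and $\|\nabla^2\vt_2\|_{C_{t,x}}$, by $C(|w|+|\nabla w|)$; testing with $w$, absorbing the gradient term by the coercive contribution $\int\varkappa_\delta(\vt_1)|\nabla w|^2$ (the conormal boundary term vanishes) and applying Gronwall's lemma gives $w\equiv0$.

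The main obstacle is the step above, namely the quantitative two-sided bound on $\vt$, and within it the strictly positive lower bound: it is precisely here that the regularizations $\delta(\vt^\beta+\vt^{-1})$ in $\varkappa_\delta$, the term $\delta\vt^{-2}$ and the damping $-\ep\vt^5$ in \eqref{m119a} enter in an essential way, and it is this positivity --- unavailable for a merely weak temperature --- that later legitimizes dividing the internal energy equation \eqref{eq2b} by $\vt$ in the derivation of the entropy inequality.
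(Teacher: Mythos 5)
Your proposal is correct and follows the same overall architecture as the paper's proof: transform \eqref{m119a} to the reference domain via $\bfPsi_\zeta$, truncate the singular nonlinearities, solve the truncated uniformly parabolic conormal problem by \cite[Chapter V]{LSU}, and then establish two-sided bounds, uniform in the truncation parameter, which render the cut-off inactive; the positive lower bound coming from $\delta\vt^{-2}$ and the upper bound from $-\ep\vt^5$ are exactly the mechanisms the paper exploits. The implementations differ in two respects. First, the paper does not work with $\vt$ itself but changes the unknown to the internal energy density $Z=\mathfrak b_{(t,x)}(\vt)=a\vt^4+c_v\vr\vt$, which makes the time derivative appear as $\partial_t Z$ with coefficient one and puts the equation directly in the normal form required by \cite{LSU}; in your formulation the coefficient $4a\vt^3+c_v\vr$ of $\partial_t\vt$ may degenerate at interior points where $\vr$ vanishes (the hypothesis only gives strict positivity of $\vr$ on $\partial\Omega_\zeta$), so you should either truncate this coefficient as well or adopt the paper's substitution. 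Second, for the extremum principles the paper uses the weighted functions $v=\varphi e^{\mp\lambda_1 t}Z$ with an explicit boundary weight $\varphi$ whose conormal derivative has a definite sign on $\partial\Omega$, and for the lower bound it needs an intermediate, truncation-dependent positivity estimate (obtained by testing with $(Z-\underline Z)^-$ and Gronwall) before proving the uniform bound; your route through spatially homogeneous ODE barriers combined with Hopf's boundary point lemma for the conormal problem is a legitimate and somewhat cleaner alternative that removes both the weight and the intermediate step, at the price of invoking Hopf's lemma (which requires the interior ball condition and uniform parabolicity, both available here after truncation). The uniqueness argument via an energy estimate for the difference of two classical solutions is standard and consistent with what the paper's uniqueness assertion relies on.
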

\begin{proof}
Equation \eqref{m119a} contains several nonlinear terms which blow up for small or large values of $\vt$. Hence we replace them
with regularized versions.
Let $\chi_\ell\in C^\infty([0,\infty))$ with $\chi_\ell(Z)= Z$ for $Z\in[1/\ell,\ell]$ and
$c\ell^{-1}\leq \chi_\ell\leq C\ell$ for some positive constants $c,C$ and $\ell\gg1$. We also define the function
\begin{align*}
\mathfrak b_{(t,x)}(\vt):=\vr(t,x) \tilde e_R(\vr(t,x),\vt)=a\vartheta^4+c_v\varrho(t,x)\vartheta,\quad\vartheta\geq0.
\end{align*}
Since $\mathfrak{b}_{(t,x)}'(\vartheta)=4a\vartheta^3+c_v\varrho(t,x)$
is strictly positive by the assumptions on $\varrho$ the inverse
$\mathfrak{b}_{(t,x)}^{-1}$ satisfies
\begin{align}\label{eq:0411}
(\mathfrak{b}_{(t,x)}^{-1})'\leq\,c(\varrho).
\end{align}
We define
\begin{align*}
\bfS^{\ep,\ell}(t,x,\vt,\nabla\bfw)&=\bfS^\ep((\mathfrak{b}_{(t,x)}^{-1}(\chi_\ell(\mathfrak{b}_{(t,x)}(\vartheta)),\nabla\bfw),\,\, \varkappa_\delta^\ell(t,x,\vt)=
\frac{\varkappa_\delta(\mathfrak{b}_{(t,x)}^{-1}(\chi_\ell(\mathfrak{b}_{(t,x)}(\vartheta)))}{b'_{(t,x)}(\mathfrak{b}_{(t,x)}^{-1}(\chi_\ell(\mathfrak{b}_{(t,x)}(\vartheta)))}b'_{(t,x)}(\vartheta),
\end{align*}
and consider the equation
\begin{align}\label{m119aL}
\begin{aligned}
\partial_t&(\mathfrak{b}_{(t,x)}(\vt))
+\Div\big(\mathfrak{b}_{(t,x)}(\vt)\bfw\big)-
\Div\big(\varkappa_\delta^\ell (\vt) \nabla \vt\big)  \\
& = \bfS^{\ep,\ell}(\vartheta,\nabla\bfw):\nabla\bfw -
\tilde p_R(\vr,\mathfrak{b}_{(t,x)}^{-1}(\chi_\ell(\mathfrak{b}_{(t,x)}(\vartheta))) \Div\bfw  + \rdiss(\vr)
\\& +
\delta \big({\mathfrak{b}_{(t,x)}^{-1}(\chi_\ell(\mathfrak{b}_{(t,x)}(\vartheta)))}\big)^{-2} - \ep (\mathfrak{b}_{(t,x)}^{-1}(\chi_\ell(\mathfrak{b}_{(t,x)}(\vartheta)))^{5}+\vr H\quad\text{in}\quad I\times\Omega_\zeta,\\
\partial_{\nu_\zeta}&\vt\big|_{\partial\Omega_\zeta}=0\quad\text{on}\quad I\times\partial\Omega_\zeta
\end{aligned}
\end{align}
and we have $\vt(0)=\vt_0$.
We will show that a solution $\vr$ to \eqref{m119aL} exists and that
\begin{align}\label{eq:maxl}
\max_{I\times\Omega_\zeta} \vt\leq \max\Big\{C\max_{\Omega_{\zeta(0)}}\vt_0,1\Big\}
\end{align}
as well as
\begin{align}\label{eq:minl}
\min_{I\times\Omega_\zeta} \vt\geq \min\Big\{C^{-1}\min_{\Omega_{\zeta(0)}}\vt_0,1\Big\}
\end{align}
with $C=C(\|\zeta\|_{C^{2,\alpha}_{t,x}},\|(\partial_t\nabla^2\zeta,\nabla^3\zeta)\|_{C^{\alpha}_{t,x}},\sup J_\zeta^{-1},\|\varrho\|_{C^1_{t,x}},\|\bfw\|_{C^{1,\alpha}_{t,x}},\|\nabla^2\bfw\|_{C^\alpha_{t,x}},\|H\|_{L^\infty_{t,x}})$ independent of $\ell$.
Consequently, the cut-offs in \eqref{m119aL} are not seen for $\ell$ are enough and we obtain the result for the original problem \eqref{m119a}.
Arguing as in the proof of Theorem \ref{thm:regrho} we can transform \eqref{m119aL} to the reference domain. For this purpose it is useful to work with the weak formulation
\begin{align*}
\nonumber\int_I\bigg(\frac{\dd}{\dt}\int_{ \Omega_\zeta}\mathfrak{b}_{(t,x)}(\vt) \psi\dx&-\int_{ \Omega_\zeta}\Big(\mathfrak{b}_{(t,x)}(\vt))\,\partial_t\psi
+\mathfrak{b}_{(t,x)}(\vt)\bfw\cdot\nabla\psi\Big)\dx\bigg)\dt\\&+
\int_I\int_{ \Omega_\zeta}\varkappa^\ell_\delta (\vt) \nabla \vt \cdot\nabla\psi\dxt \\
& = \int_I\int_{\Omega_{\zeta}}\Big[\bfS^{\ep,\ell}(\vt,\nabla\bfw):\nabla\bfw -
\tilde p_R(\vr,\mathfrak{b}_{(t,x)}^{-1}(\chi_\ell(\mathfrak{b}_{(t,x)}(\vartheta)))   \Div \bfw \Big]\,\psi \dxt
\\ &+ \int_I\int_{\Omega_{\zeta}}\Big[ \rdiss(\vr)  +
\delta (\mathfrak{b}_{(t,x)}^{-1}(\chi_\ell(\mathfrak{b}_{(t,x)}(\vartheta)))^{-2} - \ep (\mathfrak{b}_{(t,x)}^{-1}(\chi_\ell(\mathfrak{b}_{(t,x)}(\vartheta)))^4 +\vr H\Big]\,\psi\dxt
\end{align*}
for all $\psi\in C^\infty(\overline I\times\R^3)$. Actually we will solve the PDE for $Z:=(\mathfrak{b}_{(t,x)}(\vt))\circ\bfPsi_\zeta$, which is defined on the reference configuration and hence a PDE for a cylindrical time-space domain can be considered. Accordingly we are setting $\psi=\overline\psi\circ\bfPsi_\zeta^{-1}$ for some $\overline\psi\in C^\infty(\overline I\times\R^3)$,
$\overline\varrho=\varrho\circ\bfPsi_\zeta$, $\overline\bfw=\bfw\circ\bfPsi_\zeta$, $\overline H=H\circ\bfPsi_\zeta$ and $\overline\vt=\vt\circ\bfPsi_\zeta$
this is equivalent to
\begin{align*}
&\int_I\frac{\dd}{\dt}\int_{ \Omega_\zeta}Z\circ\bfPsi_\zeta^{-1} \overline\psi\circ\bfPsi_\zeta^{-1}\dxt\\
&-\int_I\int_{ \Omega_\zeta}Z\circ\bfPsi_\zeta^{-1}\,\Big(\partial_t\overline\psi\circ \bfPsi_\zeta^{-1}+\nabla\overline\psi\circ \bfPsi_\zeta^{-1}\cdot \partial_t\bfPsi_\zeta^{-1}\Big)\dxt\\
&+\int_I\int_{\Omega_\zeta}Z\circ\bfPsi_\zeta^{-1}\overline\bfw\circ \bfPsi_\zeta^{-1}\cdot\nabla \bfPsi_\zeta^{-1}\nabla \overline\psi\circ\bfPsi_\zeta^{-1}\dx\dt\\&+
\int_I\int_{ \Omega_\zeta}\varkappa_\delta^\ell (\overline\vt\circ\bfPsi_\zeta^{-1})\big(\nabla\bfPsi_\zeta^{-1}\big)^T \nabla\bfPsi_\zeta^{-1}\nabla \overline\vt\circ\bfPsi_\zeta^{-1} \cdot\nabla\overline\psi\circ\bfPsi_\zeta^{-1}\dxt \\
 =  &\int_I\int_{\Omega_{\zeta}}(\nabla\bfPsi_\zeta^{-1})^T\bfS^{\ep,\ell}(\overline\vartheta\circ\bfPsi_\zeta^{-1},\nabla\bfPsi_\zeta^{-1}\nabla\overline\bfw\circ\bfPsi_\zeta^{-1}) \nabla\overline\bfw\circ\bfPsi_\zeta^{-1}\,\overline\psi\circ\bfPsi_\zeta^{-1} \dxt
 \\ 
& - \int_I\int_{\Omega_{\zeta}}
\tilde p_R(\overline\varrho\circ\bfPsi_\zeta^{-1}, \mathfrak{b}_{(t,x)}^{-1}(\chi_\ell(\mathfrak{b}_{(t,x)}(\overline\vartheta\circ\bfPsi_\zeta^{-1}))\nabla\overline\bfw\circ\bfPsi_\zeta^{-1}:\big(\nabla\bfPsi_\zeta^{-1}\big)^T \,\overline\psi\circ\bfPsi_\zeta^{-1} \dxt 
\\ 
&+ \int_I\int_{\Omega_{\zeta}}\rdiss(\overline{\vr}\circ\bfPsi_\zeta^{-1})\,\overline\psi\circ\bfPsi_\zeta^{-1}\dxt\\
&+ \int_I\int_{\Omega_\zeta}
\delta \big(\mathfrak{b}_{(t,x)}^{-1}(\chi_\ell(Z\circ\bfPsi_\zeta^{-1}\big)\big)\big)^{-2}\,\overline\psi\circ\bfPsi_\zeta^{-1}\dxt\\
&+\int_I\int_{\Omega_\zeta}\Big[ - \ep \big(\mathfrak{b}_{(t,x)}^{-1}\big(\chi_\ell\big(Z\circ\bfPsi_\zeta^{-1}\big)\big) \big)^{5}+\overline\vr\overline H\Big]\,\overline\psi\circ\bfPsi_\zeta^{-1}\dxt
\end{align*}
and, setting $J_\zeta=\mathrm{det}\nabla\bfPsi_\zeta$ and $\overline{\rdiss}=(\delta\beta \overline\vr^{\beta - 2}+\gamma \overline\vr^{\gamma-2}) |\nabla\bfPsi_\zeta^{-1}\nabla \overline\vr|^2$, we find
\begin{align*}
&\quad \int_I\frac{\dd}{\dt}\int_{ \Omega}J_\zeta Z\overline\psi\dxt-\int_I\int_{ \Omega}J_\zeta Z\,\Big(\partial_t\overline\psi+\nabla\overline\psi\cdot \partial_t\bfPsi_\zeta^{-1}\circ\bfPsi_\zeta\Big)\dxt\\
&\quad +\int_I\int_{\Omega}J_\zeta Z\overline\bfw\cdot\big(\nabla \bfPsi_\zeta\big)^{-1}\nabla \overline\psi\dx\dt
+\int_I\int_{ \Omega}J_\zeta\varkappa^\ell_\delta (\overline\vt)\big(\nabla\bfPsi_\zeta\big)^{-T} \big(\nabla\bfPsi_\zeta\big)^{-1}\nabla \overline\vt \cdot\nabla\overline\psi\dxt \\
& = \int_I\int_{\Omega}J_\zeta(\nabla\bfPsi_\zeta)^{-T}\bfS^{\ep,\ell}(\overline\vartheta,(\nabla\bfPsi_\zeta)^{-1}\nabla\bfw):\nabla\overline\bfw\,\overline\psi \dxt
 \\& 
- \int_I\int_{\Omega}
J_\zeta \tilde p_R(\overline\varrho, \mathfrak{b}_{(t,x)}^{-1}(\chi_\ell(Z)))\nabla\overline\bfw:\big(\nabla\bfPsi_\zeta\big)^{-T} \,\overline\psi \dxt \\ 
&\quad + \varepsilon\int_I\int_{\Omega}J_\zeta \overline{\rdiss}\,\overline\psi\dxt
+ \int_I\int_{\Omega}
\delta J_\zeta\big(\mathfrak{b}_{(t,x)}^{-1}(\chi_\ell(Z))\big)^{-2}\,\overline\psi\dxt\\
&\quad +\int_I\int_{\Omega}J_\zeta\Big[ - \ep \big(\mathfrak{b}_{(t,x)}^{-1}\big(\chi_\ell(Z)\big) \big)^{5}+\overline\vr\overline H\Big]\,\overline\psi\dxt
\end{align*}
for all $\overline\psi\in C^\infty(\overline I\times\overline\Omega)$.
Again we replace $\overline\psi$ by $\overline\psi/J_\zeta$ to obtain
\begin{align*}
&\int_I\frac{\dd}{\dt}\int_{ \Omega}Z\overline\psi\dxt-\int_I\int_{ \Omega}Z\,\partial_t\overline\psi\dxt
\\&
=\int_I\int_{ \Omega}Z\,\nabla\overline\psi\cdot \partial_t\bfPsi_\zeta^{-1}\circ\bfPsi_\zeta\dxt
-\int_I\int_{\Omega}Z\overline\bfw\cdot\big(\nabla \bfPsi_\zeta\big)^{-1}\nabla \overline\psi\dx\dt
\\& \quad
-\int_I\int_{ \Omega}\varkappa_\delta^\ell (\overline\vt)\big(\nabla\bfPsi_\zeta\big)^{-T} \big(\nabla\bfPsi_\zeta\big)^{-1}\nabla \overline\vt \cdot\nabla\overline\psi\dxt
\\&
\quad -
\int_I\int_{ \Omega}J_\zeta\varkappa_\delta^\ell (\overline\vt)\big(\nabla\bfPsi_\zeta\big)^{-T} \big(\nabla\bfPsi_\zeta\big)^{-1}\nabla \overline\vt \cdot\nabla J_\zeta^{-1}\overline\psi\dxt
\\&\quad
 + \int_I\int_{\Omega}(\nabla\bfPsi_\zeta)^{-T}\bfS^{\ep,\ell}(\overline\vartheta,(\nabla\bfPsi_\zeta)^{-1}\nabla\overline\bfw):\nabla\overline\bfw \,\overline\psi \dxt  
 \\&
 \quad - \int_I\int_{\Omega}
 \tilde p_R(\overline\varrho, \mathfrak{b}_{(t,x)}^{-1}(\chi_\ell(Z)))\nabla\overline\bfw:\big(\nabla\bfPsi_\zeta\big)^{-T} \,\overline\psi \dxt 
 + \int_I\int_{\Omega}\Big[ \overline{\rdiss}+\overline\vr\overline H \Big]\,\overline\psi\dxt
 \\&
 \quad + \int_I\int_{\Omega_{\zeta}}
\big(\mathfrak{b}_{(t,x)}^{-1}(\chi_\ell(Z))\big)^{-2}\,\overline\psi\dxt
-\int_I\int_{\Omega} \ep \big(\mathfrak{b}_{(t,x)}^{-1}\big(\chi_\ell(Z)\big) ^{5}\,\overline\psi\dxt
\\
&\quad 
+\int_I\int_{ \Omega} J_\zeta  Z\,\Big(\partial_t J_\zeta^{-1}+\nabla J_\zeta^{-1}\cdot \partial_t\bfPsi_\zeta^{-1}\Big)\overline\psi\dxt
-\int_I\int_{\Omega}J_\zeta Z \overline\bfw\cdot \big(\nabla \bfPsi_\zeta\big)^{-1}\nabla J_\zeta^{-1}\overline\psi\dx\dt.
\end{align*}
Recalling \eqref{m98}, \eqref{mp8a} and that $Z=\mathfrak{b}_{(t,x)}(\overline\vt)$ we set
\begin{align*}
g_\zeta^\ell(Z)&=(\nabla\bfPsi_\zeta)^{-T}\bfS^{\ep,\ell}(\mathfrak{b}_{(t,x)}^{-1}(\chi_\ell(Z)),(\nabla\bfPsi_\zeta)^{-1}\nabla\overline\bfw):\nabla\overline\bfw - 
 \tilde p_R(\overline\varrho,\mathfrak{b}_{(t,x)}^{-1}(\chi_\ell(Z)))\nabla\overline\bfw:\big(\nabla\bfPsi_\zeta\big)^{-1}  \\ 
&+\ep
\delta \beta \overline\varrho^{\beta - 2} |\big(\nabla\bfPsi_\zeta\big)^{-1}\nabla \overline\varrho |^2+
\delta (\mathfrak{b}_{(t,x)}^{-1}(\chi_\ell(Z)))^{-2} - \ep (\mathfrak{b}_{(t,x)}^{-1}(\chi_\ell(Z)))^{5} +\overline\vr\overline H\\
&+Z\,\Big(\partial_t J_\zeta^{-1}+\nabla J_\zeta^{-1}\cdot \partial_t\bfPsi_\zeta^{-1}-\overline\bfw\cdot \big(\nabla \bfPsi_\zeta\big)^{-1}\nabla J_\zeta^{-1}\Big)J_\zeta,\\
\bfg_\zeta^\ell(Z)&=-J_\zeta\frac{\varkappa_\delta (\mathfrak{b}_{(t,x)}^{-1}(\chi_\ell(Z)))}{b'_{(t,x)}((\mathfrak{b}_{(t,x)}^{-1}(\chi_\ell(Z))))}\nabla J_\zeta^{-1}\big(\nabla\bfPsi_\zeta\big)^{-T} \big(\nabla\bfPsi_\zeta\big)^{-1}\\&+\frac{\varkappa_\delta ((\mathfrak{b}_{(t,x)}^{-1}(\chi_\ell(Z))))}{\mathfrak{b}_{(t,x)}'((\mathfrak{b}_{(t,x)}^{-1}(\chi_\ell(Z))))}\big(\nabla\bfPsi_\zeta\big)^{-T} \big(\nabla\bfPsi_\zeta\big)^{-1}\nabla\varrho(t,x) \mathfrak{b}_{(t,x)}^{-1}(Z),\\
\bff_\zeta^\ell(Z)&= Z\Big( \partial_t\bfPsi_\zeta^{-1}
-\big(\nabla \bfPsi_\zeta\big)^{-1}\overline\bfw\Big),\\
\bfA_\zeta^\ell(Z)&=\frac{\varkappa_\delta ((\mathfrak{b}_{(t,x)}^{-1}(\chi_\ell(Z))))}{\mathfrak{b}_{(t,x)}'((\mathfrak{b}_{(t,x)}^{-1}(\chi_\ell(Z))))}\big(\nabla\bfPsi_\zeta\big)^{-T} \big(\nabla\bfPsi_\zeta\big)^{-1},
\end{align*}
such that the equation becomes
\begin{align}\label{eq:thetapde}
\partial_tZ &= g_\zeta^\ell(Z)+\nabla Z\cdot\bfg_\zeta^\ell(Z)-\Div(\bff_\zeta^\ell(Z))+\Div\big(\bfA_\zeta^\ell(Z)\nabla Z\big)\quad\text{in}\quad I\times\Omega,
\\
\label{eq:thetaboundary}
\nu\cdot\bfA^\ell_\zeta(Z) \nabla Z&=0\quad \text{on}\quad I\times\partial\Omega, 
\\
\label{eq:Zinitial}
Z(0)&=(\vt_0^4+c_v\vt_0\vr(0))\circ\bfPsi_\zeta \quad\text{in}\quad \Omega.
\end{align}
Here the boundary conditions are a direct consequence of the weak formulation and the fact that $\bff_\zeta^\ell\cdot \nu=0$ on the boundary, by the assumed coupling of $\mathbf{w}$ and $\partial_t\zeta$ at the boundary, cf. \eqref{1611}.
We can further rewrite \eqref{eq:thetapde} as
\begin{align}\label{eq:LSU1'}
\begin{aligned}
\partial_t Z
&+b_\zeta^\ell(t,x,Z,\nabla Z)=\bfA^\ell_\zeta(Z):\nabla^2Z\quad\text{in}\quad I\times\Omega,
\end{aligned}
\end{align}
where
\begin{align*}
 b_\zeta^\ell(t,x,u,\bfU)&=-g_\zeta^\ell(u)+\Div_x\bff_\zeta^\ell(u)+\big(-\bfg^\ell_\zeta(u)+\partial_u\bff^\ell_\zeta(u)-\Div_x\bfA^\ell_\zeta(u)\big)\cdot\bfU\\&-\partial_u \bfA_\zeta^\ell(u)(\bfU,\bfU).
\end{align*}
Analogous to the proof of Theorem~\ref{thm:regrho} (a) we can use the theory from \cite[Thm. 7.2, 7.3 \& 7.4 Chapter V]{LSU} to infer the existence of a unique classical solution $Z$ to \eqref{eq:LSU1'} with the required regularity. We will below show that the same $Z$ is actually a solution for all $\ell$ that are sufficiently large. Observe already, that by defining $\vt$ as the respective reverse transformation of $Z$ implies the existence of a respective regular $\vt$ claimed in (a) (provided we can show that the cut-offs in \eqref{m119aL} are not seen). This follows by showing respective independent upper and lower bounds on $Z$ which then eventually imply (b), as well. 
First we bound the maximum of $Z$ and respectively proof \eqref{eq:maxl}. Here we argue as in the proof of Theorem \ref{thm:regrho} and set
\begin{align*}
v(t,x):=\varphi(x)e^{-\lambda_1 t}Z
\end{align*}
where $\varphi\in C^\infty(\overline I\times \overline\Omega)$ is such that
\begin{align}
\label{eq:testLSU1'}\varphi(x)&\geq1\quad\text{in}\quad \overline I\times \overline\Omega,\\
\frac{\nabla\varphi\cdot\bfA^\ell_\zeta(Z)\nu}{\varphi}&<0 \quad\text{on}\quad\overline I\times\partial\Omega.\label{eq:testLSU2'}
\end{align}
Note that we have $\frac{\varkappa_\delta (\mathfrak b_{(t,x)}^{-1}(Z))}{b'_{(t,x)}(b_{(t,x)}^{-1}(Z))}\geq \frac{\underline\varkappa}{4}$
by \eqref{m108} such that the coercivity constant of $\bfA^\ell_\zeta(Z)$ can be bounded from below independently of $\ell$. Consequently, the function $\varphi$ can also be chosen independently of $\ell$.
In an interior maximum point $(t_0,x_0)\in I\times \Omega$ of $Z$
we have again
\begin{align}\nonumber
0
&=
-e^{-\lambda_1 t}\varphi b_\zeta^\ell\big(t,x,Z,\nabla Z\big)+\bfA^\ell_\zeta(Z_L):\nabla^2 v-\lambda_1 v\nonumber\\
&-\bfA^\ell_\zeta(Z):\big(2e^{-\lambda_1 t_0}\frac{Z}{\varphi}\nabla\varphi\otimes\nabla\varphi+Z e^{-\lambda_1 t_0}\nabla^2\varphi\big)\nonumber\\
&\leq -e^{-\lambda_1 t}\varphi b^\ell_\zeta\Big(t_0,x,Z,-\frac{Z}{\varphi} \nabla\varphi\Big)-\lambda_1 v+c(\varphi)e^{-\lambda_1 t_0}(1+Z)\nonumber\\
&\leq\,ce^{-\lambda_1 t_0}(1+Z)+\frac{\delta \varphi e^{-\lambda_1 t_0}}{(\mathfrak b_{(t,x)}^{-1}(\chi_\ell(Z)))^2}-\lambda_1e^{-\lambda_1 t_0}\varphi Z,\label{eq:0312}
\end{align}
where
\begin{align}\label{eq:0312c}
c=c(\varphi,\|\zeta\|_{C^{2,\alpha}_{t,x}},\|(\partial_t\nabla^2\zeta,\nabla^3\zeta)\|_{C^\alpha_{t,x}},\|\overline\varrho\|_{C^1_{t,x}},J_\zeta^{-1},\|\bfw\|_{C^{1,\alpha}_{t,x}},\|\nabla^2\bfw\|_{C^\alpha_{t,x}},\|H\|_{L^\infty_{t,x}})
\end{align}
is independent of $\ell$. Note that we used that the coefficients in the definition of $b^\ell_\zeta$
 have linear growth uniformly in $\ell$ except for $\delta (\mathfrak b_{(t,x)}^{-1}(\chi_\ell(u)))^{-2}$, $-\ep (\mathfrak b_{(t,x)}^{-1}(\chi_\ell(u)))^5$ and $\partial_u \bfA_\zeta^\ell(u)(\bfU,\bfU)$. Fortunately, the first two terms have the correct sign, whereas the second one is evaluated at $\bfU=-\frac{Z}{\varphi} \nabla\varphi$.
Now we distinguish two cases. If 
$Z(t_0,x_0)\leq 1$ there is nothing to show. Otherwise, $\frac{\delta}{\mathfrak b_{(t,x)}^{-1}(\chi_\ell(Z(t_0,x_0))))^{2}}$ is bounded (independent of $\ell$) such that we obtain a contradiction in~\eqref{eq:0312} by choosing $\lambda_1$ large (depending on the quantities in \eqref{eq:0312c}). The case $x_0\in\partial\Omega$ and $t_0>0$ can be ruled out again as in \eqref{eq:1412}. Hence
\eqref{eq:maxl} follows with a constant independent of $\ell$.\\
In order to prove \eqref{eq:minl} we first establish a lower bound which depends on $\ell$.
Choosing first $\ell$ large enough and than $\underline Z\in(0,\inf Z_0)$ small enough (depending on $\ell$) we have
$g_\zeta^\ell(\underline Z)-\Div\big(\bff^\ell_\zeta(\underline Z)\big)\geq0$. This is thanks to the term $\delta \chi_\ell(Z)^{-1/2}$ in the definition of $g_\zeta^\ell$.
Consequently, we obtain from \eqref{eq:thetapde}
\begin{align*}
\partial_ta(Z-\underline Z)&\geq g_\zeta^\ell(Z)-g_\zeta^\ell(\underline Z)+\nabla Z\cdot\bfg_\zeta^\ell(Z)-\nabla \underline Z\cdot\bfg_\zeta^\ell(\underline Z)
-\Div\big(\bff^\ell_\zeta(Z)-\bff^\ell_\zeta(\underline Z)\big)\\&+\Div\big(\bfA_\zeta^\ell(Z)\nabla (Z-\underline Z)\big).
\end{align*} 
Multiplying by $(Z-\underline Z)^-$ and integrating over $\Omega$ implies
\begin{align*}
\frac{a}{2}\frac{\dd}{\dt}&\int_\Omega((Z-\underline Z)^-)^2\dx+\int_\Omega\bfA_\zeta^\ell(Z)\big(\nabla (Z-\underline Z)^-,\nabla (Z-\underline Z)^-\big)\dx\\
&\leq \int_\Omega\big(g_\zeta^\ell(\underline Z)-g_\zeta^\ell(Z)\big)(Z-\underline Z)^-\dx+\int_\Omega\big(\nabla \underline Z\cdot\bfg_\zeta^\ell(\underline Z)-\nabla Z\cdot\bfg_\zeta^\ell(Z)\big)(Z-\underline Z)^-\dx\\
&+\int_\Omega\big(\bff_\zeta^\ell(Z)-\bff_\zeta^\ell(\underline Z)\big)\nabla(Z-\underline Z)^-\dx
\end{align*} 
using also \eqref{eq:thetaboundary}. By the Lipschitz continuity of $g_\zeta^\ell$, $\bfg_\zeta^\ell$ and $\bff_\zeta^\ell$ (recall \eqref{eq:0411} and the assumptions on $\varrho$) in $Z$ and \eqref{eq:maxl}  we obtain
\begin{align*}
\frac{\dd}{\dt}&\int_\Omega \frac{a}{2}|(Z-\underline Z)^-|^2\dx+\int_\Omega \bfA_\zeta^\ell(Z)\big(\nabla Z^-,\nabla Z^{-}\big)\dx\\
&\leq \,\xi\int_{\Omega}|\nabla (Z-\underline Z)^-|^2\dx +c(\xi,\ell)\int_\Omega|(Z-\underline Z)^-|^2\dx
\end{align*}
for all $\xi>0$. Due to \eqref{m108} the first term can be absorbed for $\xi$ small enough, whereas the second one can be handled by Gronwall's lemma and $\vt_0>0$. We conclude that 
\begin{align}\label{eq:0101}
Z\geq\underline Z>0\quad\text{in}\quad \overline I\times\overline\Omega.
\end{align}
Recall that $\underline Z$ depends on $\ell$. We are now going to prove a uniform lower bound.
Similarly to \eqref{eq:testLSU1'} and \eqref{eq:testLSU2'} we consider a function $\varphi\in C^\infty(\overline I\times\overline\Omega)$ satisfying
\begin{align}
\label{eq:testLSU1''}\varphi(x)&\geq1\quad\text{in}\quad \overline I\times \overline\Omega,\\
\frac{\nabla\varphi\cdot\bfA^\ell_\zeta(Z)\nu}{\varphi}&\geq 1\quad\text{on}\quad\overline I\times\partial\Omega.\label{eq:testLSU2''}
\end{align}
Let us first assume that the minimum of $v=\varphi e^{\lambda_1 t} Z$ is attained in an interior point
$(t_0,x_0)\in I\times\Omega$.
 We obtain similarly to \eqref{eq:0312}
\begin{align}\label{eq:0313b}
0&\geq\,-ce^{\lambda_1 t_0}(1+Z)+\frac{\delta\varphi e^{\lambda_1 t_0}}{(\mathfrak b_{(t,x)}^{-1}(\chi_\ell(Z)))^{2}}-\ep(\mathfrak b_{(t,x)}^{-1}(\chi_\ell(Z)))^{5}+\lambda_1e^{\lambda_1 t_0}\varphi Z.
\end{align}
An appropriate choice of $\lambda_1$ contradicts \eqref{eq:0313b}.
In the case of $x_0\in\partial\Omega$ and $t_0>0$ we have similarly to the proof of (b)
\begin{align*}
0&\geq Z e^{-\lambda_1 t_0}\nabla\varphi\cdot\bfA^\ell_\zeta(t_0,x_0)\nu(x_0).
\end{align*}
This gives a contradiction  by \eqref{eq:0101}, \eqref{eq:testLSU1''} and \eqref{eq:testLSU2''}. Consequently, the minimum of $Z$ is attained in a point
$(0,x_0)$ for some $x_0\in\overline\Omega$. This gives the claim of (b) since $\lambda_1$ is independent of $\ell$. Hence all properties of $Z$ are shown that imply (by transformation) the existence of a function $\vt$ with the required properties.
\end{proof}

\section{Construction of an approximate solution}
\label{sec:3}

In this section we construct an approximation of the system, where the continuity equation contains an artificial diffusion ($\ep$-layer) and the pressure is stabilised by a high power of the density ($\delta$-layer). Following \cite{FN} we add various regularizing terms depending on $\varepsilon$ and $\delta$ to the equations to preserve the energy balance. One of the regularizing terms can only be shown to belong to $L^1$, which is not enough to conclude uniform continuity in time needed for the application of Theorem~\ref{thm:auba}. To overcome this peculiarity we include a further diffusion term of the fluid velocity which is non-linear and of $p$-growth with $p>\beta>2$. It vanishes in the limit but improves the time integrability mentioned before.
 Additionally, we regularize the shell equation by
 replacing the operator $K$ with 
\begin{align*}
K_\ep(\eta)=K(\eta)+\ep\mathcal L(\eta),\quad \mathcal L(\eta)=\frac{1}{2}\int_\omega|\nabla^3\eta|^2\dH,
\end{align*}
defined for $\eta\in W^{3,2}(\omega)$. Thanks to this we can prove compactness of the shell energy in the Galerkin limit.

\begin{remark}
We observe that adding dissipative regularization terms to the shell equation is not possible. This is a special feature for energetically closed systems and in contrast to other fluid systems~\cite{GraHil}. Indeed, a dissipation term in the solid creates heat on the surface, which consequently effects the temperature. In the case of shells this yields a non-homogeneous Neumann boundary value for the temperature variable. This non-homogeneity naturally possesses the ''wrong sign'' in order to attain in the limit the boundary values for the temperature that are in accordance with the concept of weak solutions. In the case of visco-elastic solids, where dissipative terms  such as an additional heat source are included (they are physical and not only relaxation terms) our approximation would yield the correct non-homogeneous boundary values. However, we considered here perfectly elastic solids. Hence all energy is supposed to be stored in the elastic potential. 
\end{remark}

In contrast to \cite{BrSc} and \cite{LeRu} we construct the fixed point on the Galerkin level. This allows to remove one regularization level for the boundary and the convective term that was needed there. 
The formulation of the Galerkin approximation in our case is more involved since the basis functions are defined on the a priori unknown time dependent domain. The fixed point argument (which is now applied on the Galerkin level) is, however, much easier. After constructing a solution on the basic level, we prove in Subsection \ref{subsec:teb} the energy equality and derive further estimates through the Helmholz-function. In particular, we derive the approximate system and the a-priori estimates. They are essential for the remainder of the paper and are preserved in all limit procedures.\\
For the original system we seek a solution of the shell in the class
\begin{align*}
 Y^I:=W^{1,\infty}(I;L^2(\omega))\cap L^\infty(I;W^{2,2}(\omega)).
\end{align*}
However, in this section we are dealing with a regularised system
where instead solutions are located in

\begin{align*}
\tilde Y^I:= W^{1,\infty}(I;L^2(\omega))\cap L^{\infty}(I;W^{3,2}(\omega)).
\end{align*}
For $\zeta\in \tilde Y^I$ with $\|\zeta\|_{L^\infty_{t,x}}\leq \frac{L}{2}$ we consider

\begin{align*}
\tilde X_\zeta^I&:=L^p(I;W^{1,p}(\Omega_{\zeta   (t)})),
\quad Z_\zeta^I:=L^2(I;W^{1,2}(\Omega_\zeta))\cap L^\infty(I;L^4(\Omega_\zeta)).
\end{align*}
The space $X^I_\eta$ is defined in Section \eqref{sec:weak}.
A solution to the regularized system, in the weak formulation, is a quadruplet $(\eta,\bfu,\varrho,\vt)\in  \tilde Y^I\times \tilde X_{\eta}^I\times X_{\eta}^I\times Z_{\eta}^I$ that satisfies the following.
\begin{enumerate}[label={(K\arabic{*})}]
 \item\label{K1} The regularized weak momentum equation
\begin{align}\label{eq:regu}
\begin{aligned}
&\int_I\frac{\dd}{\dt}\int_{\Omega_{\eta}}\varrho\bfu \cdot\bfphi\dxt-\int_I\int_{\Omega_{\eta}}\varrho\bfu \cdot\partial_t\bfphi\dx\dt\\
&-\int_I\int_{\Omega_{\eta}}\varrho\bfu\otimes \bfu:\nabla \bfphi\dx\dt+\int_I\int_{\Omega_{ \eta }}\bfS(\vt,\nabla\bfu):\nabla\bfphi \dxt
\\&-\int_I\int_{\Omega_{\eta }}
p_\delta(\vr,\vt)\,\Div\bfphi\dxt+\int_I\int_{\Omega_{\eta }}
\varepsilon\nabla\varrho\nabla\bfu\cdot\bfphi\dxt\\
&+\int_I\bigg(\frac{\dd}{\dt}\int_\omega \partial_t \eta\, b\dH-\int_\omega \partial_t\eta\,\partial_t b\dH + \int_\omega K_\ep'(\eta)\,b\dH\bigg)\dt
\\
&+\int_I\int_{\Omega_{\eta }}\ep(1+\vt)\overline{\bfP}:\nabla\bfphi\dxt\\
&=\int_I\int_{\Omega_{\eta}}\varrho\bff\cdot\bfphi\dxt+\int_I\int_\omega g\,b\,\dd x\dt
\end{aligned}
\end{align} 
holds for all test-functions $(b,\bfphi)\in C^\infty(\omega)\times C^\infty(\overline{I}\times \R^3)$ with $\mathrm{tr}_{\eta}\bfphi=b\nu$
and for some $\overline{\bfP}\in L^{p'}(I\times\Omega_{\eta})$. 
Moreover, we have $(\varrho\bfu)(0)=\bfq_0$, $\eta(0)=\eta_0$ and $\partial_t\eta(0)=\eta_1$. The boundary condition $\mathrm{tr}_{\eta}\bfu=\partial_t\eta\nu$ holds in the sense of Lemma \ref{lem:2.28}.
\item\label{K2} The regularized continuity equation 
\begin{align}\label{eq:regvarrho}
\partial_t\vr+\Div\big(\varrho\bfu\big)=\ep\Delta\vr
\end{align}
holds in $I\times\Omega_{\eta}$ with $\partial_{\nu_{\eta}}\vr|_{\partial\Omega_{\eta}}=0$ as well as $\vr(0)=\vr_0$.
\item \label{K3} The entropy balance
\begin{align} \label{m217*finala}\begin{aligned}
 \int_I &\frac{\dd}{\dt}\int_{\Omega_{\eta}} \vr s(\varrho,\vartheta) \, \psi\dxt
 - \int_I \int_{\Omega_{\eta}} \big( \vr s(\varrho,\vartheta) \partial_t \psi + \varrho s (\varrho,\vartheta)\bfu \cdot \nabla\psi \big)\dxt
\\& \geq\int_I\int_{\Omega_{\eta}}
\frac{1}{\vartheta}\Big[\bfS(\vt, \nabla \vu) : \nabla \bfu +\ep(1+\vt)\max\big\{|\overline{\bfP}|^{p'},|\nabla\bfu|^p\big\}\Big]
\psi\,\dif x\,\dif t 
\\&  +\int_I\int_{\Omega_{\eta}}
\frac{1}{\vartheta}\Big[\frac{\delta}{2} ( \vartheta^{\beta -
1} + \frac{1}{\vartheta^2}) \Big) |\nabla \vartheta|^2 + \delta
\frac{1}{{\vartheta}^2} \Big] 
\psi\,\dif x\,\dif t \\
&- \int_I\int_{\Omega_{\eta}}
\Big( \frac{\varkappa(\vartheta)}{\vartheta} + \delta (
\vartheta^{\beta - 1} + \frac{1}{\vartheta^2}) \Big) \nabla
\vartheta  \cdot \nabla\psi \dxt+ \int_I \int_{\Omega_{\eta}} \frac{\vr}{\vt} H \psi  \dxt\\
&+ \int_I\int_{\Omega_{\eta}}\ep\left[\rdiss(\vr)-  \vartheta^4\right] \psi\dxt
\end{aligned}
\end{align}
holds for all
$\psi\in C^\infty(\overline I\times \R^3)$ with $\psi \geq 0$; in particular, all integrals on the right hand side are well defined. Moreover, we have
$\lim_{r\rightarrow0}\vr s(\vr,\vt)(t)\geq \vr_0 s(\vr_0,\vt_0)$ and $\partial_{\nu_{\eta}}\vt|_{\partial\Omega_\eta}\leq 0$.
\item \label{K4} The total energy balance
\begin{equation} \label{EI20finala}
\begin{split}
- \int_I \partial_t \psi \,
\mathcal E_{\ep,\delta} \dt &=
\psi(0) \mathcal E_{\ep,\delta}(0)+\int_I\psi \int_{\Omega_{\eta}}   \left(\frac{\delta}{\vartheta^2}-\ep \vt^5  \right) \dx \dt + \int_I \psi \int_{\Omega_{\eta}} \vr H  \dxt\\&+\int_I\int_{\Omega_{\eta}}\varrho\bff\cdot\bfu\dxt+\int_I\psi\int_M g\,\partial_t\eta\,\dd y\dt
\end{split}
\end{equation}
holds for any $\psi \in C^\infty_c([0, T))$.
Here, we abbreviated
$$\mathcal E_{\ep,\delta}(t)= \int_{\Omega_{\eta}(t)}\Big(\frac{1}{2} \varrho(t) | {\bf u}(t) |^2 + \varrho(t) e_\delta(\varrho(t),\vartheta(t))\Big)\dx+\int_M \frac{|\partial_t\eta(t)|^2}{2}\,\dd y+ K_\ep(\eta(t)).$$
\end{enumerate}
\begin{remark}\label{rem:2201A}
In order to deal with the term $\int_I\int_{\Omega_{\eta }}
\varepsilon\nabla\varrho\nabla\bfu\cdot\bfphi\dxt$ (appearing in \eqref{eq:regu} to balance the artificial viscosity term in \eqref{eq:regvarrho}) in the proof of \eqref{eq:conetatC}
we need higher integrability of $\nabla\bfu$ in time. This is achieved by introducing an artificial $p$-Laplacian term $\ep(1+\vt)(1+|\nabla\bfu|)^{p-2}\nabla\bfu$ for some $p>\beta>2$ on the Galerkin approximation in the next section. It gives the additional term $\ep(1+\vt)\max\big\{|\overline{\bfP}|^{p'},|\nabla\bfu|^p\big\}$ in \eqref{m217*finala}.
The term $\ep(1+\vt)\overline{\bfP}$ in \eqref{eq:regu} is the weak limit of the $p$-Laplacian term and can be seen as the defect in the strong convergence of $\nabla\bfu$. It disappears in the limit $\ep\rightarrow0$.
\end{remark}

The rest of this section is dedicated to the proof of the following existence theorem.
\begin{theorem}\label{thm:regu}
Assume that we have for some $\alpha\in(0,1)$
\begin{align}\label{inintial}
\begin{aligned}
\frac{|\bfq_0|^2}{\varrho_0}&\in L^1(\Omega_{\eta_0}),\ \varrho_0 ,\vt_0\in C^{2,\alpha}(\overline\Omega_{\eta_0}),\   \eta_0\in W^{3,2}(\omega;[-\tfrac{L}{4},\tfrac{L}{4}]),\ \eta_1\in L^{2}(\omega),\\
\bff&\in L^2(I;L^\infty(\R^3)),\ g\in L^2(I\times \omega),\ H\in C^{1,\alpha}(\overline I\times\R^3), \ H\geq0.
\end{aligned}
\end{align}
Furthermore suppose that $\varrho_0$ and $\vt_0$ are strictly positive and that \eqref{eq:compa} is satisfied. Then there exists a solution $(\eta,\bfu,\varrho,\vt)\in \tilde Y^I\times \tilde X_{\eta}^I\times X_{\eta}^{I}\times Z_{\eta}^{I}$ to \ref{K1}--\ref{K4}. Here, we have $I=(0,T_*)$, where
$T_*<T$ only if $\lim_{t\rightarrow T^\ast}\|\eta(t,\cdot)\|_{L^\infty_x}=\frac{L}{2}$ or the Koiter energy degenerates (namely, if $\lim_{s\to t}\overline{\gamma}(s,y)=0$ for some point $y\in \omega$).
\end{theorem}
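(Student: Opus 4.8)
\emph{Decoupling and the semi-discrete scheme.} The plan is to decouple the geometry from the fluid on the Galerkin level and to close the system by a Schauder fixed point, using Theorems~\ref{thm:regrho} and~\ref{thm:regtheta} to supply the density and temperature. Fix $N\in\mathbb N$ and a boundary function $\zeta$ in a closed convex set $\mathcal C\subset C^{2,\alpha}(\overline I\times\omega)$ with $\|\zeta\|_{L^\infty_{t,x}}\le\tfrac{L}{2}$ and with $\partial_t\nabla^2\zeta,\nabla^3\zeta\in C^{\alpha}(\overline I\times\omega)$ uniformly bounded, so that the hypotheses of Theorems~\ref{thm:regrho},~\ref{thm:regtheta} are met. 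I would build the velocity ansatz space as $\Span\{\mathscr F_\zeta b_k\}_{k\le N}\cup\Span\{\bfw_k\}_{k\le N}$, where $(b_k)$ spans a finite-dimensional smooth subspace of $W^{3,2}(\omega)$ (e.g.\ eigenfunctions of a high-order elliptic operator on $\omega$) and $(\bfw_k)\subset C_c^\infty(\Omega_\zeta)$ are interior modes; a velocity $\bfu_N$ in this span then satisfies $\tr_\zeta\bfu_N=(\textstyle\sum_k\alpha_k b_k)\,\nu$, and slaving $(\alpha_k)$ to the shell forces $\partial_t\eta_N=\sum_k\alpha_k b_k$, so that $\eta_N$ lives in the finite-dimensional space $\eta_0+\Span(b_k)$. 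Given such a $\bfu_N$ one solves \eqref{eq:regvarrho} for $\varrho_N=\varrho_N[\zeta,\bfu_N]$ by Theorem~\ref{thm:regrho} (in its transpiration-boundary form when $\zeta\neq\eta_N$, which reduces to the Neumann condition at the fixed point) and then \eqref{m119a} for $\vartheta_N=\vartheta_N[\zeta,\bfu_N,\varrho_N]$ by Theorem~\ref{thm:regtheta}; crucially $\vartheta_N$ is a classical, strictly positive solution. Inserting $\varrho_N,\vartheta_N$ into the weak momentum equation~\eqref{eq:regu} tested against the basis and coupling it to the regularised shell equation $\partial_t^2\eta+K_\ep'(\eta)=g+\nu\cdot\bfF$ yields a closed system of second-order integro-differential equations for the $N$ coefficients; a Carathéodory/Banach fixed point gives a local-in-time solution $(\eta_N,\bfu_N)$, hence $(\varrho_N,\vartheta_N)$.

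\emph{A priori estimates and the fixed point in $\zeta$.} Testing the Galerkin momentum equation with the admissible pair $(\partial_t\eta_N,\bfu_N)$ and combining with the continuity and internal energy equations, the regularising terms $\delta\vartheta^{-2}$, $-\ep\vartheta^5$, $\ep\rdiss(\varrho)$ and $\ep(1+\vartheta)|\nabla\bfu|^{p}$ turn out to be exactly those needed to close the balance, producing the total energy identity~\eqref{EI20finala} at the Galerkin level; together with the entropy production this controls $\mathcal E_{\ep,\delta}$ and the quantity $\sigma$ in $L^1$ uniformly in $N$ and $\zeta$. In particular $\|\eta_N\|_{L^\infty(I;W^{2,2}(\omega))}+\|\partial_t\eta_N\|_{L^\infty(I;L^2(\omega))}$ stays bounded as long as $\|\eta_N\|_{L^\infty_{t,x}}<\tfrac{L}{2}$ and $\overline\gamma\neq0$ (coercivity of the Koiter energy, cf.~\eqref{geomQuan}), which allows continuation of the ODE solution up to a maximal time $T_*$ where one of these conditions fails. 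The $\ep\mathcal L$-regularisation upgrades this to an $L^\infty(I;W^{3,2}(\omega))$-bound on $\eta_N$, and the second-order-in-time ODE together with the time-continuity of $\varrho_N,\vartheta_N$ gives Hölder control in time of $\nabla^2\eta_N$ and $\partial_t\nabla^2\eta_N$; by interpolation and Aubin--Lions the map $\zeta\mapsto\eta_N$ sends $\mathcal C$ into a compact subset of itself and is continuous (continuous dependence of~\eqref{eq:regvarrho},~\eqref{m119a} and of the finite ODE on $\zeta$). Schauder's theorem then yields a fixed point $\zeta=\eta_N$, i.e.\ a genuine Galerkin solution $(\eta_N,\bfu_N,\varrho_N,\vartheta_N)$; dividing the classical, strictly positive equation~\eqref{m119a} by $\vartheta_N$ and using Gibbs' relation~\eqref{m97} gives the entropy identity corresponding to the inequality in~\ref{K3} at this level.

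\emph{Passage $N\to\infty$.} The uniform estimates give weak/weak$^*$ limits of $\bfu_N$, $\vartheta_N$, $\varrho_N$ and of the $p$-Laplacian flux $\overline{\bfP}_N$; strong convergence of $\varrho_N$ follows from the artificial viscosity ($\varrho_N$ bounded in $L^2(I;W^{1,2}(\Omega_\zeta))$ together with control of $\partial_t\varrho_N$, via the variable-domain compactness Lemma~\ref{thm:weakstrong}), strong convergence of $\vartheta_N$ from the $L^2$-bound on $\nabla\vartheta_N$, and---this is the delicate point---strong convergence of $\partial_t\eta_N$ from the Aubin--Lions type Theorem~\ref{thm:auba}, whose equicontinuity hypothesis is verified by testing the coupled momentum equation with the regularising extension $\mathscr F_{\eta_N}b$, whose time derivative $\partial_t(\mathscr F_{\eta_N}b)$ behaves like $\partial_t\eta_N$ (Corollary~\ref{cor:2807'}); matching $\varrho_N\bfu_N$ with $\partial_t\eta_N\in L^2(I;L^r(\omega))$, $r<4$ (Lemma~\ref{lem:2.28}), is exactly what forces $\gamma>\tfrac{12}{7}$, while the $p$-Laplacian supplies the extra time-integrability of $\nabla\bfu_N$ needed to absorb $\int_I\int_{\Omega_{\eta_N}}\ep\nabla\varrho_N\nabla\bfu_N\cdot\bfphi\,\dxt$ in that estimate (cf.~Remark~\ref{rem:2201A}). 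Passing to the limit in the convective term, in $p_\delta(\varrho,\vartheta)$, in $\bfS(\vartheta,\nabla\bfu)$, in $K_\ep'(\eta_N)$ (strong $W^{2,2}$-convergence), and in the $p$-Laplacian (whose weak limit is $\overline{\bfP}$) yields~\ref{K1}--\ref{K2}; the energy identity passes to~\ref{K4}, and the entropy equality passes to the inequality~\ref{K3} by weak lower semicontinuity of the convex production terms, keeping $\nabla\log\vartheta_N$ bounded in $L^2$. The blow-up alternative ($T_*<T$ only if $\|\eta(t)\|_{L^\infty_x}\to\tfrac{L}{2}$ or $\overline\gamma(s,y)\to0$) is inherited from the a priori estimates.

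\emph{Main obstacle.} The sensitive interface is between the decoupled subproblems and the fixed point: one must ensure that the $\eta_N$ produced by the Galerkin system is regular enough---in space and, through the second-order-in-time ODE, in time---to be an admissible boundary function for Theorems~\ref{thm:regrho} and~\ref{thm:regtheta}, and that this regularity is compact in the topology of $\mathcal C$; this is precisely the purpose of the $\ep\mathcal L$-regularisation of the shell. The second delicate point is the Galerkin energy identity, where the various $\ep$- and $\delta$-terms must be arranged so that no uncontrolled remainder survives, together with the strong convergence of $\partial_t\eta_N$ as $N\to\infty$; both are ultimately responsible for the restriction $\gamma>\tfrac{12}{7}$.
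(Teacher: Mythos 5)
Your overall architecture coincides with the paper's: a Galerkin scheme whose velocity ansatz combines interior modes with extensions of a basis of $W^{3,2}(\omega)$, the density and temperature recovered from Theorems~\ref{thm:regrho} and~\ref{thm:regtheta} (so that $\vartheta_N$ is classical and strictly positive and the entropy balance can be derived a posteriori by dividing the internal energy equation by $\vartheta_N$), a Schauder-type fixed point closing the semi-discrete system, the total energy and entropy estimates, and the limit $N\to\infty$ using Theorem~\ref{thm:auba} for the compactness of $\partial_t\eta_N$ and a testing argument for the compactness of the shell energy. The $\ep\mathcal L$-regularisation, the role of the $p$-Laplacian, and the origin of the restriction $\gamma>\tfrac{12}{7}$ are all identified correctly.

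The one place where your arrangement would run into trouble is the choice of what to freeze. You freeze only the geometry $\zeta$ and keep the velocity $\bfu_N$ as the unknown of the inner problem, so that $\varrho_N$ and $\vartheta_N$ depend on the unknown. Away from the fixed point the kinematic coupling $\tr_{\zeta}\bfu_N=\partial_t\zeta\,\nu$ fails, and Theorem~\ref{thm:regtheta} genuinely needs it: the homogeneous Neumann condition \eqref{eq:thetaboundary} and the boundary step of the maximum and minimum principles rest on $\bff_\zeta^\ell\cdot\nu=0$, i.e.\ on \eqref{1611}, which is precisely the identity coming from $\tr_\zeta\bfw=\partial_t\zeta\nu$. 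The transpiration boundary condition you invoke is only available for the continuity equation, not for the internal energy equation, and it would destroy the sign structure of the min/max principle for $\vartheta_N$ (hence the strict positivity on which the entropy balance depends). Moreover, with $\varrho_N,\vartheta_N$ depending on the unknown $\bfu_N$, the inner problem is no longer a Carath\'eodory ODE but an equation involving the solution operators of two parabolic problems evaluated at the unknown, which requires additional Lipschitz estimates you do not supply. The paper sidesteps both issues by freezing the single coefficient vector $\bfbeta_N$, which generates $\zeta_N$ \emph{and} $\bfv_N$ simultaneously through \eqref{eq:zeta}; then $\tr_{\zeta_N}\bfv_N=\partial_t\zeta_N\nu$ holds identically throughout the iteration, $\varrho_N$ and $\vartheta_N$ are computed from the frozen data, the inner problem \eqref{eq:decuN} is solvable by Picard--Lindel\"of, and Schauder is applied to the map $\bfbeta\mapsto\bfalpha$ on a ball of $C^{1,\alpha}(\overline I_*;\R^N)$. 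With the fixed point reorganised in this way, the rest of your outline goes through as in the paper.
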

We prove Theorem \ref{thm:regu} in two steps. First we construct a finite dimensional Galerkin approximation to \ref{K1}--\ref{K3} in the next subsection.
Then we derive the energy balance, prove uniform a priori estimates and pass to the limit.
 
\subsection{Galerkin approximation} 
By solving respective eigenvalue problems we construct a smooth orthogonal basis $(\tilde\bfX_k)_{k\in\N}$
of $W^{1,2}_0(\Omega)$ that is orthogonal in $L^2(\Omega)$ and a smooth orthonormal basis $(\tilde Y_k)_{k\in\N}$ of $W^{3,2}(\omega)$ which is orthogonal in $L^2(\omega)$. We define vector fields $\tilde\bfY_k$ by setting $\tilde\bfY_k=\mathscr F_\Omega((\tilde Y_k\nu)\circ\bfvarphi^{-1})$, where $\mathscr F_\Omega$ is the extension operator used in Section \ref{sec:ext}. We recall that $\mathscr F_\Omega:W^{k,2}(\omega)\rightarrow W^{k,2}(\R^n)$ for $k\in\N$ such that the $\tilde\bfY_k$'s are smooth. 
Now we choose an enumeration $(\tilde\bfomega_k)_{k\in\N}$ of $(\tilde\bfX_k)_{k\in\N}\cup (\tilde\bfY_k)_{k\in\N}$. In return we associate
$w_k:=(\tilde\bfomega_k|_{\partial\Omega} \nu)\circ\bfvarphi$. 
Obviously, we obtain a basis $(\tilde\bfomega_k)_{k\in\N}$ of $W^{1,2}_0(\Omega)$
and a basis $(w_k)_{k\in\N}$ of $W^{3,2}(\omega)$. We
define for $\phi\in W^{3,2}(\omega)$ the orthogonal projection (in space) $\mathcal P_N$ as
\[
\mathcal P_N(\phi):=\sum_{k=1}^N P_N^k(\phi)w_k:=\sum_{k=1}^N\skp{\phi}{w_k}_{{W^{3,2}}(\omega)}w_k,
\]
 which satisfies the expected stability and convergence properties in all spaces relevant for the analysis. 
Next we seek for a couple of discrete solutions $(\eta_N,\bfu_N)$ of the form
\begin{align*}
\eta_N=\mathcal P_N\eta_0+\sum\nolimits_{k=1}^N\int_0^t\alpha_{kN} w_k\ds
,\quad \bfu_N=\sum\nolimits_{k=1}^N\alpha_{kN} \tilde\bfomega_k\circ \bfPsi_{\eta_N}^{-1},
\end{align*}
with time-dependent coefficients $\bfalpha_N=(\alpha_{kN})_{k=1}^N$, which solve the following discrete version of \eqref{eq:regu}: 
\begin{align}\label{eq:decuN'}
\begin{aligned}
&\int_{\Omega_{\eta_N}}\varrho_N(t) \bfu_N(t)\cdot \tilde\bfomega_k\circ \bfPsi_{\eta_N}^{-1}(t)\dx\\
&-\int_0^t\int_{\Omega_{\eta_N}}\Big( \varrho_N\bfu_N\cdot \partial_t\Big(\tilde\bfomega_k\circ \bfPsi_{\eta_N}^{-1}\Big) +\varrho_N\bfu_N\otimes \bfu_N:\nabla \tilde\bfomega_k\circ \bfPsi_{\eta_N}^{-1}\Big)\dxt
\\
&+\int_0^t\int_{\Omega_{\eta_N }}\Big(\bfS^\ep(\vt_N,\nabla\bfu_N):\nabla\tilde\bfomega_k\circ \bfPsi_{\eta_N}^{-1}\Big)\dx\ds
\\
&-\int_0^t\int_{\Omega_{\eta_N }}\Big(
p_\delta(\varrho_N,\vt_N)\,\Div\tilde\bfomega_k\circ \bfPsi_{\eta_N}^{-1}+\varepsilon\nabla\varrho_N\nabla\bfu_N\tilde\bfomega_k\circ \bfPsi_{\eta_N}^{-1}\Big)\dx\ds
\\
&+\int_0^t\int_\omega \Big(K_\ep'(\eta_N)w_k-\partial_t\eta_N\,\partial_t w_k\Big)\dH\ds+\int_\omega \partial_t \eta_N(t) w_k\dH\ds
\\
&=\int_0^t\int_{\Omega_{\eta_N}}\varrho_N\bff\cdot\tilde\bfomega_k\circ \bfPsi_{\eta_N}^{-1}\dxs+\int_0^t\int_\omega g\,w_k\,\dd y\ds\\
&+\int_{\Omega_{\eta_N(0)}}\bfq_0\cdot\tilde\bfomega_k\circ \bfPsi_{\eta_N}^{-1}(0,\cdot)\dx+\int_\omega\eta_1\,w_k\,\dd y.
\end{aligned}
\end{align} 
Here $\varrho_N=\varrho(\eta_N,\bfu_N)$ and $\vt_N=\vt(\eta_N,\bfu_N,\vr_N)$ are the unique solutions from Theorems~\ref{thm:regrho} and \ref{thm:regtheta} subject to the initial data $\varrho_0$ and $\vt_0$, where $\zeta\equiv \eta_N$ and $\bfw\equiv \bfu_N$.
Note that by construction we have $\tr_{\eta_N}\bfu_N=\partial_t\eta_N\nu$
and that we can choose $\alpha_{kN}(0)$ in a way that
$\bfu_N(0)$ converges to $\bfq_0/\varrho_0$.
In order to solve \eqref{eq:decuN'} we decouple the nonlinearities. Consider a given couple of discrete functions $(\zeta_N,\bfv_N)$ of the form
\begin{align}
\label{eq:zeta}
\zeta_N=\mathcal P_N\eta_0+\sum\nolimits_{k=1}^N\int_0^t\beta_{kN} w_k\ds
,\quad \bfv_N=\sum\nolimits_{k=1}^N\beta_{kN} \tilde\bfomega_k\circ \bfPsi_{\zeta_N}^{-1},
\end{align}
with time-dependent coefficients $\bfbeta_N=(\beta_{kN})_{k=1}^N$.
By construction they satisfy $\tr_{\zeta_N}\bfv_N=\partial_t\zeta_N\nu$.
We aim to solve
\begin{align}\label{eq:decuN}
\begin{aligned}
&\int_{\Omega_{\zeta_N}}\varrho_N(t)  \bfu_N(t)\cdot \tilde\bfomega_k\circ \bfPsi_{\zeta_N}^{-1}(t)\dx\\
&-\int_0^t\int_{\Omega_{\zeta_N}}\Big( \varrho_N\bfu_N\cdot \partial_t\Big(\tilde\bfomega_k\circ \bfPsi_{\zeta_N}^{-1}\Big) +\varrho_N\bfv_N\otimes \bfu_N:\nabla \tilde\bfomega_k\circ \bfPsi_{\zeta_N}^{-1}\Big)\dxt
\\
&+\int_0^t\int_{\Omega_{\zeta_N }}\Big(\bfS^\ep(\vt_N,\nabla\bfu_N):\nabla\tilde\bfomega_k\circ \bfPsi_{\zeta_N}^{-1}\Big)\dx\ds
\\
&-\int_0^t\int_{\Omega_{\zeta_N }}\Big(
p_\delta(\varrho_N,\vt_N)\,\Div\tilde\bfomega_k\circ \bfPsi_{\zeta_N}^{-1}+\varepsilon\nabla\varrho_N\nabla\bfu_N\tilde\bfomega_k\circ \bfPsi_{\zeta_N}^{-1}\Big)\dx\ds
\\
&+\int_0^t\int_\omega \Big(K_\ep'(\eta_N)w_k-\partial_t\eta_N\,\partial_t w_k\Big)\dH\ds+\int_\omega \partial_t \eta_N(t) w_k\dH
\\
&=\int_0^t\int_{\Omega_{\zeta_N}}\varrho_N\bff\cdot\tilde\bfomega_k\circ \bfPsi_{\zeta_N}^{-1}\dxs+\int_0^t\int_\omega g\,w_k\,\dd y\ds\\
&+\int_{\Omega_{\zeta_N(0)}}\bfq_0\cdot\tilde\bfomega_k\circ \bfPsi_{\zeta_N}^{-1}(0,\cdot)\dx+\int_\omega\eta_1\,w_k\,\dd y.
\end{aligned}
\end{align} 
Here  $\varrho_N=\varrho(\zeta_N,\bfv_N)$ and $\vt_N=\vt(\zeta_N,\bfv_N,\vr_N)$ are the unique solutions from Theorems~\ref{thm:regrho} and \ref{thm:regtheta} subject to the initial data $\varrho_0$ and $\vt_0$, where $\zeta\equiv \zeta_N$ and $\bfw\equiv \bfv_N$.
Note that this is possible since $\norm{P_N\eta_0}_{L^\infty_x}\leq \frac{L}{3}$ for $N$ large enough, which implies $\norm{\zeta_N}_{L^\infty_{t,x}}\leq \frac{L}{2}$ for $T_*$ small enough.
 The system \eqref{eq:decuN} is equivalent to
a system of integro-differential equations for the vector $\bfalpha_N=(\alpha_{kN})_{k=1}^N$. 
 It reads as
\begin{align}\label{eq:ide1}
\mathcal A(t)\bfalpha_N(t)&=\int_0^t\mathcal B(\sigma)\bfalpha_N(\sigma)\ds+\int_0^t \tilde{\mathcal B}\bigg(\sigma,\bfalpha_N(\sigma),\int_0^\sigma\bfalpha_N(s)\dd s\bigg)\ds+\int_0^t\bfc(\sigma)\ds+\tilde{\bfc},
\end{align}
with
\begin{align*}
\mathcal A_{ij}&=\int_{\Omega_{\zeta_N}}\varrho_N(t) \tilde\bfomega_i\circ \bfPsi_{\zeta_N}^{-1}(t) \cdot\tilde\bfomega_j\circ \bfPsi_{\zeta_N}^{-1}(t)\dx
+\int_\omega w_i w_j\dH
\\
\mathcal{B}_{ij}
&=\int_{\Omega_{\zeta_N}}\Big( \varrho_N\tilde\bfomega_i\circ \bfPsi_{\zeta_N}^{-1}\cdot \partial_t\Big(\tilde\bfomega_j\circ \bfPsi_{\zeta_N}^{-1}\Big) +\varrho_N\bfv_N\otimes \tilde\bfomega_i\circ \bfPsi_{\zeta_N}^{-1}:\nabla\tilde\bfomega_j\circ \bfPsi_{\zeta_N}^{-1}\Big)\dx
\\
&-\int_{\Omega_{\zeta_N }}
\varepsilon\nabla\varrho_N\nabla\tilde\bfomega_i\circ \bfPsi_{\zeta_N}^{-1}\cdot\tilde\bfomega_j\circ \bfvarphi_{\zeta_N}^{-1}\dx\ds
-\int_{\omega}w_i\,\partial_t w_j\dH
\\
\tilde{\mathcal B}_{j}&=\int_\omega K_\ep'\bigg(\mathcal P_N\eta_0+\sum\nolimits_{k=1}^N\int_0^\sigma\alpha_{kN}(s) w_k\dd s\bigg)w_j\dH\\
&+\int_{\Omega_{\zeta_N }}\bfS^\ep\Big(\vt_N,\sum_{k=1}^N\nabla\big(\alpha_{kN}\tilde\bfomega_k\circ \bfPsi_{\zeta_N}^{-1}\big)\Big):\nabla\tilde\bfomega_j\circ \bfPsi_{\zeta_N}^{-1}\dx
\\
c_i&=\int_{\Omega_{\zeta_N}}p_\delta(\vr_N,\vt_N)\,\Div\tilde\bfomega_i\circ \bfPsi_{\zeta_N}^{-1}\dx+\int_{\Omega_{\zeta_N}}\varrho_N\bff\cdot\tilde\bfomega_i\circ \bfPsi_{\zeta_N}^{-1}\dxt+\int_\omega g\,w_i\,\dd y
\\
\tilde{c}_i&=\int_{\Omega_{\zeta_N(0)}}\bfq_0\cdot\tilde\bfomega_i\circ \bfPsi_{\zeta_N}^{-1}(0,\cdot)\dx+\int_\omega\eta_1\,w_i\,\dd y.
\end{align*}
The matrix $\mathcal A_{ij}$ is invertible and all non-linear quantities are locally Lipschitz continuous in $\bfalpha_N$ (compare also with \cite[Thm. 4.4]{BrSc}). Also our analysis from Section \ref{sec:3a} shows that $\vt_N$ and $\varrho_N$ depend in a smooth way on $\bfv_N$ and $\zeta_N$. By the Picard-Lindel\"of theorem there is a unique solution in short time.\footnote{
Eventually, the solution can be extended for arbitrary times due to the a priori estimates which we derive below in \eqref{wWS27}.}
Consequently, we obtain a solution $(\eta_N,\bfu_N)$ to \eqref{eq:decuN} which satisfies the following energy balance (testing \eqref{eq:decuN} by $(\bfu_N,\partial_t\eta_N)$ and \eqref{eq:warme} by $\frac{1}{2}|\bfu_N|^2$)
\begin{align*}
\begin{aligned}
-&\int_I\bigg(\int_{\Omega_{\zeta_N}}\varrho_N\frac{|\bfu_N|^2}{2}\dx+\int_\omega\frac{|\partial_t\eta_N|^2}2\,\dd y+K_\ep(\eta_N)\bigg)\partial_t\psi\dt
\\ 
& +\int_I\psi\int_{\Omega_{\zeta}}\bfS^\ep(\vt_N,\nabla\bfu_N):\nabla\bfu_N\dxt\\
&=\psi(0)\bigg(\int_{\Omega_{\zeta_N(0)}}\frac{|\bfq_0|^2}{2\varrho_0}\dx
+\int_\omega\frac{|\eta_1|^2}2\,\dd y+K_\ep(\eta_0)\bigg)\\
&+\int_I\psi\int_{\Omega_{\zeta_N}}\varrho_N\bff\cdot\bfu_N\dxt+\int_I\psi\int_\omega g\partial_t \eta_N\,\dd y\dt\\
& +\int_I\psi\int_{\Omega_{\zeta_N}}p_\delta(\vr_N,\vt_N)\,\Div\bfu_N\dxt
\end{aligned}
\end{align*}
for all $\psi\in C^\infty_c([0,T))$. Testing further the continuity equation by
$\delta\frac{\beta\varrho_N^{\beta-1}}{(\beta-1)}+\frac{\gamma\varrho_N^{\gamma-1}}{(\gamma-1)}$ yields 
\begin{align}\label{eq:1712}
\begin{aligned}
-&\int_I\bigg(\int_{\Omega_{\zeta_N}}\varrho_N\frac{|\bfu_N|^2}{2}\dx
+\int_{\Omega_{\zeta_N}}\Big(\tfrac{1}{\gamma-1}\vr_N^\gamma+\tfrac{\delta}{\beta-1}\vr_N^\beta\Big)\dx+\int_\omega\frac{|\partial_t\eta_N|^2}2\,\dd y+K_\ep(\eta_N)\bigg)\partial_t\psi\dt
\\ 
& +\int_I\psi\int_{\Omega_{\zeta}}\bfS^\ep(\vt_N,\nabla\bfu_N):\nabla\bfu_N\dxt
+\int_I\psi\int_{\Omega_{\zeta}}\ep
\rdiss(\vr_N)\dxt
\\
&=\psi(0)\bigg(\int_{\Omega_{\zeta_N(0)}}\frac{|\bfq_0|^2}{2\varrho_0}\dx
+\int_\omega\frac{|\eta_1|^2}2\,\dd y+K_\ep(\eta_0)+\int_{\Omega_{\zeta_N}(0)}\Big(\tfrac{1}{\gamma-1}\vr_0^\gamma+\tfrac{\delta}{\beta-1}\vr_0^\beta\Big)\dx\bigg)\\
&+\int_I\psi\int_{\Omega_{\zeta_N}}\varrho_N\bff\cdot\bfu_N\dxt+\int_I\psi\int_\omega g\partial_t \eta_N\,\dd y\dt\\
& +\int_I\psi\int_{\Omega_{\zeta_N}}\Big(\big(\vr_N\vt_N+\tfrac{a}{3}\vt_N^4\big)\,\Div\bfu_N+\big(\vr_N^\gamma+\delta\vr_N^\beta\big)\,\Div(\bfu_N-\bfv_N)\Big)\dxt
\end{aligned}
\end{align}
for all $\psi\in C^\infty_c([0,T))$.
We consider the mapping 
\begin{align*}
F&:D\rightarrow F(D),\quad \bfbeta\mapsto\bfalpha,\quad
D=\Big\{\bfbeta\in C^{1,\alpha}(\overline I_\ast,\R^N):\,\sup_{I_\ast}\norm{\bfbeta'}_\alpha\leq K^*\Big\}
\end{align*}
where $I_*=(0,T_*)$ and $\alpha\in(0,1)$.
We will choose $K^*$ sufficiently large. In dependence of $K^*$ we find $T_*$ (sufficiently small) but uniform to solve the above ODE uniquely on $I_*$. Note that we may take $T^*$  small enough (in dependence of $K^*$) such that $\zeta_N$ (defined via $\bfbeta$ by~\eqref{eq:zeta}) satisfies $\norm{\zeta_N}_{L^\infty_{t,x}}\leq \frac{L}{2}$  for any $\bfbeta\in D$.
We are going to prove that $F$ has a fixed point. Let us first note that $F$ is upper-semicontinuous. Indeed, if we have a sequence $(\bfbeta^j)$ which converges in 
$C^{1,\alpha}(\overline I)$ to some $\bfbeta$ such that $\bfalpha^j=F(\bfbeta^j)$ converges in $C^{1,\alpha}(\overline I)$ to some $\bfalpha$, we have $\bfalpha=F(\bfbeta)$. This is due to the unique solvability of \eqref{eq:decuN} and the continuity of the coefficients $\mathcal A$, $\mathcal B$, $\tilde{\mathcal{B}}$ and $\bfc$. In fact, the continuity of $\mathcal A$, $\mathcal B$, $\tilde{\mathcal{B}}$ and $\bfc$ (with respect to $\bfbeta$) can be shown by transforming the integrals to the reference domain and using \eqref{map} similarly to the proofs of Theorems \ref{thm:regrho} and \ref{thm:regtheta}. The regularity and continuity of $\vr_N$ and $\vt_N$ then implies the continuity of the coefficients.

Next we aim to show that $F(D)\subset D$.
The internal energy equation \eqref{m119aL} for $\vt_N$ yields
\begin{align*}
&-\int_I\int_{ \Omega_{\zeta_N}}(a\varrho_N^4+ c_v\vr_N\vartheta_N)\,\partial_t\psi\dx\dt-\psi(0)\int_{ \Omega_{\zeta_N(0)}}(a\varrho_0^4+ c_v\vr_N\vartheta_0)\dx\\
& = \int_I\int_{\Omega_{\zeta_N}}\Big[\bfS^{\ep}(\vartheta_N,\nabla\bfv_N): \nabla\bfv_N -
(\tfrac{a}{3}\vt_N^4+\vr_N\vt_N) \Div \bfv_N \Big]\,\psi \dxt\\ 
&+ \int_I\int_{\Omega_{\zeta_N}}\Big[ \ep\rdiss(\vr_N)  +
\frac{\delta}{\vartheta_N^{2}} - \ep \vartheta_N^5 \Big]\,\psi\dxt
\end{align*}
for all $\psi\in C^\infty_c([0,T))$.
 Combining this with \eqref{eq:1712} implies
\begin{align} \label{eq:0201}
\begin{aligned}
- \int_I \partial_t \psi \,
\mathcal E^N_{\ep,\delta} \dt &=
\psi(0) \mathcal E^N_{\ep,\delta}(0)+\int_I\psi \int_{\Omega_{\zeta_N}}   \left(\frac{\delta}{\vartheta_N^2} - \ep\vartheta_N^5 \right) \dx \dt \\&+ 
\int_I\psi \int_{\Omega_{\zeta}}\Big(\bfS^\ep(\vt_N,\nabla\bfv_N):\nabla\bfv_N-\bfS^\ep(\vt_N,\nabla\bfu_N):\nabla\bfu_N\Big)\dxt\\
&+\int_I\psi \int_{\Omega_{\zeta}}p_\delta(\varrho_N, \vartheta_N) \big(\Div \bfu_N-\Div \bfv_N \big) \dxt\\
&+
\int_I \psi \int_{\Omega_{\eta}}\big( \vr_N H +\varrho_N\bff\cdot\bfu_N\big)\dxt+\int_I\psi\int_\omega g\,\partial_t\eta_N\,\dd y\dt
\end{aligned}
\end{align}
with
\begin{align*}
\mathcal E^N_{\ep,\delta}(t)&= \int_{\Omega_{\zeta_N(t)}}\Big(\frac{1}{2}\varrho_N(t)| {\bf u}_N(t) |^2 + \varrho_N(t) e_\delta(\varrho_N(t),\vartheta_N(t))\Big)\dx\\&+\int_\omega \frac{|\partial_t\eta_N(t)|^2}{2}\,\dd y+ K_\ep(\eta_N(t)).
\end{align*}
By choosing $\psi=\mathbb I_{(0,t)}$, we find that \eqref{eq:0201} implies uniform a-priori estimates. Note that we can apply Young's inequality to the forcing terms in \eqref{eq:0201} and absorb terms containing the unknowns in the left-hand side. 
 Moreover, by Theorem~\ref{thm:regtheta}
 we obtain bounds for $\theta_N$ (in dependence of $\varepsilon,\delta, N,K^*$) from below
 such that
\begin{align*}
\int_{I_*}\int_{\Omega_{\zeta_N}}   \frac{\delta}{\vartheta_N^2}\dxt\leq \,c(\varepsilon,\delta,N,K^*)T^{*}\leq 1
\end{align*}
for $T^{*}$ small enough. 
So, in order to apply the Gronwall lemma it is enough to control {the error term
\begin{align*}
\int_{I_*} &\int_{\Omega_{\zeta_N}}\Big(\bfS^\ep(\vt_N,\nabla\bfv_N):\nabla\bfv_N-\bfS^\ep(\vt_N,\nabla\bfu_N):\nabla\bfu_N\Big)\dxt\\
&+\int_{I_*}\int_{\Omega_{\zeta}}p_\delta(\varrho_N, \vartheta_N) \big(\Div \bfu_N-\Div \bfv_N \big) \dxt\\
&\leq\int_{I_*} \int_{\Omega_{\zeta_N}}\Big(\bfS^\ep(\vt_N,\nabla\bfv_N):\nabla\bfv_N+p_\delta(\vr_N,\vt_N)(|\nabla\bfu_N|+|\nabla\bfv_N|)\Big)\dxt.
\end{align*}
Using Theorem~\ref{thm:regrho} and \ref{thm:regtheta} we can bound $\vr_N$ and $\vt_N$ in terms of $K$ such that the above is bounded by
\begin{align*}
&\leq\,c(K)\,\int_{I_*} \int_{\Omega_{\zeta_N}}\big(1+|\nabla\bfv_N|^p\big)\dxt+c(K)\int_{I_*} \int_{\Omega_{\zeta_N}}|\nabla\bfu_N|^2\dxt\\
&\leq \,c(K,N)\,T^\ast\Big(1+\sup_{I^\ast}|\bfbeta_N|^p\Big)+c(K,N)T^\ast\sup_{I_\ast}\int_{\Omega_{\zeta_N}}|\bfu_N|^2\dx\\
&\leq \,c(K,N)\,T^\ast+c(K,N)T^\ast\sup_{I_\ast}\int_{\Omega_{\zeta_N}}\vr_N|\bfu_N|^2\dx.
\end{align*}
We choose $T^\ast=T^\ast(\ep,N,K^*)$ small enough such that $c(K,N)\,T^\ast\leq \frac{1}{2}$} and 
and obtain
\begin{align}
\sup_{I_*}E_{\ep,\delta}^N
\leq &c(\bff,H,g,\bfq_0,\eta_0,\eta_1,\varrho_0). 
\end{align}
In particular, we have 
\begin{align}
\label{wWS27}\sup_{I_*}\int_{\Omega_{\zeta_N}}|\bfu_N|^2\dx+
\sup_{I_*}\int_\omega \frac{|\partial_t\eta_N|^2}{2}\,\dd y+ \sup_{I_*}K_\ep(\eta_N)\leq\,c(\bff,H,g,\bfq_0,\eta_0,\eta_1,\varrho_0).
\end{align}
recalling the lower bound for $\varrho_N$ from Theorem \ref{thm:regrho} (b) (which depends on $N$ here).
Consequently, we see that the mapping $\bfbeta\mapsto\bfalpha$ satisfies $F(D)\subset D$, for $K^{*}$ large enough.\\
Now, we need to prove compactness of $F$ with respect to the $C^{1,\alpha}(\overline I)$ topology.
First we find by Leibnitz rule that
 \[
\partial_t \bfalpha_N=\mathcal A^{-1}\Big(\partial_t(\mathcal A \bfalpha_N)-\partial_t\mathcal A \bfalpha_N\Big).
 \]
Due to \eqref{eq:ide1} and the regularity of $\varrho_N$ and $\vt_N$ from Theorems~\ref{thm:regrho}
and \ref{thm:regtheta}
we have $\partial_t(\mathcal A \bfalpha_N)\in C^1(\overline I_*)$. This can be easily
seen by transforming the integrals in the definitions of the coefficients $\mathcal A$, $\mathcal B$, $\tilde{\mathcal{B}}$ and $\bfc$ to the reference domain and recalling from \eqref{map} that $\bfPsi_{\zeta_N}$ and $\bfPsi^{-1}_{\zeta_N}$ have the same regularity as $\zeta_N$.
Also note that $\bfbeta_N\in C^{1,\alpha}(\overline I_*)$ implies $\zeta_N\in C^{2,\alpha}(\overline I_*)$ by construction. Similarly, we are going to prove that $\partial_t\mathcal A_{i,j}\in C^1(\overline I_*)$.
By taking the test function $\tilde\bfomega_i\circ \bfvarphi_{\zeta_N}^{-1}\cdot \tilde\bfomega_j\circ \bfvarphi_{\zeta_N}^{-1}$ in the continuity equation we find that
\begin{align*}
\partial_t \mathcal A_{i,j}&=\frac{\dd}{\dt}\int_{\Omega_{\zeta_N}}\varrho_N\,\tilde\bfomega_i\circ \bfPsi_{\zeta_N}^{-1}\cdot \tilde\bfomega_j\circ \bfPsi_{\zeta_N}^{-1}\dx
\\
&=\int_{\partial\Omega_{\zeta}}\partial_t\zeta_N\nu\circ \bfvarphi_{\zeta_N}^{-1}\varrho_N\tilde\bfomega_i\circ \bfPsi_{\zeta_N}^{-1}\cdot \tilde\bfomega_j\circ \bfPsi_{\zeta_N}^{-1}\nu_{\Omega_{\zeta}}\dH\\
&+\int_{\Omega_{\zeta}}\varrho_N\bfv_N\cdot\nabla(\tilde\bfomega_i\circ \bfPsi_{\zeta_N}^{-1}\cdot \tilde\bfomega_i\circ \bfPsi_{\zeta_N}^{-1})\dx\\
&+\varepsilon\int_{\Omega_{\zeta}}\nabla\varrho_N\cdot\nabla(\tilde\bfomega_i\circ \bfPsi_{\zeta_N}^{-1}\cdot \tilde\bfomega_j\circ \bfPsi_{\zeta_N}^{-1})\dx \\
&+\int_{\Omega_{\zeta}}\varrho_N\partial_t\big(\tilde\bfomega_i\circ \bfPsi_{\zeta_N}^{-1}\big)\cdot \tilde\bfomega_j\circ \bfPsi_{\zeta_N}^{-1}\dx\\
&+\int_{\Omega_{\zeta}}\varrho_N\tilde\bfomega_i\circ \bfPsi_{\zeta_N}^{-1}\cdot \partial_t\big(\tilde\bfomega_j\circ \bfPsi_{\zeta_N}^{-1}\big)\dx.
\end{align*}
The last two terms containing the time-derivative behave as $\bfbeta_N$ which is bounded in $C^{1,\alpha}(\overline I_*)$.
Consequently, we find that $\partial_t \bfalpha_N\in C^{1}(\overline I_*)$ with bound depending only on $K$ (and $N$). So, the mapping $F$ is compact by Arcel\'a-Ascoli's theorem. Consequently, there is a fixed point $\bfalpha^\ast$ which gives rise to the solution to \eqref{eq:decuN'} if $T^\ast$ is sufficiently small (depending on $\delta,\varepsilon$, $K^*$ and $N$).  The intervall of existence can be extended by iterating the procedure and gluing the solutions together.

\subsection{Uniform estimates--total energy balance}
\label{subsec:teb}
At this stage $\vt_N$ is still strictly positive by Theorem \ref{thm:regtheta} (with a bound depending on $N$) so we can divide the internal energy defined in \eqref{m119a} by $\vt_N$ to obtain the entropy balance
\begin{equation} \label{apeneq}
\begin{split}
\partial_t (\varrho_N s(\varrho_N, \vartheta_N))
& +
\Div (\varrho_N s (\varrho_N, \vartheta_N) \bfu_N) - \Div \Big[
\Big( \frac{\varkappa(\vartheta_N)}{\vartheta_N} + \delta (
\vartheta_N^{\beta - 1} + \frac{1}{\vartheta^2_N}) \Big) \Grad
\vartheta_N \Big] 
\\
&= \frac{1}{\vartheta_N} \Big[\Big(
\frac{\varkappa_\delta(\vartheta_N)}{\vartheta_N} + \delta ( \vartheta_N^{\beta -
1} + \frac{1}{\vartheta_N^2}) \Big) |\Grad \vartheta_N|^2 + \delta
\frac{1}{{\vartheta_N}^2} \Big] + \frac{\vr_N}{\vt_N} H
\\
&+\frac{1}{\vartheta_N} \Big(\bfS^\ep(\vt_N, \Grad \bfu_N) : \Grad \bfu_N+ \ep\rdiss(\vr_N) - \ep \vartheta_N^4\Big)
\end{split}
\end{equation}
satisfied in $I\times \Omega_{\eta_N}$, together with the boundary condition $\Grad \vt_N \cdot \nu_{\eta_N}|_{\partial\Omega_{\eta_N}} =0$.
 In the weak form it reads as
\begin{align*} 
 \int_I \frac{\dd}{\dt}&\int_{\Omega_{\eta_N}} \vr_N s(\varrho_N,\vartheta_N) \, \psi\dxt
 - \int_I \int_{\Omega_{\eta_N}} \big( \vr s(\varrho_N,\vartheta_N) \partial_t \psi + \varrho_N s (\varrho_N,\vartheta_N)\bfu_N \cdot \nabla\psi \big)\dxt
\\& \geq\int_I\int_{\Omega_{\eta_N}}
\frac{1}{\vartheta_N}\bfS^\ep(\vt_N, \nabla \vu_N) : \nabla \bfu_N 
\psi\,\dif x\,\dif t \\
& +\int_I\int_{\Omega_{\eta_N}}
\frac{1}{\vartheta_N}\Big[\Big(
\frac{\varkappa(\vartheta_N)}{\vartheta_N} + \frac{\delta}{2} ( \vartheta_N^{\beta -
1} + \frac{1}{\vartheta_N^2}) \Big) |\nabla \vartheta_N|^2 + \delta
\frac{1}{\vartheta_N^2} \Big] 
\psi\,\dif x\,\dif t \\
&+ \int_I\int_{\Omega_{\eta_N}}
\Big( \frac{\varkappa(\vartheta_N)}{\vartheta_N} + \delta (
\vr_N^{\beta - 1} + \frac{1}{\vartheta_N^2}) \Big) \nabla
\vartheta_N \cdot \nabla\psi \dxt+ \int_I \int_{\Omega_{\eta_N}} \frac{\vr_N}{\vt_N} H \psi  \dxt\\
&+ \int_I\int_{\Omega_{\eta_N}}\ep\left[\rdiss(\vr_N)
 -  \vartheta^4_N\right] \frac{\psi}{\vartheta_N} \dxt
\end{align*}
for all
$\psi\in C^\infty(\overline I\times \R^3)$ with $\psi \geq 0$. We combine this with the energy balance proved in \eqref{eq:0201} which reads as (note that in the fixed point we have $\zeta_N=\eta_N$ and $\bfv_N=\bfu_N$)
\begin{align}
\begin{aligned}
- \int_I \partial_t \psi \,
\mathcal E^N_{\delta} \dt &=
\psi(0) \mathcal E^N_{\delta}(0)+\int_I\psi \int_{\Omega_{\eta_N}}   \left(\frac{\delta}{\vartheta_N^2}-\ep \vt_N^5  \right) \dx \dt +
\int_I \psi \int_{\Omega_{\eta_N}} \vr_N H  \dxt\\&+\int_I\int_{\Omega_{\eta_N}}\varrho_N\bff\cdot\bfu_N\dxt+\int_I\psi\int_\omega g\,\partial_t\eta_N\,\dd\mathcal H^2\dt
\end{aligned}
 \label{eq:2807}
\end{align}
with
\begin{align*}
\mathcal E^N_{\delta}(t)&= \int_{\Omega_{\eta_N(t)}}\Big(\frac{1}{2} \varrho_N(t)| {\bf u}_N(t) |^2 + \varrho_N(t) e_\delta(\varrho_N(t),\vartheta_N(t))\Big)\dx\\&+\int_\omega \frac{|\partial_t\eta_N(t)|^2}{2}\,\dd y+ K(\eta_N(t)).
\end{align*}
We follow \cite[Chapter 2, Section 2.2.3]{F}, and obtain by substracting from \eqref{eq:2807} $\Theta$-times the integral of \eqref{apeneq} (or $\Theta$-times the weak formulation tested with $\psi\equiv 1$) to obtain
\begin{align}\label{wWS272}
\begin{aligned}
- \int_I \partial_t \psi& \,
\big(\mathcal E^N_{\delta,\ep}- \Theta\varrho s(\varrho,\vartheta)\big) \dt +\Theta \int_{\Omega_{\eta_N}}\sigma_{\varepsilon,\delta}^{N}\dxt
 +\int_I\psi\int_{\Omega_{\eta_N}}\Big(\ep\vartheta^5-\frac{\delta}{\vt^2}\Big)\dxt\\
 &=
\psi(0) \big(\mathcal E^N_{\delta,\ep}-\Theta\varrho s(\varrho,\vartheta)\big)(0)+\Theta \int_I\psi\int_{\Omega_{\eta_N}}\ep\vartheta^4\dxt\\&+ 
\int_I \psi \int_{\Omega_{\eta_N}} \vr_N H  \dxt+\int_I\int_{\Omega_{\eta_N}}\varrho_N\bff\cdot\bfu_N\dxt+\int_I\psi\int_\omega g\,\partial_t\eta_N\,\dd y\dt,
\end{aligned}
\end{align}
where
\begin{align*}
\sigma_{\varepsilon,\delta}^N&=\frac{1}{\vartheta_N}\Big[\bfS(\vartheta_N,\nabla\bfu_N):\nabla\bfu_N+\ep(1+\vt_N)|\nabla\bfu_N|^p\Big]\\
&\frac{1}{\vartheta_N}\Big[\frac{\varkappa(\vartheta_N)}{\vartheta_N}|\nabla\vartheta_N|^2+\frac{\delta}{2}\Big(\varrho^{\beta-1}+\frac{1}{\vartheta^2_N}\Big)|\nabla\vartheta_N|^2+\delta\frac{1}{\vartheta_N^2}\Big]+\frac{\varepsilon}{\vartheta_N} \rdiss(\vr_N) .
\end{align*}
Consequently, we obtain the estimates
\begin{align*}
&\sup_I\int_{\Omega_{\eta_N}}\vr_N|\bfu_N|^2\dx+\sup_I\int_{\Omega_{\eta_N}}\vr_N^\beta\dx
+\int_I\int_{\Omega_{\eta_N}}|\nabla\bfu_N|^p\dxt\leq\,c,\\
&\ep\sup_I\int_\omega|\nabla^3\eta_N|^2\dH+\sup_{I}\int_\omega \frac{|\partial_t\eta_N|^2}{2}\,\dd y+ \sup_{I}K(\eta_N)\leq\,c,\\
&\sup_I\int_{\Omega_{\eta_N}}\vt_N^4\dx+\int_I\int_{\Omega_{\eta_N}}\frac{1}{\vartheta_N} \Big(
\frac{\varkappa_\delta(\vartheta_N)}{\vartheta_N} + \delta ( \vartheta_N^{\beta -
1} + \frac{1}{\vartheta_N^2}) \Big) |\Grad \vartheta_N|^2+\frac{\vr_N}{\vt_N}H\dxt\leq\,c,
\end{align*}
where $c=(\bff,H,g,\bfq_0,\eta_0,\eta_1,\varrho_0)$ is independent of $N$.
The first estimate together with Poincar\'e's inequality, the boundary condition
$\tr_{\eta_N}\bfu_N$ and bound for $\partial_t\eta_N$ from the second estimate implies
that $\bfu_N$ is bounded in $L^p(I;L^p(\Omega_{\eta_N}))$. 
So, we may choose a subsequence such that
\begin{align}\label{conv1}
\eta_N&\rightharpoonup^\ast\eta\quad\text{in}\quad L^\infty(I,W^{3,2}(\omega)),\\
\label{conv2}\partial_t\eta_N&\rightharpoonup^\ast\partial_t\eta\quad\text{in}\quad L^\infty(I,L^{2}(\omega)),\\
\label{conv3orig}\bfu_N&\rightharpoonup^{\eta}\bfu\quad\text{in}\quad L^p(I;L^p(\Omega_{\eta_N})),\\
\label{conv4orig}\nabla\bfu_N&\rightharpoonup^{\eta}\nabla\bfu\quad\text{in}\quad L^p(I;L^p(\Omega_{\eta_N}))),\\
\label{conv4origp}|\nabla\bfu_N|^{p-2}\nabla\bfu_N&\rightharpoonup^{\eta}\overline{\bfP}\quad\text{in}\quad L^{p'}(I;L^{p'}(\Omega_{\eta_N}))),\\
\label{conv:vrN1}\vr_N&\rightharpoonup^{\eta,*}\vr\quad\text{in}\quad L^\infty(I;L^\beta(\Omega_{\eta_N})),\\
\label{conv:vtN1}\vt_N&\rightharpoonup^{\eta,*}\vt\quad\text{in}\quad L^\infty(I;L^4(\Omega_{\eta_N})),\\
\label{conv:vtN1b}\vt_N&\rightharpoonup^{\eta}\vt\quad\text{in}\quad L^\beta(I;L^{3\beta}(\Omega_{\eta_N})),\\
\label{conv:vtN2}\nabla\vt_N&\rightharpoonup^{\eta}\nabla\vt\quad\text{in}\quad L^2(I;L^2(\Omega_{\eta_N}))),
\end{align}
for some $\overline{\bfP}\in L^{p'}(I\times\Omega_{\eta})$.
This implies
\begin{align}
\eta_N&\rightarrow\eta\quad\text{in}\quad C(\overline I\times \omega).\label{etaN1}
\end{align}
Compactness of $\vt_N$ can be shown as in \cite[Chapter 3, Section 3.5.3.]{F} using \eqref{apeneq}. It is based on local arguments, which are not effected by the moving shell. Consequently we have
\begin{align}\label{conv:vtNstrong}
\vt_N\rightarrow^\eta\vt\quad\text{in}\quad L^4(I;L^4(\Omega_{\eta_N})).
\end{align}
In order to pass to the limit in various terms
in the equations
we are concerned with the compactness of $\vr_N$. 
Applying Corollary \ref{rem:strong} yields
\begin{align}\label{0103a}
\varrho_N\rightarrow^{\eta}\varrho\quad\text{in}\quad L^2(I;L^2(\Omega_{\eta_N})).
\end{align}
We aim at improving the exponent from 2 to $\beta$ in order to pass to the limit in the pressure. Testing the continuity equation with $\varrho_N^{\beta-1}$
yields
\begin{align}
\label{eq:renormzN}
\begin{aligned}
\int_{\Omega_{\eta_N}}\varrho_N^\beta\dx+\int_0^t&\int_{\Omega_{\eta_N}}\frac{4(\beta-1)}{\beta}\varepsilon |\nabla\varrho_N^\frac{\beta}{2}|^2\dx\ds\\
&=\int_{\Omega_{\eta_N(0)}}\varrho_0^\beta\dx-\int_0^t\int_{\Omega_{\eta_N}}\vr_N^{\beta-1}\Div\bfu_N\dx\ds.
\end{aligned}
\end{align}
Since $p>\beta$, we find that the right hand side is uniformly bounded recalling \eqref{conv4orig} and \eqref{conv:vrN1}. We conclude
 (for a non-relabelled subsequence)
\begin{align}\label{0103}
\varrho_N\rightarrow^{\eta}\varrho\quad\text{in}\quad L^\beta(I;L^\beta(\Omega_{\eta_N})).
\end{align}
We are, however, still concerned with the term
\begin{align*}
\varepsilon\int_{I}\int_{\Omega_{\eta_N}}\nabla\varrho_N\nabla\bfu_N\cdot\bfphi\dxt,
\end{align*}
which requires compactness of $\nabla\vr_N$.
As for \eqref{eq:renormzN} we have
\begin{align*}
\int_{\Omega_{\eta_N}}\varrho^2\dx+\int_0^t&\int_{\Omega_{\eta}}2\varepsilon |\nabla\varrho|^2\dx\ds\\
&=\int_{\Omega_{\eta_N(0)}}\varrho_0^2\dx-\int_0^t\int_{\Omega_{\eta_N}}2\vr_N\Div\bfu_N\dx\ds.
\end{align*}
and applying Theorem \ref{lem:warme} (b) to the limit version. Due to \eqref{etaN1}, \eqref{0103a}
and the strong convergence of $\vr_N$ we can pass to the limit in all terms in
\eqref{eq:renormzN} expect for the one containing $\nabla\vr_N$. Consequently,
\begin{align*}
\lim_{N\to \infty}\int_0^t\int_{\Omega_{\eta_N}}|\nabla\varrho_N|^2\dx\ds=\int_0^t\int_{\Omega_{\eta}}|\nabla\varrho|^2\dx\ds
\end{align*}
for all $t\in I$, which implies strong convergence of $\nabla \vr_N$ and hence by \eqref{conv4orig}
\begin{align*}
\lim_{N\rightarrow\infty}\int_{I}\int_{\Omega_{\eta_N}}\nabla\varrho_N\nabla\bfu_N\cdot\bfphi\dxt=\int_{I}\int_{\Omega_{\eta}}\nabla\varrho\nabla\bfu\cdot\bfphi\dxt.
\end{align*}


\subsection{Compactness of $\partial_t\eta_N$}
\label{ssec:comp}
The effort of this subsection is to prove that
\begin{align}
\partial_t\eta_N&\rightarrow\partial_t\eta\quad\text{in}\quad L^2(I;L^{2}(\omega)).\label{eq:conetatC}
\end{align}
We will show this convergence in the generality we will need also in the subsequent limit procedures in the next section. In particular, we will not make use of any higher regularity beyond $L^\infty_t(L^\gamma_x)$ with $\gamma>\frac{12}{7}$ for the density.\\
The following aim is showning
\begin{align}\label{eq:convrhouN}\begin{aligned}
\int_{I}\int_{\Omega_{\eta_N}}&|\sqrt{\varrho_N}\bfu_N|^2\dxt+\int_{I}\int_\omega|\partial_t\eta_N|^2\,\dd y\dt\\
&\longrightarrow \int_{I}\int_{\Omega_{\eta}}|\sqrt{\varrho}\bfu|^2\dxt+\int_{I}\int_\omega|\partial_t\eta|^2\,\dd y\dt,
\end{aligned}
\end{align}
which implies the strong convergence \eqref{eq:conetatC} by the strict convexity of the $L^2$-norm. Relation \eqref{eq:convrhouN} will be a consequence of 
\begin{align}\label{eq:312N}
\begin{aligned}
\int_{I}\int_{\Omega_{\eta_N}}&\varrho_N\bfu_N\cdot\mathscr F_{\eta_N}\partial_t\mathcal \eta_N\dxt+\int_{I}\int_\omega|\partial_t\eta_N|^2\,\dd y\dt\\
&\longrightarrow \int_{I}\int_{\Omega_{\eta}}\varrho\bfu\cdot\mathscr F_{\eta}\partial_t\eta\dxt+\int_{I}\int_\omega|\partial_t\eta|^2\,\dd y\dt
\end{aligned}
\end{align}
and 
\begin{align}\label{eq:313N}
\int_{I}\int_{\Omega_{\eta_N}}&\varrho_N\bfu_N\cdot(\bfu_N-\mathscr F_{\eta_N}\partial_t\mathcal \eta_N)\dxt
\longrightarrow \int_{I}\int_{\Omega_{\eta}}\varrho\bfu\cdot(\bfu-\mathscr F_{\eta}\partial_t\eta)\dxt.
\end{align}
First observe that (due to the trace theorem Lemma~\ref{lem:2.28}) we find that $\partial_t\eta_N$ possesses some compactness in space. To be precise, we have
\begin{align}\label{eq:1812}
 \norm{\partial_t\eta_N}_{L^2(I,W^{1-\frac{1}{r},r}(\omega))}+ \norm{\partial_t\eta_N}_{L^2(I, L^\ell(\omega))}\leq\,c
 \end{align}
 for all $r<2$ and $\ell<4$. The bounds only depend on the $L^2_t(W^{1,2}_x)$ bounds of $\bfu_N$ and hence are uniform by estimates \eqref{conv3orig} and \eqref{conv4orig}.
 We define the projection
\begin{align*}
\mathcal P_N w=\sum_{k=1}^N\alpha_k(w)w_k,\quad
\mathcal P_N^\zeta w=\sum_{k=1}^N\alpha_k(w)\tilde\bfomega_k\circ\bfPsi_\zeta^{-1},
\end{align*}
where {$\alpha_k(w)=\langle w,w_k\rangle_{W^{3,2}(\omega)}$} if $w_k=\tilde{Y}_\ell$ for some $\ell\in\N$ and
$\alpha_k(w)=0$ otherwise. Obviously, we have $\tr_\zeta \mathcal P_N^\zeta w=\mathcal P_N w$ for any $w\in W^{3,2}(\omega)$. We have by definition,
\begin{align}\label{eq:PN}
\|\mathcal P_N w\|_{W^{3,2}(\omega)}^2\leq\,\|w\|_{W^{3,2}(\omega)}^2\quad \forall w\in W^{3,2}(\omega).
\end{align}
The eigenvalue equation for the basis vectors implies additionally that
\begin{align}\label{eq:PN2}
\|\mathcal P_N w\|_{L^{2}(\omega)}^2\leq\,c\|w\|_{L^2(\omega)}^2\quad \forall w\in L^2(\omega).
\end{align}
Moreover, by definition of $\tilde\bfY_k$ and $\mathscr F_\zeta$ (see Section \ref{sec:ext}) we have
\begin{align}\label{eq:PNzeta}
\mathcal P_N^\zeta w=\mathscr F_{\zeta}(\mathcal P_N w)
\end{align}
for all $w\in W^{3,2}(\omega)$. Finally, we note that
$\mathcal P_N \eta_N=\eta_N$ such that $(\eta_N,\mathscr F_{\eta_N}\eta_N)$
is admissible in \eqref{eq:decuN'}.
Due to  the uniform a priori bounds from the last subsection and the respective embeddings, we find that the convergence in \eqref{eq:313N} follows directly from Lemma \ref{thm:weakstrong}
with the choices $v_N=\bfu_N-\mathscr F_{\eta_N}\partial_t\mathcal \eta_N$, $r_N=\mathcal P_N^{\eta_N}(\varrho_N\bfu_N)$ (which solves the projected equation \eqref{eq:decuN'} in the domain $\Omega_{\eta_N}$) and the continuity of the projection operator $\mathcal P^{\eta_N}_N$ defined above (recall also \eqref{eq:PNzeta}). The corresponding uniform estimates are given in the previous subsection and the weak convergence of $\mathscr F_{\eta_N}\partial_t\mathcal \eta_N$ follows from \eqref{conv1}, \eqref{conv2}, Lemma~\ref{lem:3.8'} and Corollary \ref{cor:2807'}.\\
In order to prove \eqref{eq:312N} we need to make use of the coupled momentum equation using Theorem~\ref{thm:auba}. We define $g_N=(\partial_t\eta_N,\vr_N\bfu_N\mathbb I_{\Omega_{\eta_N}})$ and $f_N=(\partial_t\eta_N,\mathscr F_{\eta_N}\partial_t\mathcal \eta_N)$ noticing that (by construction) $\Omega_{\eta_N}\subset \Omega\cup S_{L/2}$ as well as for all $s<\frac{1}{2}$ and $q<3$ 
$$\mathscr F_{\eta_N}\partial_t\mathcal \eta_N\in L^2(I;W^{s,q}(\Omega\cup S_{L/2}))$$
uniformly in $N$. The last observation is a consequence of \eqref{eq:1812} and Lemma~\ref{lem:3.8'} (a). In particular, we have
\begin{align}\label{eq:fgN1}
f_N\rightharpoonup f\quad\text{in}\quad L^2(I;X),
\end{align}
where $f=(\partial_t\eta,\mathscr F_{\eta}\partial_t\mathcal \eta)$ 
and
\begin{align}\label{eq:fgN2}
g_N\rightharpoonup g\quad\text{in}\quad L^2(I;X'),
\end{align}
where $X=L^2(\omega)\times W^{s_x,q}(\Omega\cup S_{L/2})$ with $s_x<s_y<\frac12$ (such that $X'=L^2(\omega)\times W^{-s_x,q'}(\Omega\cup S_{L/2})$), since\footnote{Here, this follows easily from \eqref{0103a}, but it will be critical in the final limit $\delta\rightarrow0$.}
\begin{align}\label{eq:wwwwww}
\vr_N\bfu_N\rightharpoonup^{\eta}\vr\bfu\quad\text{in}\quad L^2(I;L^{\frac{6\gamma}{\gamma+6}}(\Omega_{\eta_N}))
\end{align}
and $L^{\frac{6\gamma}{\gamma+6}}_x\hookrightarrow W^{-s_x,q'}_x$ due to $\gamma>\frac{12}{7}$ (choosing $s_x$ sufficiently close to $1/2$ and $q$ close to 3).
Further we define
\[
Z=W^{1,2}(\omega)\times W^{
1,q}(\Omega\cup S_{L/2})
\]
Boundedness of $g_N$ in $L^\infty(I;Z')$ follows now from, \eqref{conv2}, $\vr_N\bfu_N\in L^2_t(L^{\frac{2\beta}{\beta+1}}_x)$ uniformly
and the embedding $L^{\frac{2\beta}{\beta+1}}_x\hookrightarrow W^{-1,2}_x\hookrightarrow W^{-1,q}_x$ for $\beta>\frac{3}{2}$ and $q\geq 2.$
The conditions $(a)$ in Theorem~\ref{thm:auba} follow now from \eqref{eq:fgN1} and \eqref{eq:fgN2} by weak compactness. For $(b)$  we observe that we may assume that a regularizer $b\mapsto (b)_\kappa$ exists such that for any $s,a\in \R$ and $p\in[1,\infty)$
\begin{align}
\label{eq:molly}\norm{b-(b)_\kappa}_{W^{a,p}(\omega)}\leq \,c\kappa^{s-a}\norm{b}_{W^{s,p}(\omega)},\quad b\in W^{s,p}(\omega).
\end{align}
The estimate is well-known for $a,s\in\N_0$, while the general case follows by interpolation and duality. Moreover, since we use standard Fourier bases in $W^{3,2}(\omega)$ for the discretisation of $\eta_N$, we find by interpolation that the projection error satisfies the following stability estimates for all $s\in [0,3]$
\begin{align}
\label{eq:projection}
\norm{\mathcal P^N b}_{W^{s,2}(\omega)}\leq c\norm{b}_{W^{s,2}(\Omega)}.
\end{align}
Next we introduce the mollification operator on $\partial_t\eta_N$ by considering for $\kappa>0$ and $N\in\N$
$
\mathcal P^N((\partial_t\eta_N)_\kappa)
$
and set
\[
f_{N,\kappa}(t):=(\mathcal P^N((\partial_t\eta_N(t))_\kappa),\mathscr F_{\eta_N(t)}(\mathcal P^N((\partial_t\eta_N(t))_\kappa))).
\]
We find by the continuity of the mollification operator from \eqref{eq:molly}, the continuity of the projection
operator from \eqref{eq:projection} and the estimate for the extension operator (due to \eqref{conv1} and Lemma~\ref{lem:3.8'}) that for a.e.\ $t\in (0,T)$
 \begin{align}\label{eq:fNkappa}
 \norm{f_{N,\kappa}-f_N}_{L^2(\omega)\times W^{s_x,q}(\Omega\cup S_{L/2})}\leq \,c\kappa^{s_y-s}\norm{\partial_t\eta_N}_{W^{s_y,2}(\omega)},
 \end{align}
 which can be made arbitrarily small in $L^2$ choosing $\kappa$ appropriately, cf. \eqref{eq:1812}.
Similarly, we have
 \begin{align*}\label{eq:fNkappa}
 \norm{f_{N,\kappa}}_{W^{1,2}(\omega)\times W^{1,q}(\Omega\cup S_{L/2})}\leq \,c\kappa^{-1}\norm{\partial_t\eta_N}_{L^{2}(\omega)}.
 \end{align*}
Moreover, by \eqref{eq:fgN1} we clearly can deduce a converging subsequence such that $f_{N,\kappa}\weakto f_{\kappa}$ (for some $ f_{\kappa}$) in $L^2(I;X)$ for any $\kappa>0$,
 which implies (b).\\ 
 For (c) have to control
$\skp{g_N(t)-g_N(s)}{f_{N,\kappa}(t)}$
and hence decompose
 \begin{align*}
&\skp{g_N(t)-g_N(s)}{f_{N,\kappa}(t)}
\\
&=\big(\skp{g_N(t)}{(\mathcal P^N((\partial_t\eta_N(t))_\kappa),\mathscr F_{\eta_N(t)}(\mathcal P^N((\partial_t\eta_N(t))_\kappa)))}
\\
&\quad-\skp{g_N(s)}{(\mathcal P^N((\partial_t\eta_N(t))_\kappa),\mathscr F_{\eta_N(s)}(\mathcal P^N((\partial_t\eta_N(t))_\kappa)))}\big)
\\
&\quad +\skp{g_N(s)}{(0,\mathscr F_{\eta_N(t)}(\mathcal P^N((\partial_t\eta_N(t))_\kappa))-\mathscr F_{\eta_N(s)}(\mathcal P^N((\partial_t\eta_N(t))_\kappa))}=:(I)+(II).
 \end{align*}
We begin estimating $(II)$ using Corollary~\ref{cor:2807'} to find that
 \begin{align*}
(II)& = \int_s^t \int_{\Omega_{\eta_N(s)}} \vr_N(s)\bfu_N(s)\cdot \partial_\theta \mathscr F_{\eta_N(\theta)}(\mathcal P^N((\partial_t\eta_N)_\kappa)(t)\dx\,\dd\theta
\\
&
\leq c\norm{\vr_N\bfu_N(s)}_{L^{\frac{6\gamma}{\gamma+6}}(\Omega_{\eta_N(s)})}\abs{s-t}^\frac{1}{2}\bigg(\int_I\norm{\partial_t\eta_N(\theta)}_{L^\ell(\omega)}^2\bigg)^{\frac{1}{2}}\norm{\mathcal P^N((\partial_t\eta_N))_\delta(t)}_{L^\infty(\omega)}
 \end{align*}
 for some $\ell<4$ (recall that $\gamma>\frac{12}{7}$).
By Sobolev's embedding's, \eqref{eq:molly} and \eqref{eq:projection} the last term can be estimated by
\begin{align*}
\norm{\mathcal P^N((\partial_t\eta_N)_\kappa)(t)}_{L^\infty(\omega)}&\leq c\norm{\mathcal P^N((\partial_t\eta_N)_\kappa(t)}_{W^{3,2}(\omega)}\leq c\norm{(\partial_t\eta_N)_\kappa(t)}_{W^{3,2}(\omega)}\\&\leq c\kappa^{-3}\norm{\partial_t\eta_N(t)}_{L^2(\omega)},
\end{align*}
which is bounded to to \eqref{conv2}. Using also
\eqref{eq:1812} we conclude
$$(II)\leq\,c(\kappa)\norm{\vr_N\bfu_N(s)}_{L^{\frac{6\gamma}{\gamma+6}}(\Omega_{\eta_N(s)})}\abs{s-t}^\frac{1}{2}$$
  The term $(I)$ is estimated using the test-function $\mathbb I_{(s,t)}f_{N,\kappa}$ in \eqref{eq:decuN'}. One obtains the uniform H\"older estimate in a similar sense as for $(II)$ using the various estimates on the extension, projections, embeddings and H\"older's inequality. We explain here in detail only the two most complicated terms stemming from the time derivative and the pressure. All other terms can be estimated analogously by simpler means. First, we consider the term acting on the time derivative. Observe that this term only appears due to the time-dependent extension. We choose $a$ such that $\frac{1}{a}+\frac{1}{\gamma}+\frac{1}{6}=1$. Then by the assumption $\gamma>\frac{12}{7}$, we find that $a<4$. Hence we can choose $a_0\in(a,4)$ and $\chi\in (0,1)$ such that $\frac{1}{a}=\frac{\chi}{2}+\frac{1-\chi}{a_0}$ and
  
  \[
  \norm{\partial_t\eta_N}_{L^a(\omega)}
  \leq  \norm{\partial_t\eta_N}_{L^2(\omega)}^\chi \norm{\partial_t\eta_N}_{L^{a_0}(\omega)}^{1-\chi}.
  \]
Using Corollary~\ref{cor:2807}, \eqref{eq:molly} and \eqref{eq:projection} we obtain
  \begin{align*}
&\Bigabs{\int_s^t\int_{\Omega_{\eta_N(\theta)}}\vr_N\bfu_N\cdot \partial_\theta \mathscr F_{\eta_N(\theta)}(\mathcal P^N((\partial_t\eta_N)_\kappa)(t)\dx\,\dd\theta}
\\
&\quad \leq c\int_s^t\norm{\vr_N}_{L^\gamma(\Omega_{\eta_N})}\norm{\bfu_N}_{L^6(\Omega_{\eta_N})}\norm{\mathscr F_{\eta_N(\theta)}(\mathcal P^N((\partial_t\eta_N)_\kappa)}_{L^{a}(\Omega_{\eta_N})}\,\dd\theta
\\
&\quad \leq c\int_s^t\norm{\vr_N}_{L^\gamma(\Omega_{\eta_N})}\norm{\bfu_N}_{L^6(\Omega_{\eta_N})}\norm{\partial_t\eta_N(\theta)}_{L^{a}(\omega)}\norm{\mathcal P^N((\partial_t\eta_N)_\kappa)}_{L^{\infty}(\omega)}\,\dd\theta
\\
&\quad 
\leq c\norm{\vr_N}_{L^\infty(I;L^\gamma)}
\norm{\partial_t\eta_N}_{L^\infty(I;L^{2}(\omega)}^{1-\chi}\norm{\mathcal P^N((\partial_t\eta_N))_\kappa(t)}_{W^{3,2}(\omega)}\int_s^t
\norm{\partial_t\eta_N}_{L^{a_0}(\omega)}^\chi\norm{\bfu_N}_{L^6(\Omega_{\eta_N})}\,\dd\theta
\\
&\quad 
\leq c\kappa^{-3}\abs{s-t}^\frac{1-\chi}{2}\norm{\partial_t\eta_N}_{L^2(I;L^{a_0}(\omega)}^{\chi}\norm{\bfu_N}_{L^2(I;L^6(\Omega_{\eta_N})}\leq c\kappa^{-3}\abs{s-t}^\frac{1-\chi}{2},
  \end{align*}
where the constant depends on the a priori estimates only.
As far as the pressure is concerned, H\"older's inequality and Lemma \ref{lem:3.8} (b) imply
     \begin{align*}
&\Bigabs{\int_s^t\int_{\Omega_{\eta_N(\theta)}}p_\delta(\vr_N,\vt_N)\, \Div \mathscr F_{\eta_N(\theta)}(\mathcal P^N((\partial_t\eta_N)_\kappa)(t)\dx\,\dd\theta}
\\
&\quad \leq c\|p_\delta(\vr_N,\vt_N)\|_{L^\infty(I;L^1(\Omega_{\eta_N}))}\int_s^t\norm{\nabla \mathscr F_{\eta_N(\theta)}(\mathcal P^N((\partial_t\eta_N)_\kappa)}_{L^{\infty}(\Omega_{\eta_N})}\,\dd\theta
\\
&\quad \leq c\int_s^t(1+\|\nabla\eta_N\|_{L^\infty(\omega)})\|\mathcal P^N((\partial_t\eta_N)_\kappa)\|_{W^{1,\infty}(\omega)}\,\dd\theta\\
&\quad \leq c\int_s^t(1+\|\nabla\eta_N\|_{L^\infty(\omega)})\|(\partial_t\eta_N)_\kappa)\|_{W^{3,2}(\omega)}\,\dd\theta\\
&\quad 
\leq c\kappa^{-3}\|\partial_t\eta_N\|_{L^\infty(I;L^2(\omega))}\int_s^t(1+\|\nabla\eta_N\|_{L^\infty(\omega)})\,\dd\theta\\
&\quad 
\leq c\kappa^{-3}|t-s|^{\frac{1}{2}}\bigg(\int_I(1+\|\nabla\eta_N\|^2_{L^\infty(\omega)})\,\dd\theta\bigg)^{\frac{1}{2}}\leq c\kappa^{-3}|t-s|^{\frac{1}{2}}
  \end{align*}  
  provided that we have
  \begin{align}\label{3112a}
p_\delta(\vr_N,\vt_N)&\in L^\infty(I;L^1(\Omega_{\eta_N})),\quad
\partial_t\eta_N\in L^\infty(I;L^2(\omega)),\\
\nabla\eta_N&\in L^2(I;L^\infty(\omega)),\label{3112b}
  \end{align}
 uniformly in $N$. While \eqref{3112a} follows here and on the subsequent directly form the energy estimates, we need some further regularity for \eqref{3112b}. On this level it follows from the regularisation of the shell equation, cf. \eqref{conv1}.\\
In conclusion, 
 we can now choose  $\alpha \in (0,1)$ close enough to one and conclude that for $\tau>0$ and $t\in [0,T-\tau]$
 \begin{align*}
&\Big|\dashint_0^\tau\skp{g_N(t)-g_N(t+s)}{f_{N,\kappa}(t)}\,\dd s\Big|\leq 
c\kappa^{-3}\tau^{1/2}\big(A_N(t)+1\big),
\end{align*}
where 
\begin{align*}
A_N(t)&=\|g_N(t)\|_{X'}^2+\|f_{N}(t)\|_{X}^2+\norm{\vr_N\bfu_N(t)}_{L^{\frac{6\gamma}{\gamma+6}}(\Omega_{\eta_N})}
\\
&\quad +\dashint_0^\tau\Big(\|g_N(s)\|_{X'}^2+\|f_{N}(s)\|_{X}^2+\norm{\vr_N\bfu_N(s)}_{L^{\frac{6\gamma}{\gamma+6}}(\Omega_{\eta_N})}\Big)\,\dd s
\end{align*} 
 uniformly bounded in $L^1(I)$ due to  \eqref{eq:fgN1} and \eqref{eq:fgN2} and \eqref{eq:wwwwww}.\\
Finally, the condition on $(4)$ follows by the usual compactness in (negative) Sobolev spaces.

\subsection{Compactness of the shell energy}
\label{subsec:shell}
In order to complete the proof of \ref{K4} it remains to justify the limit in the shell energy.
Since we have a regularized system \eqref{conv1} yields for any $q<\infty$
\begin{align}\label{eq:conetastrong}
\eta_N&\rightarrow\eta\quad\text{in}\quad L^q(I;W^{2,q}(\omega)),
\end{align}
which is enough to conclude
\begin{align}
\label{eq:1401bN}\lim_{N\rightarrow \infty}\int_I\psi K(\eta_N)\dt&=\int_I \psi K(\eta)dt
\end{align}
for all $\psi\in C^\infty_c(I)$ (this step will be much harder on the subsequent levels, see Section \ref{sec:shelleps}). It remains to show the convergence of the regularizer
\begin{align}
\label{eq:1401bNN}\lim_{N\rightarrow \infty}\int_I\psi \mathcal L(\eta_N)\dt&=\int_I \psi \mathcal L(\eta)\dt.
\end{align}
Next, we can assume that
\begin{align}
\partial_t\eta_N&\rightharpoonup\partial_t\eta\quad\text{in}\quad L^2(I;W^{1-1/r,r}(\omega)),\label{eq:conetatA}
\end{align}
for all $r<2$ due to \eqref{eq:1812}. 
We infer from \eqref{eq:decuN'} using $(\psi\eta_N,\psi\mathscr F_{\eta_N}(\eta_N))$ as a test-function
\begin{align}\label{eq:1911}
\begin{aligned}
\int_I\psi\int_\omega K'_\ep(\eta_N)&\eta_N\dH\dt
=\int_I\int_{\Omega_{\eta_N}}\varrho_N\bfu_N\cdot \partial_t\Big(\psi\mathscr F_{\eta_N}(\eta_N)\Big)\dxt\\&+\int_I\psi\int_{\Omega_{\eta_N}}\varrho_N\bfu_N\otimes \bfu_N:\nabla \mathscr F_{\eta_N}(\eta_N)\dxt
\\
&+\int_I\psi\int_{\Omega_{\eta_N }}\bfS^\ep(\vt_N,\nabla\bfu_N):\nabla\mathscr F_{\eta_N}(\eta_N)\dx\dt
\\
&+\int_I\psi\int_{\Omega_{\eta_N }}\Big(
p_\delta(\varrho_N,\vt_N)\,\Div\mathscr F_{\eta_N}(\eta_N)+\varepsilon\nabla\varrho_N\nabla\bfu_N\mathscr F_{\eta_N}(\eta_N)\Big)\dx\dt
\\
&+\int_I\psi\int_\omega \partial_t\eta_N\,\partial_t(\psi \eta_N)\dH\ds
\\&+\int_I\psi\int_{\Omega_{\eta_N}}\varrho_N\bff\cdot\mathscr{F}_{\eta_N}(\eta_N)\dxt+\int_I\psi\int_\omega g\,\eta_N\,\dH\dt\\
&+\psi(0)\int_{\Omega_{\eta_N(0)}}\bfq_0\cdot\mathscr{F}_{\eta_N}(\eta_N)(0,\cdot)\dx+\psi(0)\int_\omega\eta_1\,\eta_N\,\dd y.
\end{aligned}
\end{align}
The terms on the right-hand side related to the shell clearly converge to their expected limits because of \eqref{conv1} and \eqref{eq:conetatC}.
On account of Lemma \ref{lem:3.8'} and Corollary \ref{cor:2807'} we have
\begin{align*}
\|\partial_t(\mathscr F_{\eta_N}(\eta_N))\|_{L^2_tL^{q_1}_x}+\|\mathscr F_{\eta_N}(\eta_N))\|_{L^\infty_tW^{1,q_2}_x}+\|\mathscr F_{\eta_N}(\eta_N))\|_{L^\infty_tW^{2,q_3}_x}\leq\,c
\end{align*}
uniformly in $N$ for all $q_1<4$, $q_2<\infty$ and $q_3<2$, cf. \eqref{conv1} and \eqref{eq:conetatC}. In particular, applying standard compact embeddings we can choose a subsequence  (not relabelled) such that
\begin{align*}
\partial_t(\mathscr F_{\eta_N}(\eta_N))&\rightharpoonup\partial_t(\mathscr F_{\eta}(\eta))\quad\text{in}\quad L^2(I;L^{q_1}(\Omega\cup S_{L/2})),\\
\mathscr F_{\eta_N}(\eta_N)&\rightarrow\mathscr F_{\eta}(\eta)\quad\text{in}\quad L^{q_2}(I;W^{1,q_2}(\Omega\cup S_{L/2})),\\
\mathscr F_{\eta_N}(\eta_N)&\rightarrow\mathscr F_{\eta}(\eta)\quad\text{in}\quad L^{\infty}(I;L^{\infty}(\Omega\cup S_{L/2})),
\end{align*}
for all $q_1<4$ and $q_2<\infty$.
Combining these convergences with the convergences form the last subsection we can pass to the limit in the
terms on the right-hand side of \eqref{eq:1911} related to the fluid system as well. On the other hand,
the resulting expression coincides with $\int_I\psi K(\eta)\dt$ as can be seen
from testing the limit system with $(\psi\eta,\psi\mathscr F_{\eta}(\eta))$. We conclude that
\begin{align*}
\ep\int_I\psi \mathcal L(\eta_N)\dt&=\frac{\ep}{2}\int_I\psi \mathcal L'(\eta_N)\eta_N\dt=\frac{1}{2}\int_I\psi K_\ep'(\eta_N)\eta_N\dt-\frac{1}{2}\int_I\psi K'(\eta_N)\eta_N\dt\\&\longrightarrow 
\frac{1}{2}\int_I\psi K_\ep'(\eta)\eta\dt-\frac{1}{2}\int_I\psi K'(\eta)\eta\dt=\ep\int_I\psi \mathcal L(\eta)\dt
\end{align*}
as $N\rightarrow\infty$ due to \eqref{eq:1401bN}. Combing this with \eqref{eq:conetastrong} shows that
\eqref{eq:1401bNN} must be true.
Combining all the convergences proven above
allows us to pass to the limit in the energy balance \eqref{eq:2807} and to conclude that
\begin{align*} 
- \int_I \partial_t \psi \,
\mathcal E_{\ep,\delta} \dt &=
\psi(0) \mathcal E_{\ep,\delta}(0)+\int_I\psi \int_{\Omega_{\eta}}   \left(\frac{\delta}{\vartheta^2}-\ep \vt^5  \right) \dx \dt \\&+
\int_I \psi \int_{\Omega_{\eta}} \vr H  \dxt+\int_I\psi\int_{\Omega_{\eta}}\varrho\bff\cdot\bfu\dxt+\int_I\psi\int_\omega g\,\partial_t\eta\,\dd y\dt
\end{align*}
with
\begin{align*}
\mathcal E_{\ep,\delta}(t)&= \int_{\Omega_{\eta(t)}}\Big(\frac{1}{2} \varrho(t)| {\bf u}(t) |^2 + \varrho(t) e_\delta(\varrho(t),\vartheta(t))\Big)\dx+\int_\omega \frac{|\partial_t\eta(t)|^2}{2}\,\dd y+ K_\ep(\eta(t)).
\end{align*}

\subsection{End of the proof of Theorem \ref{thm:regu}}
We have shown that a subsequence can be chosen that inhibits the necessary compactness properties to satisfy \ref{K2} and \ref{K4}. The entropy inequality \ref{K3} follows by further convergence properties of $\vt$ and weak sequential lower semi-continuity of the various convex terms. Due to its local character, the limit passage can be obtained without further difficulty by applying the methodology developed in \cite[Chapter 3]{F}.
Finally, we may also pass to the limit with the momentum equation \eqref{eq:decuN'} and establish \ref{K1}. First observe that the necessary convergence of the approximate solutions has been shown in Subsection~\ref{subsec:teb}. Hence we are left to show the convergence of the test-functions. For that please observe that $\tilde\bfomega_k\circ \bfPsi_{\eta_N}^{-1}\to^{\eta} \tilde\bfomega_k\circ \bfPsi_{\eta}^{-1}$ with $N\to \infty$ (and $k\in \{1,...,N\}$) in various spaces including $L^q(I,W^{1,q}(\Omega_{\eta_N}))\cap W^{1,2}(I,L^a(\Omega_{\eta_N}))$ for $q<\infty$ and $a<4$. Indeed, the a-priori estimates on $\eta_N, \partial_t \eta_N, \nabla \eta_N$ transfer to $\bfPsi_{\eta_N}^{-1}$ by the respective calculations and estimates in Section~\ref{sec:2}. Finally, we observe that the linear hull of $\{(\omega_k,\tilde\bfomega_k\circ \bfPsi_{\eta}^{-1}\}_{k\in \N}$ exhibits the full space of test functions.
Hence we conclude that \ref{K1} is satisfied.  

\section{Construction of a solution.}
\label{sec:5}
In this section we pass to the limit in the approximate equations. For technical reasons
the limits $\ep\rightarrow0$ and $\delta\rightarrow0$ have to be performed independently from each other.  
For the greater part of this Section we study the limit $\varepsilon\rightarrow0$ in the approximate system \ref{K1}--\ref{K4} and only highlight the difference in the $\delta$-limit.

\subsection{The limit system for $\ep\to0$}
\label{subsec:del}
We wish to establish the existence of a weak solution $(\eta,\bfu,\vr,\vt)$ to the system with artificial pressure in the following sense: We define
$$
\widetilde W_\eta^I= C_w(\overline{I};L^\beta(\Omega_\eta))
$$
as the function space for the density, whereas the other function spaces are defined in Section \ref{sec:weak}.
A weak solution is a quadruplet $(\eta,\bfu,\varrho,\vt)\in Y^I\times X_{\eta}^I\times\widetilde W_\eta^I\times Z_\eta^I$ that satisfies the following.
\begin{enumerate}[label={(D\arabic{*})}]
\item\label{D1} The momentum equation holds in the sense that\begin{align}\label{eq:apu}
\begin{aligned}
&\int_I\frac{\dd}{\dt}\int_{\Omega_{ \eta}}\varrho\bfu \cdot\bfphi\dx-\int_{\Omega_{\eta}} \Big(\varrho\bfu\cdot \partial_t\bfphi +\varrho\bfu\otimes \bfu:\nabla \bfphi\Big)\dxt
\\
&+\int_I\int_{\Omega_\eta}\bfS(\vt,\nabla\bfu):\nabla\bfphi \dxt-\int_I\int_{\Omega_{ \eta }}
p_\delta(\vr,\vt)\,\Div\bfphi\dxt\\
&+\int_I\bigg(\frac{\dd}{\dt}\int_\omega \partial_t \eta b\dH-\int_\omega \partial_t\eta\,\partial_t b\dH + \int_\omega K'(\eta)\,b\dH\bigg)\dt
\\&=\int_I\int_{\Omega_{\eta}}\varrho\bff\cdot\bfphi\dxt+\int_I\int_\omega g\,b\,\dd x\dt
\end{aligned}
\end{align} 
for all $(b,\bfphi)\in C^\infty(\omega)\times C^\infty(\overline{I}\times \R^3)$ with $\mathrm{tr}_{\eta}\bfphi=b\nu$. Moreover, we have $(\varrho\bfu)(0)=\bfq_0$, $\eta(0)=\eta_0$ and $\partial_t\eta(0)=\eta_1$. The boundary condition $\mathrm{tr}_\eta\bfu=\partial_t\eta\nu$ holds in the sense of Lemma \ref{lem:2.28}.
\item\label{D2}  The continuity equation holds in the sense that
\begin{align}\label{eq:apvarrho0}
\begin{aligned}
&\int_I\frac{\dd}{\dt}\int_{\Omega_{\eta}}\varrho \psi\dxt-\int_I\int_{\Omega_{\eta}}\Big(\varrho\partial_t\psi
+\varrho\bfu\cdot\nabla\psi\Big)\dxt=0
\end{aligned}
\end{align}
for all $\psi\in C^\infty(\overline{I}\times\R^3)$ and we have $\varrho(0)=\varrho_0$. 
\item \label{D2'} The entropy balance
\begin{align} \label{m217*final}\begin{aligned}
 \int_I \frac{\dd}{\dt}&\int_{\Omega_{\eta}} \vr s(\varrho,\vartheta) \, \psi\dxt
 - \int_I \int_{\Omega_{\eta}} \big( \vr s(\varrho,\vartheta) \partial_t \psi + \varrho s (\varrho,\vartheta)\bfu \cdot \nabla\psi \big)\dxt
\\& \geq\int_I\int_{\Omega_{\eta}}
\frac{1}{\vartheta} \Big[ \bfS(\vt, \nabla \vu) : \nabla \bfu + \Big(
\frac{\varkappa(\vartheta)}{\vartheta} + \frac{\delta}{2} ( \vartheta^{\beta -
1} + \frac{1}{\vartheta^2}) \Big) |\nabla \vartheta|^2 + \delta
\frac{1}{{\vartheta}^2} \Big] 
\psi\,\dif x\,\dif t \\
&- \int_I\int_{\Omega_{\eta}} 
\Big( \frac{\varkappa(\vartheta)}{\vartheta} + \delta (
\vartheta^{\beta - 1} + \frac{1}{\vartheta^2}) \Big) \nabla
\vartheta  \cdot \nabla\psi \dxt+ \int_I \int_{\Omega_{\eta}} \frac{\vr}{\vt} H \psi  \dxt
\end{aligned}
\end{align}
holds for all
$\psi\in C^\infty(\overline I\times \R^3)$ with $\psi \geq 0$. Moreover, we have
$\lim_{r\rightarrow0}\vr s(\vr,\vt)(t)\geq \vr_0 s(\vr_0,\vt_0)$ and $\partial_{\nu_{\eta}}\vt|_{\partial\Omega_\eta}\leq 0$.
\item \label{D3} The total energy balance
\begin{equation} \label{EI20final1}
\begin{split}
- \int_I \partial_t \psi \,
\mathcal E_\delta \dt &=
\psi(0) \mathcal E_\delta(0) + \int_I \psi \int_{\mt} \frac{\delta}{\vartheta^2}  \dx  \dt+ \int_I \psi \int_{\Omega} \vr H  \dxt+\int_I\int_{\Omega_{\eta}}\varrho\bff\cdot\bfu\dxt\\&+\int_I\psi\int_\omega g\,\partial_t\eta\,\dd y\dt
\end{split}
\end{equation}
holds for any $\psi \in C^\infty_c([0, T))$.
Here, we abbreviated
$$\mathcal E_\delta(t)= \int_{\Omega_\eta(t)}\Big(\frac{1}{2} \varrho(t) | {\bf u}(t) |^2 + \varrho(t) e_\delta(\varrho(t),\vartheta(t))\Big)\dx+\int_\omega \frac{|\partial_t\eta(t)|^2}{2}\,\dd y+ K(\eta(t)).$$
\end{enumerate}
\begin{theorem}\label{thm:ap}
Assume that we have for some $\alpha\in(0,1)$ and $s>0$
\begin{align*}
\frac{|\bfq_0|^2}{\varrho_0}&\in L^1(\Omega_{\eta_0}),\ \varrho_0,\vt_0\in C^{2,\alpha}(\overline\Omega_{\eta_0}), \ \eta_0\in W^{3+s,2}(\omega),\ \eta_1\in L^2(\omega),\\
\bff&\in L^2(I;L^\infty(\R^3)),\ g\in L^2(I\times \omega),\ H\in C^{1,\alpha}(\overline I\times \R^3),\  H\geq0.
\end{align*}
Furthermore suppose that $\varrho_0$ and $\vt_0$ are strictly positive and that \eqref{eq:compa} is satisfied. There is a solution $(\eta,\bfu,\varrho,\vt)\in Y^I\times X_\eta^I\times \widetilde W_\eta^I\times Z_\eta^I$ to \ref{D1}--\ref{D3}. 
Here, we have $I=(0,T_*)$, where $T_*<T$ only if $\lim_{t\rightarrow T^\ast}\|\eta(t,\cdot)\|_{L^\infty_x}=\frac{L}{2}$ or the Koiter energy degenerates (namely, if $\lim_{s\to t}\overline{\gamma}(s,y)=0$ for some point $y\in \omega$). 
\end{theorem}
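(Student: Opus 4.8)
\textbf{Proof plan for Theorem \ref{thm:ap}.}
The plan is to pass to the limit $\varepsilon\to 0$ in the system \ref{K1}--\ref{K4} obtained in Theorem~\ref{thm:regu}, starting from a suitably regularised initial configuration. First I would regularise the initial data: approximate $\eta_0\in W^{3+s,2}(\omega)$ by functions $\eta_0^\varepsilon\in W^{3,2}(\omega;[-\tfrac L4,\tfrac L4])$ (e.g.\ by mollification, keeping the $W^{3+s,2}$-bound so that the regularising term $\varepsilon\mathcal L(\eta_0^\varepsilon)$ stays bounded), and $\varrho_0,\vt_0$ by strictly positive $C^{2,\alpha}$-functions with the same integrability bounds, preserving the compatibility condition \eqref{eq:compa}. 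Theorem~\ref{thm:regu} then yields, for each $\varepsilon>0$, a solution $(\eta_\varepsilon,\bfu_\varepsilon,\varrho_\varepsilon,\vt_\varepsilon)$ on a common time interval $I=(0,T_*)$ (independent of $\varepsilon$, by the uniform-continuity argument for $\eta$ sketched in the remark after Theorem~\ref{thm:MAIN}), subject to the $\varepsilon$-$\delta$-energy balance \eqref{EI20finala} and the entropy inequality \ref{K3}.

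Next I would collect the $\varepsilon$-uniform a priori bounds from \eqref{wWS272} and its consequences: $\sqrt{\varrho_\varepsilon}\bfu_\varepsilon\in L^\infty_t L^2_x$, $\varrho_\varepsilon\in L^\infty_t L^\beta_x$, $\mathcal D(\bfu_\varepsilon)\in L^2_{t,x}$ (hence $\bfu_\varepsilon\in L^2_t W^{1,2}_x$ by the Korn-type inequality of Remark~\ref{rem:korn}), $\eta_\varepsilon\in Y^I$ with the $\partial_t\eta_\varepsilon$-trace bounds of Lemma~\ref{lem:2.28}, plus $\vt_\varepsilon\in L^2_t W^{1,2}_x\cap L^\infty_t L^4_x$, $\log\vt_\varepsilon\in L^2_t W^{1,2}_x$ and the entropy-production bound. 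These give weak(-$*$) limits $(\eta,\bfu,\varrho,\vt)$ and, via \eqref{etaN1}-type arguments, $\eta_\varepsilon\to\eta$ uniformly. The compactness of $\partial_t\eta_\varepsilon$ in $L^2_t L^2_\omega$ is obtained exactly as in Subsection~\ref{ssec:comp}: apply Theorem~\ref{thm:auba} with $g_\varepsilon=(\partial_t\eta_\varepsilon,\varrho_\varepsilon\bfu_\varepsilon\mathbb I_{\Omega_{\eta_\varepsilon}})$, $f_\varepsilon=(\partial_t\eta_\varepsilon,\mathscr F_{\eta_\varepsilon}\partial_t\eta_\varepsilon)$, the mollified/projected family $f_{\varepsilon,\kappa}$, the extension estimates of Lemma~\ref{lem:3.8'} and Corollary~\ref{cor:2807'}, and the solenoidal extension $\Testzeta$ of Proposition~\ref{prop:musc} for the momentum testing; the condition $\gamma>\tfrac{12}{7}$ enters precisely so that $\varrho_\varepsilon\bfu_\varepsilon\in L^2_t L^{6\gamma/(\gamma+6)}_x$ embeds into the right negative Sobolev space. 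Compactness of $\varrho_\varepsilon$ follows from the effective-viscous-flux identity (the term $\varepsilon\nabla\varrho_\varepsilon\nabla\bfu_\varepsilon$ in \ref{K1} vanishes in the limit because $\varepsilon\|\nabla\varrho_\varepsilon\|_{L^2}^2$ is bounded, cf.\ \eqref{eq:renormzN}) together with the renormalised continuity equation and the monotonicity/oscillation-defect-measure argument of Feireisl--Lions, which is local and hence unaffected by the moving boundary; this upgrades $\varrho_\varepsilon\to\varrho$ strongly in $L^{\beta+1}_{t,x}$-type spaces and identifies the pressure limit $\overline{p_\delta(\varrho,\vt)}=p_\delta(\varrho,\vt)$. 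The temperature is handled as in \cite[Chapter 3]{F}: strong convergence of $\vt_\varepsilon$ from the entropy/internal-energy equation (again local), weak lower semicontinuity of the convex dissipative terms for \ref{D2'}, and the fact that the residual $\varepsilon$-terms ($\varepsilon\vt_\varepsilon^5$, $\varepsilon\mathcal R_\delta(\varrho_\varepsilon)$, $\varepsilon(1+\vt_\varepsilon)|\nabla\bfu_\varepsilon|^p$) tend to zero in $L^1$ by the uniform bounds; in particular the $p$-Laplacian defect $\overline{\bfP}$ disappears.

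I expect the main obstacle to be the passage to the limit in the \emph{elastic (Koiter) energy}, i.e.\ establishing \eqref{eq:1401bN}--type convergence $K(\eta_\varepsilon)\to K(\eta)$ without the $W^{3,2}$-regulariser: $K$ is only continuous on $W^{2,p}(\omega)$ for $p>2$, and the bound $\varepsilon\|\nabla^3\eta_\varepsilon\|_{L^2}^2\le c$ degenerates as $\varepsilon\to0$, so strong $W^{2,p}$-compactness of $\eta_\varepsilon$ must be recovered by other means. This is where the regularity argument of \cite{MuSc}, advertised in Subsection~\ref{subsec:shell} and to be carried out in Section~\ref{sec:shelleps}, is essential: one tests the shell equation \eqref{eq:4} with difference quotients of $\nabla^2\eta_\varepsilon$, pairs this with a correspondingly chosen test-function (via the solenoidal extension $\Testzeta$) in the momentum equation to control the boundary stress despite the pressure being only $L^1$ near $\partial\Omega_{\eta_\varepsilon}$, and thereby derives a uniform fractional estimate $\nabla^2\eta_\varepsilon\in L^2_t W^{\theta,2}_\omega$ for some $\theta>0$. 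Combined with $\partial_t\eta_\varepsilon\in L^\infty_t L^2_\omega$ this yields, by Aubin--Lions, strong convergence of $\nabla^2\eta_\varepsilon$ in $L^2_t L^2_\omega$ and hence in $L^2_t L^p_\omega$ for some $p>2$ after interpolation with the $W^{2,2}$-bound, enough for continuity of $K$. Passing to the limit in the momentum equation \ref{D1} (using $\tilde\bfomega_k\circ\bfPsi_{\eta_\varepsilon}^{-1}\to\tilde\bfomega_k\circ\bfPsi_\eta^{-1}$ and density of the Galerkin test-functions, as at the end of Section~\ref{sec:3}), in \ref{D2}, and recovering the energy equality \ref{D3} by combining the limit energy inequality with the reverse inequality from the weak formulation then completes the proof; the assertion on the interval of existence is inherited from Theorem~\ref{thm:regu}.
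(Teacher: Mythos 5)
Your plan follows the paper's proof of Theorem \ref{thm:ap} essentially step by step: uniform bounds from the total dissipation balance, the Aubin--Lions argument of Subsection \ref{ssec:comp} for $\partial_t\eta_\varepsilon$, local arguments for the temperature, the effective viscous flux and renormalization for the density, and the fractional difference-quotient estimate with the solenoidal extension $\Testzeta$ for the Koiter energy, which you correctly single out as the main new difficulty. (The preliminary mollification of the data is not needed at this stage, since the hypotheses of Theorem \ref{thm:ap} already meet those of Theorem \ref{thm:regu}; the paper defers data regularisation to the $\delta$-limit.)

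There is, however, one genuine gap. You assert that the density/pressure compactness argument ``is local and hence unaffected by the moving boundary'' and that strong convergence of $\varrho_\varepsilon$ ``identifies the pressure limit.'' But before any identification can take place one must know that $p_\delta(\varrho_\varepsilon,\vt_\varepsilon)\chi_{\Omega_{\eta_\varepsilon}}$ converges \emph{weakly in $L^1$} to some $\overline p$, i.e.\ that the pressure is equi-integrable up to the moving boundary. The local higher-integrability estimate (the \Bogovskii-type test) only controls $p_\delta\varrho_\varepsilon$ on parabolic cubes compactly contained in $I\times\Omega_\eta$; near $\partial\Omega_{\eta_\varepsilon}$ the pressure is a priori only bounded in $L^1$ and could concentrate, leaving a measure-valued defect in the limit momentum equation. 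This is precisely where the moving boundary \emph{does} matter, and the paper devotes Lemmas \ref{prop:higher}, \ref{prop:higherb} and Corollary \ref{prop:higherb0} (following \cite{Kuk} and \cite[Lemma 6.4]{BrSc}) to excluding such concentrations; the restriction $\gamma>\frac{12}{7}$ reappears there in the $\delta$-limit. Your proposal needs this ingredient made explicit before the passage to the limit in \ref{D1} and the identification $\overline p=p_\delta(\varrho,\vt)$ are justified.
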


\begin{lemma}
\label{cor:ap1}
Under the assumptions of Theorem \ref{thm:ap} the continuity equation holds in the renormalized sense as specified in Definition~\ref{def:ren}.
\end{lemma}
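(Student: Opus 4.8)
\textbf{Proof proposal for Lemma \ref{cor:ap1}.}

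The plan is to apply the DiPerna--Lions regularization technique, adapted to the time-dependent domain, to the solution $(\eta,\bfu,\varrho,\vt)$ constructed in Theorem \ref{thm:ap}. Recall that at the level of Theorem \ref{thm:regu} the continuity equation carries the artificial viscosity $\ep\Delta\varrho$, and Theorem \ref{lem:warme} (b) already furnishes the renormalized identity \eqref{eq:renormz} at that stage with the additional terms $-\ep\chi_{\Omega_{\eta_\ep}}\nabla\theta(\varrho_\ep)\cdot\nabla\psi$ and $-\ep\chi_{\Omega_{\eta_\ep}}\theta''(\varrho_\ep)|\nabla\varrho_\ep|^2\psi$. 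The first step is therefore to pass to the limit $\ep\to 0$ in \eqref{eq:renormz}: using the uniform bound $\sqrt{\ep}\,\nabla\varrho_\ep\in L^2(I\times\Omega_{\eta_\ep})$ (which follows from \eqref{eq:renormzN} with $\beta=2$ and the energy estimates), the term $\ep\nabla\theta(\varrho_\ep)\cdot\nabla\psi=\ep\theta'(\varrho_\ep)\nabla\varrho_\ep\cdot\nabla\psi$ is $O(\sqrt\ep)$ since $\theta'$ is bounded, and the defect term $\ep\theta''(\varrho_\ep)|\nabla\varrho_\ep|^2\psi$ has a sign when $\psi\geq0$ and $\theta$ is convex, so it can be discarded after testing against nonnegative $\psi$ — but to get the full equality one should instead first note that this term is $\ep$ times something bounded only in $L^1$, hence one cannot simply drop it. The cleaner route is to observe that the limit density $\varrho$ from Theorem \ref{thm:ap} is now an $L^\beta$-function with $\beta>2$ which solves \eqref{eq:apvarrho0} (the continuity equation \emph{without} artificial viscosity), together with $\bfu\in L^2(I;W^{1,2}(\Omega_\eta))$ and $\Div\bfu\in L^2$; and moreover that the strong convergence $\varrho_\ep\to\varrho$ in $L^\beta(I;L^\beta(\Omega_{\eta_\ep}))$ proved in \eqref{0103} allows one to pass to the limit directly in all the non-$\ep$ terms of \eqref{eq:renormz}, obtaining \eqref{eq:final3b} in the limit. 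Thus the renormalized equation is inherited from the $\ep$-level by compactness, and the main thing to check is that the two $\ep$-terms vanish; the viscous defect vanishes because $\varepsilon\|\nabla\varrho_\ep\|_{L^2}^2$ is uniformly bounded, so $\ep\theta''(\varrho_\ep)|\nabla\varrho_\ep|^2\psi$ is bounded by $C\|\theta''\|_\infty\|\psi\|_\infty\cdot\ep^{1/2}\cdot(\ep^{1/2}\|\nabla\varrho_\ep\|_{L^2}^2)\to 0$ — wait, that is $O(1)$, not $o(1)$; so in fact one must keep track that $\varepsilon\|\nabla\varrho_\ep\|^2_{L^2}\leq C$ only gives boundedness and the term does \emph{not} obviously vanish. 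Here is where one uses the sign: for nonnegative test functions and convex $\theta$ the defect is $\leq 0$, giving the renormalized \emph{inequality}; the reverse is obtained from the classical DiPerna--Lions argument applied to the already-established limit continuity equation \eqref{eq:apvarrho0}, as explained below.

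Concretely, the second and main step is the direct DiPerna--Lions argument on the limit system. Extend $\varrho$ and $\bfu$ by zero outside $\Omega_{\eta(t)}$; because $\tr_\eta\bfu=\partial_t\eta\,\nu$ matches the boundary velocity, the extended pair $(\chi_{\Omega_\eta}\varrho,\chi_{\Omega_\eta}\bfu)$ solves the continuity equation in $\R^3$ in the distributional sense (this uses the weak formulation \eqref{eq:apvarrho0} tested against functions not vanishing at the boundary, together with the kinematic coupling — this identity is part of what Definition \ref{def:weak-sol}(O2) encodes and was verified in the construction). Then mollify in space by a standard kernel, $\varrho^\kappa=\varrho*_x\omega_\kappa$, to get
\[
\partial_t\varrho^\kappa+\Div(\varrho^\kappa\bfu)=r_\kappa,\qquad r_\kappa:=\Div(\varrho^\kappa\bfu)-(\Div(\varrho\bfu))*_x\omega_\kappa,
\]
and invoke the Friedrichs commutator lemma: since $\varrho\in L^\beta_{t,x}$ with $\beta>2$ and $\nabla\bfu\in L^2_{t,x}$ with $\tfrac1\beta+\tfrac12<1$, one has $r_\kappa\to 0$ in $L^1_{t,x}$ (in fact in $L^{q}$ for $\tfrac1q=\tfrac1\beta+\tfrac12$). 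Multiplying the regularized equation by $\theta'(\varrho^\kappa)$, using the chain rule (legitimate since $\varrho^\kappa$ is smooth in $x$ and absolutely continuous in $t$), one gets
\[
\partial_t\theta(\varrho^\kappa)+\Div\big(\theta(\varrho^\kappa)\bfu\big)+\big(\varrho^\kappa\theta'(\varrho^\kappa)-\theta(\varrho^\kappa)\big)\Div\bfu=\theta'(\varrho^\kappa)r_\kappa,
\]
and letting $\kappa\to0$ — using $\theta,\theta'$ bounded and Lipschitz (so $\theta(\varrho^\kappa)\to\theta(\varrho)$ in every $L^p_{loc}$), and $\theta'(\varrho^\kappa)r_\kappa\to0$ in $L^1$ — yields \eqref{eq:final3b} first in $I\times\Omega_\eta$ and then, after checking the boundary terms cancel via the kinematic condition (the flux $\theta(\varrho)\bfu\cdot\nu_\eta$ on $\partial\Omega_{\eta(t)}$ equals $\theta(\varrho)\partial_t\eta$, which is exactly what appears when integrating by parts in the moving domain, and this is absorbed by the $\partial_t\psi$ transport structure as in \eqref{eq:apvarrho0}), the full statement for all $\psi\in C^\infty(\overline I\times\R^3)$.

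The third step is bookkeeping of the time-dependent geometry: the space mollification must be done in $\R^3$ after zero-extension, which is why matching the boundary velocity is essential; one should cite Lemma \ref{lem:sobdif} and the constructions of Section \ref{ssec:geom} to justify that the extended functions have the claimed integrability and that the distributional continuity equation in $\R^3$ holds. Finally one remarks that the same argument applies verbatim after the $\delta\to0$ limit (the limit density is then only in $L^\gamma$ with $\gamma>\tfrac{12}{7}>\tfrac65$, which still satisfies $\tfrac1\gamma+\tfrac12<1$, so the commutator lemma again applies), which is why the renormalized property is stated in Theorem \ref{thm:MAIN} as well. The only genuinely delicate point — and the one I would emphasize — is the interplay at the moving boundary: one must be sure that the zero-extended $(\varrho,\bfu)$ really does satisfy the continuity equation \emph{across} $\partial\Omega_{\eta(t)}$ in $\R^3$, with no spurious surface measure, and this rests entirely on $\tr_\eta\bfu=\partial_t\eta\,\nu$ together with the weak formulation being posed with test functions that do not vanish on the boundary; everything else is the classical DiPerna--Lions machinery.
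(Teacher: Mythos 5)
Your overall strategy is the right one and is essentially the paper's (the paper simply refers to \cite[Lemma 6.2]{BrSc} and records the outcome in \eqref{8.4}): discard the $\ep$-level route (you correctly notice yourself that $\ep\|\nabla\varrho_\ep\|^2_{L^2}\leq c$ only gives boundedness, not vanishing, of the defect term $\ep\theta''(\varrho_\ep)|\nabla\varrho_\ep|^2$) and run the DiPerna--Lions regularization directly on the limit continuity equation. There is, however, one concrete gap in your execution: you extend $\bfu$ by zero outside $\Omega_{\eta(t)}$ and then invoke the Friedrichs commutator lemma on the strength of ``$\nabla\bfu\in L^2_{t,x}$''. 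For the zero extension this is false: since $\tr_\eta\bfu=\partial_t\eta\,\nu\neq 0$, the extended field is only of bounded variation, its distributional gradient carries a surface measure on $\partial\Omega_{\eta(t)}$, and the commutator $r_\kappa$ then produces a defect concentrated on the moving boundary instead of converging to zero in $L^1$. The classical commutator lemma requires the transporting field to be locally Sobolev on all of $\R^3$; you identify the moving boundary as the delicate point but locate the difficulty in the wrong place (the absence of a spurious surface term in the equation for the zero-extended \emph{density} is indeed guaranteed by the kinematic coupling, but that does not repair the regularity of the zero-extended \emph{velocity}).

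The fix --- and the reason the paper states \eqref{8.4} with $\mathscr E_\eta\bfu$ rather than $\bfu$ --- is to replace the zero extension of the velocity by the bounded Sobolev extension $\mathscr E_\eta:W^{1,2}(\Omega_\eta)\rightarrow W^{1,p}(\R^3)$, $1<p<2$, from \cite[Lemma 2.5]{BrSc}. The zero-extended density still satisfies $\partial_t\varrho+\Div(\varrho\,\mathscr E_\eta\bfu)=0$ in the sense of distributions on $I\times\R^3$ (here your observation about the kinematic coupling is exactly what is needed), and since $\theta(0)=0$ one has $\theta(\varrho)\,\mathscr E_\eta\bfu=\theta(\varrho)\,\bfu$ and $(\varrho\theta'(\varrho)-\theta(\varrho))\Div\mathscr E_\eta\bfu=(\varrho\theta'(\varrho)-\theta(\varrho))\Div\bfu$ almost everywhere, so the renormalized identity obtained for $\mathscr E_\eta\bfu$ is the one required by Definition~\ref{def:ren}. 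With $\varrho\in L^\beta$, $\beta>2$, and $\mathscr E_\eta\bfu\in L^2(I;W^{1,p}(\R^3))$ the commutator estimate goes through, and the remainder of your argument (mollification, chain rule for the smooth mollified density, passage $\kappa\to0$) is correct as written.
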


The proof of the above theorem and lemma will be split in several parts.
For a given $\varepsilon$ we obtain a solution $(\eta_\varepsilon,\bfu_\varepsilon,\varrho_\varepsilon)$ to \ref{K1}--\ref{K4} by Theorem \ref{thm:regu}.
As in the preceding Section we can combine the total energy balance
\eqref{EI20finala} with the entropy balance \eqref{m217*finala} to obtain the total dissipation balance
\begin{align} \label{NTDB}
\begin{aligned}
&
 \int_{\Omega_{\eta_\ep}} \Big[ \frac{1}{2} \varrho_\varepsilon | {\bf u_\varepsilon} |^2 + H_{\delta,\Theta}(\varrho_\varepsilon,\vartheta_\varepsilon)
  \Big] \dx+\int_\omega \frac{|\partial_t\eta_N|^2}{2}\,\dd y+ K(\eta_N)\\
&+\Theta \int_0^\tau\int_{\Omega_{\eta_\ep}}\sigma_{\varepsilon,\delta}\dx
+ \int_0^\tau\int_{\Omega_{\eta_\ep}} \ep \vartheta_\varepsilon^5   \dt\\
&\leq \,\int_{\Omega_{\eta_\ep}}\Big[ \frac{1}{2} \vr_0 |\vu_0|^2  + H_{\delta,\Theta}(\varrho_0,\vartheta_0) \Big] \dx+\int_\omega \frac{|\partial_t\eta_1|^2}{2}\,\dd y+ K(\eta_0)\\
&+\int_0^\tau\int_{\Omega_{\eta_\ep}}\bigg(\frac{\delta}{\vartheta_\varepsilon^2}+\varepsilon\Theta\vartheta^4_\varepsilon\bigg)\dx\dt
\end{aligned}
\end{align}
for any $0 \leq \tau \leq T$. Here $H_{\delta, \Theta}(\vr, \vt)= \vr \left( e_\delta(\vr, \vt) - \Theta s(\vr, \vt) \right)$ for some $\Theta>0$
and
\begin{align*}
\sigma_{\varepsilon,\delta}&=\frac{1}{\vartheta_\varepsilon}\Big[\bfS(\vartheta_\varepsilon,\nabla\bfu_\varepsilon):\nabla\bfu_\varepsilon+\ep(1+\vt_\ep)\max\{|\overline{\bfP}_\ep|^{p'},|\nabla\bfu_\ep|^p\}\Big]+\frac{\varepsilon\delta}{2\vartheta_\varepsilon}{\beta\varrho_\varepsilon^{\beta-2}} |\nabla\varrho_\varepsilon|^2\\
&+\frac{1}{\vartheta_\varepsilon}\Big[\frac{\varkappa(\vartheta_\varepsilon)}{\vartheta_\varepsilon}|\nabla\vartheta_\varepsilon|^2+\frac{\delta}{2}\Big(\vt_\varepsilon^{\beta-1}+\frac{1}{\vartheta_\varepsilon^2}\Big)|\nabla\vartheta_\varepsilon|^2+\delta\frac{1}{\vartheta_\varepsilon^2}\Big].
\end{align*}
Absorbing the final term on the left-hand side of \eqref{NTDB} into the left-hand side we deduce the bounds
\begin{align} \label{Nbv1}
   &\sup_{t \in I} \int_{\Omega_{\eta_\ep}} \Big[ \frac{1}{2} \varrho_\varepsilon | {\bf u_\varepsilon} |^2 + H_{\delta,\Theta}(\varrho_\varepsilon,\vartheta_\varepsilon) \Big] \dx   \leq c\\
 \label{Nbv1B}   &\sup_{I}\int_\omega \frac{|\partial_t\eta_\ep|^2}{2}\,\dd y+ \sup_{I}K(\eta_\ep)+\ep\sup_I\mathcal L(\eta_\ep)\leq\,c.
\end{align}
In particular, we have
\begin{equation} \label{Nbv2}
\sup_{t \in I} \| \vr_\varepsilon \|^\beta_{L^\beta(\Omega_{\eta_\ep})}  +
 \sup_{t \in I} \| \vr_\varepsilon \vu_\varepsilon \|_{L^{\frac{2 \beta}{\beta + 1}}(\Omega_{\eta_\ep})}^{\frac{2 \beta}{\beta + 1}}+
 \sup_{t \in I} \| \vt_\varepsilon \|^4_{L^4(\Omega_{\eta_\ep})}  \leq c.
\end{equation}
Moreover, boundedness of the entropy production rate
\begin{equation} \label{Nbv3}
 \| \sigma_{\ep, \delta} \|_{L^1(I \times \Omega_{\eta_\ep})}  \leq c
\end{equation}
gives rise to
\begin{align} 
\label{Nbv4A}
 \ep\| \nabla \vu_\varepsilon \|^p_{L^p(I\times \Omega_{\eta_\ep}) }  + \ep\| \overline{\bfP}_\ep\|^{p'}_{L^{p'}(I\times\Omega_{\eta_\ep}) }  \leq c,\\
 \label{Nbv4}
 \| \mathcal D(\vu_\varepsilon) \|^2_{L^2(I\times \Omega_{\eta_\ep}) } + \| \nabla \vt_\varepsilon^{\beta/2} \|^2_{L^2(I\times\Omega_{\eta_\ep}) }  + \| \nabla \vt_\varepsilon \|^2_{L^2(I\times\Omega_{\eta_\ep}) }  \leq c;
\end{align}
whence, by Poincare's inequality and (\ref{Nbv2}),
\begin{equation} \label{Nbv5}
\| \vu_\varepsilon \|^2_{L^2(I; W^{1,2}(\Omega_{\eta_\ep}) ) }  +  \| \vt_\varepsilon \|^2_{L^2(I; W^{1,2}(\Omega_{\eta_\ep})) } \leq c.
\end{equation}
Finally, we deduce from the equation of continuity (\ref{eq:regvarrho}) (using the renormalized formulation from Theorem~\ref{lem:warme} (b) with $\theta(z)=z^2$ and testing with $\psi\equiv 1$)) that
\begin{equation} \label{Nbv6}
\int_{\Omega_{\eta_\ep(t)}} \vr_\varepsilon(t, \cdot) \dx = \int_{\Omega_{\eta_\ep(0)}} \vr_0 \dx,\
 \| \sqrt{\ep} \nabla \vr_\varepsilon \|_{L^2(I\times\Omega_{\eta_\ep})}  \leq c.
\end{equation}
Note that all estimates are independent of $\ep$.
Hence, we may take a subsequence such that
 for some $\alpha\in (0,1)$ we have
\begin{align}
\eta_\epsilon&\rightharpoonup^\ast\eta\quad\text{in}\quad L^\infty(I;W^{2,2}(\omega))\label{eq:conveta1},\\
\ep\eta_\epsilon&\rightarrow0\quad\text{in}\quad L^\infty(I;W^{3,2}(\omega)),\label{eq:conveta1ep}\\
\eta_\epsilon&\rightharpoonup^\ast\eta\quad\text{in}\quad W^{1,\infty}(I;L^2(\omega)),
\label{eq:conetat1}\\
\eta_\epsilon&\to\eta\quad\text{in}\quad C^\alpha(\overline I\times \omega)),
\label{eq:conetatal}\\
\mathcal D(\bfu_\epsilon)&\rightharpoonup^\eta\mathcal D(\bfu)\quad\text{in}\quad L^2(I;L^{2}(\Omega_{\eta_\epsilon})),\label{eq:convu1}\\
\bfu_\epsilon&\rightharpoonup^\eta\bfu\quad\text{in}\quad L^2(I;L^{2}(\Omega_{\eta_\epsilon})),\label{eq:convu1b}\\
\ep\bfu_\epsilon&\rightarrow^\eta0\quad\text{in}\quad L^p(I;W^{1,p}(\Omega_{\eta_\epsilon})),\label{eq:convu1p}\\
\ep\overline\bfP_\epsilon&\rightarrow^\eta0\quad\text{in}\quad L^{p'}(I;L^{p'}(\Omega_{\eta_\epsilon})),\label{eq:convP}\\
\varrho_\epsilon&\rightharpoonup^{\ast,\eta}\varrho\quad\text{in}\quad L^\infty(I;L^\beta(\Omega_{\eta_\epsilon})),\label{eq:convrho1}\\
\varepsilon\nabla\varrho_\varepsilon&\rightarrow^\eta 0\quad\text{in}\quad L^2(I\times\Omega_{\eta_\epsilon}),\label{eq:van1}\\
\vt_\ep&\rightharpoonup^{\ast,\eta}\vt\quad\text{in}\quad L^\infty(I;L^4(\Omega_{\eta_\ep})),\label{eq:convteps}\\
\vt_\ep&\weakto^{\eta}\vt\quad\text{in}\quad L^\beta(I;L^{3\beta}(\Omega_{\eta_\ep})),\label{eq:convvt'1eps}\\
\vt_\ep&\rightharpoonup^\eta\vt\quad\text{in}\quad L^2(I;W^{1,2}(\Omega_{\eta_\ep})).\label{eq:convt2eps}
\end{align}
We observe that the a-priori estimates \eqref{Nbv2} imply uniform bounds of
$\varrho_\epsilon\bfu_\epsilon$ in $ L^\infty(I,L^\frac{2\beta}{\beta+1})$. Therefore, we may apply Lemma~\ref{thm:weakstrong} with the choice $v_i\equiv \bfu_\epsilon$, $r_i=\varrho_\epsilon$, $p=s=2$, $b=\beta$ and $m$ sufficiently large to obtain
\begin{align}
\varrho_\varepsilon\bu_\varepsilon&\rightharpoonup^\eta  {\varrho}  {\bfu}\qquad\text{in}\qquad L^q(I, L^a(\Omega_{\eta_\epsilon})),\label{conv:rhov2}
\end{align}
where $a\in (1,\frac{2\beta}{\beta+1})$ and $q\in (1,2)$.
We apply Lemma~\ref{thm:weakstrong} once more with the choice $v_i\equiv \bfu_\epsilon$, $r_i=\varrho_\epsilon\bfu_\epsilon$,  $p=s=2$, $b=\frac{2\beta}{\beta+1}$ and $m$ sufficiently large to find that
\begin{align}
{\varrho}_\varepsilon  {\bfu}_\varepsilon\otimes  {\bfu}_\varepsilon&\rightharpoonup^\eta  {\varrho}  {\bfu}\otimes  {\bfu}\qquad\text{in}\qquad L^1(I\times\Omega_{\eta_\epsilon}).\label{conv:rhovv2}
\end{align}
We also obtain
\begin{align}
\varrho_\varepsilon\bu_\varepsilon&\rightharpoonup^\eta  {\varrho}  {\bfu}\qquad\text{in}\qquad L^q(I, L^q(\Omega_{\eta_\epsilon}))\label{conv:rhov2B},\\
\varrho_\varepsilon\bu_\varepsilon&\rightharpoonup^{\eta,*}  {\varrho}  {\bfu}\qquad\text{in}\qquad L^\infty(I, L^{\frac{2\beta}{\beta+1}}(\Omega_{\eta_\epsilon}))\label{conv:rhov2BB},
\end{align}
for all $q<\frac{6\beta}{\beta+6}$. Moreover, we have as a consequence of \eqref{eq:convu1} and \eqref{eq:convteps}
\begin{align}
\bfS(\vt_\ep,\nabla\bfu_\ep)&\rightharpoonup^\eta  \overline{\bfS}\qquad\text{in}\qquad L^{4/3}(I, L^{4/3}(\Omega_{\eta_\ep}))\label{conv:Sep}
\end{align}
for some limit function $\overline{\bfS}$. The convergence
\eqref{eq:conveta1} and the assumption on $K$ (see Section \ref{subsec:model}) yields
\begin{align}\label{conv:K'ep}
K'(\eta_\ep)\rightharpoonup^* \overline K'\quad \text{in}\quad L^\infty(I;W^{-2,r}(\omega))
\end{align}
for any $r<2$ with some limit quantity $\overline K$.\\
At this stage of the proof the pressure is only bounded in $L^1$, so we have to exclude its concentrations. The nowadays common approach from \cite[Chapter 3, Section 3.6.3]{F} only works locally where the moving shell is not seen (see Lemma \ref{prop:higher} below).
The problem can be circumvented by excluding concentrations at the boundary (see Lemma \ref{prop:higherb} which is inspired by \cite{Kuk}). The proof is exactly as in \cite[Lemma 6.4]{BrSc}.
\begin{lemma}\label{prop:higher}
Let $Q=J\times B\Subset I\times\Omega_{\eta}$ be a parabolic cube. The following holds for any $\epsilon\leq\epsilon_0(Q)$
\begin{equation}\label{eq:gamma+1}
\int_{Q}p_\delta(\vr_\ep,\vt_\ep)\vr_\ep\,\dif x\,\dif t\leq C(Q)
\end{equation}
with a constant independent of $\epsilon$.
\end{lemma}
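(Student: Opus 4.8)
The plan is to follow the now-standard strategy for estimating the pressure in compressible Navier--Stokes theory, namely to test the momentum equation with the \Bogovskii-corrected multiplier $\bfphi = \psi_Q\, \Bog(\vr_\ep - \langle\vr_\ep\rangle)$, where $\psi_Q \in C^\infty_c(I\times\Omega_\eta)$ is a cut-off equal to one on a slightly larger cube $Q' \supset Q$ but still compactly contained in $I\times\Omega_\eta$, and $\Bog$ denotes the Bogovskii operator on a fixed interior ball. Because $Q \Subset I\times\Omega_\eta$, for $\ep$ small enough (depending on $Q$) we also have $Q' \Subset I\times\Omega_{\eta_\ep}$ thanks to the uniform convergence \eqref{eq:conetatal}, so this test function is admissible in \ref{K1} (equivalently \eqref{eq:regu}) and \emph{does not see the moving boundary at all} — this is exactly why the argument is purely local and why it reduces to the fixed-domain case. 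First I would record that $\|\vr_\ep\|_{L^\infty_t L^\beta_x}$ is bounded by \eqref{Nbv2}, hence $\Bog(\vr_\ep - \langle\vr_\ep\rangle)$ is bounded in $L^\infty_t W^{1,\beta}_x$ on the interior region, and moreover $\vr_\ep \in L^2_t L^2_x$ with $\ep^{1/2}\nabla\vr_\ep \in L^2_{t,x}$ by \eqref{Nbv6}, which makes all the terms coming from the artificial viscosity in \eqref{eq:regvarrho} and the extra term $\ep\nabla\vr_\ep\nabla\bfu_\ep$ in \eqref{eq:regu} vanish or stay bounded uniformly in $\ep$.

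The core computation then produces, on $Q$, the identity
\begin{align*}
\int_{Q} p_\delta(\vr_\ep,\vt_\ep)\,\vr_\ep\dxt
&= \int_Q p_\delta(\vr_\ep,\vt_\ep)\,\langle\vr_\ep\rangle\dxt
+ \int_{Q'\setminus Q}(\ldots)
+ \int_{Q'} \vr_\ep\bfu_\ep\cdot\partial_t\bfphi\dxt\\
&\quad + \int_{Q'}\vr_\ep\bfu_\ep\otimes\bfu_\ep:\nabla\bfphi\dxt
- \int_{Q'}\bfS^\ep(\vt_\ep,\nabla\bfu_\ep):\nabla\bfphi\dxt
+ \ep\text{-terms} + \text{forcing},
\end{align*}
where every term on the right is controlled by the uniform bounds \eqref{Nbv2}, \eqref{Nbv4}, \eqref{Nbv5}, \eqref{Nbv6} together with the continuity equation used to rewrite $\partial_t\Bog(\vr_\ep) = \Bog(\partial_t\vr_\ep) = \Bog(-\Div(\vr_\ep\bfu_\ep) + \ep\Delta\vr_\ep)$ and the standard mapping properties of $\Bog$ on the fixed interior domain. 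The convective term $\vr_\ep\bfu_\ep\otimes\bfu_\ep$ is in $L^1_{t}L^1_x$ on $Q'$ by interpolating $\vr_\ep \in L^\infty_t L^\beta_x$ against $\bfu_\ep \in L^2_t L^6_x$ (here $\beta > 2$ is more than enough), and $\nabla\bfphi \in L^\infty_t L^\beta_x$ suffices against it after a further application of the bound $\Bog: L^q \to W^{1,q}$; the viscous term pairs $\bfS \in L^2_{t,x}$ (using $\vt_\ep \in L^\infty_t L^4_x$ and \eqref{Nbv4}) with $\nabla\bfphi \in L^\infty_t L^\beta_x \hookrightarrow L^2_{t,x}$. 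Since $\langle\vr_\ep\rangle$ is bounded in $L^\infty$ by $\|\vr_\ep\|_{L^\infty_t L^1_x}$ (and mass is conserved by \eqref{Nbv6}), the first right-hand term is bounded by $\|p_\delta(\vr_\ep,\vt_\ep)\|_{L^1(Q')}$, which is itself uniformly bounded by the energy estimate \eqref{Nbv2} (note $p_\delta \lesssim \vr_\ep^\beta + \vr_\ep\vt_\ep + \vt_\ep^4$). Collecting everything yields \eqref{eq:gamma+1} with $C(Q)$ depending only on $Q$, $\delta$ and the data.

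The main obstacle — and the reason this is stated as a separate lemma rather than folded into the fixed-domain reference — is the bookkeeping of the $\ep$-dependent terms: one must check that $\ep\nabla\vr_\ep\nabla\bfu_\ep\cdot\bfphi$, the artificial viscosity contribution $\ep\Delta\vr_\ep$ inside $\partial_t\bfphi$, and the $p$-Laplacian defect $\ep(1+\vt_\ep)\overline{\bfP}_\ep:\nabla\bfphi$ are all bounded \emph{uniformly} in $\ep$ (they are, by \eqref{Nbv4A}, \eqref{Nbv5}, \eqref{Nbv6}, e.g. $\ep\|\nabla\vr_\ep\|_{L^2}\|\nabla\bfu_\ep\|_{L^2}\|\bfphi\|_{L^\infty} \lesssim \ep^{1/2}\cdot 1$), and in fact most of them tend to zero. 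Since this is precisely the computation carried out in \cite[Lemma 6.4]{BrSc} — the only change being the different pressure law and the harmless presence of temperature factors, which are absorbed using $\vt_\ep \in L^\infty_t L^4_x \cap L^2_t W^{1,2}_x$ — I would simply invoke that reference for the details after setting up the test function and recording the list of uniform bounds above.
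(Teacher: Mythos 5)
Your proposal follows exactly the route the paper takes: the paper's entire proof of this lemma consists of the observation that the argument is purely local (for $\ep\leq\ep_0(Q)$ the cube lies inside $\Omega_{\eta_\ep}$ by the uniform convergence $\eta_\ep\to\eta$, so the moving shell is never seen) together with the citation of \cite[Lemma 6.4]{BrSc} and \cite[Chapter 3, Section 3.6.3]{F}, which is precisely the cut-off Bogovskii/$\nabla\Delta^{-1}$ multiplier computation you describe, with the same list of uniform bounds. The only step where your bookkeeping is slightly optimistic is the pairing of the defect $\ep(1+\vt_\ep)\overline{\bfP}_\ep\in L^{p'}$ against $\nabla\bfphi$, which by H\"older requires $\nabla\bfphi\in L^{p}$ with $p>\beta$ whereas the Bogovskii correction of the $L^\infty_t L^\beta_x$ density only delivers $L^\beta$; this is repaired by truncating the density in the multiplier (or by exploiting the vanishing prefactor $\ep^{1/p}$ from \eqref{Nbv4A} together with a truncation) and is a bookkeeping point the paper itself does not spell out, not a flaw in the strategy.
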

\begin{lemma}\label{prop:higherb}
Let $\kappa>0$ be arbitrary. There is a measurable set $A_\kappa\Subset I\times\Omega_\eta$ such that we have for all $\varepsilon\leq \varepsilon_0(\kappa)$
\begin{equation}\label{eq:gamma+1b}
\int_{I\times\R^3\setminus A_\kappa}p_\delta(\vr_\ep,\vt_\ep)\vr_\ep\chi_{\Omega_{\eta_\epsilon}}\,\dif x\,\dif t\leq \kappa.
\end{equation}
\end{lemma}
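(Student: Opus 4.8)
\textbf{Plan for the proof of Lemma~\ref{prop:higherb}.}
The statement asks us to rule out concentration of the artificial pressure $p_\delta(\vr_\ep,\vt_\ep)\vr_\ep$ \emph{near the moving boundary} $\partial\Omega_{\eta_\ep}$, uniformly in $\ep$. Lemma~\ref{prop:higher} already gives uniform $L^1$-bounds on $p_\delta(\vr_\ep,\vt_\ep)\vr_\ep$ on any fixed parabolic cube compactly contained in $I\times\Omega_\eta$, so the entire difficulty is the thin neighbourhood of the deformed shell, where (as stressed in the introduction and in footnote~\ref{rem:simple}) the usual \Bogovskii\ test function fails because $\partial\Omega_{\eta_\ep}$ is not uniformly Lipschitz and the pressure is only $L^1$ there. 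The plan is to follow the strategy of \cite[Lemma~6.4]{BrSc} essentially verbatim, replacing the linear Koiter energy by the nonlinear one (which only enters through the energy bounds \eqref{Nbv1B}, already at our disposal) and noting that the extra regularising terms $\ep\Delta\vr_\ep$, the $p$-Laplacian and the $\ep$-dissipation in the internal energy equation all carry favourable signs or vanish in the estimates. The key uniform inputs are: $\vr_\ep$ bounded in $L^\infty_t(L^\beta_x)$ by \eqref{Nbv2}; $\bfu_\ep$ bounded in $L^2_t(W^{1,2}_x)$ by \eqref{Nbv5}; $\partial_t\eta_\ep$ bounded in $L^\infty_t(L^2_x)$ and $\eta_\ep$ in $L^\infty_t(W^{2,2}_x)$ by \eqref{Nbv1}--\eqref{Nbv1B}; the trace estimate Lemma~\ref{lem:2.28} giving $\partial_t\eta_\ep\in L^2_t(L^r_x(\omega))$ for $r<4$; the equicontinuity in time of $\vr_\ep\bfu_\ep$ coming from the momentum equation \ref{K1}; and the bound $\gamma>\tfrac{12}{7}$, which is precisely what makes $L^{\frac{6\gamma}{\gamma+6}}_x\hookrightarrow W^{-s,q'}_x$ for $s$ close to $1/2$, exactly as exploited in Subsection~\ref{ssec:comp}.

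\textbf{Main steps, in order.}
First I would fix $\kappa>0$ and, using the uniform energy bound, choose a number $\lambda=\lambda(\kappa)>0$ so small that the set $A_\kappa:=\{(t,x)\in I\times\Omega_\eta:\dist(x,\partial\Omega_{\eta(t)})>\lambda\}$ captures all but a $\kappa/2$-fraction of the limiting pressure mass; by Lemma~\ref{prop:higher} and the locally uniform-in-$\ep$ bound on the cube, $A_\kappa$ is compactly contained in $I\times\Omega_\eta$, as required. It then remains to control $\int_{(I\times\R^3)\setminus A_\kappa} p_\delta(\vr_\ep,\vt_\ep)\vr_\ep\,\chi_{\Omega_{\eta_\ep}}\dxt$. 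Second, I would test the momentum equation \eqref{eq:regu} with a carefully chosen test pair built from the \emph{solenoidal} extension $\Testzeta$ of Proposition~\ref{prop:musc} composed with a \Bogovskii-type corrector localised in the boundary collar $S_\lambda$, so that $\Div\bfphi = p_\delta(\vr_\ep,\vt_\ep)\chi_{\text{collar}} - \langle\cdot\rangle$; crucially, the solenoidal extension removes the boundary data's incompatibility with the interior divergence, which is why it (rather than the regularising extension $\mathscr F_\eta$) must be used here. Third, I would estimate each resulting term using the uniform bounds above: the convective and time-derivative terms by Hölder together with \eqref{musc1}--\eqref{musc2} and the trace bound for $\partial_t\eta_\ep$; the viscous stress and the $\ep$-regularisations via \eqref{Nbv4A}--\eqref{Nbv5} (the $\ep$-terms are $O(\ep^{1-\theta})\to0$); the shell terms via \eqref{Nbv1B}; and the forcing terms trivially. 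All of these produce a bound of the form $C\cdot\sup_\ep\|\,\cdot\,\|_{\text{collar}}$ times a factor that can be made $\le\kappa/2$ by taking $\lambda$ (hence the collar) small, after which one adds back the contribution captured by $A_\kappa$. Combining the two pieces gives \eqref{eq:gamma+1b}.

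\textbf{Expected main obstacle.}
The delicate point is making the construction of the boundary-localised test function quantitative and uniform in both $\ep$ and $\lambda$: one needs a divergence-solving operator on the thin collar $S_\lambda$ whose operator norm from $L^{p'}$ to $W^{1,p'}$ does \emph{not} blow up as $\lambda\to0$ (this is exactly where the lack of a uniform Lipschitz bound on $\partial\Omega_{\eta_\ep}$ bites), and one must verify that the trace of this test function on $\partial\Omega_{\eta_\ep}$ is admissible, i.e.\ of the form $b\nu$ with $b$ controlled, so that Proposition~\ref{prop:musc} applies and the shell terms in \eqref{eq:regu} can be absorbed. In \cite[Lemma~6.4]{BrSc} this is handled by flattening the collar with the Hanzawa map $\bfPsi_{\eta_\ep}$ (Lemma~\ref{lem:sobdif}) and working on the fixed reference collar $S_\lambda\subset S_L$, where a standard \Bogovskii\ operator with $\lambda$-independent constants is available; the same flattening works here since $\eta_\ep$ enjoys the same $W^{2,2}$ (in fact $W^{3,2}$) regularity, and the Jacobian $J_{\eta_\ep}$ is uniformly bounded above and below on $S_L$ by the a priori bounds. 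Hence, modulo bookkeeping, the proof is "exactly as in \cite[Lemma~6.4]{BrSc}," and I would present it by checking that each hypothesis of that lemma is met by our regularised quantities and then quoting the estimate.
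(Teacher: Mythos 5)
Your reading of what the lemma is for, and your inventory of the available uniform bounds (including where $\gamma>\tfrac{12}{7}$ enters), are correct, but the mechanism you propose for the central step is not the one of \cite[Lemma 6.4]{BrSc} and would not work. That proof, following the idea of \cite{Kuk}, uses an \emph{explicit} test function adapted to the normal geometry of the moving boundary: essentially $\bfphi(t,x)=\nu(y(x))$ times a primitive of $\vr_\ep$ (respectively a cut-off of the normal coordinate $s(x)-\eta_\ep(t,y(x))$) taken along the normal ray starting at $\partial\Omega_{\eta_\ep(t)}$ and truncated at depth $\alpha$. By construction $\Div\bfphi=\vr_\ep\chi_{\mathrm{collar}_\alpha}+$ lower-order terms, the trace on the moving boundary vanishes (so the shell terms drop out), and $\|\bfphi\|_{L^\infty}\lesssim \alpha^{1-1/\beta}\|\vr_\ep\|_{L^\beta}$ is small with the collar width; no divergence equation is ever solved. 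Your construction instead rests on a Bogovskii-type solve localised in the collar, which fails on several counts. First, any field produced by $\Testzeta$ is solenoidal and contributes nothing to $\int p_\delta\,\Div\bfphi$, so all pressure information must come from the ``corrector''. Second, prescribing $\Div\bfphi=p_\delta(\vr_\ep,\vt_\ep)\chi_{\mathrm{collar}}-\langle\cdot\rangle$ would yield a bound on $\int p_\delta^2$ over the collar, which is neither claimed nor attainable: the Bogovskii operator maps $L^q_0\to W^{1,q}_0$ only for $1<q<\infty$, while $p_\delta$ is merely $L^1$ near the boundary, and the resulting $\bfphi$ would in any case not be admissible against $\vr_\ep\bfu_\ep\otimes\bfu_\ep\in L^1$. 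Third, even with the correct right-hand side $\vr_\ep\chi_{\mathrm{collar}}$, the inverse-divergence constant on a collar of width $\lambda$ is \emph{not} $\lambda$-independent (it degenerates as the collar thins), and transporting the solution through $\bfPsi_{\eta_\ep}$ destroys the divergence identity because $\eta_\ep$ is only controlled in $W^{2,2}$; this is exactly the obstruction the authors flag in their footnote on the failure of the Bogovskii test near the moving boundary. So the obstacle you correctly anticipate is real, but the resolution you attribute to \cite{BrSc} (flattening plus a $\lambda$-uniform Bogovskii operator) is not what is done there, and without it your argument does not close.

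A smaller logical point: your first step chooses $\lambda$ so that $A_\kappa$ ``captures all but a $\kappa/2$-fraction of the limiting pressure mass by Lemma \ref{prop:higher}''. That lemma is a purely interior bound on fixed parabolic cubes and gives no information about how much of $\int p_\delta\vr_\ep$ sits in the collar; quantifying that mass is precisely the content of the present lemma, so this step is circular as stated. In the correct argument $A_\kappa$ is simply the complement of the $\alpha$-collar for a suitable $\alpha=\alpha(\kappa)$, and the smallness of the collar contribution is \emph{proved} by the test-function computation, with the error terms controlled either by the explicit factor $\alpha^{\theta}$ carried by $\bfphi$ or by pairing uniformly bounded quantities (e.g.\ $\vr_\ep|\bfu_\ep|^2\in L^1_tL^{3\gamma/(\gamma+3)}_x$, $\vr_\ep\bfu_\ep\cdot\partial_t\bfphi$ via the trace bound $\partial_t\eta_\ep\in L^2_tL^{<4}_x$) against the vanishing measure of the collar.
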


We connect Lemma~\ref{prop:higher} and Lemma~\ref{prop:higherb} to obtain the following corollary.
\begin{corollary}\label{prop:higherb0}Under the assumptions of Theorem~\ref{thm:ap} there exists a function $\overline p$ such that
\[
p_\delta(\vr_\ep,\vt_\ep)\weakto^\eta \overline p\text{ in }L^1(I;L^1(\Omega_{\eta_\epsilon})),
\]
at least for a subsequence.
Additionally, for $\kappa>0$  arbitrary, there is a measurable set $A_\kappa\Subset I\times\Omega_\eta$ such that $\overline{p}\varrho\in L^1(A_\kappa)$ and
\begin{equation}\label{eq:gamma+1b0}
\int_{(I\times\Omega_{\eta})\setminus A_\kappa}\overline p\dxt\leq \kappa.
\end{equation}
\end{corollary}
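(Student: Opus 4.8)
The plan is to combine the two preceding lemmas in a straightforward way, extracting a weak limit for the approximate pressure and then controlling its ``concentration defect'' via the set $A_\kappa$ from Lemma~\ref{prop:higherb}.

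First I would establish the weak $L^1$ convergence. By the energy estimate \eqref{Nbv2} the sequence $p_\delta(\vr_\ep,\vt_\ep)$ is bounded in $L^\infty(I;L^1(\Omega_{\eta_\ep}))$; indeed $\vr_\ep^\gamma+\delta\vr_\ep^\beta$ is controlled by $\|\vr_\ep\|_{L^\beta}^\beta$, the term $\vr_\ep\vt_\ep$ by H\"older using $\vr_\ep\in L^\infty_tL^\beta_x$ and $\vt_\ep\in L^\infty_tL^4_x$, and $\tfrac a3\vt_\ep^4$ by $\|\vt_\ep\|_{L^4}^4$. Since $L^1$ is not reflexive, mere boundedness does not give a weakly convergent subsequence; this is where Lemma~\ref{prop:higherb} enters. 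That lemma provides, for each $\kappa>0$, a set $A_\kappa\Subset I\times\Omega_\eta$ with $\int_{I\times\R^3\setminus A_\kappa}p_\delta(\vr_\ep,\vt_\ep)\vr_\ep\chi_{\Omega_{\eta_\ep}}\dxt\le\kappa$ for all small $\ep$. Because $\vr_\ep$ converges a.e. to a positive limit (or at least is bounded away from zero on sets where $p_\delta$ concentrates --- more precisely, on $A_\kappa$ we invoke Lemma~\ref{prop:higher} to get $p_\delta(\vr_\ep,\vt_\ep)\vr_\ep$ bounded in $L^1$ over parabolic cubes exhausting $A_\kappa$), the family $\{p_\delta(\vr_\ep,\vt_\ep)\chi_{A_\kappa}\}_\ep$ is equi-integrable: on $A_\kappa$ one has $p_\delta\le p_\delta\vr_\ep$ on the set $\{\vr_\ep\ge1\}$, while on $\{\vr_\ep<1\}$ the pressure is bounded pointwise by a constant (using $\vt_\ep\in L^4$ this needs a short argument splitting the radiative part, but it is routine). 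Hence on each $A_\kappa$ the Dunford--Pettis theorem yields a weakly convergent subsequence, and a diagonal argument over $\kappa=1/n$ together with the uniform smallness of the tails produces a global weak limit $\overline p\in L^1(I;L^1(\Omega_\eta))$ with $p_\delta(\vr_\ep,\vt_\ep)\weaktoetan\overline p$.

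Next, to obtain the bound \eqref{eq:gamma+1b0} on the limit, I would fix $\kappa>0$ and use the set $A_{\kappa/2}$ from Lemma~\ref{prop:higherb}. By weak lower semicontinuity of the $L^1$ norm over the (fixed) open set $(I\times\Omega_\eta)\setminus \overline{A_{\kappa/2}}$ --- testing against $\mathrm{sgn}$-type functions in $L^\infty$, or more cleanly using that $\chi_{\Omega_{\eta_\ep}}\to\chi_{\Omega_\eta}$ boundedly and in measure --- one gets
\begin{align*}
\int_{(I\times\Omega_\eta)\setminus A_{\kappa/2}}\overline p\dxt\le\liminf_{\ep\to0}\int_{I\times\R^3\setminus A_{\kappa/2}}p_\delta(\vr_\ep,\vt_\ep)\chi_{\Omega_{\eta_\ep}}\dxt.
\end{align*}
On the complement of $A_{\kappa/2}$ the bound from Lemma~\ref{prop:higherb} controls $p_\delta\vr_\ep$, not $p_\delta$ itself, so one splits into $\{\vr_\ep\ge1\}$, where $p_\delta\le p_\delta\vr_\ep$ and the integral is $\le\kappa/2$, and $\{\vr_\ep<1\}$, where $p_\delta$ is pointwise bounded by $c(1+\vt_\ep^4)$; the latter contribution over a set that can be made small (shrinking $A$ slightly, or absorbing it into a relabelled $A_\kappa\supset A_{\kappa/2}$) is $\le\kappa/2$. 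Thus $\int_{(I\times\Omega_\eta)\setminus A_\kappa}\overline p\dxt\le\kappa$ with $A_\kappa$ the enlarged set, and $\overline p\varrho\in L^1(A_\kappa)$ follows from Lemma~\ref{prop:higher} combined with the strong convergence $\vr_\ep\to^\eta\vr$ inside $A_\kappa$ (which holds once effective-viscous-flux/compactness arguments are in place, or directly from the a.e.\ convergence of $\vr_\ep$ together with equi-integrability on $A_\kappa$ and Vitali's theorem).

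The main obstacle is the passage from the two local/quantitative estimates to a genuine global weak limit in the non-reflexive space $L^1$: one must carefully exhaust $I\times\Omega_\eta$ by the sets $A_\kappa$ (which are nested only after relabelling), verify equi-integrability of $p_\delta(\vr_\ep,\vt_\ep)$ on each $A_\kappa$ using both Lemma~\ref{prop:higher} and the pointwise bound for small densities, and run the diagonal/tail argument so that the limits on the pieces patch to a single $\overline p$ independent of $\kappa$. The bookkeeping with the indicator $\chi_{\Omega_{\eta_\ep}}$ versus $\chi_{\Omega_\eta}$ --- which differ on a set whose measure tends to zero by \eqref{eq:conetatal} --- also has to be handled, but this is standard once the convergence $\eta_\ep\to\eta$ in $C^\alpha$ is available.
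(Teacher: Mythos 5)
The main gap is in your justification of $\overline p\,\varrho\in L^1(A_\kappa)$. You invoke ``the strong convergence $\vr_\ep\to^\eta\vr$ inside $A_\kappa$'' or ``the a.e.\ convergence of $\vr_\ep$''. Neither is available at this stage: in the limit $\ep\to0$ the only gradient information on the density is $\sqrt{\ep}\,\nabla\vr_\ep\in L^2$, so there is no Aubin--Lions-type compactness, and the effective viscous flux identity --- which is what eventually yields strong (hence a.e.) convergence of $\vr_\ep$ --- is itself derived from the limit momentum equation, whose derivation requires the present corollary. Your argument is therefore circular. The correct route uses only the monotonicity of $\vr\mapsto p_{M,\delta}(\vr)=\vr^\gamma+\delta\vr^\beta$: on each parabolic cube $Q\Subset I\times\Omega_\eta$, Lemma~\ref{prop:higher} bounds $p_\delta(\vr_\ep,\vt_\ep)\vr_\ep$ in $L^1(Q)$ uniformly, and the standard lemma on weak limits of compositions with monotone functions (see \cite[Chapter 3]{F}) yields $\overline{p_{M,\delta}(\vr)}\,\varrho\le\overline{p_{M,\delta}(\vr)\,\vr}$ a.e., whence $\overline{p_{M,\delta}(\vr)}\,\varrho\in L^1(Q)$; the contributions of $\vr\vt$ and $\vt^4$ are handled by H\"older's inequality using the uniform bounds on $\vt_\ep$. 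Covering $A_\kappa$ by finitely many such cubes gives the claim with no strong convergence of the density. (Note also that, once $\overline p\in L^1(I\times\Omega_\eta)$ is known, the tail bound \eqref{eq:gamma+1b0} is immediate from the absolute continuity of the integral of the single function $\overline p$; your lower-semicontinuity detour is not needed.)

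A second, smaller defect is the deduction of equi-integrability on $A_\kappa$ from ``$p_\delta\le p_\delta\vr_\ep$ on $\{\vr_\ep\ge1\}$'': domination by a family that is merely bounded in $L^1(Q)$ does not give equi-integrability. The truncation level must be sent to infinity, e.g.\ for $E\subset Q$
\begin{align*}
\int_E p_\delta(\vr_\ep,\vt_\ep)\dxt\le \frac{1}{\lambda}\int_Q p_\delta(\vr_\ep,\vt_\ep)\vr_\ep\dxt+\int_{E\cap\{\vr_\ep<\lambda\}}p_\delta(\vr_\ep,\vt_\ep)\dxt\le \frac{C(Q)}{\lambda}+c(\lambda)\Big(|E|+\int_E\vt_\ep^4\dxt\Big),
\end{align*}
choosing $\lambda$ large first (equivalently, apply de la Vall\'ee-Poussin to the superlinear bound $\vr_\ep^{\gamma+1}\le p_\delta\vr_\ep$), together with equi-integrability of $\vt_\ep^4$, which follows from interpolating $\vt_\ep\in L^\infty_t L^4_x\cap L^2_tW^{1,2}_x$ (or from \eqref{eq:convvt'1eps}) rather than from the mere $L^\infty_tL^4_x$ bound. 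With these two repairs, the remaining architecture --- Dunford--Pettis on an exhaustion by the sets $A_\kappa$, the splitting of the boundary tail into $\{\vr_\ep\ge1\}$ (controlled by Lemma~\ref{prop:higherb}) and $\{\vr_\ep<1\}$ (controlled after enlarging $A_\kappa$ so its complement has small measure), and the diagonal argument --- is the intended way of ``connecting'' the two lemmas.
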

Combining Corollary \ref{prop:higherb0} with the convergences \eqref{eq:conveta1}--\eqref{conv:K'ep} we can pass to the limit in \eqref{eq:regu} and \eqref{eq:regvarrho} and obtain the following. There is
$$(\eta,\bfu,\varrho,\vt,\overline p)\in Y^I\times X_{\eta}^I\times\widetilde W_\eta^I \times Z^I_\eta \times L^1(I\times \Omega_\eta)$$ that satisfies
\[
\bfu(\cdot,\cdot+\eta\nu)=\partial_t\eta\nu_{\eta}
 \quad\text{in}\quad I\times \omega,
\]
 the continuity equation 
\begin{align}\label{eq:apvarrho}
\begin{aligned}
&\int_I\frac{\dd}{\dt}\int_{\Omega_{\eta}}\varrho \psi\dxt-\int_I\int_{\Omega_{\eta}}\Big(\varrho\partial_t\psi
+\varrho\bfu\cdot\nabla\psi\Big)\dxt=0
\end{aligned}
\end{align}
for all $\psi\in C^\infty(\overline I \times \R^3)$ and the coupled weak momentum equation
\begin{align}\label{eq:apulim}
\begin{aligned}
&\int_I\frac{\dd}{\dt}\int_{\Omega_{ \eta}}\varrho\bfu\cdot \bfphi\dxt-\int_I\int_{\Omega_{\eta}} \Big(\varrho\bfu\cdot \partial_t\bfphi +\varrho\bfu\otimes \bfu:\nabla \bfphi\Big)\dxt\\
&+\int_{\Omega_{\eta}}\overline\bfS:\nabla\bfphi\dxt-\int_I\int_{\Omega_{ \eta }}
\overline{p}\,\Div\bfphi\dxt
\\
&
+\int_I\frac{\dd}{\dt}\int_\omega \partial_t \eta b\dH-\int_\omega \partial_t\eta\,\partial_t b\dH + \int_\omega \overline K'\,b\dH\dt
\\&=\int_I\int_{\Omega_{ \eta}}\varrho\bff\cdot\bfphi\dxt+\int_I\int_\omega g\,b\,\dd x\dt
\end{aligned}
\end{align} 
 for all $(b,\bfphi)\in C^\infty(\omega)\times C^\infty(\overline{I}\times \setR^3)$ with $\mathrm{tr}_{\eta}\bfphi=b\nu$.
It remains to show strong convergence of $\vt_\ep$, $\vr_\ep$ and $\nabla^2\eta_\ep$. The convergence proof for $\vt_\ep$ is entirely based on local arguments. Consequently the shell is not seen and we can follow the arguments in 
\cite[Chapter 3, Section 3.7.3]{F} to conclude
\begin{align}\label{m257a}
\vt_\ep \to^\eta \vt \ \mbox{in} \ L^4(I\times\Omega_{\eta_\ep}).
\end{align}
This yields $\overline\bfS=\bfS(\vartheta,\nabla \vu)$ in \eqref{eq:apulim}. Additionally we can pass to the limit in the entropy balance \eqref{m217*finala} using lower semi-continuity. The remainder of this subsection is dedicated to the proof of $\overline{p}=p(\vr,\vt)$. Eventually, we will pass to the limit in the shell energy in Section \ref{sec:shelleps} which will finish the proof of Theorem \ref{thm:ap}. \\
The proof of strong convergence of the density is based on the effective viscous flux identity introduced in \cite{Li2} and the concept of renormalized solutions from \cite{DL}. Arguing locally, there is no difference to the known setting and we can follow the arguments in \cite[Chapter 3, Section 3.6.5]{F}. We consider a parabolic cube $\tilde Q=\tilde J\times\tilde B$ with
$Q\Subset \tilde Q\Subset  I\times\Omega_\eta$. Due to \eqref{eq:conetatal} we can assume
that $\tilde Q\Subset  I\times\Omega_{\eta_\varepsilon}^I$ (by taking $\varepsilon$ small enough). 
For $\psi\in C^\infty_c(\tilde Q)$ we obtain
\begin{align}\label{eq:fluxpsi}
\begin{aligned}
\int_{I\times\mt} &\psi^2\big(p_\delta(\vr_\ep,\vt_\ep)-(\lambda(\vt_\ep)+2\mu(\vt_\ep))\Div \bfu_\varepsilon\big)\,\varrho_\varepsilon\dxt\\&\longrightarrow\int_{I\times\mt}\psi^2 \big( \overline{p}-(\lambda(\vt)+2\mu(\vt))\Div \bfu\big)\,\varrho\dxt
\end{aligned}
\end{align}
as $\varepsilon\rightarrow0$ (note that the term related to $\overline{\bfP}_\ep$ disappears due to \eqref{eq:convu1p} provided we choose $\beta$ large enough).
The proof of Lemma \ref{cor:ap1} follows exactly as in \cite[Lemma 6.2]{BrSc}. 
So, for $\psi\in C^\infty(\overline{I}\times \R^3)$ we have
\begin{align}\label{8.4}
\begin{aligned}
\int_{I}\frac{\dd}{\dt}&\int_{\R^3} \theta(\varrho)\,\psi\dxt-\int_{I\times\R^3}\theta(\varrho)\,\partial_t\psi\dxt
+\int_{I\times\R^3}\big(\varrho\theta'(\varrho)-\theta(\varrho)\big)\Div\mathscr E_\eta \bfu\,\psi\dxt\\
&=\int_{I\times\R^3}\theta(\varrho) \mathscr E_\eta \bfu\cdot\nabla\psi .
\end{aligned}
\end{align}
Here $\mathscr E_{\eta}:W^{1,2}(\Omega_\eta)\rightarrow W^{1,p}(\R^3)$ is the extension operator from \cite[Lemma 2.5]{BrSc}
where $1<p<2$ (but may be chosen close to 2).
In order to deal with the local nature of \eqref{eq:fluxpsi} we use ideas from
\cite{Fe}. First of all, by the monotonicity of the mapping $\vr\mapsto p(\vr,\vt)$, we find for arbitrary non-negative $\psi\in C^\infty_c(\tilde Q)$
\begin{align*}
&\liminf_{\varepsilon\rightarrow0}\int_{I\times\mt}\psi (\lambda(\vt)+2\mu(\vt))\big(\Div \bfu_\epsilon\,\varrho_\epsilon -\Div \bfu\,\varrho\big)\dxt\\
=&\liminf_{\varepsilon\rightarrow0}\int_{I\times\mt}\psi\Big( (\lambda(\vt_\ep)+2\mu(\vt_\ep))\Div \bfu_\epsilon\,\varrho_\epsilon -(\lambda(\vt)+2\mu(\vt))\Div \bfu\,\varrho\Big)\dxt\\
=&\liminf_{\varepsilon\rightarrow0}\int_{I\times\Omega_{\eta_\epsilon}}\psi\Big(\big( \overline{p}-(\lambda(\vt)+2\mu(\vt))\Div \bfu\big)\varrho
-\big( p(\vr_\ep,\vt_\ep)-(\lambda(\vt_\ep)+2\mu(\vt_\ep))\Div \bfu_\varepsilon\big)\,\varrho_\varepsilon\Big)\dxt
\\
+&\liminf_{\varepsilon\rightarrow0}\int_{I\times\Omega_{\eta_\epsilon}}\psi\big( p(\vr_\ep,\vt_\ep)\vr_\ep- \overline{p}\varrho\big)\dxt
\\
=&\liminf_{\varepsilon\rightarrow0}\int_{I\times\Omega_{\eta_\epsilon}}\psi \big(p(\vr_\ep,\vt_\ep)- \overline{p}\big)\big(\varrho_\varepsilon-\varrho\big)\dxt\geq 0
\end{align*}
using \eqref{eq:fluxpsi} as well as \eqref{m257a} (together with \eqref{m105} and the uniform bounds \eqref{Nbv2} and \eqref{Nbv4}).
As $\psi$ is arbitrary and $\mu$ strictly positive by \eqref{m105} we conclude
\begin{align}\label{8.12}
\overline{\Div \bfu\,\varrho}\geq \Div \bfu\,\varrho \quad\text{a.e. in }\quad I\times\Omega_\eta,
\end{align}
where 
\begin{align*}
\Div \bfu_\epsilon\,\varrho_\epsilon\weakto^\eta \overline{\Div \bfu\,\varrho}\quad\text{in}\quad L^1(\Omega;L^1(\Omega_{\eta_\varepsilon})),
\end{align*}
recall \eqref{eq:convu1} and \eqref{eq:convrho1}. Now, we compute both sides of
\eqref{8.12} by means of the corresponding continuity equations. Due to Theorem
\ref{lem:warme} (b) with $\theta(z)=z\ln z$ and $\psi=\mathbb{I}_{(0,t)}$ we have
\begin{align}\label{8.15}
\int_0^t\int_{\R^3}\Div \bfu_\epsilon\,\varrho_\epsilon\dxs\leq\int_{\R^3}\varrho_0\ln(\varrho_0)\dx
-\int_{\R^3}\varrho_\varepsilon(t)\ln(\varrho_\varepsilon(t))\dx.
\end{align}
Similarly, equation \eqref{8.4} yields
\begin{align}\label{8.14}
\int_0^t\int_{\R^3}\Div \bfu\,\varrho\dxs=\int_{\R^3}\varrho_0\ln(\varrho_0)\dx
-\int_{\R^3}\varrho(t)\ln(\varrho(t))\dx.
\end{align}
Combining \eqref{8.12}--\eqref{8.14} shows
\begin{align*}
\limsup_{\varepsilon\rightarrow0}\int_{\R^3}\varrho_\varepsilon(t)\ln(\varrho_\varepsilon(t))\dx\leq \int_{\R^3}\varrho(t)\ln(\varrho(t))\dx
\end{align*}
for any $t\in I$.
This gives the claimed convergence $\varrho_\varepsilon\rightarrow\varrho$ in $L^1(I\times\R^3)$ by convexity of $z\mapsto z\ln z$. Consequently, we have $\overline p=p(\vr,\vt)$.

 \subsection{Compactness of the shell energy}
 \label{sec:shelleps}
 All the forthcoming effort is to prove
\begin{align}\label{eq:1401a}
\lim_{\ep\rightarrow 0}\int_I\int_\omega|\partial_t\eta_\ep(t)|^2\,\dd y\dt&=\int_I\int_\omega |\partial_t\eta(t)|^2\,\dd y\dt,\\
\label{eq:1401b}\lim_{\ep\rightarrow 0}\int_I K_\ep(\eta_\ep(t))\dt&=\int_I  K(\eta(t))\dt,
\end{align}
as $\ep\rightarrow0$ at least for a subsequence. This will allow us to pass to the limit in the energy balance as well as in the nonlinear term of the shell equation.
In the following we derive a framework
to prove \eqref{eq:1401b} based on fractional estimates. The same approach will be subsequently used
in the limit passage $\delta\rightarrow0$ in Section \ref{sec:6}. The difference is that the bounds on the density will be more restrictive.
We develop the theory here using only these weaker estimates to have it ready for the final limit procedure as well.\\
A first observation is that $\tr_{\eta_\ep}(\bfu_\ep)=\partial_t\eta_\ep\nu$ implies
\begin{align}
\partial_t\eta_\ep&\rightharpoonup\partial_t\eta\quad\text{in}\quad L^2(I;W^{1-1/r,r}(\omega)),\label{eq:conetatA2}
\end{align}
for all $r<2$ by \eqref{Nbv4} in combination with Lemma \ref{lem:2.28}.
In the following we are going to prove that
\begin{align}\label{eq:0806}
\int_I\|\eta_n\|^2_{W^{2+s,2}(\omega)}\dt
\end{align}
is uniformly bounded for some $s>0$ using an appropriate test-function in the shell equation. On account of the coupling we need a suitable test-function for the momentum equation for it as well.
Hence we set
\begin{align*}
({\bfphi}_\ep,\phi_\ep)=\big(\Test^{\mathrm div}_{\eta_\ep}(\Delta_{-h}^s\Delta_h^s \eta_\ep-\mathscr K_{\eta_\ep}(\Delta_{-h}^s\Delta_h^s \eta_\ep)),\Delta_{-h}^s\Delta_h^s \eta_\ep-\mathscr K_{\eta_\ep}(\Delta_{-h}^s\Delta_h^s \eta_\ep)\big),
\end{align*}
where $\Test^{\mathrm div}_{\eta}$ and $\mathscr K_{\eta}$ have been introduced in Proposition \ref{prop:musc}. Here $\Delta_s^hv(y)=h^{-s}(v(y+he_\alpha)-v(y))$ is the fractional difference quotient in direction $e_\alpha$ for $\alpha\in\{1,2\}$.  We obtain 
\begin{align*}
\int_I & K_\ep'(\eta_\ep)\, \phi_\ep\dt\\
&= \int_I  \int_{\Omega_{\eta_\ep(t)}}\big(  \vr_\ep\bu_\ep \otimes \bu_\ep
-\mathbf S(\vt_\ep,\bu_\ep)-\ep\overline\bfP_\ep\big):\nabla {\bfphi}_\ep
+\bff\cdot{\bfphi}_\ep \big) \dx\dt
\\
&+\int_I  \int_{\Omega_{\eta^\varrho(t)}} \vr_\ep\bu_\ep \cdot \partial_t  {\bfphi}_\ep\dxt-\int_I  \frac{\mathrm{d}}{\dt}\bigg(\int_{\Omega_{\eta_\ep(t)}}\vr_\ep\vu_\ep  \cdot {\bfphi}_n\dx
+\int_{\omega} \partial_t \eta_\ep \, \phi_\ep \dH\bigg)\dt\\
&+
\int_I \int_{\omega} \big(\partial_t \eta_\ep\, \partial_t\phi_\ep +g\, \phi_\ep \big)\dH\dt
=:(I)_\ep+(II)_\ep+(III)_\ep+(IV)_\ep
\end{align*}
recalling that the function $\bfphi_\ep$ is divergence-free such that the pressure term disappears.
Since $\eta_\ep\in L^\infty(I,W^{2,2}(\omega))$ uniformly, we have
\begin{align*}
\int_I&\|\Delta_h^s \nabla^2\eta_\ep\|_{L^2(\omega)}^2\dt\lesssim 1+ \int_I K'(\eta_\ep)\, \phi_\ep\dt
\end{align*} 
for every $h>0$ and $s\in (0,\frac{1}{2})$ due to \cite[Lemma 4.5]{MuSc}. Consequently,
it holds
\begin{align*}
\int_I\|\Delta_h^s \nabla^2\eta_\ep\|_{L^2(\omega)}^2\dt+\ep\int_I\|\Delta_h^s \nabla^3\eta_\ep\|_{L^2(\omega)}^2\dt\lesssim 1+(I)_\ep+(II)_\ep+(III)_\ep+(IV)_\ep
\end{align*}
and our task consists in establishing uniform estimates for the 
terms $(I)_\ep,\dots,(IV)_\ep$.
As far as $(I)_n$ is concerned the most critical term is the convective term $\vr_\ep\bu_\ep \otimes \bu_\ep$ with integrability $\frac{6\gamma}{\gamma+6}>1$. By \eqref{musc1} and \eqref{eq:conveta1} 
\begin{align}\label{eq:2907}
\begin{aligned}
\|\bfphi_\ep\|_{L^q(I;W^{1,p}(\omega))}&\leq\,\|\Delta_{-h}^s\Delta_h^s \eta_\ep\|_{L^q(I;W^{1,p}(\omega))}+\|\Delta_{-h}^s\Delta_h^s \eta_\ep\nabla \eta_\ep\|_{L^q(I;L^{p}(\omega))}\\
&\leq\,\|\eta_\ep\|_{L^q(I;W^{1+2s,p}(\omega))}+\|\Delta_{-h}^s\Delta_h^s \eta_\ep\|_{L^\infty(I;L^{2p}(\omega))}\|\nabla\eta_\ep\|_{L^\infty(I;L^{2p}(\omega))}\\
&\leq\,\|\eta_\ep\|_{L^q(I;W^{1+2s,p}(\omega))}+\|\eta_\ep\|_{L^\infty(I;W^{2s,2p}(\omega))}\|\nabla \eta_\ep\|_{L^\infty(I;L^{2p}(\omega))}\\
&\leq\,\|\eta_\ep\|_{L^q(I;W^{1+2s,p}(\omega))}+c_p
\end{aligned}
\end{align}
for all $s<\frac{1}{2}$, $p<\infty$ and $q\in[1,\infty]$. For $p=\frac{6\gamma}{6\gamma-\gamma-6}$ we can choose $s>0$ small enough such that $W^{2,2}(\omega)\hookrightarrow W^{1+2s,p}(\omega)$. Using \eqref{eq:conveta1} again implies that $\bfphi_\ep$ is
uniformly bounded in $L^\infty_t(W^{1,p}_x)$.
We conclude that 
$(I)_\ep$ is uniformly bounded in $\ep$ and $h$ if we choose $s$ small enough. The most critical term is in fact $(II)_\ep$. We note that \eqref{eq:convu1} and \eqref{eq:convrho1} imply
\begin{align*}
\vr_\ep\bfu_\ep\in L^2(I;L^{q_3}(\Omega_{\eta_n}))
\end{align*}
uniformly for all $q_3<\frac{6\gamma}{\gamma+6}$.
Due to the assumption $\gamma>\frac{12}{7}$ we can choose in the above $q_3>\frac{4}{3}$.
On the other hand we have
\begin{align*}
\|\partial_t\bfphi_\ep\|_{L^2(I;L^{q_3'}( S_{L/2} \cup \Omega))}&\lesssim \|\partial_t\Delta_{-h}^s\Delta_h^s \eta_n\|_{L^2(I;L^{q_3'}(\omega))}+\|\Delta_{-h}^s\Delta_h^s \eta_n\partial_t \eta_\ep\|_{L^2(I;L^{q_3'}(\omega))}\\
&\lesssim \|\partial_t\eta_\ep\|_{L^2(I;W^{2s,q_3'}(\omega))}+\|\Delta_{-h}^s\Delta_h^s \eta_\ep\|_{L^\infty(I\times\omega)}\|\partial_t \eta_\ep\|_{L^2(I;L^{q_3'}(\omega))}.
\end{align*}
Thus, we can choose $s$ small enough such that $\partial_t\bfphi_\ep$ is uniformly bounded in $L^2_t(L_x^{q_3'})$ thanks to \eqref{eq:conveta1} and \eqref{eq:conetatA2} (together with Sobolev's embedding and $q_3'<4$). We conclude boundedness of $(II)_\ep$. As far as $(III)_\ep$ is concerned, uniform bounds for the first term are easily obtained
 from \eqref{eq:2907} (choosing $p>\seb{\beta}>2$ and using Sobolev's embedding) in combination with
 \eqref{conv:rhov2}. For the second term we use
 \begin{align*}
 \|\phi_\ep\|_{L^2(I;L^2(\omega))}\lesssim  \|\eta_\ep\|_{L^2(I;W^{2s,2}(\omega)}\lesssim  \|\eta_\ep\|_{L^2(I;W^{1,2}(\omega))}
 \end{align*}
 together with \eqref{eq:conveta1}. The second term in $(IV)_\ep$ is analogous. Finally, we can use again \eqref{eq:conetatA2} to control the first term in $(IV)_\ep$ and the proof of \eqref{eq:1401b} is complete. Moreover we have shown
 \begin{align*}
\ep\int_I\|\eta_n\|^2_{W^{3+s,2}(\omega)}\dt\leq\,c
\end{align*}
uniformly in $\ep$. This, interpolated with \eqref{eq:0806}, yields
$\ep\mathcal L(\eta_\ep)\rightarrow0$
as $\ep\rightarrow0$, which completes the proof of \eqref{eq:1401b}.
Finally we observe that the convergence in \eqref{eq:1401a} follows exactly as was done in Subsection~\ref{ssec:comp} (in particular, \eqref{eq:0806} implies the required to obtain a counterpart of \eqref{3112b}). The proof is even slightly simpler since we do not need to project into a discrete space when proving the equi-continuity. 

\subsection{Proof of Theorem~\ref{thm:ap}}
We have collected all convergences that are necessary to pass to the limit with all involved terms. In particular, we may pass to the limit to obtain \ref{D2}, \ref{D3} and (by the methods established for the cylindrical domains, see \cite[Chapter 3]{F}) with \ref{D2'}. 
In order to pass to the limit with the weak momentum equation, fix a pair of smooth test-functions for the limit geometry $(b,\bfphi)\in C^\infty(\omega)\times C^\infty(\overline{I}\times\R^3)$ with $\mathrm{tr}_{\eta}\bfphi=b\nu$. Now since $\nabla \bfPsi_{\eta_\ep},\nabla \bfPsi_{\eta_\ep}^{-1}$ are strongly convergent in $L^\infty(I;L^q(\Omega_{\eta_\ep}))\cap L^2(I;L^\infty(\Omega_{\eta_\ep}))$ for all $q<\infty$ and $\partial_t\bfPsi_{\eta_\ep},\partial_t\bfPsi_{\eta_\ep}^{-1}$ are strongly convergent in $L^2(I;L^a(\Omega_{\eta_\ep}))$ for all $a<4$ we find that $(b,\bfphi\circ \bfPsi_\eta \circ \bfPsi_{\eta_\ep}^{-1})$ is an admissible test function for the approximate weak momentum equation with respective convergence properties. Hence we may pass to the limit with the approximate momentum equation and optain \ref{D1}.

\subsection{Proof of Theorem~\ref{thm:MAIN}.}

\label{sec:6}
In this section we are ready to prove the main result of this paper by passing to the limit $\delta\rightarrow0$ in the system \ref{D1}--\ref{D3} from Section \ref{subsec:del}.
Large parts of
the proof are very similar to their counterparts in the limit $\ep\to 0$. In particular, the compactness arguments from \ref{sec:shelleps} and \ref{ssec:comp} have been written in such a way that they are directly adaptable for the final layer here (using only the more restrictive bounds on $\gamma$). The main exception is the analysis related to the limit passage in the molecular pressure. This can, however, be adapted from \cite[Section 7]{BrSc}. As there, we can localise the argument for fixed boundaries from \cite{F}.
Consequently, parts of the argument are independent from the variable domain and the fluid-structure interaction. Nevertheless we sketch the main steps of the proof for the convenience of the reader.\\
Given initial data $(\bfq_0,\vr_0,\vt_0)$ and $H$ belonging to the function spaces stated in Theorem \ref{thm:MAIN}
it is standard to find regularized versions $(\bfq_{0,\delta},\vr_{0,\delta},\vt_{0,\delta})$ and $H_\delta$ such that for all $\delta>0$
\begin{align*}
 \vr_{0,\delta},\vt_{0,\delta}\in C^{2,\alpha}(\overline\Omega_{\eta_0}),\ \vr_{0,\delta},\vt_{0,\delta}\ \text{strictly positive},\  H_\delta\in C^{1,\alpha}(\overline I\times\R^3),\ H_\delta\geq0,
 \end{align*}
 as well as
 \begin{align*}
 \int_{\Omega_{\eta_0}}\Big(\frac{1}{2} \frac{| {\bf q}_{0,\delta} |^2}{\vr_{0,\delta}} &+ \varrho_{0,\delta} e(\varrho_{0,\delta},\vartheta_{0,\delta})\Big)\dx\rightarrow  \int_{\Omega_{\eta_0}}\Big(\frac{1}{2} \frac{| {\bf q}_{0} |^2}{\vr_{0,\delta}} + \varrho_{0} e(\varrho_{0},\vartheta_{0})\Big)\dx,\\
 \ H_\delta&\rightarrow H\ \text{in}\ L^\infty(\overline I\times\R^3),
 \end{align*}
 as $\delta\rightarrow0$.
For a given $\delta$ we gain a weak solution $(\eta_\delta,\bfu_\delta,\varrho_\delta,\vt_\delta)$ to \eqref{eq:apu}--\eqref{eq:apvarrho0} with this data by Theorem \ref{thm:ap}. It is defined in the interval $(0,T_*)$, where $T_*$ is restricted by the data only. The counterpart of the total dissipation balance from (\ref{NTDB}), that can be derived exactly as in Section \ref{subsec:del}, provides the following
uniform bounds:
\begin{equation} \label{wWS4eta}
 \sup_{t \in I} \| \partial_t{\eta_\delta} \|^{2}_{L^{2}(\omega)}  +
  \sup_{t \in I}  \|  {\eta_\delta} \|^2_{W^{2,2}(\omega)}  
\leq c,
\end{equation}
\begin{equation} \label{wWS47}
 \sup_{t \in I} \| \vrd \|^{\gamma}_{L^{\gamma}(\Omega_{\eta_\delta})}  +
  \sup_{t \in I} \delta \|  \vrd \|^\beta_{L^\beta(\Omega_{\eta_\delta})}  
\leq c,
\end{equation}
\begin{equation} \label{wWS48}
\begin{split}
   \sup_{t \in I} \left\| \vrd |\vud|^2 \right\|_{L^1(\Omega_{\eta_\delta})} +
 \sup_{t \in I} \left\| \vrd \vud \right\|^\frac{2\gamma}{\gamma+1}_{L^{\frac{2\gamma}{\gamma+1}}(\Omega_{\eta_\delta})}   \leq c,
\end{split}
\end{equation}
\begin{equation} \label{wWS49}
 \left\| \vud \right\|^{2}_{L^2(I\times\Omega_{\eta_\delta})}  +\left\| \mathcal D(\vud) \right\|^{2}_{L^2(I\times\Omega_{\eta_\delta})}  \leq c,
\end{equation}
\begin{equation} \label{wWS217aSS}
 \sup_{t \in I} \| \vtd \|^4_{L^4(\Omega_{\eta_\delta})} + \left\| \nabla \vtd \right\|^2_{L^2(I\times \Omega_{\eta_\delta})}  \leq c,
\end{equation}
\begin{equation} \label{wWS217SS}
 \left\| \frac{\varkappa_\delta(\vartheta_\delta)}{\vartheta_\delta}\nabla\vartheta_\delta \right\|^{2}_{L^2(I\times \Omega_{\eta_\delta})}  \leq c.
\end{equation}
\\
Finally, we report the conservation of mass principle
\begin{equation} \label{wWS411}
\| \vrd(\tau, \cdot) \|_{L^1(\Omega_{\eta_\delta})} = \int_{\Omega_{\eta_\delta}} \vr(\tau, \cdot) \dx = \int_{\Omega} \vr_0 \dx  \quad \mbox{for all}\ \tau\in[0,T].
\end{equation}
Hence we may take a subsequence, such that
for some $\alpha\in (0,1)$ we have
\begin{align}
\eta_\delta&\rightharpoonup^\ast\eta\quad\text{in}\quad L^\infty(I;W^{2,2}(\omega))\label{eq:conveta}\\
\eta_\delta&\rightharpoonup^\ast\eta\quad\text{in}\quad W^{1,\infty}(I;L^2(\omega)),
\label{eq:conetat}\\
\eta_\delta&\to\eta\quad\text{in}\quad C^\alpha(\overline{I}\times \omega),
\label{eq:conetata}
\\
\label{eq:convu}
\mathcal D(\bfu_\delta)&\rightharpoonup^\eta\mathcal D(\bfu)\quad\text{in}\quad L^2(I;L^{2}(\Omega_{\eta_\delta})),\\
\bfu_\delta&\rightharpoonup^\eta\bfu\quad\text{in}\quad L^2(I;L^{2}(\Omega_{\eta_\delta})),
\\
\varrho_\delta&\rightharpoonup^{\ast,\eta}\varrho\quad\text{in}\quad L^\infty(I;L^\gamma(\Omega_{\eta_\delta})),\label{eq:convrho}\\
\vt_\delta&\rightharpoonup^{\ast,\eta}\vt\quad\text{in}\quad L^\infty(I;L^4(\Omega_{\eta_\delta})),\label{eq:convt}\\
\vt_\delta&\rightharpoonup^\eta\vt\quad\text{in}\quad L^2(I;W^{1,2}(\Omega_{\eta_\delta})).\label{eq:convt2}
\end{align}
By Lemma~\ref{thm:weakstrong}, arguing as in Sections \ref{subsec:teb} and \ref{subsec:del}, we find for all $q\in (1,\frac{6\gamma}{\gamma+6})$ that
\begin{align}
\varrho_\delta\bu_\delta&\rightharpoonup^\eta  {\varrho}  {\bfu}\qquad\text{in}\qquad L^2(I, L^q(\Omega_{\eta_\delta}))\label{conv:rhov2delta}\\
{\varrho}_\delta  {\bfu}_\delta\otimes  {\bfu}_\delta&\rightarrow^\eta  {\varrho}  {\bfu}\otimes  {\bfu}\qquad\text{in}\qquad L^1(I;L^1(\Omega_{\eta_\delta})).\label{conv:rhovv2delta}\\
\sqrt{{\varrho}_\delta}  {\bfu}_\delta&\rightarrow^\eta  \sqrt{\varrho}  {\bfu}\qquad\text{in}\qquad L^2(I;L^2(\Omega_{\eta_\delta})).\label{conv:rhovv2delta'}
\end{align}
As in Section \ref{sec:5} we also obtain again
\begin{align}
\bfS(\vt_\delta,\nabla\bfu_\delta)&\rightharpoonup^\eta  \overline{\bfS}\qquad\text{in}\qquad L^{4/3}(I, L^{4/3}(\Omega_{\eta_\delta}))\label{conv:S}\\
\label{conv:K'}
K'(\eta_\delta)&\rightharpoonup^* \overline K'\quad \text{in}\quad L^\infty(I;W^{-2,r}(\omega))
\end{align}
for any $r<2$ with some limit objects $\overline{\bfS}$ and $\overline K$.
As before in Proposition \ref{prop:higher} we have higher integrability of the density (see \cite[Lemma 7.3]{BrSc} for the proof).
\begin{lemma}\label{prop:higher'}
Let $\gamma>\frac{3}{2}$ ($\gamma>1$ in two dimensions).
Let $Q=J\times B\Subset I\times\Omega_\eta$ be a parabolic cube and $0<\Theta\leq\frac{2}{3}\gamma-1$. The following holds for any $\delta\leq \delta_0(Q)$
\begin{equation}\label{eq:gamma+1'}
\int_{Q}p_\delta(\vr_\delta,\vt_\delta)\varrho_\delta^{\Theta}\,\dif x\,\dif t\leq C(Q)
\end{equation}
with constant independent of $\delta$.
\end{lemma}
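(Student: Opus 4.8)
The strategy is to mimic the $\delta$-independent higher-integrability estimate for the pressure that is classical for Navier--Stokes--Fourier systems on fixed domains (see \cite[Chapter 3, Section 3.6.3]{F}), but to localise it on a parabolic cube $Q=J\times B$ compactly contained in $I\times\Omega_\eta$ so that the moving boundary plays no role. Because of \eqref{eq:conetata}, for $\delta\leq\delta_0(Q)$ we may assume $\tilde Q\Subset I\times\Omega_{\eta_\delta}$ for a slightly larger cube $\tilde Q$ with $Q\Subset\tilde Q\Subset I\times\Omega_\eta$, and then all the computations take place in the interior of the true physical domain.

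The key step is to test the momentum equation \eqref{eq:apulim} (here written at the $\delta$-level, i.e.\ \eqref{eq:apu}) with the localised test-function
\[
\bfphi=\psi\,\mathcal B\big(\varrho_\delta^\Theta-(\varrho_\delta^\Theta)_{\tilde B}\big),
\]
where $\psi\in C^\infty_c(\tilde Q)$ with $\psi\equiv1$ on $Q$, $(\cdot)_{\tilde B}$ denotes the spatial average over $\tilde B$, and $\mathcal B=\Bog$ is the Bogovski\u\i\ operator on $\tilde B$. Since $\tilde Q$ is in the interior and $\psi$ has compact support, $\bfphi$ is a legitimate test-function with $\tr_\eta\bfphi=0$, hence $b=0$, and the shell terms drop out. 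After expanding one obtains the usual identity
\[
\int_Q p_\delta(\varrho_\delta,\vt_\delta)\,\varrho_\delta^\Theta\dxt
=\sum_j (\mathrm{I})_j,
\]
where the $(\mathrm{I})_j$ collect: the average-pressure term, the viscous stress paired with $\nabla\bfphi$, the convective term $\varrho_\delta\bfu_\delta\otimes\bfu_\delta:\nabla\bfphi$, the time-derivative terms (these produce $\int\varrho_\delta\bfu_\delta\cdot\psi\,\mathcal B(\partial_t\varrho_\delta^\Theta)$ plus lower-order pieces, and $\partial_t\varrho_\delta^\Theta$ is rewritten through the renormalised continuity equation \eqref{eq:renormz}, valid here by Theorem~\ref{lem:warme}(b) / Lemma~\ref{cor:ap1}), the artificial-viscosity term $\varepsilon\nabla\varrho_\delta\nabla\bfu_\delta\cdot\bfphi$, which at this stage is absent since $\varepsilon=0$, and the body-force term $\varrho_\delta\bff\cdot\bfphi$. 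One then estimates each piece by H\"older's inequality, the continuity of $\Bog:L^r_0(\tilde B)\to W^{1,r}_0(\tilde B)$, and the uniform bounds \eqref{wWS47}--\eqref{wWS217SS}; the constraint $0<\Theta\leq\frac23\gamma-1$ is precisely what is needed so that $\varrho_\delta^\Theta\in L^{r}$ with $r$ large enough for the Bogovski\u\i\ estimate and for the convective term $\varrho_\delta|\bfu_\delta|^2$ (which lies in $L^1_t(L^{3\gamma/(\gamma+3)}_x)$ by \eqref{wWS48} and Sobolev embedding) to be controlled. The delicate term is the convective one together with the time-derivative term coming through the renormalised equation; these require the interplay of $L^\infty_t(L^\gamma_x)$ for $\varrho_\delta$, $L^2_t(W^{1,2}_x)$ for $\bfu_\delta$ and the choice of $\Theta$, exactly as in the fixed-domain case, and this is where one must be careful. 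Since none of these bounds depend on $\delta$, the constant $C(Q)$ is independent of $\delta$, which is the assertion.

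Because all the analysis is purely local and interior, and since the a priori bounds \eqref{wWS47}--\eqref{wWS217SS} are the $\delta$-uniform analogues of those already used in Section~\ref{subsec:del}, the argument is a verbatim transcription of \cite[Lemma 7.3]{BrSc} (which in turn follows \cite[Chapter 3]{F}); I would simply refer to that proof after setting up the localisation. The only obstacle worth flagging is bookkeeping: one must verify that the renormalised continuity equation may be used with $\theta(z)=z^\Theta$ cut off at infinity and that the resulting error terms $(\varrho_\delta\theta'(\varrho_\delta)-\theta(\varrho_\delta))\Div\bfu_\delta$ are integrable uniformly in $\delta$, which again follows from $\Theta\leq\frac23\gamma-1$ and \eqref{wWS47}, \eqref{wWS49}.
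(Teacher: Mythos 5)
Your proposal is correct and is essentially the paper's argument: the paper gives no independent proof of this lemma but defers to \cite[Lemma 7.3]{BrSc}, which is precisely the localised Bogovski\u{\i} test-function estimate you describe (interior cube $\tilde Q\Subset I\times\Omega_{\eta_\delta}$ for $\delta\le\delta_0(Q)$ via the uniform convergence of $\eta_\delta$, vanishing trace so the shell terms drop, renormalised continuity equation for the time-derivative term, and the exponent balance $\Theta\le\tfrac23\gamma-1$ coming from pairing $\varrho_\delta|\bfu_\delta|^2\in L^1_t(L^{3\gamma/(\gamma+3)}_x)$ against $\nabla\mathcal B(\varrho_\delta^\Theta)\in L^\infty_t(L^{\gamma/\Theta}_x)$). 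The only bookkeeping point you did not flag is that $z\mapsto z^\Theta$ with $\Theta<1$ is not $C^1$ at the origin, so the renormalisation must also be regularised near zero before passing to the limit; this is standard and does not affect the argument.
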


Similarly to \cite[Lemma 7.4]{BrSc} we can exclude concentrations of the pressure at the moving boundary. Here, we need the assumption $\gamma>\frac{12}{7}$.
\begin{lemma}\label{prop:higherb'}
Let $\gamma>\frac{12}{7}$ ($\gamma>1$ in two dimensions).
Let $\kappa>0$ be arbitrary. There is a measurable set $A_\kappa\Subset I\times\Omega_\eta$ such that we have for all $\delta\leq\delta_0$
\begin{equation}\label{eq:gamma+1b'}
\int_{I\times\R^3\setminus A_\kappa}p_\delta(\varrho_\delta,\vt_\delta)
\chi_{\Omega_{\eta_\delta}}\dif x\,\dif t\leq \kappa.
\end{equation}
\end{lemma}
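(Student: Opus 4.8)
The plan is to mimic the proof of Lemma~\ref{prop:higherb} (which handled the $\varepsilon$-level) and its analogue \cite[Lemma 7.4]{BrSc}, but now using only the integrability $\varrho_\delta\in L^\infty_t(L^\gamma_x)$ with $\gamma>\frac{12}{7}$, rather than the $L^\beta$-bound available before. The core idea is to test the weak momentum equation \eqref{eq:apulim}/\eqref{eq:apu} with a test-function that is a solenoidal extension of a function supported near the moving boundary, so that the pressure term, which is $\int p_\delta(\varrho_\delta,\vt_\delta)\Div\bfphi$, is forced to cancel or be controlled, and then exploit the boundary condition $\tr_{\eta_\delta}\bfu_\delta=\partial_t\eta_\delta\nu$ together with the uniform bound \eqref{wWS4eta}. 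Concretely: one fixes a cut-off $\chi$ localizing to the collar region $S_{L/2}$ (complementary to any interior parabolic cube $A_\kappa$ on which Lemma~\ref{prop:higher'} already gives a uniform bound), builds an extension $\bfphi_\delta=\Testzeta(\cdot)$ or $\mathscr F_{\eta_\delta}(\cdot)$ of suitable boundary data, and reads off that the only dangerous contribution to the momentum balance is $\int_{(I\times\R^3)\setminus A_\kappa}p_\delta\,\chi_{\Omega_{\eta_\delta}}\,\Div\bfphi_\delta$, which one wants to show is small.

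First I would recall from Corollary~\ref{prop:higherb0}-type reasoning (combined with the interior estimate Lemma~\ref{prop:higher'} with exponent $\Theta\le\frac23\gamma-1$) that the measure $p_\delta(\varrho_\delta,\vt_\delta)\chi_{\Omega_{\eta_\delta}}\,\dd x\,\dd t$ is uniformly bounded on every compact subset of $I\times\Omega_\eta$, so the only possible loss of mass is a concentration on the moving boundary $\bigcup_t\{t\}\times\partial\Omega_{\eta_\delta(t)}$. Then, exactly as in \cite[Lemma 7.4]{BrSc}, I would construct a test pair $(b_\delta,\bfphi_\delta)$ in the momentum equation \eqref{eq:apu}, where $b_\delta$ is a bump concentrating the trace near the boundary and $\bfphi_\delta=\mathscr F_{\eta_\delta}^{\mathrm{div}}(b_\delta-\mathscr K_{\eta_\delta}(b_\delta))$ is the solenoidal extension of Proposition~\ref{prop:musc}, so that $\Div\bfphi_\delta=0$ kills the full pressure term over the collar, turning the estimate into: boundedness of all the remaining terms (convective $\varrho_\delta\bfu_\delta\otimes\bfu_\delta$, viscous $\bfS$, time-derivative, forcing, and the shell terms) forces a quantitative non-concentration statement. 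The key uniform bounds needed are: $\varrho_\delta\bfu_\delta\otimes\bfu_\delta\in L^1_tL^1_x$ with the $\frac{6\gamma}{\gamma+6}$-integrability of $\varrho_\delta\bfu_\delta$ (which is exactly where $\gamma>\frac{12}{7}$ enters, so that $\frac{6\gamma}{\gamma+6}>\frac43$ and the product lands in $L^1$ against the $W^{1,p}$-extension), the estimates \eqref{musc1}--\eqref{musc2} for $\Testzeta$ in terms of $\|\eta_\delta\|_{W^{2,2}}$ and $\|\partial_t\eta_\delta\|_{L^2}$, and the a~priori bounds \eqref{wWS4eta}, \eqref{wWS48}, \eqref{wWS49}, \eqref{wWS217aSS}. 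Applying these and taking the support of $b_\delta$ to shrink to the boundary as $\delta\to0$ yields the claimed smallness on $(I\times\R^3)\setminus A_\kappa$ for $\delta\le\delta_0(\kappa)$.

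The main obstacle is that, unlike in the $\varepsilon$-layer where the density had higher integrability $L^\beta$ with $\beta$ large, here one only has $L^\gamma$ with $\gamma$ barely above $\frac{12}{7}$, so every Hölder pairing has to be done at the critical exponent and the test-function $\bfphi_\delta$ must be controlled in $L^\infty_t W^{1,p}_x$ for $p$ just above the dual of $\frac{6\gamma}{\gamma+6}$; the Sobolev embedding $W^{2,2}(\omega)\hookrightarrow W^{1+2s,p}(\omega)$ holds only for small $s$ and this $p$, so the fractional gain must be spent carefully, as in \eqref{eq:2907}. A secondary delicate point is that $\Testzeta$ depends on the geometry $\eta_\delta$, so its time-derivative $\partial_t\bfphi_\delta$ behaves like $\partial_t\eta_\delta$ and one must ensure the pairing $\int\varrho_\delta\bfu_\delta\cdot\partial_t\bfphi_\delta$ is uniformly bounded using only $\partial_t\eta_\delta\in L^2_t(W^{1-1/r,r}_x)\cap L^2_t L^\ell_x$ for $\ell<4$ (Lemma~\ref{lem:2.28}), which again is precisely tight at $\gamma=\frac{12}{7}$. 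Once these pairings are verified, the remaining terms (viscous stress via \eqref{conv:S}, shell terms via \eqref{wWS4eta} and \eqref{conv:K'}, forcing $\varrho_\delta\bff\cdot\bfphi_\delta$) are routine, and the proof reduces, as stated in the excerpt, to the argument already carried out in \cite[Lemma 7.4]{BrSc} and localized from \cite{F}; so I would present it by reduction to that reference after setting up the test-function and pointing out where $\gamma>\frac{12}{7}$ is used.
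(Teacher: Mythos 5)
The central device in your proposal cannot work. In the weak momentum equation the pressure enters \emph{only} through the term $\int_I\int_{\Omega_{\eta_\delta}}p_\delta(\varrho_\delta,\vt_\delta)\,\Div\bfphi\dxt$. If you choose $\bfphi_\delta=\Testzeta(\cdot)$ divergence-free, this term is identically zero, the tested momentum equation becomes an identity among the remaining (pressure-free) quantities, and no information about $p_\delta$ can be extracted from it: "boundedness of all the remaining terms" cannot "force a quantitative non-concentration statement" for a quantity that does not appear in the identity. The solenoidal extension of Proposition~\ref{prop:musc} is used in the paper for the opposite purpose (Subsection~\ref{sec:shelleps}): there one \emph{wants} the merely-$L^1$ pressure to disappear so as to derive fractional estimates on $\eta$. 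You have conflated the two roles of the test function.

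The argument the paper actually invokes (\cite[Lemma 7.4]{BrSc}, following \cite{Kuk}) goes the other way: one constructs a test function adapted to the moving boundary, roughly $\bfphi_\sigma(t,x)=\phi_\sigma\big(s(x)-\eta_\delta(t,y(x))\big)\,\nu(y(x))$, supported in a $\sigma$-collar of $\partial\Omega_{\eta_\delta(t)}$ and with $\Div\bfphi_\sigma$ bounded \emph{below} by a positive constant there, while $|\bfphi_\sigma|\lesssim\sigma$. Testing the momentum equation then gives $\int_{\mathrm{collar}_\sigma}p_\delta\lesssim\big|\int p_\delta\Div\bfphi_\sigma\big|=|\text{other terms}|$, and the other terms are small uniformly in $\delta$ because they pair uniformly integrable quantities against $\bfphi_\sigma$, $\nabla\bfphi_\sigma$, $\partial_t\bfphi_\sigma$ over a set whose measure shrinks with $\sigma$; one then fixes $\sigma=\sigma(\kappa)$ first and takes $A_\kappa$ as the complement of the collar (not, as you write, letting the support shrink "as $\delta\to0$"). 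Your identification of where $\gamma>\frac{12}{7}$ enters is correct and carries over verbatim to this setting: the delicate pairing is $\int\varrho_\delta\bfu_\delta\cdot\partial_t\bfphi_\sigma$ with $\partial_t\bfphi_\sigma$ behaving like $\partial_t\eta_\delta\in L^2_t(L^{r}_x)$, $r<4$, against $\varrho_\delta\in L^\infty_t(L^\gamma_x)$ and $\bfu_\delta\in L^2_t(L^6_x)$, which closes exactly for $\frac1\gamma+\frac16+\frac14<1$. Likewise your use of Lemma~\ref{prop:higher'} to handle compact interior sets is appropriate. But as written, the proof does not establish the lemma, because the step that is supposed to produce the pressure bound produces nothing.
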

Lemma~\ref{prop:higher'} and Lemma~\ref{prop:higherb'} imply equi-integrability of the sequence  $p_\delta(\varrho_\delta,\vt_\delta)\chi_{\Omega_{\eta_\delta}}$. This yields the existence of a function $\overline p$ such that (for a subsequence)
\begin{align}\label{eq:limp'}
p_\delta(\vr_\delta,\vt_\delta)\rightharpoonup\overline p\quad\text{in}\quad L^{1}(I\times\R^3),\\
\label{1301}
\delta\varrho_\delta^{\beta}\rightarrow0\quad\text{in}\quad L^1(I\times\R^3).
\end{align}
Similarly to Corollary \ref{prop:higherb0} we have the following. 
\begin{corollary}\label{prop:higherb0'}
Let $\kappa>0$ be arbitrary. There is a measurable set $A_\kappa\Subset I\times\Omega_\eta$ such that
\begin{equation}\label{eq:gamma+1b0'}
\int_{I\times\R^3\setminus A_\kappa}\overline p\,\dif x\,\dif t\leq \kappa.
\end{equation}
\end{corollary}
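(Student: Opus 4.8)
The plan is to deduce Corollary~\ref{prop:higherb0'} by combining the two preceding lemmas exactly as was done for Corollary~\ref{prop:higherb0} in the $\varepsilon$-limit. The starting point is the equi-integrability of the sequence $p_\delta(\varrho_\delta,\vt_\delta)\chi_{\Omega_{\eta_\delta}}$ already recorded in \eqref{eq:limp'}: by Lemma~\ref{prop:higher'} this family is uniformly integrable on every parabolic cube $Q\Subset I\times\Omega_\eta$ (via the bound on $p_\delta(\varrho_\delta,\vt_\delta)\varrho_\delta^{\Theta}$ together with de la Vall\'ee--Poussin), while Lemma~\ref{prop:higherb'} controls the mass of the sequence outside a compact set $A_\kappa\Subset I\times\Omega_\eta$. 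These two facts together give uniform integrability of $p_\delta(\varrho_\delta,\vt_\delta)\chi_{\Omega_{\eta_\delta}}$ on all of $I\times\R^3$, whence the weak $L^1$-limit $\overline p$ in \eqref{eq:limp'} exists (Dunford--Pettis) and, crucially, no mass is lost in the limit.

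First I would fix $\kappa>0$ and invoke Lemma~\ref{prop:higherb'} to produce a measurable set $A_\kappa\Subset I\times\Omega_\eta$ and a threshold $\delta_0$ such that
\[
\int_{I\times\R^3\setminus A_\kappa}p_\delta(\varrho_\delta,\vt_\delta)\,\chi_{\Omega_{\eta_\delta}}\dif x\,\dif t\leq \kappa\qquad\text{for all }\delta\leq\delta_0.
\]
Since $\chi_{I\times\R^3\setminus A_\kappa}$ is a fixed bounded measurable function and $p_\delta(\varrho_\delta,\vt_\delta)\chi_{\Omega_{\eta_\delta}}\rightharpoonup\overline p$ weakly in $L^1(I\times\R^3)$ by \eqref{eq:limp'}, testing the weak convergence against $\chi_{I\times\R^3\setminus A_\kappa}$ (using that $\overline p\geq 0$, as a weak limit of nonnegative functions) yields
\[
\int_{I\times\R^3\setminus A_\kappa}\overline p\,\dif x\,\dif t=\lim_{\delta\to0}\int_{I\times\R^3\setminus A_\kappa}p_\delta(\varrho_\delta,\vt_\delta)\,\chi_{\Omega_{\eta_\delta}}\dif x\,\dif t\leq \kappa,
\]
which is precisely \eqref{eq:gamma+1b0'}. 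One should note here that $A_\kappa\Subset I\times\Omega_\eta$ because of the uniform convergence \eqref{eq:conetata} of $\eta_\delta$ to $\eta$, so that for $\delta$ small the set $A_\kappa$ sits inside $I\times\Omega_{\eta_\delta}$ and the restriction of $\chi_{\Omega_{\eta_\delta}}$ plays no role there; this is the same reduction used in Lemma~\ref{prop:higher'}.

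The only genuinely delicate point — and the reason the two lemmas are needed rather than a one-line argument — is justifying that the weak $L^1$-limit actually exists and that passing to the limit against the indicator of $I\times\R^3\setminus A_\kappa$ is legitimate; both hinge on the uniform integrability coming from Lemmas~\ref{prop:higher'} and~\ref{prop:higherb'}, with the constraint $\gamma>\frac{12}{7}$ entering through Lemma~\ref{prop:higherb'}. All of this is a verbatim transcription of the $\varepsilon$-level argument leading to Corollary~\ref{prop:higherb0}, the difference being only the more restrictive range of $\gamma$ and the replacement of $p_\delta(\varrho_\varepsilon,\vt_\varepsilon)\varrho_\varepsilon$ by $p_\delta(\varrho_\delta,\vt_\delta)\varrho_\delta^{\Theta}$ with $\Theta\leq\frac{2}{3}\gamma-1$; I would simply remark on this and refer back rather than repeat it in full.
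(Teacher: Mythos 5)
Your proposal is correct and follows essentially the same route the paper intends: combine the interior higher integrability of Lemma~\ref{prop:higher'} (giving equi-integrability on compact subsets via de la Vall\'ee--Poussin) with the no-concentration bound of Lemma~\ref{prop:higherb'}, extract the weak $L^1$-limit $\overline p$ by Dunford--Pettis, and pass to the limit in \eqref{eq:gamma+1b'} by testing the weak convergence against the bounded function $\chi_{I\times\R^3\setminus A_\kappa}$. The paper leaves this as a one-line remark ("similarly to Corollary~\ref{prop:higherb0}"), and your write-up fills in exactly that argument.
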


Using \eqref{eq:limp'} and the convergences \eqref{eq:conveta}--\eqref{conv:K'} we can pass to the limit in \eqref{eq:apu} and \eqref{eq:apvarrho0} and obtain
\begin{align}\label{eq:apulim'}
\begin{aligned}
&\int_I\frac{\dd}{\dt}\int_{\Omega_{ \eta}}\varrho\bfu \cdot\bfphi\dx-\int_{\Omega_{\eta}} \Big(\varrho\bfu\cdot \partial_t\bfphi +\varrho\bfu\otimes \bfu:\nabla \bfphi\Big)\dxt
\\
&+\int_I\int_{\Omega_\eta}\overline{\bfS}:\nabla\bfphi \dxt-\int_I\int_{\Omega_{ \eta }}
\overline p\,\Div\bfphi\dxt\\
&+\int_I\bigg(\frac{\dd}{\dt}\int_\omega \partial_t \eta b\dH-\int_\omega \partial_t\eta\,\partial_t b\dH + \int_\omega \overline K'\,b\dH\bigg)\dt
\\&=\int_I\int_{\Omega_{\eta}}\varrho\bff\cdot\bfphi\dxt+\int_I\int_\omega g\,b\,\dd x\dt
\end{aligned}
\end{align} 
for all test-functions $(b,\bfphi)$ with $\mathrm{tr}_\eta\bfphi=\partial_t\eta\nu$, $\bfphi(T,\cdot)=0$ and $b(T,\cdot)=0$. Moreover, the following holds
\begin{align}\label{eq:apvarrholim}
\int_I\int_{\Omega_{\eta}}\varrho\,\partial_t\psi\dxt-\int_I\int_{\Omega_{\eta}}\Div(\varrho\,\bfu)\,\psi\dxt=\int_{\Omega_{\eta_0}}\varrho_0\,\psi(0,\cdot)\dx
\end{align}
for all $\psi\in C^\infty(\overline{I}\times\R^3)$.
It remains to show strong convergence of $\vt_\delta$, $\vr_\delta$ and $\nabla^2\eta_\delta$. As in the last section the proof of the convergence of $\vt_\delta$ is entirely based on local arguments. Consequently the shell is not seen and we can follow the arguments in 
\cite[Chapter 3, Section 3.7.3]{F} to conclude
\begin{align}\label{m257}
\vt_\delta \to^\eta \vt \quad \mbox{in} \quad L^4(I;L^4(\Omega_\delta)).
\end{align}
Consequently we have $\overline\bfS=\bfS(\vartheta,\nabla \vu)$ in \eqref{eq:apulim'}. Moreover, we can pass to the limit in the entropy balance and obtain \ref{O2'}. Next we aim to prove strong convergence of the density.
We define the $L^\infty$-truncation
\begin{align}\label{eq:Tk'}
T_k(z):=k\,T\Big(\frac{z}{k}\Big)\quad z\in\mr,\,\, k\in\N.
\end{align}
Here $T$ is a smooth concave function on $\mr$ such that $T(z)=z$ for $z\leq 1$ and $T(z)=2$ for $z\geq3$.
Now we have to show that 
\begin{align}\label{eq:flux'}
\begin{aligned}
\int_{I\times\Omega_{\eta_\delta}}&\big( a\varrho_\delta^\gamma+\delta\varrho_\delta^\beta-(\lambda(\vt)+2\mu(\vt))\Div \bfu_\delta\big)\,T_k(\varrho_\delta)\dxt\\&\longrightarrow\int_{I\times\Omega_{\eta}} \big( \overline{p}-(\lambda(\vt)+2\mu(\vt))\Div \bfu\big)\,T^{1,k}\dxt.
\end{aligned}
\end{align}
For this step we are able to use the theory established in \cite{Li2} on a local level. Similarly to
\cite[Subsection 7.1]{BrSc} (see \cite[Chapter 3, Section 3.7.4]{F} about how to include the temperature) we first prove a localised version of \eqref{eq:flux'}
and then use Lemma \ref{prop:higherb'} and Corollary \ref{prop:higherb0'} to deduce the global version.
The next aim is to prove that $\varrho$ is a renormalized solution (in the sense of Definition~\ref{def:ren}).
In order to do so it suffices to use the continuity equation and \eqref{eq:flux'} again on the whole space.
Following line by line the arguments from \cite[Subsection 7.2]{BrSc} we have
\begin{align}\label{eq:Tk''}
\partial_t T^{1,k}+\Div\big( T^{1,k}\bfu\big)+T^{2,k}= 0
\end{align}
in the sense of distributions on $I\times\R^3$. Note that we extended
$\varrho$ by zero to $\R^3$.
The next step is to show
\begin{align}\label{eq.amplosc''}
\limsup_{\delta\rightarrow0}\int_{I\times\mt}|T_k(\varrho_\delta)-T_k(\varrho)|^{q}\dxt\leq C,
\end{align}

where $C$ does not depend on $k$ and $q>2$ will be specified later. The proof of \eqref{eq.amplosc''} follows exactly the arguments from the setting with fixed boundary
 (see \cite[Chapter 3, Section 3.7.5]{F}) using \eqref{eq:flux'} and the uniform bounds on $\bfu_\delta$ (with the only exception that we do not localise).
Using  \eqref{eq.amplosc''} and arguing as in \cite[Sec. 7.2]{BrSc} we obtain the renormalised continuity equation.
As in \cite[Sec. 7.3]{BrSc} we can use the latter one to show strong convergence of the density. 
Now we can pass to the limit with the approximate equations and and obtain the weak solution, as it was explained in the previous subsection.\\\

\centerline{\bf Acknowledgement}
\noindent{
S. Schwarzacher thanks the support of the research support programs PRIMUS/19/SCI/01 and UNCE/SCI/023 of Charles University. S.\ Schwarzacher thanks the support of the program GJ19-11707Y of the Czech national grant agency (GA\v{C}R). }
The authors wish to thank the referee for the careful reading of the paper and the valuable suggestions which helped to significantly improve the paper.
%



\begin{thebibliography}{20}
\bibitem{Ac} G. Acosta, R. Dur\'an, and A. Lombardi: Weighted Poincar\'e and Korn inequalities for H\"older $\alpha$ domains. Math. Meth. Appl. Sci. 29, 387--400. (2006)
 \bibitem{Be} H. Beir\~{a}o da Veiga: On the existence of strong solutions to a coupled fluid-structure
evolution problem. J. Math. Fluid Mech.6, 21--52. (2004)
\bibitem{Bo} M. Boulakia: Existence of weak solutions for an interaction problem between an
elastic structure and a compressible viscous fluid. J. Math. Pures Appl. (9) 84(11),
1515--1554. (2005)
\bibitem{BGN} T. Bodn\'ar, G. Galdi and v. Ne\v{c}asov\'a, Fluid-Structure Interaction and Biomedical Applications, Springer, Basel. (2014)
\bibitem{BrMe} D. Breit \& P. R. Mensah: An incompressible polymer fluid interacting with a Koiter shell.  J. Nonlinear Sci. 31, 25. (2021)
\bibitem{BrSc} D. Breit \& S. Schwarzacher: Compressible fluids interacting with a linear-elastic shell. Arch. Rational Mech. Anal. 228, 495--562. (2018)
\bibitem{Ci1} P.G. Ciarlet: Mathematical elasticity. Vol. II. Theory of plates. In: Studies in Mathematics
and its Applications, vol. 27. North-Holland, Amsterdam. (1997)
\bibitem{Ci2} P.G. Ciarlet: Mathematical elasticity, vol. III. Theory of shells. In: Studies in Mathematics
and its Applications, vol. 29. North-Holland Publishing Co., Amsterdam. (2000)
\bibitem{Ci3} P.G. Ciarlet: An Introduction to Differential Geometry with Applications to Elasticity.
Springer, Dordrecht. Reprinted from J. Elasticity 78/79, no. 1--3. (2005)
\bibitem{ciarlet2001justification}
Ciarlet, P.G., Roquefort, A.: Justification of a two-dimensional nonlinear
  shell model of {K}oiter's type.
\newblock Chinese Ann. Math. Ser. B {22}(2), 129--144 (2001).
\bibitem{Su} S. K. Chakrabarti: The theory and practice of hydrodynamics and vibration. River Edge, N.J. : World Scientific. (2002)
\bibitem{Ch} A. Chambolle, B. Desjardins, M. J. Esteban, C. Grandmont: Existence of weak
solutions for the unsteady interaction of a viscous fluid with an elastic plate. J. Math.
Fluid Mech.7(3), 368--404. (2005)
\bibitem{ChSk} C. H. A. Cheng, S. Shkoller: The interaction of the 3D Navier--Stokes equations with
a moving nonlinear Koiter elastic shell. SIAM J. Math. Anal.42(3), 1094--1155. (2010)
\bibitem{CoSh2} D. Coutand, S. Shkoller: The interaction between quasilinear elastodynamics and
the Navier--Stokes equations. Arch. Rational Mech. Anal.179(3), 303--352. (2006)
\bibitem{DL} R. J. DiPerna, P.-L. Lions: Ordinary differential equations, transport theory and
Sobolev spaces. Invent. Math. 98, 511--547. (1998)
\bibitem{Do} W. Dowell: A Modern Course in Aeroelasticity. Fifth Revised and Enlarged Edition. Solid Mechanics and Its Applications 2017. Springer (2015)
\bibitem{DuFe} B. Ducomet, E. Feireisl:
On the Dynamics of Gaseous Stars.
Arch. Rational Mech. Anal. 174, 221--266. (2004)
\bibitem{Fe} E. Feireisl: On the motion of rigid bodies in a viscous compressible fluid. Arch. Rational Mech. Anal. 167 (3), 281--308. (2003)
\bibitem{fei3} E. Feireisl: Dynamics of Compressible Flow. Oxford Lecture  Series in Mathematics and its Applications, Oxford University Press, Oxford. (2004)
\bibitem{FN} E. Feireisl, A. Novotn\'y: On a simple model of reacting compressible
flows arising in astrophysics. Proc. Royal Soc. Edinburgh, 135A, 1169--1194.
(2005)
\bibitem{F} E. Feireisl, A. Novotn\'{y} (2009): \emph{Singular limits in thermodynamics of viscous fluids.} Birkh\"auser-Verlag, Basel, 2009.
\bibitem{feireisl1} E. Feireisl, A. Novotn\'y, H. Petzeltov\'a: On the existence of globally defined weak solutions to the Navier--Stokes equations of compressible isentropic fluids. J. Math. Fluid. Mech. 3, 358--392. (2001) 
\bibitem{Gr} C. Grandmont: Existence of weak solutions for the unsteady interaction of a viscous
fluid with an elastic plate. SIAM J. Math. Anal.40(2), 716--737. (2008)
\bibitem{GraHil} C. Grandmont, M. Hillairet: Existence of Global Strong Solutions
to a Beam-Fluid Interaction System. Arch. Rational Mech. Anal. 220, 1283--1333. (2016)
\bibitem{Koi} W.T. Koiter: A consistent first approximation in the general theory of thin elastic
shells. In: Proc. Sympos. Thin Elastic Shells (Delft, 1959) (Amsterdam), pp. 12--33.
North-Holland, Amsterdam. (1960)
\bibitem{Koi2} W.T. Koiter: On the nonlinear theory of thin elastic shells. I, II, III. Nederl. Akad.
Wetensch. Proc. Ser. B69, 1--17, 18--32, 33--54. (1966)
\bibitem{Kuk} P. Kuku\v{c}ka: On the existence of finite energy weak solutions to the Navier--Stokes equations in irregular domains. Math. Meth. Appl. Sci. 32, Issue 11, 1428--1451. (2009)
\bibitem{LSU} O. A. Ladyzhenskaya, V. A. Solonnikov, and N. N. Ural'tseva.
Linear and quasilinear equations of parabolic type. Translated from
the Russian by S. Smith. Translations of Mathematical Monographs,
Vol. 23. American Mathematical Society, Providence, R.I., 1967. 
\bibitem{Ler} J. Leray: Sur le mouvement d'un liquide visqueux emplissant l'espace.
\bibitem{Le} D. Lengeler: Weak solutions for an incompressible, generalized Newtonian fluid interacting with a linearly elastic Koiter type shell. SIAM J. Math. Anal. 46, no. 4, 2614--2649. (2014)
\bibitem{LeRu} D. Lengeler, M. \Ruzicka: Weak Solutions for an Incompressible Newtonian Fluid Interacting with a Koiter Type Shell. Arch. Ration. Mech. Anal. 211, no. 1, 205--255. (2014)
\bibitem{Li2} P.\,L. Lions: Mathematical topics in fluid mechanics. Vol. 2. Compressible models. Oxford Lecture Series in Mathematics and its Applications, 10. Oxford Science Publications, The Clarendon Press, Oxford University Press, New York. (1998)
\bibitem{Li} G. M. Lieberman. Second order parabolic differential equations. World Scientific Publishing Co., Inc., River Edge, NJ. (1996) 
\bibitem{Ma} D. Maity, A. Roy, T. Takahashi: Existence of strong solutions for a system of interaction between a compressible viscous fluid and a wave equation. Preprint at hal.archives-ouvertes.fr
\bibitem{Ma2} D. Maity, T. Takahashi: Existence and uniqueness of strong solutions for the system of interaction between a compressible Navier-Stokes-Fourier fluid and a damped plate equation. Preprint arXiv:2006.00488
\bibitem{Mi} S. Mitra: Local Existence of Strong Solutions of a Fluid--Structure Interaction Model. J. Math. Fluid Mech. 22: 60. (2020)
\bibitem{MuCa1} B. Muha, S. Canic: Existence of a weak solution to a nonlinear fluid-structure interaction problem modeling the flow of an incompressible, viscous fluid in a cylinder with deformable walls. Arch. Rational Mech. Anal. 207 (3), 919--968. (2013)
\bibitem{MuCa}
Muha, B., \v{C}ani\'{c}, S.: Existence of a solution to a
  fluid-multi-layered-structure interaction problem.
\newblock J. Differential Equations {256}(2), 658--706. (2014)
\bibitem{MuSc} B. Muha, S. Schwarzacher: Existence and regularity for weak solutions for a fluid interacting with a non-linear shell in 3D, to appear in Ann. I. H. Poincar\'e -- AN. (2021)
\bibitem{NRL}P. N\"agele, M. \Ruzicka, and D. Lengeler: Functional setting for unsteady problems in moving domains and applications. Comp. Var. Ell. Syst., 62(1):66--97. (2016)

\bibitem{SchShe20}
S. Schwarzacher, B. She:
\newblock On numerical approximations to fluid-structure interactions involving compressible fluids.
\newblock {\em arXiv preprint:} arXiv:2002.04636. (2020)


\bibitem{SchSro20}
S. Schwarzacher, M. Sroczinski:
\newblock Weak-strong uniqueness for an elastic plate interacting with the Navier Stokes equation.
\newblock {\em arXiv preprint:} arXiv:2003.04049. (2020)

\bibitem{Tr} S. Trifunovi\'c, Y.-G. Wang: On the interaction problem between a compressible viscous fluid and a nonlinear thermoelastic plate. Preprint arXiv:2010.01639. (2020)


\end{thebibliography}
\end{document}